\renewcommand{\Re}{\operatorname{Re}}
\renewcommand{\Im}{\operatorname{Im}}
\DeclareMathOperator{\supp}{supp}
\newcommand{\der}{\mathrm{d}}
\newcommand{\rmi}{\mathrm{i}}
\newcommand{\ee}{\mathrm{e} }
\newcommand{\sphere}{{\mathbb{S}^{2}}}
\newcommand{\tr}{\mathrm{tr}}
\newcommand{\Tr}{\mathrm{Tr}}
\newcommand{\trans}{\mathtt{t}}
\newcommand{\mi}{\mathrm{min}}
\newcommand{\ma}{\mathrm{max}}
\newcommand{\curl}{\mathrm{curl}}
\newcommand{\Curl}{\mathrm{Curl}}
\newcommand{\curlcurl}{\mathrm{curl} \, \mathrm{curl}}
\renewcommand{\div}{\mathrm{div}}
\newcommand{\grad}{\mathrm{grad}}
\newcommand{\calLt}{\tilde{\mathcal{L}}}
\newcommand{\calL}{\mathcal{L}}
\newcommand{\calMt}{\tilde{\mathcal{M}}}
\newcommand{\calM}{\mathcal{M}}
\newcommand{\R}{\mathbb{R}}
\newcommand{\C}{\mathbb{C}}
\newcommand{\N}{\mathbb{N}}
\newcommand{\aev}{\textrm{ a.e.}}
\newcommand{\del}{\partial}
\newcommand{\comp}{\mathrm{c}}
\newcommand{\loc}{\mathrm{loc}}
\newcommand{\calS}{\mathcal{S}}
\newcommand{\calSt}{\tilde{\mathcal{S}}}
\renewcommand{\tan}{\mathrm{tan}}
\newcommand{\Div}{\mathrm{Div}}
\newcommand{\free}{\mathrm{free}}
\newcommand{\abs}{\mathrm{abs}}
\newcommand{\rel}{\mathrm{rel}}
\newcommand{\id}{\operatorname{id}}
\newtheorem{theorem}{Theorem}[section]
\newtheorem{definition}[theorem]{Definition}
\newtheorem{lemma}[theorem]{Lemma}
\newtheorem{proposition}[theorem]{Proposition}
\newtheorem{rem}[theorem]{Remark}
\title[Relative trace]{The relative trace formula in electromagnetic scattering and boundary layer operators}
\author[A. Strohmaier]{Alexander Strohmaier}
\address{Leibniz University Hannover, Institute of Analysis, Welfengarten 1, 30167 Hannover, Germany}  \email{a.strohmaier@math.uni-hannover.de} 
\thanks{Supported by Leverhulme grant RPG-2017-329}
\author[A. Waters]{Alden Waters}
\address{ University of Groningen, Bernoulli Institute,
Nijenborgh 9,
9747 AG Groningen,
The Netherlands}
 \email{alden.waters@math.uni-hannover.de}
\begin{document}

\begin{abstract}
 This paper establishes trace formulae for a class of operators defined in terms of the functional calculus for the Laplace operator on divergence-free vector fields with relative and absolute boundary conditions on Lipschitz domains in $\R^3$. Spectral and scattering theory of the absolute and relative Laplacian is equivalent to the spectral analysis and scattering theory for Maxwell equations. The trace formulae allow for unbounded functions in the functional calculus that are not admissible in the Birman-Krein formula. In special cases the trace formula reduces to a determinant formula for the Casimir energy that is being used in the physics literature for the computation of the Casimir energy for objects with metallic boundary conditions. Our theorems justify these formulae in the case of electromagnetic scattering on Lipschitz domains, give a rigorous meaning to them as the trace of certain trace-class operators, and clarifies the function spaces on which the determinants need to be taken.
\end{abstract}


\maketitle
\setcounter{tocdepth}{1}
\tableofcontents

\section{Introduction}

In this paper we establish several trace formulae for operators governing the time-harmonic Maxwell equations on an open set $X=\Omega \cup M \subset \R^d$ of the form $\R^d \setminus \partial\Omega$, where $\Omega$ is a bounded (strongly) Lipschitz domain.
Here we will refer to $\Omega$ as the interior and to $M$ as the exterior domain.
We denote by $E$, $H$ the electric and magnetic fields, respectively. The time-harmonic Maxwell-system is given by
\begin{align}\label{system}
&\curl \, E-\rmi \lambda H=0 \nonumber \\& 
\div E=0  \nonumber \\&
\curl \,  H+ \rmi \lambda E=0 \\& \nonumber
\div \,  H=0 \\& \nonumber 
\nu \times E=A \quad \mathrm{on} \quad \partial\Omega\\& \nonumber
\langle \nu,H\rangle =f \quad \mathrm{on} \quad \partial\Omega 
\end{align}
where the first four equations are considered in either $\Omega$ or $M$ separately, or simultaneously by considering this as an equation on $X$. Here $\nu$ is the almost everywhere defined outward pointing unit normal vector field on $\partial \Omega$.
This system is well posed on suitable function spaces under natural consistency conditions on $A$ and $f$. In particular, if $A$ is sufficiently regular and tangential and $\lambda \not=0$ the function $f$ is determined by $A$. 
For the interior problem, given a tangential $A$, the system then has a unique solution for $\lambda$ away from a discrete set of points. For the exterior problem and $\Im(\lambda)>0$ one imposes that $E$ and $H$ are square integrable and then obtains a unique solution for any sufficiently regular tangential $A$. In both cases the solution $E$ can be expressed as 
$$
 E = \tilde{\mathcal{L}}_\lambda \mathcal{L}_\lambda^{-1} A,
$$
where $\tilde{\mathcal{L}}_\lambda$ is the electric field boundary layer potential operator and $\mathcal{L}_\lambda$ is the electric field boundary layer operator. For a continuous tangential vector field $A$ one has
$$
 (\tilde{\mathcal{L}}_\lambda A)(x) = \curl\;\curl \int_{\partial \Omega}  \frac{e^{\rmi \lambda |x-y|}}{4 \pi |x-y|} A(y) \der y,
$$
and $\mathcal{L}_\lambda A$ is obtained by taking the boundary value of $\nu \times \tilde{\mathcal{L}}$. These operators extend to suitable function spaces and we refer to Section \ref{MBLO} for the precise definitions. 
The vector field $H$ and the function $f$ are then determined by $H= -\frac{\rmi}{\lambda}\curl \, E$.
As usual this layer potential operator creates a solutions of the Maxwell system by placing certain sources on the boundary, and the choice of $\tilde{\mathcal{L}}$ is now a standard operator in computational electrodynamics.

For $\lambda \not=0$ the system for $E$ becomes
\begin{align}\label{system}
&-\Delta \, E -\lambda^2 E=0 \nonumber \\& 
\div E=0  \nonumber \\&
\nu \times E=A \quad \mathrm{on} \quad \partial\Omega. \nonumber
\end{align}
The associated spectral problem is therefore that of the Laplace-Beltrami operator $\Delta$, on divergence-free vector fields with the corresponding boundary condition. For the electric field the boundary condition $\nu \times E=0$ on $\partial \Omega$ leads to the relative Laplacian $\Delta_\rel$ by quadratic form considerations. Similarly, for the magnetic field the boundary condition $\nu \cdot H=0$ leads to the absolute Laplace operator $\Delta_\abs$. Both are are self-adjoint operators on $L^2(\R^3,\C^3)= L^2(\Omega,\C^3) \oplus L^2(M,\C^3)$ and their definition and properties are explained in detail in Sections \ref{InteriorLaplace} and \ref{extLap}.
Functional calculus for the relative Laplacian determines the solutions $E$ of the time-harmonic Maxwell-system, whereas functional calculus for the absolute Laplacian determines the solutions $H$ of the system. Here we use the more mathematical notation that is inspired by Hodge theory. The harmonic forms satisfying relative boundary conditions give rise to relative de Rham cohomology classes and the ones satisfying absolute boundary conditions give rise to absolute de Rham cohomology classes.

Before we describe the general case we would like to explain and motivate this in an important special case and when the bounded Lipschitz domain $\Omega \subset \R^3$ consists of two connected components $\Omega_1$ and $\Omega_2$. We then construct the self-adjoint operator $\Delta_\rel$ out of the Laplace operator on $\R^3 \setminus \partial \Omega$ by imposing relative boundary conditions in each side of $\partial \Omega$. The operators $\Delta_{j,\rel}$ are obtained in the same way from the Laplace operator on $\R^3 \setminus \partial \Omega_j$ with boundary conditions only imposed on each side of $\partial \Omega_j$.
The operators $\Delta_\abs$ and $\Delta_{j,\abs}$ are defined analogously with absolute boundary conditions.
It is a special case of our result that the two operators
\begin{align*}
 C_E = (-\Delta_\rel)^{-\frac{1}{2}} \delta \der - (-\Delta_{1,\rel})^{-\frac{1}{2}} \delta \der -(-\Delta_{2,\rel})^{-\frac{1}{2}} \delta \der + (-\Delta_\free)^{-\frac{1}{2}} \delta \der,\\
  C_H = (-\Delta_\abs)^{-\frac{1}{2}} \delta \der - (-\Delta_{1,\abs})^{-\frac{1}{2}} \delta \der -(-\Delta_{2,\abs})^{-\frac{1}{2}} \delta \der + (-\Delta_\free)^{-\frac{1}{2}} \delta \der,
\end{align*}
defined on smooth compactly supported vector fields on $X = \R^3 \setminus \partial \Omega$ extend to trace-class operators on $L^2(\R^3,\C^3)$ and their trace can be expressed in terms of the determinant of a combination of Maxwell boundary layer operators (see Theorems \ref{nicetheorem} and \ref{nicetheorem2}). In fact we will see that their traces coincide, i.e. $\tr\left(C_E \right) = \tr\left(C_H \right)$.
We have used here differential form notation, with $\der$ being the exterior derivative and $\delta$ being the co-derivative.
The trace-class property is due to several cancellations. Any linear combination of operators appearing in the expressions above that is not proportional to this expression is not trace-class. This statement remains true even if one introduces an artificial boundary thereby compactifying the problem.

 In terms of vector-calculus the above two operators can also be written as 
\begin{align*}
 C_E&=(-\Delta_\rel)^{-\frac{1}{2}} \curlcurl - (-\Delta_{1,\rel})^{-\frac{1}{2}} \curlcurl -(-\Delta_{2,\rel})^{-\frac{1}{2}} \curlcurl + (-\Delta_\free)^{-\frac{1}{2}} \curlcurl,\\
 C_H &= \curl(-\Delta_\rel)^{-\frac{1}{2}} \curl - \curl (-\Delta_{1,\rel})^{-\frac{1}{2}} \curl - \curl(-\Delta_{2,\rel})^{-\frac{1}{2}} \curl + \curl (-\Delta_\free)^{-\frac{1}{2}} \curl.
\end{align*}

Apart from being interesting from the point of view of spectral analysis these operators also have a direct physical significance.
Namely $\frac{1}{4}\tr\left(C_E + C_H\right)=\frac{1}{2} \tr\left(C_E\right)$ represents the Casimir energy of the two Lipschitz obstacles $\Omega_1$ and $\Omega_2$. Indeed, as shown in \cite{MR4324382} in a general rigorous framework of quantum field theory, the local trace, i.e. the trace of the integral kernel restricted to the diagonal, of the operator 
$$
 \frac{1}{4}\left((-\Delta_\rel)^{-\frac{1}{2}} \curlcurl - (-\Delta_{\free})^{-\frac{1}{2}} \curlcurl \right) + \frac{1}{4}\left((-\Delta_\abs)^{-\frac{1}{2}} \curlcurl - (-\Delta_{\free})^{-\frac{1}{2}} \curlcurl \right)
$$ 
is the renormalised energy density obtained from the electromagnetic quantum field theory.
The relative resolvent differences $C_E$ and $C_H$ then describe differences of energies. It was shown in \cite{Fang2021AMA}, again in a rigorous QFT framework, that in case of the scalar field that such ``energy differences'' lead to a Casimir force as determined from the quantum stress energy tensor as in \cite{kay1979casimir, Candelas1982}. The same statement is expected to hold for the electromagnetic field, but this will be discussed elsewhere.

The mathematical statements above can therefore also be interpreted as a rigorous proof that the Casimir energy as derived from spectral quantities is well defined in this framework and can be computed from determinants of boundary layer operators. It also clarifies the function spaces needed to compute these quantities for non-smooth boundaries. 

We focus in this paper on Maxwell's equations in dimension three and we will mostly use vector-calculus notations rather than differential forms. 
This has the advantage of keeping the notations and exposition more accessible and we can then also rely on a wealth of previous results on boundary layer operators (\cite{costabel1988,MR1944792, costabelremark, MR2839867, mitrea1995, MR1899489, mitrea1997, dmitrea, kirsch, MR3904426, MR2032868}). Focusing on dimension three also avoids complications with the free Green's function having more complicated expressions or a logarithmic singularity at zero. More importantly the focus on dimension three allows us to stay close to the classical notations in Maxwell theory without having to distract the reader with more complicated notations. \hspace{1cm}\\

Although this is a mathematical paper we also try to give of physics background for the interested reader.
To our knowledge a determinant formula for the Casimir energy first appeared in the physics literature \cite{renne71} where this was derived microscopically and without reference to spectral theory. Physics derivations have also appeared in various contexts based on path integrals and fluctuations of configurations on the surface on the obstacles (\cite{kenneth06,EGJK2007,kenneth08}) and have led to numerical schemes \cite{johnson2011numerical} and asymptotic formulae. The spectral side, often favouring a zeta function regularisation approach as in Casimir's original work \cite{Casimir} was developed somewhat independently. We refer to \cite{bordag2009advances} and \cite{kirsten2002} for a comprehensive overview over the subject. The relation between the various approaches remained unclear even in the physics world (for a very recent report on this see \cite{universe7070225} and references). We also mention the approach of \cite{balian78}, which is also based on a reduction to the boundary.

\subsection{Statement of main results}
We now describe the general setting of our results.
We assume that $\Omega \subset \R^3$ be an open and bounded (strongly) Lipschitz domain in $\R^3$  in the sense that the boundary of $\Omega$ is locally congruent to the graph of a Lipschitz function.
The finitely many connected components will be denoted
by $\Omega_j$ with some index $j$, which ranges from $1$ to $N$. We will think of the closure $\overline{\Omega}$ as a collection of disjoint compact obstacles $\overline{\Omega}_j$ placed in $\R^3$ (see Fig. \ref{objects}).
Removing these obstacles from $\R^3$ results in a non-compact open domain
$M = \R^3 \backslash \overline{\Omega}$ with Lipschitz boundary $\partial \Omega$. We will assume throughout that $M$ is connected. It will also be convenient to introduce $X = \R^3 \setminus \partial \Omega = M \cup \Omega$. 

\begin{figure}[h]
	\centering
	\includegraphics[scale=0.15]{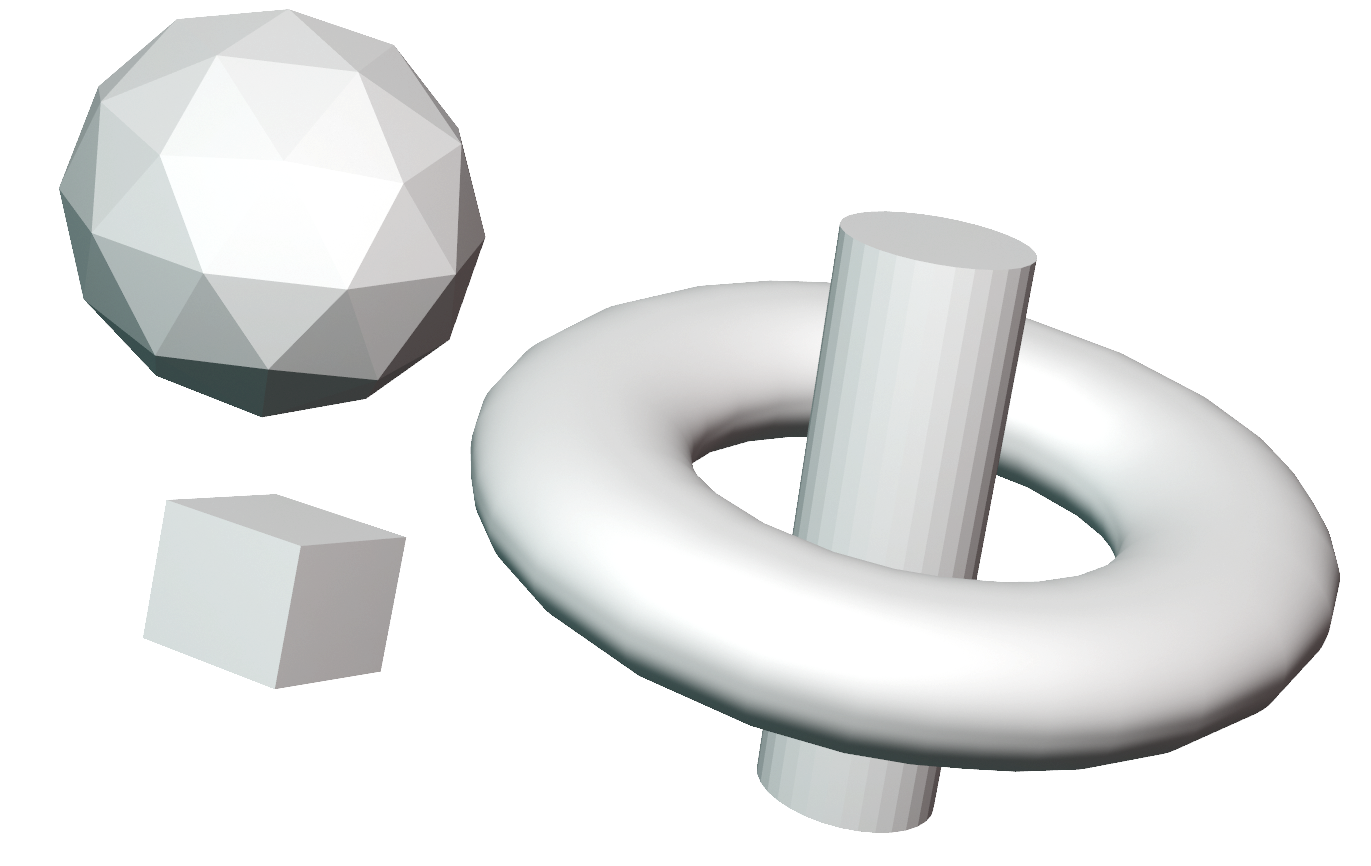}
	\caption{A Lipschitz domain $\Omega$ consisting of four connected components $\Omega_1,\Omega_2,\Omega_3,\Omega_4$.}
	\label{objects}
\end{figure}

On the boundary one has well-defined anisotropic Sobolev spaces $H^{-\frac{1}{2}}(\Div,\partial \Omega)$ (see Section \ref{Lipschitzsection}) and the Maxwell electric field operator $\mathcal{L}_\lambda$
is a  bounded operator $H^{-\frac{1}{2}}(\Div,\partial \Omega) \to H^{-\frac{1}{2}}(\Div,\partial \Omega)$ (see Section \ref{MBLO}). This can be done for each object separately and one can assemble the individual parts $\mathcal{L}_{\lambda, \partial \Omega_j} : H^{-\frac{1}{2}}(\Div,\partial \Omega_j) \to H^{-\frac{1}{2}}(\Div,\partial \Omega_j)$
into an operator $\mathcal{L}_{D,\lambda} = \oplus_{j=1}^N  \mathcal{L}_{\lambda, \partial \Omega_j}$ acting on $H^{-\frac{1}{2}}(\Div,\partial \Omega)$.
\begin{theorem} \label{nicetheorem}
The operator $\mathcal{L}_\lambda \mathcal{L}^{-1}_{D,\lambda}$ is well-defined and a trace-class perturbation of the identity for any complex $\lambda$ with $\Im(\lambda)>0$. It therefore has a well-defined Fredholm determinant  $\mathrm{det}(\mathcal{L}_\lambda \mathcal{L}^{-1}_{D,\lambda})$ on the space $H^{-\frac{1}{2}}(\Div,\partial \Omega)$. Let $\delta$ be the minimal distance between separate objects. Then for any $0 < \delta' < \delta$ the function $$\Xi(\lambda) =\log \det(\mathcal{L}_\lambda \mathcal{L}^{-1}_{D,\lambda}),$$ where the branch of the logarithm has been fixed by continuity, extends to a holomorphic function in a neighborhood of the closed upper half space and it
satisfies the bound
$$
 |\Xi(\lambda)| \leq C \mathrm{e}^{-\Im(\lambda) \delta}
$$
for $\lambda$ in any sector about the positive imaginary axis of angle strictly less than $\pi$.
\end{theorem}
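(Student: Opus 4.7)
The plan is to exploit a decomposition of the global Maxwell electric field operator into a block-diagonal and an off-diagonal piece,
$$
\mathcal{L}_\lambda = \mathcal{L}_{D,\lambda} + \mathcal{L}_{OD,\lambda},
$$
where $\mathcal{L}_{OD,\lambda}$ is the block operator whose $(j,k)$-entry for $j \neq k$ is the restriction of the outgoing Maxwell boundary integral operator to $\partial\Omega_j \times \partial\Omega_k$. Because the closed components $\overline\Omega_j$ are pairwise separated by at least $\delta > 0$, the kernel of $\mathcal{L}_{OD,\lambda}$ is the restriction of the outgoing Helmholtz Green's tensor to a set uniformly away from its diagonal singularity. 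It is therefore real-analytic in $(x,y)$ on a compact Lipschitz set and entire in $\lambda$, so $\mathcal{L}_{OD,\lambda}$ is a smoothing operator between tangential Sobolev spaces on $\partial\Omega$, and in particular extends to a trace-class operator on $H^{-\frac{1}{2}}(\Div,\partial\Omega)$.

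Invertibility of $\mathcal{L}_{D,\lambda} = \oplus_j \mathcal{L}_{\lambda,\partial\Omega_j}$ for $\Im(\lambda)>0$ reduces to the single-obstacle case treated in Section \ref{MBLO}. Writing
$$
\mathcal{L}_\lambda \mathcal{L}_{D,\lambda}^{-1} = \mathrm{Id} + \mathcal{L}_{OD,\lambda}\mathcal{L}_{D,\lambda}^{-1},
$$
the right-hand summand is the product of a trace-class and a bounded operator, hence itself trace-class. The Fredholm determinant is therefore well-defined, and $\lambda \mapsto \mathcal{L}_{OD,\lambda}\mathcal{L}_{D,\lambda}^{-1}$ is holomorphic in trace norm on $\{\Im\lambda>0\}$. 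The extension to a neighborhood of the closed upper half space uses the limiting-absorption/outgoing continuation of $\mathcal{L}_{D,\lambda}^{-1}$ to the real axis, again componentwise from the single-obstacle results (one must check that the free space electric operator has no real resonances on each bounded obstacle, which is a standard consequence of uniqueness for the exterior Maxwell problem).

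For the exponential bound I would argue at the level of kernels. Each off-diagonal entry of $\mathcal{L}_{OD,\lambda}$, including the first-order tangential derivatives needed to produce the $\Div$-part of the norm, is bounded by $C(1+|\lambda|)^{N} \mathrm{e}^{-\Im(\lambda)|x-y|}$ on $\partial\Omega_j \times \partial\Omega_k$ with $j \neq k$, for some fixed $N$. Since $|x-y|\geq\delta$ on the support, one obtains a trace-norm estimate $\|\mathcal{L}_{OD,\lambda}\|_{\mathrm{Tr}} \leq C(1+|\lambda|)^{N}\mathrm{e}^{-\Im(\lambda)\delta}$, and in a sector of angle strictly less than $\pi$ about the positive imaginary axis one has $\Im(\lambda) \geq c|\lambda|$, which absorbs the polynomial factor at the cost of replacing $\delta$ by any $\delta' < \delta$. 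Combined with the uniform bound $\|\mathcal{L}_{D,\lambda}^{-1}\| \leq C$ on such sectors and the standard estimate $|\log\det(\mathrm{Id}+A)| \leq \|A\|_{\mathrm{Tr}} \mathrm{e}^{\|A\|_{\mathrm{Tr}}}$, once $\|A\|_{\mathrm{Tr}}$ is small this gives the claimed estimate on $\Xi(\lambda)$; the branch of the logarithm is fixed by continuity along, say, the positive imaginary axis, where $\mathcal{L}_\lambda\mathcal{L}_{D,\lambda}^{-1} \to \mathrm{Id}$ as $\Im(\lambda)\to\infty$.

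The main obstacle I expect is the uniform control of $\mathcal{L}_{D,\lambda}^{-1}$ in a sector reaching down to the real axis: near real $\lambda$ the single-obstacle electric operator has to be inverted through the outgoing resolvent, and one must verify absence of spurious real-$\lambda$ resonances on each $\partial\Omega_j$ and obtain bounds that are uniform, not just pointwise. The secondary technical difficulty is that the ambient norm is the anisotropic $H^{-\frac{1}{2}}(\Div,\partial\Omega)$, not $L^2$, so the off-diagonal kernel estimates must be paired with surface curl/divergence estimates rather than pointwise sup norms, and one has to make sure the polynomial factor $(1+|\lambda|)^N$ is genuinely polynomial (and not exponential) when the Div-structure is respected.
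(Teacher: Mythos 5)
Your overall strategy is the same as the paper's: decompose $\mathcal{L}_\lambda = \mathcal{L}_{D,\lambda} + \mathcal{T}_\lambda$ with $\mathcal{T}_\lambda$ the off-diagonal (separated-obstacle) part, argue that the separation $\delta>0$ smooths the kernel and makes $\mathcal{T}_\lambda$ trace-class with exponential decay, and write $\mathcal{L}_\lambda\mathcal{L}_{D,\lambda}^{-1}-\id = \mathcal{T}_\lambda\mathcal{L}_{D,\lambda}^{-1}$. The sector argument $\Im(\lambda)\geq c|\lambda|$ to absorb polynomial growth of $\|\mathcal{L}_{D,\lambda}^{-1}\|$ for large $|\lambda|$ is also essentially what the paper does (via $\mathcal{L}_\lambda^{-1}=\rmi\lambda^{-1}(\Lambda^+_\lambda-\Lambda^-_\lambda)$ and Theorem~\ref{N:estimate}).

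The genuine gap is your appeal to a ``uniform bound $\|\mathcal{L}_{D,\lambda}^{-1}\|\leq C$'' in a sector reaching the real axis, and the accompanying framing of the difficulty as ruling out ``spurious real-$\lambda$ resonances.'' The operator $\mathcal{L}_{D,\lambda}^{-1}$ is \emph{not} uniformly bounded near the closed upper half plane; it has genuine poles there. At $\lambda=0$ the pole has order two (this is what Theorem~\ref{supercor} and the expansion \eqref{Lexpand} exhibit: $\|\mathcal{L}_\lambda^{-1}\|\sim |\lambda|^{-2}$), and it is further singular at every interior Maxwell eigenvalue of each component $\Omega_j$. These poles cannot be removed; they must instead be shown to \emph{cancel} in the specific combination $\mathcal{T}_\lambda\mathcal{L}_{D,\lambda}^{-1}$. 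The paper does this by passing from $\mathcal{L}_\lambda$ to $\frac{1}{2}+\mathcal{M}_\lambda$, writing $(\frac{1}{2}+\mathcal{M}_{D,\lambda})^{-1}=\lambda^{-2}P_D+\lambda^{-1}B_D+Q_\lambda$ near $0$ (Lemma~\ref{Mexpand}), identifying the ranges of the singular Laurent coefficients $P_D,B_D$ with $\ker(\frac{1}{2}+\mathcal{M}_{D,0})$ via the Hodge-theoretic characterization of Propositions~\ref{dmitreazero} and~\ref{menotzero}, and showing that $(\frac{1}{2}+\mathcal{M}_\lambda)$ annihilates these spaces. The analogous cancellation at the Maxwell eigenvalues (Lemma~\ref{Maxwexpand}) uses that the relevant kernels of $\frac{1}{2}+\mathcal{M}_{\mu_k}$ and $\frac{1}{2}+\mathcal{M}_{D,\mu_k}$ coincide. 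This low-energy and eigenvalue-cancellation analysis, occupying a substantial fraction of Sections~\ref{estimateSec}--\ref{multiL}, is the real content of the theorem and is missing from your proposal. A smaller point: the determinant estimate $|\det(\id+A)-1|\leq\|A\|_1e^{1+\|A\|_1}$ holds unconditionally, so you need not assume $\|A\|_1$ is small.
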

We note that the operators $\mathcal{L}_\lambda^{-1}$ and $\mathcal{L}^{-1}_{D,\lambda}$ have singularities at zero and it is due to a variety of cancellations that the quotient is regular at zero, in particular when the objects have non-trivial topology. Our proof is based on a careful analysis of these singularities.

\begin{figure}[h]
	\centering
	\includegraphics[scale=1.4]{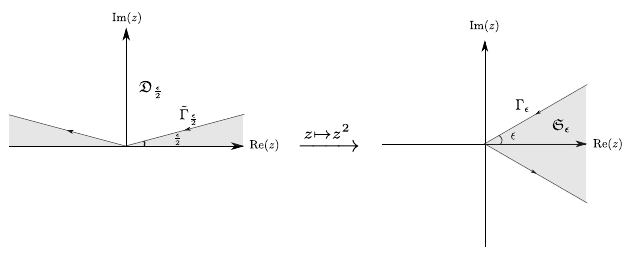}
	\caption{The sectors $\mathfrak{S}_\epsilon$, $\mathfrak{D}_\frac{\epsilon}{2}$ and the corresponding contours.}
	\label{ztozsquare}
\end{figure}

Before we formulate the trace formula we need to define a large class of functions to which it applies. These will be analytic functions in certain sectors and we start by describing these sectors.
Assume $0<\epsilon\leq \pi$ and let $\mathfrak{S}_\epsilon$ be the open sector
\begin{align*}
\mathfrak{S}_\epsilon=\{ z \in \C \mid z\not=0, | \arg(z) | < \epsilon \}
\end{align*}
containing the real axis (see Fig. \ref{ztozsquare}). Associated to these we define the  following spaces of functions. The space $\mathcal{E}_\epsilon$ will be defined by
\begin{gather*}
 \mathcal{E}_\epsilon= \{ f : \mathfrak{S}_\epsilon \to \C \mid f \textrm{ is holomorphic in } \mathfrak{S}_\epsilon, \exists \alpha >0, \forall \epsilon_0>0, \, |f(z)| = O( |z|^\alpha e^{\epsilon_0 |z|}) \}.
\end{gather*}
  We define the space $\mathcal{P}_\epsilon$ as the set of functions in  $\mathcal{E}_\epsilon$ whose restriction to $[0,\infty)$ is polynomially bounded and that extend continuously to the boundary 
  of $\mathfrak{S}_\epsilon$ in the logarithmic cover of the complex plane.
 Reference to the logarithmic cover of the complex plane is only needed in case $\epsilon=\pi$. In this case 
 functions in $\mathcal{P}_\pi$ are required to have continuous limits from above and below on the negative real axis. We do not however require that these limits coincide.
The space $\mathcal{P}_\epsilon$ contains in particular $f(z) = z^{\alpha}, \quad \alpha >0$ for any $0<\epsilon \leq \pi$.

When working with the Laplace operator it is often convenient to change variables and use $\lambda^2$ as a spectral parameter, and in the context of Maxwell theory it turns out to be beneficial to introduce an extra $\lambda^{-2}$-factor. For notational brevity we therefore introduce another class of functions as follows.
\begin{definition} \label{def:TildePEps}
 The space $\widetilde{\mathcal{P}}_\epsilon$ is defined to be the space of functions $f$ such that $f(\lambda)=\lambda^{-2 }g(\lambda^2)$ for some $g \in \mathcal{P}_\epsilon$. In particular $f(\lambda) = O(\lambda^a)$
 for some $a> -2$ near $\lambda=0$.
\end{definition}
Very generally the operator $\Delta_{\rel}$ decomposes into a direct sum of unbounded operators $\Delta_{\rel} = 0 \oplus   \der \delta \oplus \delta \der$ under the weak Hodge-Helmholtz decomposition (see Section \ref{extLap}, \eqref{wHHd}) we have
$$
  f((-\Delta_\rel)^\frac{1}{2}) \curl\curl = f((-\Delta_\rel)^\frac{1}{2}) \delta \der =  f((\delta \der)^\frac{1}{2}) \delta \der
$$
for any Borel function $f$. This implies that for a function $f \in \widetilde{\mathcal{P}}_\epsilon$ the unbounded operator  $f((-\Delta_\rel)^\frac{1}{2}) \curl\curl$
contains $C^\infty_0(X,\C^3)$ in its domain. Indeed, for $\psi \in C^\infty_0(X,\C^3)$ and $k \in \N$ large enough we have the factorisation
$f((\delta \der)^\frac{1}{2}) \delta \der = h((\delta \der)^\frac{1}{2}) (\delta \der + 1)^k \psi$, where
$(\delta \der + 1)^k \psi \in C^\infty_0(X,\C^3)$ and the function $h(\lambda)=(1+\lambda^2)^{-k}\lambda^2 f(\lambda)$ is bounded on the real line.

For $0<\epsilon \leq \pi$ we also define the contours $\Gamma_\epsilon$ in the complex plane as the boundary curves of the sectors 
$\mathfrak{S}_\epsilon$. In case $\epsilon=\pi$ the contour is defined as a contour in the logarithmic cover of the complex plane. We also let $\widetilde{\Gamma}_\frac{\epsilon}{2}$ be the corresponding 
contour after the change of variables, i.e. the pre-image in the upper half space under the map $z \to z^2$ of $\Gamma_\epsilon$ (see Fig. \ref{ztozsquare}).

For $f \in \widetilde{\mathcal{P}}_\epsilon$ we define the {\sl relative operator}
\begin{align*}
 D_{\rel,f} &=  f((-\Delta_\rel)^\frac{1}{2}) \curlcurl - f((-\Delta_\free)^\frac{1}{2}) \curlcurl \\&- \sum\limits_{j=1}^N  \left( f((-\Delta_{j,\rel})^\frac{1}{2}) \curlcurl - f((-\Delta_\free)^\frac{1}{2}) \curlcurl \right),
\end{align*}
where  $f(\lambda) = g(\lambda^2)$.
Similarly,
\begin{align*}
 D_{\abs,f} &=  f((-\Delta_\abs)^\frac{1}{2}) \curlcurl - f((-\Delta_\free)^\frac{1}{2}) \curlcurl \\&- \sum\limits_{j=1}^N  \left( f((-\Delta_{j,\abs})^\frac{1}{2}) \curlcurl - f((-\Delta_\free)^\frac{1}{2}) \curlcurl \right)\\
 &=  \curl f((-\Delta_\rel)^\frac{1}{2}) \curl - \curl f((-\Delta_\free)^\frac{1}{2}) \curl \\&- \sum\limits_{j=1}^N  \left( \curl f((-\Delta_{j,\rel})^\frac{1}{2}) \curl -\curl f((-\Delta_\free)^\frac{1}{2}) \curl \right)
\end{align*}
Since these operators contain $C^\infty_0(X,\C^3)$ in their domain they are densely defined.

We refer to taking these differences as the {\sl relative setting}, indicating that this compares interacting quantities to non-interacting ones. It is unfortunate that the word relative is also used to denote the relative boundary conditions. We alert the reader that these two uses of the word relative are unrelated, but to avoid confusion we have used the symbol $D$ for ``difference'' to denote relative objects.

Our main result reads as follows.

\begin{theorem} \label{nicetheorem2}
 If $f \in \widetilde{\mathcal{P}}_\epsilon$, then operators $D_{\rel,f}$ and $D_{\abs,f}$ extend to trace-class operators $L^2(\R^3,\C^3) \to L^2(\R^3,\C^3)$ and
 $$
  \tr(D_{\rel,f}) = \tr(D_{\abs,f})= \frac{\rmi}{2\pi}\int_{\tilde\Gamma_\frac{\epsilon}{2}} \Xi(\lambda) \frac{\der}{\der \lambda}(\lambda^2 f(\lambda))  \der \lambda,
 $$
 where the contour $\tilde \Gamma_{\frac{\epsilon}{2}}$ is the clockwise-oriented boundary of a sector that includes the imaginary axis.
\end{theorem}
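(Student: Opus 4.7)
The plan is to reduce the statement to a Krein-type resolvent identity on the boundary and then apply Theorem~\ref{nicetheorem}. Writing $f(\lambda)=\lambda^{-2}g(\lambda^{2})$ with $g\in\mathcal{P}_\epsilon$, the factor $\curl\curl$ absorbs the negative power: on divergence-free vector fields one has $(-\Delta_\rel)=\curl\curl$, and the composition $f((-\Delta_\rel)^{1/2})\curl\curl$ therefore coincides on the divergence-free subspace with $g(-\Delta_\rel)$, while annihilating pure gradients. The first step is thus to rewrite
\begin{align*}
 D_{\rel,f} &= \bigl(g(-\Delta_\rel) - g(-\Delta_\free)\bigr) - \sum_{j=1}^{N}\bigl(g(-\Delta_{j,\rel}) - g(-\Delta_\free)\bigr)
\end{align*}
composed with the projection onto the divergence-free part, and similarly for $D_{\abs,f}$. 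Using that $g$ is holomorphic and polynomially bounded in $\mathfrak{S}_\epsilon$, I would represent each $g(-\Delta_\bullet)$ via a Dunford-type contour integral over $\Gamma_\epsilon$, then pass through the substitution $\mu=\lambda^2$ so that the contour becomes $\widetilde{\Gamma}_{\epsilon/2}$. This reduces the computation of $\tr(D_{\rel,f})$ to understanding the resolvent-difference combination
\[
 K(\lambda)=\bigl(R_\rel(\lambda)-R_\free(\lambda)\bigr)-\sum_{j=1}^{N}\bigl(R_{j,\rel}(\lambda)-R_\free(\lambda)\bigr)
\]
composed with $\curl\curl$, where $R_\bullet(\lambda)=(-\Delta_\bullet-\lambda^{2})^{-1}$.

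The central step is a boundary layer representation for $K(\lambda)\curl\curl$. For each of the Laplacians involved, the difference $R_\rel(\lambda)-R_\free(\lambda)$ can be written as an operator that factors through the boundary space $H^{-1/2}(\Div,\partial\Omega)$ by means of the Maxwell single- and double-layer potentials, with the boundary symbol given by (the inverse of) $\mathcal{L}_\lambda$; for the individual-object problems the corresponding symbol is $\mathcal{L}_{D,\lambda}$. Combining these representations in the telescoping combination defining $K(\lambda)$ produces a kernel that factors as $\Psi_\lambda^{*}\bigl(\mathcal{L}_{D,\lambda}^{-1}-\mathcal{L}_\lambda^{-1}\bigr)\Psi_\lambda$ (up to conjugation by appropriate potential operators), where $\Psi_\lambda$ is built out of the Maxwell layer potentials on $\partial\Omega$. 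Cyclicity of the trace then converts the trace on $L^2(\R^{3},\C^{3})$ into a trace on $H^{-1/2}(\Div,\partial\Omega)$, and the same cancellation that underlies Theorem~\ref{nicetheorem} guarantees that the resulting boundary operator is trace-class, with
\[
 \tfrac{d}{d\lambda}\log\det\bigl(\mathcal{L}_\lambda\mathcal{L}_{D,\lambda}^{-1}\bigr)
 = \tr\bigl(\mathcal{L}_\lambda^{-1}\mathcal{L}_\lambda'-\mathcal{L}_{D,\lambda}^{-1}\mathcal{L}_{D,\lambda}'\bigr)=\Xi'(\lambda).
\]

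Putting the two pieces together gives
\[
 \tr(D_{\rel,f})=\frac{1}{2\pi\rmi}\int_{\widetilde{\Gamma}_{\epsilon/2}} g(\lambda^{2})\,\Xi'(\lambda)\,\der\lambda,
\]
and an integration by parts along $\widetilde{\Gamma}_{\epsilon/2}$, permitted by the exponential decay bound $|\Xi(\lambda)|\le Ce^{-\Im\lambda\,\delta'}$ of Theorem~\ref{nicetheorem} together with the polynomial bound on $g$, yields the stated formula with $g(\lambda^{2})=\lambda^{2}f(\lambda)$. The trace-class property for $D_{\rel,f}$ itself follows from combining the boundary factorization with the Schatten-class mapping properties of the Maxwell layer potentials and the singular value decay of the operators involved in the difference $\mathcal{L}_\lambda\mathcal{L}_{D,\lambda}^{-1}-\mathrm{Id}$ already used in Theorem~\ref{nicetheorem}. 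The identity $\tr(D_{\abs,f})=\tr(D_{\rel,f})$ follows either by repeating the argument with the absolute layer operator, which is intertwined with $\mathcal{L}_\lambda$ by $\nu\times(\cdot)$ and $\curl$ and therefore has the same Fredholm determinant, or more directly by observing that $\curl$ conjugates $-\Delta_\rel$ and $-\Delta_\abs$ on their respective range subspaces so that both combinations produce the same contour integral.

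The main obstacle is the Krein/boundary-layer factorization of $K(\lambda)\curl\curl$ together with the singularity analysis near $\lambda=0$: each individual term $R_\bullet(\lambda)\curl\curl$ is singular at $\lambda=0$ (more severely so when the obstacles carry nontrivial relative cohomology), and only the full alternating telescoping combination produces a kernel that is analytic down to $\lambda=0$ and integrable against $g(\lambda^{2})\Xi'(\lambda)$. Controlling these cancellations uniformly in the boundary norms $H^{-1/2}(\Div,\partial\Omega)$, so that the cyclic trace manipulation and the integration by parts are justified and the contour can be deformed to $\widetilde{\Gamma}_{\epsilon/2}$, is the technical heart of the argument and relies directly on the detailed resolvent analysis that underlies Theorem~\ref{nicetheorem}.
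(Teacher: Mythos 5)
The proposal follows the same strategic arc as the paper's proof (functional calculus $\to$ resolvent difference $R_{D,\rel,\lambda}$ $\to$ boundary-layer factorization and cyclicity of the trace giving $\tr R_{D,\rel,\lambda} = -\tfrac{\lambda}{2}\Xi'(\lambda)$ $\to$ integration by parts), but there is a genuine gap at the very first step. You propose to ``represent each $g(-\Delta_\bullet)$ via a Dunford-type contour integral over $\Gamma_\epsilon$'', yet for $g\in\mathcal{P}_\epsilon$ the function $g$ is only polynomially bounded on the closed sector, while $\|(\delta\der - z)^{-1}\|$ is merely $O(1/\mathrm{dist}(z,[0,\infty)))$ along the contour. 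The Dunford integral for any single Laplacian therefore diverges, and the identity that rewrites the spectrally defined $D_{\rel,f}$ as a contour integral of the resolvent-difference combination does not come for free; it is precisely the thing that has to be proved, not invoked.

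The paper closes this gap by regularizing: it replaces $g$ with $g_n(z)=e^{-z^2/n}g(z)$, which decays super-exponentially so that the Dunford integrals of all individual terms converge and the rewriting into a contour integral of $R_{D,\rel}(\lambda)$ is immediate; it then lets $n\to\infty$, using dominated convergence to get $D_{\rel,f_n}h\to D_{\rel,f}h$ in $L^2$ for $h\in C^\infty_0(X,\C^3)$, while the exponential trace-norm decay of $R_{D,\rel}(\lambda)$ (Proposition~\ref{decayrelres}) shows that $D_{\rel,f_n}$ is Cauchy in the Banach space of trace-class operators. These two facts together yield both the trace-class property of $D_{\rel,f}$ and the identification of its trace with the contour integral. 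Without such an approximation step your sketch neither establishes that $D_{\rel,f}$ is trace-class nor justifies the interchange of spectral calculus with the contour integral: the boundary-layer cancellations and the $\lambda\to 0$ analysis you emphasise control the contour integral, but they do not by themselves tie it to the unbounded functional calculus on the individual Laplacians.
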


We would like to mention here that expressions formally similar to the relative trace-formula have appeared in the context of the multi-channel scattering theory and were introduced by Buslaev and Merkur'ev (\cite{MR0261896}, see also \cite{MR1944028}) to prove Birman-Krein-type formulae. In this context the test function $f$ is still required to decay sufficiently fast.

An interesting application of the relative trace is that it allows one to define a relative zeta function, namely
$$
 \zeta_{D}(s) = \tr\left( D_{f_s}\right),\quad f_s(\lambda) = \frac{1}{\lambda^{2s +2}} 
$$ 
for $\Re(s) < 0$. As a consequence of Theorem \ref{nicetheorem2} this relative zeta function then satisfies
$$
  \zeta_{D}(s) = \frac{2s}{\pi} \sin(\pi s) \int_0^\infty \lambda^{-2s -1} \Xi(\rmi \lambda) \der \lambda.
 $$
 This formula allows for a meromorphic continuation of $\zeta_D$ with poles of order at most one and residues related to the Taylor coefficients of $\Xi(\rmi \lambda)$ at zero. These coefficients are interesting in their own right and will be investigated elsewhere.
In the special case when $f(\lambda)= \frac{1}{\lambda}$ this gives the expression
$$
 \frac{1}{4} \tr\left( C_E + C_H \right) = \frac{1}{2\pi}\int_0^\infty \Xi(\rmi \lambda) \der \lambda
$$
for the Casimir energy.

Under our more general assumptions on $f$ the operators 
\begin{gather} \label{Breldef}
  B_{\rel,f} = f((-\Delta_\rel)^\frac{1}{2}) \curlcurl - f((-\Delta_\free)^\frac{1}{2}) \curlcurl,\\
  B_{\abs,f} = f((-\Delta_\abs)^\frac{1}{2}) \curlcurl - f((-\Delta_\free)^\frac{1}{2}) \curlcurl, \label{Babsdef}
\end{gather} are not trace-class. One has however the following theorem about the smoothness and integrability properties of their integral kernels.
\begin{theorem} \label{nicetheorem3}
 Let $B_f$ be either $B_{\rel,f}$, defined by \eqref{Breldef}, or  $B_{\rel,f}$, defined by \eqref{Babsdef}. Then $B_f$ has an integral kernel $\kappa \in C^\infty(X \times X, \mathrm{Mat}(3,\C))$, which is smooth away from the boundary.
 If $\Omega_0 \subset X$ has positive distance to the boundary $\partial \Omega$ and $p_{\Omega_0}$ the orthogonal projection
 $L^2(\R^3,\C^3) \to L^2(\Omega_0,\C^3)$, then $p_{\Omega_0} B_f p_{\Omega_0}$ extends to a trace-class operator with trace
 equal to the convergent integral
 $$
  \int_{\Omega_0} \tr\left(\kappa(x,x)\right) \der x.
 $$
 If $f(z) = O(|z|^a)$ for $|z|<1$ we have for
 large $|x|$ the estimate
 $$
  \| \kappa(x,x) \| \leq C_f \frac{1}{|x|^{6+a}}.
 $$
\end{theorem}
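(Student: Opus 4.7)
The plan is to represent $B_f$ as a contour integral of resolvent differences along $\tilde\Gamma_\frac{\epsilon}{2}$, read off smoothness of the kernel from elliptic regularity, and finally control the decay at infinity via a boundary-layer expansion.

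First I would write $f(\lambda)=\lambda^{-2}g(\lambda^2)$ with $g\in\mathcal{P}_\epsilon$ and use the weak Hodge--Helmholtz decomposition to identify $f((-\Delta_\rel)^\frac{1}{2})\curlcurl$ with $g(-\Delta_\rel)$ on the $\delta\der$-block (and zero on its complement), and similarly for $\Delta_\free$. Cauchy's formula together with the substitution $z=\lambda^2$ then yields, for $\psi\in\CnI(X,\C^3)$,
\[
B_{\rel,f}\psi = \frac{\rmi}{\pi}\int_{\tilde\Gamma_\frac{\epsilon}{2}} g(\lambda^2)\,\lambda\bigl[(-\Delta_\rel-\lambda^2)^{-1}-(-\Delta_\free-\lambda^2)^{-1}\bigr]\psi\,\der\lambda,
\]
and analogously for $B_{\abs,f}$. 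Convergence near $\lambda=0$ uses $g(\lambda^2)=O(|\lambda|^{a+2})$ from Definition~\ref{def:TildePEps}; convergence at infinity uses polynomial growth of $g$ on $\tilde\Gamma_\frac{\epsilon}{2}$ together with exponential off-diagonal decay of the resolvent acting on compactly supported $\psi$.

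For smoothness I would observe that for each $\lambda$ with $\Im(\lambda)>0$ the Schwartz kernels of $(-\Delta_\rel-\lambda^2)^{-1}$ and $(-\Delta_\free-\lambda^2)^{-1}$ are local fundamental solutions of the same elliptic operator and share the same diagonal singularity, so their difference is a smooth function on $X\times X$ depending holomorphically on $\lambda$. Applying $\curlcurl$ in both spatial variables preserves smoothness away from $\partial\Omega$. A dominated convergence argument, using the integrability bounds from Step~1 and polynomial-in-$\lambda$ control of derivatives of the resolvent-difference kernel on compact subsets of $X\times X$, permits pulling the contour integral inside the kernel evaluation to produce a kernel $\kappa\in C^\infty(X\times X,\mathrm{Mat}(3,\C))$ for $B_f$. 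Once $\kappa$ is continuous on $\Omega_0\times\Omega_0$ and the diagonal is integrable (which follows from Step~4 below), the trace-class property of $p_{\Omega_0}B_f p_{\Omega_0}$ and the identity $\tr(p_{\Omega_0}B_f p_{\Omega_0})=\int_{\Omega_0}\tr\kappa(x,x)\,\der x$ follow from standard results on integral operators with continuous kernels on Euclidean subsets, e.g.\ a factorisation into Hilbert--Schmidt pieces via a smooth weight $(1+|x|^2)^{-N}$.

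The hard step is the decay estimate. For $|x|\gg\mathrm{diam}(\Omega)$ I would use a boundary-layer representation of the resolvent difference, schematically of the form
\[
\bigl[(-\Delta_\rel-\lambda^2)^{-1}-(-\Delta_\free-\lambda^2)^{-1}\bigr](x,y) = \int_{(\partial\Omega)^2} G_\free(x,z;\lambda)\,T_\lambda(z,w)\,G_\free(w,y;\lambda)\,\der\sigma(z)\der\sigma(w),
\]
with $G_\free(x,y;\lambda)=\ee^{\rmi\lambda|x-y|}/(4\pi|x-y|)$ and $T_\lambda$ obtained by inverting the Maxwell boundary integral operators of Section~\ref{MBLO}, whose norms are polynomially controlled along $\tilde\Gamma_\frac{\epsilon}{2}$ by Theorem~\ref{nicetheorem}, including the low-frequency singularity at $\lambda=0$ highlighted in the remark following that theorem. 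Using the pointwise estimate $|\partial_x^\alpha G_\free(x,z;\rmi\tau)|\le C(|x|^{-1-|\alpha|}+\tau^{|\alpha|}|x|^{-1})\ee^{-\tau|x|}$, the two derivatives produced by $\curlcurl$, and the weight $\lambda g(\lambda^2)\,\der\lambda\sim\tau^{a+3}\,\der\tau$ near the origin, integration in $\tau$ with dominant contribution at $\tau\sim|x|^{-1}$ yields the claimed $|x|^{-6-a}$ bound. The principal obstacle is uniform-in-$\lambda$ control of the inverse boundary operator $T_\lambda$ on $\tilde\Gamma_\frac{\epsilon}{2}$ simultaneously with sharp tracking of the $|x|$-decay, especially near $\lambda=0$ where $T_\lambda$ itself is singular and one must balance the $\lambda^{-2}$ factor in $f$ against the cancellations already exploited for Theorem~\ref{nicetheorem}.
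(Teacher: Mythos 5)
Your overall strategy (contour representation of $B_f$ via the functional calculus, smoothness away from $\partial\Omega$, boundary-layer representation for the decay) is the paper's, but the step you yourself call the ``principal obstacle'' is exactly the heart of the proof, and you leave it unresolved. Theorem~\ref{nicetheorem}, which you invoke to control your inverse boundary operator $T_\lambda$ along $\tilde\Gamma_{\frac{\epsilon}{2}}$, is a bound on the Fredholm determinant $\Xi$ and gives no operator-norm information about $\mathcal{L}_\lambda^{-1}$ whatsoever. What is actually needed, and what the paper proves, are the resolvent bounds of Theorems~\ref{N:estimate} and~\ref{supercor} together with the low-energy expansions of Theorem~\ref{eigenrep} and Lemma~\ref{Mexpand}; the structural cancellations there ($P(\nu\times\nabla)=B(\nu\times\nabla)=0$, $\Div\circ T=0$, the mean-zero range of $T$, and the splitting $\tilde{\mathcal{L}}_\lambda=\nabla\tilde{\mathcal{S}}_\lambda\Div+\lambda^2\tilde{\mathcal{S}}_\lambda$) are what make the difference kernel in \eqref{eqn:ResolvDiffQ} uniformly controlled down to $\lambda=0$, yielding the trace-norm bound \eqref{eqn:TraceNormBound} and the diagonal estimate \eqref{eqn:DiagKernelEst}, $\|\kappa_\lambda(x,x)\|\leq C\,\mathrm{dist}(x,\partial\Omega)^{-4}e^{-\mathrm{dist}(x,\partial\Omega)\Im\lambda}$, uniformly in the sector. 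Without this input your $\tau$-integration ``with dominant contribution at $\tau\sim|x|^{-1}$'' has no justification near $\tau=0$, and even your dominated-convergence step fails as stated: ``polynomial-in-$\lambda$ control of derivatives of the resolvent-difference kernel'' cannot beat a weight $\lambda f(\lambda)$ that does not decay along the unbounded contour; one needs the exponential decay $e^{-\delta'\Im\lambda}$ of the $C^k$-norms of $\kappa_\lambda$ on compacts away from $\partial\Omega$, which the paper extracts from the layer-potential factorization (Lemma~\ref{Prop:EstL2PGfirst} plus Theorem~\ref{supercor}), not from interior elliptic regularity alone.

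Two further points. First, your route to the trace, ``continuous kernel with integrable diagonal implies trace class and trace equals the diagonal integral,'' is false in general; continuity plus integrability of the diagonal does not give trace class, and the weight factorization you gesture at needs Hilbert--Schmidt off-diagonal estimates you have not provided. In the paper the trace-class property of $p_{\Omega_0}B_fp_{\Omega_0}$ comes from the uniform trace-norm bounds of Theorem~\ref{Thm:DifferenceProperties}, so the contour integral converges in the trace ideal, and only then is the trace identified with $\int_{\Omega_0}\tr\kappa(x,x)\,\der x$ via a Mercer-type argument. Second, your displayed starting formula drops the $\curlcurl$/Hodge projection: $f((-\Delta_\rel)^{\frac12})\curlcurl-f((-\Delta_\free)^{\frac12})\curlcurl$ is not $\lambda^2$ times the plain resolvent difference, because the divergence-free projections for $\Delta_\rel$ and $\Delta_\free$ differ; as written your integral represents $g(-\Delta_\rel)-g(-\Delta_\free)$, which contains spurious contributions from the gradient blocks. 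The correct object to integrate is the kernel of the difference of $(-\Delta-\lambda^2)^{-1}\curlcurl$'s, i.e.\ \eqref{eqn:ResolvDiffQ}, against $\lambda f(\lambda)\,\der\lambda$.
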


\subsection{Discussion}

The theorems presented here are the Maxwell-analogue of the paper \cite{HSW} where a similar statement was proved for the scalar Laplacian in the case of smooth boundary. The Maxwell system on a Lipschitz domain is different in several regards and introduces challenges that are absent in the scalar case: 
\begin{itemize}
\item  Maxwell's equations arise from an abelian gauge theory and the gauge freedom results in the loss of ellipticity of the equations for the electromagnetic field. On the analysis side this manifests itself as the equations taking place on the space of divergence-free vector fields rather than the space of sections of the vector bundle. This can however be fixed by considering the spectral decomposition of the Laplace operator and then employing the Helmholtz-Hodge decomposition to project onto the subspace of divergence-free vector fields. Projecting works well in cases with and without boundary as long as the geometric configuration is fixed. The projector constructed from the Helmholtz decomposition is roughly of the form $-\Delta^{-1} \delta \der =-\Delta^{-1} \curlcurl$ and it involves the non-local functional calculus of the Laplace operator. It therefore depends on the geometric configuration and also the boundary conditions imposed on the Laplace operator. This makes it much harder to directly apply scattering theory which requires an identification of the involved Hilbert spaces. The same problem appears in the context of the Birman-Krein formula in electromagnetic scattering. We have proved a variant of the Birman-Krein formula in \cite{SWB} and we will follow the same formulation here.

\item Unlike the Dirichlet-Laplacian the Laplace operator on the space of vector fields with relative boundary conditions has a non-trivial kernel in the exterior domain. This leads to singularities of the resolvent near zero and manifests itself in the presence of singularities of the boundary layer operators. Additional singularities of the boundary layer operators appear if the obstacles have non-trivial topology, which we do not exclude. To overcome this we carefully analyse the singularities of various Maxwell boundary layer operators at zero and we show that there are various cancellations that render a final result without singularities.

\item An additional complication arises in this paper since we are considering Lipschitz domains instead of smooth ones. This requires more sophisticated harmonic analysis techniques. We rely here on a lot of progress in this subject that has been made during the past several decades, in particular with the identification of the appropriate function spaces.

\end{itemize}

As explained the spectral theory of $\Delta_\rel$ and $\Delta_\abs$ determines the Maxwell-system.
 Suitably interpreted the $\curl$-operator intertwines these two operators in the sense that $\curl\, \Delta_\abs = \Delta_\rel \,\curl$.
In the interior the relative Laplacian on a suitable closed subspace consisting of divergence-free vector fields has the Maxwell eigenvalues as its spectrum and the eigenfunctions describe modes of photons that are confined to $\Omega$.
The exterior relative Laplacian on a suitable closed subspace of divergence-free vector fields describes the scattering of electromagnetic waves, or photons respectively, by the obstacles $\Omega$. The functional calculus on $\Delta_\rel$ on this subspace can be understood in terms of the operators $f(\Delta_\rel) \curlcurl$. The following Birman-Krein formula has been proved.

\subsection{Relation to the Birman-Krein formula}
In case $f$ is an even Schwartz function we have that $$\left(\curl \, \curl( f((-\Delta_\rel)^{\frac{1}{2}}) - f((-\Delta_\free)^{\frac{1}{2}}))  \right) $$ is trace-class and its trace can be computed by the Birman-Krein-type formula
$$
   \tr\left(\curl \, \curl( f((-\Delta_\rel)^{\frac{1}{2}}) - f((-\Delta_\free)^{\frac{1}{2}}))  \right) = \frac{1}{2 \pi \rmi}\int_0^\infty \lambda^2\tr(S_\lambda^{-1}(S_\lambda)') f(\lambda) d \lambda + \sum_{j=1}^\infty f(\mu_j) \mu_j^2,
 $$ 
 where $S_\lambda$ is the scattering matrix for the Maxwell equation and $\mu_j$ are the Maxwell eigenvalues of the interior.
 As a consequence of this formula
 $$
   \tr D_{\rel,f} = -\frac{1}{2\pi \rmi} \int_0^\infty  \log \frac{\det  S_\lambda}{\det ( S_{1,\lambda}) \cdots \det ( S_{N,\lambda})} \frac{\der}{\der \lambda}(\lambda^2 f(\lambda)) \der \lambda,
 $$
 which is valid only under very restrictive assumptions on $f$. The same formula and statements hold for absolute instead of relative boundary conditions.
 
 In the motivating example one cannot use this formula. It would require $f(\lambda) = \frac{1}{\lambda}$, which does not satisfy the assumptions of the Birman-Krein formula. In fact it can be shown that the integrand on the right hand side is not integrable in that case.
One has however the following relation between the function $\Xi$ and the scattering matrices.
\begin{theorem} \label{nicetheorem4}
We have
$$
 \log \frac{\det  S_\lambda}{\det ( S_{1,\lambda}) \cdots \det ( S_{N,\lambda})} =  -\left(\Xi(\lambda) -  \Xi(-\lambda)\right)
$$
for $\lambda \in \R$.
\end{theorem}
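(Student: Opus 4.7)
The plan is to derive the identity by establishing a direct determinant formula
$$
   \det S_\lambda = \det\bigl(\mathcal{L}_{-\lambda} \mathcal{L}_\lambda^{-1}\bigr)
$$
on $H^{-\frac{1}{2}}(\Div,\partial \Omega)$, relating the full-configuration scattering matrix to the Maxwell electric field boundary layer operator, together with the analogous single-obstacle identities $\det S_{j,\lambda} = \det(\mathcal{L}_{-\lambda,\partial \Omega_j} \mathcal{L}_{\lambda,\partial \Omega_j}^{-1})$. The Schwarz-type reflection $\overline{G_\lambda(x,y)} = G_{-\lambda}(x,y)$ of the Helmholtz fundamental solution $G_\lambda(x,y) = \mathrm{e}^{\rmi \lambda |x-y|}/(4\pi|x-y|)$ for real $\lambda$ is what converts the outgoing/incoming unitary structure of the scattering matrix into the $\lambda \leftrightarrow -\lambda$ structure of the boundary layer operators.

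To establish the key identity I would represent an outgoing scattering solution with tangential data $A$ as a single-layer potential whose density satisfies $\phi = \mathcal{L}_\lambda^{-1}(\cdots A)$, and then compute its far-field pattern. Expressing $S_\lambda$ in terms of the resulting boundary-to-asymptotic map and applying the reflection symmetry to identify the conjugate operator, one finds that $S_\lambda$ is unitarily equivalent, via isometries between boundary-trace space and far-field-amplitude space, to $\mathcal{L}_{-\lambda} \mathcal{L}_\lambda^{-1}$. The intertwining isometries drop out upon taking Fredholm determinants. This argument proceeds within the Birman-Krein framework for electromagnetic scattering developed in \cite{SWB}.

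Given the single-obstacle identity, the block-diagonality of $\mathcal{L}_{D,\lambda} = \oplus_{j=1}^N \mathcal{L}_{\lambda,\partial \Omega_j}$ and the multiplicativity of Fredholm determinants give $\prod_{j=1}^N \det S_{j,\lambda} = \det(\mathcal{L}_{D,-\lambda} \mathcal{L}_{D,\lambda}^{-1})$, and hence
$$
   \frac{\det S_\lambda}{\prod_{j=1}^N \det S_{j,\lambda}}
   \;=\; \frac{\det(\mathcal{L}_{-\lambda} \mathcal{L}_\lambda^{-1})}{\det(\mathcal{L}_{D,-\lambda} \mathcal{L}_{D,\lambda}^{-1})}
   \;=\; \frac{\det(\mathcal{L}_{-\lambda} \mathcal{L}_{D,-\lambda}^{-1})}{\det(\mathcal{L}_\lambda \mathcal{L}_{D,\lambda}^{-1})}
   \;=\; \mathrm{e}^{\Xi(-\lambda) - \Xi(\lambda)}.
$$
Taking the logarithm with the branch fixed by continuity from the upper half-plane, as in Theorem \ref{nicetheorem}, yields the stated identity.

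The main obstacle is the first step: pinning down the boundary-layer determinant representation $\det S_\lambda = \det(\mathcal{L}_{-\lambda} \mathcal{L}_\lambda^{-1})$ on $H^{-\frac{1}{2}}(\Div,\partial \Omega)$ in the Lipschitz setting, and in particular verifying that the singularities of $\mathcal{L}_\lambda$ at $\lambda = 0$, which by Theorem \ref{nicetheorem} cancel in the ratio $\mathcal{L}_\lambda \mathcal{L}_{D,\lambda}^{-1}$, produce a matching cancellation on the scattering side so that the logarithms on both sides of the claimed identity have consistent branches. Once this representation of the scattering matrix is in hand, the remainder is purely algebraic manipulation of Fredholm determinants over the direct-sum decomposition.
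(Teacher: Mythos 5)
Your route is genuinely different from the paper's: the paper never produces a determinant formula for $\det S_\lambda$ itself. Instead it compares two expressions for $\Tr(D_{\rel,f})$ for even test functions $f$ — the Birman--Krein formula of Theorem \ref{BKF} on one side, and a Helffer--Sj\"ostrand/Stokes representation through Lemma \ref{tracexi} (i.e. $\tr R_{D,\rel,\lambda} = -\tfrac{\lambda}{2}\Xi'(\lambda)$) on the other — to conclude $\tfrac{\rmi}{2\pi}\left(\Xi'(x)+\Xi'(-x)\right)=\xi_\rel'(x)$, and then fixes the constant of integration by the low-energy fact $S_0=S_{1,0}=\cdots=S_{N,0}=\id$ from \cite{OS}. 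Your plan would be more direct if it worked, but it hinges on the identity $\det S_\lambda = \det\left(\mathcal{L}_{-\lambda}\mathcal{L}_\lambda^{-1}\right)$, which you do not prove and which is precisely the nontrivial content; it is not available in the paper or in the cited literature for the Maxwell/Lipschitz setting. As stated it is also problematic: $S_\lambda$ acts on $L^2_\tan(\sphere,\C^3)$ while $\mathcal{L}_{-\lambda}\mathcal{L}_\lambda^{-1}$ acts on $H^{-\frac{1}{2}}(\Div,\partial\Omega)$, and these operators are not unitarily equivalent — what one can hope for is a factorization $S_\lambda-\id = A_\lambda B_\lambda$ through the boundary space (via a Funk--Hecke type identity expressing $\mathcal{L}_{-\lambda}-\mathcal{L}_{\lambda}$ through far-field restriction operators) followed by $\det(\id+A_\lambda B_\lambda)=\det(\id+B_\lambda A_\lambda)$, and carrying this out with the $\curlcurl$ structure and the $\nu\times$ twist is real work, not a formality.

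Two further gaps would remain even granting that identity. First, well-definedness: for real $\lambda$ the operator $\mathcal{L}_\lambda$ is not invertible at $\lambda=0$ (the paper emphasizes that only the combination $\mathcal{L}_\lambda\mathcal{L}_{D,\lambda}^{-1}$ is regular there, after the cancellations analyzed in Lemma \ref{Mexpand} and Proposition \ref{trace1prime}) nor at the interior Maxwell eigenvalues $\mu_k$ (Proposition \ref{menotzero}); so $\det\left(\mathcal{L}_{-\lambda}\mathcal{L}_\lambda^{-1}\right)$ and the intermediate factors in your multiplicativity chain need a separate argument showing the apparent poles and zeros are removable, which you do not supply. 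Second, equality of determinants only gives the claimed identity of logarithms modulo $2\pi\rmi\,\Ganz$; pinning down the additive constant requires normalizing both sides at a common point, which is exactly what the paper does by showing $\xi_\rel(0)=0$. Your closing sentence acknowledges these issues but does not resolve them, so the argument as it stands has a genuine gap at its central step.
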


This theorem reflects the relation between the spectral shift function and zeta regularised determinants as discovered by Carron (\cite{carron1999determinant}, Theorem 1.3) generalising a formula by Gesztesy and Simon (\cite{MR1395669}, Theorem 1.1).

\subsection{Organisation of the paper} 

The paper is organised as follows. Sections \ref{Lipschitzsection}- \ref{MBLO} provide the required theoretical background for the paper and consist of essentially known material. Section \ref{Lipschitzsection} sets up the basic function spaces needed for boundary layer theory on Lipschitz domains. Section \ref{InteriorLaplace} summarises the spectral properties of the interior relative and absolute Laplace operators, and Section \ref{extLap} reviews the scattering theory for the relative and absolute Laplacians on the exterior. Both are combined into one operator in Section \ref{combreltr}. In this section we also discuss the Birman-Krein formula in the context of our setting. Section \ref{MBLO} introduces the basic Maxwell boundary layer operators and their properties.

The basic estimates and expansions for the layer potential operators needed for the proofs are covered in Section \ref{estimateSec}. This section is presented independently of the main results as its content is interesting in its own right. It covers various aspects of low energy expansions for the electric and magnetic boundary layer operators and inverses. Section \ref{resolventformulae} gives formulae of the resolvent differences in terms of layer potential operators and thereby provides estimates for these differences. Such formulae are sometimes referred to as Krein-type resolvent formulae and this section provides a Maxwell analogue of these.
Sections \ref{multiL} and \ref{multiLzwei} take on the main subject of this paper, namely function $\Xi$, the relative resolvent, and its trace. Section \ref{mainproofs} finally contains the proofs of the main theorems.

\section{Function spaces on Lipschitz domains} \label{Lipschitzsection}
Since $\Omega$ is a Lipschitz domain we have, by Rademacher's theorem, an almost everywhere defined exterior unit vector field $\nu \in L^\infty(\partial \Omega,\R^3)$. We will use the following spaces that now are standard in Maxwell theory:
\begin{itemize} 
 \item $H(\curl,M) = \{ f \in L^2(M,\C^3) \mid \curl f \in L^2(M,\C^3)\}$.
 \item $H(\div,M) = \{ f \in L^2(M,\C^3) \mid \div f \in L^2(M)\}$.
 \item $L^2_\tan(\partial \Omega) = \{ f \in L^2(\partial \Omega,\C^3) \mid \nu\cdot f =0 \aev \textrm{ on } \partial \Omega \}$.
 \item $H^{-\frac{1}{2}}(\Div,\partial \Omega)$, $H^{-\frac{1}{2}}(\Curl,\partial \Omega)$.
 \item $H^{-\frac{1}{2}}(\Div0,\partial \Omega)$, $H^{-\frac{1}{2}}(\Curl0,\partial \Omega)$.
\end{itemize}
These spaces were introduced in \cite{MR1944792} and provide a convenient framework for dealing with Maxwell's equations on Lipschitz domains. We refer to the Appendix of \cite{kirsch} for an extensive discussion and we only summarise the basic properties.

In case $\partial \Omega$ is smooth we have 
\begin{align*}
  H^{-\frac{1}{2}}(\Div,\partial \Omega) &= \{ f \in H^{-\frac{1}{2}}(\partial \Omega;T\partial\Omega) \mid \Div f \in H^{-\frac{1}{2}}(\partial \Omega)\},\\
  H^{-\frac{1}{2}}(\Curl,\partial \Omega) &= \{ f \in H^{-\frac{1}{2}}(\partial \Omega;T\partial\Omega) \mid \Curl f \in H^{-\frac{1}{2}}(\partial \Omega)\},\\
  H^{-\frac{1}{2}}(\Div0,\partial \Omega) &= \{ f \in H^{-\frac{1}{2}}(\partial \Omega;T\partial\Omega) \mid \Div f =0\},\\
  H^{-\frac{1}{2}}(\Curl0,\partial \Omega) &= \{ f \in H^{-\frac{1}{2}}(\partial \Omega;T\partial\Omega) \mid \Curl f =0\},
\end{align*}
where $\Div$ is the surface divergence on $\partial \Omega$, and $\Curl$ is the surface curl. On a general Lipschitz domain this can be defined via Lipschitz coordinate charts, thus locally reducing it to the smooth case. Note that the spaces $H^s_\loc(\R^d)$ are invariant under bi-Lipschitz maps if $|s| \leq 1$.
We refer to \cite{kirsch} for a detailed discussion of the definition via coordinate charts.
We also have the corresponding spaces for the interior domains. Namely we have that 
\begin{align*}
  H(\curl,\Omega) &= \{ f \in L^2(\Omega,\C^3) \mid \curl f \in L^2(\Omega,\C^3)\},\\
  H(\div,\Omega) &= \{ f \in L^2(\Omega,\C^3) \mid \div f \in L^2(\Omega)\}.
 \end{align*}
On $H(\curl,M)$ there are two distinguished and well-defined continuous  trace maps 
\begin{align*}
 \gamma_{T,-} &: H(\curl,M) \to H^{-\frac{1}{2}}(\Curl,\partial \Omega),\\
 \gamma_{t,-} &: H(\curl,M) \to H^{-\frac{1}{2}}(\Div,\partial \Omega),
\end{align*}
which continuously extend the maps $f \mapsto (\nu \times f |_{\partial \Omega}) \times \nu$ and $f \mapsto (\nu \times f |_{\partial \Omega})$ respectively, defined on $C_0(\overline{M},\C^3)$.
Note that for $x \in \partial \Omega$ such that $\nu_x$ is defined the map $v \mapsto (\nu_x  \times v) \times \nu_x$ is the orthogonal projection onto the tangent space of $\partial \Omega$ at $x$. 
Similarly we have the map
\begin{align*}
 \gamma_{\nu,-} &: H(\div,M) \to H^{-\frac{1}{2}}(\partial \Omega),
\end{align*}
continuously extending the normal restriction map $f \mapsto \nu \cdot f|_{\partial \Omega}$.
On the interior domain $\Omega$ we have the analogous maps
\begin{align*}
 \gamma_{T,+} &: H(\curl,\Omega) \to H^{-\frac{1}{2}}(\Curl,\partial \Omega),\\
 \gamma_{t,+} &: H(\curl,\Omega) \to H^{-\frac{1}{2}}(\Div,\partial \Omega),\\
 \gamma_{\nu,+} &: H(\div,\Omega) \to H^{-\frac{1}{2}}(\partial \Omega),
\end{align*}
There is a well defined dual pairing between $H^{-\frac{1}{2}}(\Curl,\partial \Omega)$ and $H^{-\frac{1}{2}}(\Div,\partial \Omega)$
that extends the $L^2$-inner product on $H^\frac{1}{2}(\partial \Omega) \cap L^2_\tan(\partial \Omega)$. We will denote this pairing by $\langle \cdot, \cdot\rangle_{L^2(\partial \Omega)}$, irrespective of the Sobolev order and mildly abusing notation.
The map
$\phi \mapsto \nu \times \phi$ extends to a continuous isomorphism from $H^{-\frac{1}{2}}(\Div,\partial \Omega)$ to  $H^{-\frac{1}{2}}(\Curl,\partial \Omega)$
and vice versa. Moreover, the $L^2$-pairing induces an antilinear isomorphism between $H^{-\frac{1}{2}}(\Div,\partial \Omega)$ and $H^{-\frac{1}{2}}(\Curl,\partial \Omega)$ (see for example \cite{kirsch}*{Lemma 5.61} for both statements). In other words,
the antisymmetric bilinear form $\langle \cdot,\nu \times \cdot \rangle$  on $H^{-\frac{1}{2}}(\Div,\partial \Omega)$ is non-degenerate.
Note here that since  $\nu \in L^\infty(\partial \Omega,\R^3)$ it is not immediately obvious that is defined as a map between Sobolev spaces.

We recall Stokes theorem for $\phi, E\in H(\curl,\Omega)$: 
\begin{align}\label{stokes}
&\langle \gamma_{t,+} E,  \gamma_{T,+}\phi\rangle_{L^2(\partial\Omega)}=\langle \curl\, E,\phi\rangle_{L^2(\Omega)}-\langle E,\curl \, \phi\rangle_{L^2(\Omega)},  \\&
\langle \curlcurl\, E,\phi\rangle_{L^2(\Omega)}-\langle E,\curlcurl\,\phi\rangle_{L^2(\Omega)}\\&=\langle \gamma_{t,+} \curl\, E,\gamma_{T,+}\phi\rangle_{L^2(\partial\Omega)}+\langle \gamma_{t,+} E,\gamma_{T,+} \curl\,\phi\rangle_{L^2(\partial\Omega)}. \nonumber
\end{align}
As before we are slightly abusing notation and write $\langle \cdot,\cdot \rangle_{L^2(\Omega)}$ for pairings extending the $L^2$-inner product.
We also define $H_0(\curl,M)$ as the kernel of $\gamma_{t,-}$ and $H_0(\div,M)$ as the kernel of $\gamma_{\nu,-}$. 
These spaces play a similar role as the Sobolev space of functions $H^1_0(M)$, which can also be characterised as the kernel of the trace map $\gamma: H^1(M) \to H^\frac{1}{2}(\partial M)$.
The spaces $H_0(\curl,\Omega)$ and $H_0(\div,\Omega)$, as well as $H^1_0(\Omega)$ are defined analogously.

If there is no danger of confusion we will omit the $\pm$ and simply write $\gamma_t$ and $\gamma_\nu$ respectively.

We also have surface divergence $\Div$ and surface curl $\Curl$. They satisfy
\begin{gather} \label{Divcurlinter}
  \Div \circ \gamma_{t,+} = - \gamma_{\nu,+} \circ  \curl.
 \end{gather}

\section{Laplace operators on the interior domain} \label{InteriorLaplace}

\subsection{The relative Laplacian}

The operator $$\curl_\mi=\curl |_{H_0(\curl,\Omega)}: H_0(\curl,\Omega)  \to L^2(\Omega,\C^3)$$ is a closed densely defined operator. It coincides with the closure of the operator $\curl$ on the space of compactly supported smooth vector fields on $\Omega$ (\cite{kirsch}*{Theorem 5.25})   and therefore equals the minimal closed extension of $\curl$.

Its adjoint is the maximal extension, i.e. the closed operator $$\curl_\ma: H(\curl,\Omega)  \to L^2(\Omega,\C^3).$$ 
For any closed densely defined operator $A$ the operator $A^*A$ is automatically self-adjoint. If in addition $\mathrm{rg}(A) \subset \ker(A)$, then
 $A^* A + A A^*$ is self-adjoint if it is densely defined (see for example \cite{SWB}*{Section 2}).
It follows that
$\curl_\ma\, \curl_\mi$ with domain 
$$
 \{ f \in  H_0(\curl,\Omega) \mid \curl f \in H(\curl,\Omega)  \}
$$
is a non-negative self-adjoint operator. Similarly, $\div_\ma: H(\div,\Omega) \to L^2(\Omega)$ is a closed operator with adjoint $-\grad_\mi: H^1_0(\Omega) \to L^2(\Omega)$. Therefore, the operator
$- \grad_\mi\; \div_\ma$ is a non-negative self-adjoint operator with domain
$$
 \{ f \in  H(\div,\Omega) \mid \div f \in H_0(\Omega)  \}.
$$
Their sum $\Delta_{\Omega,\rel}=\curl_\ma \curl_\mi - \grad_\mi\; \div_\ma$ is again self-adjoint and then has domain
$$
 \{ f \in H(\div,\Omega) \cap  H_0(\curl,\Omega) \mid \div f \in H_0(\Omega),  \curl f \in H(\curl,\Omega) \},
$$
and on this domain $-\Delta_{\Omega,\rel}$ is given by $ \curlcurl- \grad\; \div$.
The implied boundary conditions of this operator are the so-called relative boundary conditions
$$
 \gamma_{t,+}(f) = 0, \quad \div f|_{\partial \Omega} =0.
$$

In the case of smooth boundary the form-domain of the interior relative Laplace operator is contained in $H^1(\Omega,\C^3)$.
In the more general Lipschitz case this is no longer true, but it is known that the form domain is contained in $H^\frac{1}{2}(\Omega,\C^3)$ (see \cite{costabelremark}*{Theorem 2} and also \cite{MR1899489}).
This is compactly embedded in $L^2(\Omega,\C^3)$ and therefore the interior relative Laplace operator has purely discrete spectrum. We have the classical Hodge-Helmholtz decomposition
$$
 L^2(\Omega) = \mathcal{H}^1(\Omega) \oplus  \overline{\mathrm{rg}(\grad_\mi)} \oplus \overline{\mathrm{rg}(\curl_\ma)} 
$$
into an orthogonal direct sum.  Here $\mathcal{H}^1(\Omega)= \mathrm{\ker}(\Delta_{\Omega,\rel})$ is the finite dimensional space of harmonic vector fields
satisfying the relative boundary conditions. We will see in Section \ref{cohosubsec} that in fact the assumption that $M$ is connected implies that
$\mathcal{H}^1(\Omega) = \{0\}$.

We now describe the spectrum of the relative Laplace operator.
On $\Omega$ we can choose an orthonormal basis $(v_j)$ of Dirichlet eigenfunctions $v_j$ in the domain of the Dirichlet Laplacian with eigenvalues $\lambda_j^2$, i.e.
\begin{gather*}
 -\Delta v_j = \lambda_{D,j}^2 v_j, \quad  \lambda_{D,j}>0, \quad v_j \in \{v \in H^1_0(\Omega,\C^3) \mid \nabla v \in H(\div,\Omega) \},
\end{gather*}
We have $\lambda_j \to \infty$ and we arrange the eigenfunctions such that $\lambda_j \nearrow \infty$.
Then $\frac{1}{\lambda_j} \nabla v_j$ form an orthonormal basis of eigenfunctions in $\overline{\mathrm{rg}(\grad_\mi)}$ of $-\Delta_\Omega$ with eigenvalues $\lambda_j^2$.
One has the usual Weyl-law for Lipschitz domains which can easily be inferred from the Weyl law for smooth domains using domain monotonicity and an approximation by smooth domains
$$
  \lambda_{D,k} \sim \left( \frac{6\pi^2}{\mathrm{Vol}(\Omega)} \right)^{\frac{1}{3}} k^{\frac{1}{3}}, \quad k \to \infty.
$$
The space $\overline{\mathrm{rg}(\curl_\ma)}$ on the other hand is the closure of the subspace spanned by $\phi_j$, where $(\phi_j)$ is an orthonormal basis in $\mathrm{ker}(\div_\ma) \subset L^2(\Omega,\C^3)$ satisfying the eigenvalue equation
$$
  -\Delta_{\Omega,\rel} \phi_j  = \mu_j^2 \phi_j, \; \div \, \phi_j = 0,
$$
with boundary condition $\gamma_t(\phi_j)=0$.  Therefore zero is not an eigenvalue.
The numbers $\mu_j>0$ are the Maxwell eigenvalues and we again assume these are arranged such that $\mu_j \nearrow \infty$.
The Maxwell eigenvalues are known to satisfy a Weyl law (see \cite{MR928156} for Lipschitz domains, but also \cite{MR3113431} and references for a general statement in arbitrary dimension)
$$
  \mu_k \sim \left( \frac{3\pi^2}{\mathrm{Vol}(\Omega)} \right)^{\frac{1}{3}} k^{\frac{1}{3}}, \quad k \to \infty.
$$
The family
$(\phi_j)_{\mu_j>0}$ then forms an orthonormal basis in $\overline{\mathrm{rg}(\curl_\ma)}$ consisting of eigenfunctions of 
$-\Delta_{\Omega,\rel}$ with non-zero eigenvalues $\mu_j^2$. Summarising there is an orthonormal basis of eigenfunctions $\Delta_{\Omega,\rel}$ of the form
\begin{align*}
 \{ \frac{1}{\lambda_j} \grad\, v_j \,|\, j \in \N \} \cup  \{ \phi_j \,|\, \mu_j >0 \},
\end{align*}
where $v_j$ are the Dirichlet eigenfunctions and $\phi_j$ the Maxwell eigenfunctions with Maxwell eigenvalues $\mu_j$. 

\subsection{The absolute Laplacian}

It will also be convenient to consider another operator $\Delta_{\Omega,\abs}$, which is defined by
$$
 -\Delta_{\Omega,\abs} = \curl_\mi \;\curl_\ma - \grad_\ma\; \div_\mi,
$$
with domain
$$
 \{ f \in H_0(\div,\Omega) \cap  H(\curl,\Omega) \mid \div f \in H^1(\Omega),  \curl f \in H_0(\curl,\Omega) \}.
$$

Again, it is known that the form-domain is contained in $H^\frac{1}{2}(\Omega,\C^3)$ (\cite{costabelremark}*{Theorem 2}) and the domain is therefore compactly embedded into $L^2(\Omega,\C^3)$.
In the same way as for the relative Laplacian there is an explicit description of the spectrum which we now give.
Let $(u_j)$ be an orthonormal basis consisting of eigenfunctions of the Neumann Laplacian with eigenvalues $\lambda_{N,j}$. Hence,
$$
 -\Delta u_j = \lambda_{N,j}^2 u_j, \quad \partial_\nu u_j|_{\partial \Omega} =0, \quad u_j \in \{ u \in H^1(\Omega) \mid \nabla u \in H^1_0(\div,\Omega) \}. 
$$
Then the functions $\frac{1}{ \lambda_{N,j}} \nabla u_j$ form an orthonormal set consisting of eigenfunctions of $\Delta_{\Omega,\abs}$.

We can construct another orthogonal set $(\psi_j)$ from the Maxwell eigenfunctions $\phi_j$ of the relative Laplace operator by defining
$$
 \psi_j = \frac{1}{\mu_j}\curl \phi_j.
$$
Since the spectrum is discrete standard Hodge theory applies for the absolute Laplacian and we obtain an orthogonal decomposition
$$
 L^2(\Omega,\C^3) = \mathcal{H}^1_{\abs}(\Omega) \oplus \overline{\mathrm{span}\{\frac{1}{ \lambda_{N,j}}\grad u_j\} }\oplus \overline{ \mathrm{span} \{\psi_j\}},
$$
where $\mathcal{H}^1_{\abs}(\Omega) = \ker \Delta_{\Omega,\abs}$. Unlike in the case of the relative Laplace operator this space is in general nontrivial. We will in the following choose an orthonormal basis $(\psi_{0,k})_k$, where $1 \leq k \leq \dim(\mathcal{H}^1_{\abs}(\Omega))$. Therefore an orthonormal basis in $L^2(\Omega,\C^3)$ consisting of eigenfunctions of the absolute Laplacian is
$$
 \{\psi_{0,k} \mid 1 \leq k \leq \dim(H^1_\abs(\Omega))\} \cup \{\frac{1}{ \lambda_{N,j}}\nabla u_j \mid j \in \N\} \cup \{\psi_j \mid j \in \N\}.
$$

\subsection{Relation to singular and de Rham cohomology groups} \label{cohosubsec}

Since $\Omega$ is an oriented smooth manifold we have, by de Rham's theorem, a natural isomorphism identifying
$H^p_\mathrm{dR}(\Omega,\C)$ with $H^p_\mathrm{sing}(\Omega,\C)= H^p_\mathrm{sing}(\Omega,\mathbb{Z}) \otimes_\mathbb{Z} \C$.
Hodge theory is also applicable for Lipschitz domains in the sense that the natural map from $\ker \Delta_{\Omega,\abs}$ to the first de Rham cohomology group $H^1_\mathrm{dR}(\Omega,\C)$ is an isomorphism. This can for example be inferred from the statement of \cite{MR1809655}*{Th. 11.1 and Th. 11.2} together with the universal coefficient theorem and de Rham's theorem. This theorem also applies to the absolute Laplacian on $2$-forms as defined in \cite{MR1809655}. Since this operator is obtained by conjugation of the relative Laplacian on one forms with the Hodge star-operator $*$, we therefore have that $*\ker \Delta_{\Omega,\rel}$ is isomorphic to $H^2_\mathrm{dR}(\Omega,\C)$. 
Because the inner product is non-degenerate on these spaces, we have the following non-degenerate dual pairing
$$
\ker \Delta_{\Omega,\rel} \times (*\ker \Delta_{\Omega,\rel}) \to \C, \quad (f_1,f_2) \mapsto \int_\Omega f_1 \wedge f_2.
$$

We also have, as a consequence of  Poincar\'e duality, the non-degenerate dual pairing
$$
 H^{1}_{\comp,\mathrm{dR}}(\Omega,\C) \times H^2_\mathrm{dR}(\Omega,\C) \to \C, \quad (f_1,f_2) \mapsto \int_\Omega f_1 \wedge f_2.
 $$
This establishes an isomorphism $\ker \Delta_{\Omega,\rel} \to H^{1}_{\comp,\mathrm{dR}}(\Omega,\C)$, which relates the harmonic forms to the de Rham cohomology groups with compact support. Since elements in $\ker \Delta_{\Omega,\rel}$ are not compactly supported this map is defined indirectly by duality.

Our assumptions imply that in fact $H^{1}_{\comp,\mathrm{dR}}(\Omega,\C)$ is trivial and therefore $\ker \Delta_{\Omega,\rel} =\{0\}$. This reflects the observation that a domain with connected exterior cannot have homologically non-trivial $2$-cycles (inclusions). 

\begin{lemma}\label{vanishcoho}
 Let $U$ be an open $C^0$-domain with compact closure in $\R^d$ with $d \geq 2$ such that $\R^d \setminus \overline{U}$ is connected. Then $H^{1}_{\comp,\mathrm{dR}}(U) =\{0\}$.
\end{lemma}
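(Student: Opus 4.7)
The plan is to give a direct argument exploiting the simple connectedness of $\R^d$. Let $\omega \in \Omega^1_{\comp}(U)$ be closed, with compact support $K \subset U$. Since $K$ is compact and $\partial U$ is closed and disjoint from $K$, there is a positive distance $d(K,\partial U)>0$. Consequently, the extension by zero $\tilde\omega$ of $\omega$ to $\R^d$ is a smooth, closed $1$-form. Because $H^1_{\mathrm{dR}}(\R^d)=0$ via the Poincar\'e lemma, I can write $\tilde\omega = df$ for some $f \in C^\infty(\R^d,\R)$.

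The key point is to refine $f$ to a function that is compactly supported \emph{inside} $U$. On the open set $V := \R^d \setminus K$ we have $df = 0$, so $f$ is locally constant, hence constant on each connected component of $V$. Since $\overline{U}$ is compact, $M = \R^d\setminus\overline{U}$ is unbounded; being also connected by hypothesis, $M$ lies entirely in the unique unbounded connected component of $V$. Let $c$ be the value of $f$ on that component; replacing $f$ by $f - c$, I may assume $f \equiv 0$ on $M$.

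To conclude I need $f$ to vanish in a neighborhood of $\partial U$, not merely on $M$ itself. Here I use the $C^0$-regularity of $U$: every boundary point $x \in \partial U$ lies in $\overline{M}$, since the graph description of $\partial U$ guarantees that $x$ is approached by points of $M$. Pick a ball $B_x \subset V$ with $B_x \cap K = \emptyset$; then $\tilde\omega|_{B_x} = 0$, so $f$ is constant on the connected set $B_x$, while $B_x \cap M \neq \emptyset$ forces that constant to be $0$. Thus $f$ vanishes on an open neighborhood $W$ of $\partial U$. Combined with $\supp f \subseteq \overline{U}$ (which is compact and disjoint from $M \subset \{f=0\}$), this yields $\supp f \subseteq \overline{U}\setminus W \subseteq U$, so $f|_U \in C^\infty_{\comp}(U)$ and $\omega = d(f|_U)$, showing $[\omega]=0$ in $H^1_{\comp,\mathrm{dR}}(U)$.

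The main obstacle is precisely this last step: a priori one only knows $f\equiv 0$ on $M$ and, by continuity, on $\partial U$, which is insufficient for compact support in $U$. The cancellation of $\tilde\omega$ on a full neighborhood of $\partial U$ (provided by $\mathrm{dist}(K,\partial U)>0$) together with connectedness of $M$ and the accessibility of $\partial U$ from outside (guaranteed by $C^0$-regularity) is what upgrades pointwise vanishing on $\partial U$ to vanishing on a neighborhood of $\partial U$.
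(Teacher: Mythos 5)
Your proof is correct, and it takes a genuinely different and more elementary route than the paper's. The paper first invokes a nontrivial smooth-approximation theorem to reduce to the case of a smooth domain, then runs Mayer--Vietoris to show $\partial U$ is connected, and finally uses the long exact sequence in compactly supported de Rham cohomology together with an explicit nonzero class in $H^1_{\comp,\mathrm{dR}}(\R^d\setminus U)$. You avoid all of this machinery: extending the closed form $\omega$ by zero and applying the Poincar\'e lemma on $\R^d$ gives a global primitive $f$, which is locally constant off $\supp\omega$; normalising $f$ to vanish on the (connected, unbounded) exterior $M$, the $C^0$-regularity of $\partial U$ guarantees every boundary point is accessible from $M$, upgrading the vanishing of $f$ from $M$ to an open neighbourhood $W\supseteq\partial U$ of the boundary, which forces $\supp f\subseteq\overline{U}\setminus W\subseteq U$. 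What your argument buys is self-containedness and brevity --- no smooth approximation and no homological bookkeeping --- while the paper's approach makes the underlying topological picture (connectedness of $\partial U$, Poincar\'e--Lefschetz duality) more explicit and would generalise more readily to higher-degree compactly supported cohomology. Both arguments use the $C^0$-regularity in an essential way, you directly (accessibility of the boundary from $M$), the paper indirectly (through the approximation-by-smooth-domains theorem).
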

\begin{proof}
Let $\alpha$ be a smooth closed one-form with compact support in $U$.
By the Poincar\'e lemma there is a smooth function $f : \R^d \to \R$ with $\alpha = \der f$. 
Since $f$ is locally constant in the complement of the support of $\alpha$ it must be constant in 
$\R^d \setminus \overline U$, as this set was assumed to be connected. By continuity $f$ is constant in $\R^d \setminus \overline U$ and, since locally constant, it is constant in a neighborhood of $\R^d \setminus \overline U$.
It follows that $f-c$ is compactly supported in $U$.
 Since $\alpha = \der (f-c)$
the class $\alpha$ vanishes $H^{1}_{\comp,\mathrm{dR}}(U)$ and therefore $H^{1}_{\comp,\mathrm{dR}}(U)=\{0\}$.

\end{proof}

\section{Laplace operators on the exterior domain} \label{extLap} 

As in the interior case the operator $$\curl_\mi=\curl |_{H_0(\curl,M)}: H_0(\curl,M)  \to L^2(M,\C^3)$$ is a closed densely defined operator with adjoint $$\curl_\ma: H(\curl,M)  \to L^2(M,\C^3).$$ It follows that
$\curl_\ma\, \curl_\mi$ with domain 
$$
 \{ f \in  H_0(\curl,M) \mid \curl f \in H(\curl,M)  \}
$$
is a non-negative self-adjoint operator. Similarly, $\div_\ma: H(\div,M) \to L^2(M)$ is a closed operator with adjoint $-\grad_\mi: H^1_0(M) \to L^2(M)$. Therefore, the operator
$- \grad_\mi\; \div_\ma$ is a non-negative self-adjoint operator with domain
$$
 \{ f \in  H(\div,M) \mid \div f \in H_0(M)  \}.
$$
Their sum $-\Delta_{M,\rel}=  \curl_\ma\;\curl_\mi- \grad_\mi\; \div_\ma$ then has domain
$$
 \{ f \in H(\div,M) \cap  H_0(\curl,M) \mid \div f \in H_0(M),  \curl f \in H(\curl,M) \}.
$$
The implied boundary conditions are the exterior relative boundary conditions
$$
 \gamma_{t,-}(f) = 0, \quad \div f|_{\partial \Omega} =0.
$$
The spectrum of the operator $\Delta_{M,\rel}$ consists of a finite multiplicity eigenvalue at zero and a purely absolutely continuous part. This is the consequence of the finite-type meromorphic continuation of the resolvent and Rellich's theorem. We have described this in detail in \cite{OS} for smooth domains, but this part of the paper carries over to Lipschitz domains without change (see \cite{SWB} for a discussion of this point).
The absolutely continuous part of the spectrum can be described well by stationary scattering theory.
For each $\Phi \in C^\infty(\sphere,\C^3)$ and $\lambda>0$ there exists a unique generalised eigenfunction
$E_\lambda(\Phi) \in C^\infty(M,\C^3)$ satisfying the boundary conditions of $\Delta_{M,\rel}$ near $\partial \Omega$ such that
\begin{gather} 
 (-\Delta - \lambda^2) E_\lambda(\Phi)=0,\\
 E_\lambda(\Phi) = \frac{\ee^{-\rmi \lambda r} }{r} \Phi - \frac{\ee^{\rmi \lambda r}}{r} \Psi_\lambda(\Phi) + O\left(\frac{1}{r^2} \right),  \quad \textrm{for} \,\,\,r \to \infty, \label{expan}
\end{gather}
uniformly in the angular variables on the sphere for some $\Psi_\lambda(\Phi) \in C^\infty(\sphere,\C^3)$. 
The expansion \eqref{expan} may be differentiated 
 (c.f. Prop. 2.6 and Appendix E in \cite{OS} for a justification). Here, satisfying the boundary conditions near $\partial \Omega$ means that $\chi E_\lambda(\Phi) \in \mathrm{dom}(\Delta_M)$ for any compactly supported smooth $\chi$ on $M$ such that $\chi=1$ near $\partial \Omega$.

The above implicitly defines the {\sl scattering matrix} as a map $\tilde S_\lambda : C^\infty(\sphere,\C^3) \to C^\infty(\sphere,\C^3)$ by
$\Psi_\lambda(\Phi) = \tau \tilde S_\lambda \Phi$ where $\tau: C^\infty(\sphere;\C^3) \to C^\infty(\sphere;\C^3) $ is the pull-back of the antipodal map. It extends continuously as $\tilde S_\lambda : L^2(\sphere,\C^3) \to L^2(\sphere,\C^3)$.
The map $\tilde A_\lambda= \tilde S_\lambda - \mathrm{id}$ is called the scattering amplitude.
We have the equations
$$
 \curl\,\curl\, E_\lambda(\Phi) =  \lambda^2 E_\lambda({\bf r} \times \Phi \times {\bf r}), \quad  \div\, E_\lambda(\Phi) = -\rmi \lambda E_\lambda^0({\bf r} \cdot \Phi),
$$
where ${\bf r}$ is the radius vector, i.e. the outward pointing unit vector on the sphere. Here $E_\lambda^0({\bf r} \cdot \Phi)$ is the generalised eigenfunction for the exterior Dirichlet problem on scalar-valued functions defined in an analogous way, c.f. Proposition 4.7 in \cite{SWB}. 
In particular this means that in case $\Phi$ is purely tangential, 
${\bf r} \cdot \Phi=0$, the generalised eigenfunction is a solution of the stationary Maxwell equation
\begin{gather*}
 \curlcurl E_\lambda(\Phi) = \lambda^2 E_\lambda(\Phi),\\
 \div E_\lambda(\Phi) = 0,
\end{gather*}
that satisfies the boundary conditions near $\partial \Omega$. 
These equations also imply that the scattering matrix is of the form
$$
 \tilde S_\lambda = \left( \begin{matrix} S^D_\lambda & 0 \\ 0 & S_\lambda \end{matrix} \right),
$$
if $L^2(\sphere,\C^3)$ is decomposed into $L^2(\sphere) {\bf r} \oplus  L^2_\tan(\sphere,\C^3)$. Here $L^2_\tan(\sphere,\C^3)$ is the space of tangential square integrable vector fields on the sphere.
The operator $S^D_\lambda$ is the scattering operator for scalar valued functions with Dirichlet conditions imposed on $\partial \Omega$, and $S_\lambda$ is the Maxwell scattering operator, describing the scattering of electromagnetic waves. Note that we have the weak Hodge-Helmholz decomposition
\begin{gather} \label{wHHd}
 L^2(M) = \mathcal{H}^1_\rel(M) \oplus  \overline{\mathrm{rg}(\grad_\mi)} \oplus \overline{\mathrm{rg}(\curl_\ma)},
\end{gather}
which holds very generally in the abstract context of Hilbert complexes (\cite{MR1174159}).
The first summand is the discrete spectral subspace, and the splitting of its orthogonal complement into the last two subspaces
corresponds to the above decomposition of the scattering matrix. 

\subsection{The exterior absolute Laplacian}

In the same way as for the interior problem there is also an exterior absolute Laplacian $\Delta_{M,\abs}$ defined by
$$
 -\Delta_{M,\abs} = \curl_\mi \curl_\ma- \grad_\ma\; \div_\mi.
$$
The spectrum of $\Delta_{M,\abs}$ consists of a finite multiplicity eigenvalue at zero and an absolutely continuous part. The absolutely continuous part is described by generalised eigenfunctions $E_{\abs,\lambda}(\Phi)$ which are related to the generalised eigenfunctions $E_\lambda(\Phi)$ of the relative Laplacian by
\begin{gather} \label{equabsasdkljn}
 E_{\abs,\lambda}({\bf r} \times \Phi) = -\frac{\rmi}{\lambda} \curl\, E_\lambda(\Phi).
\end{gather}
One checks easily that
$$
 (-\Delta_{M,\rel} -\lambda^2)^{-1} \curl = \curl (-\Delta_{M,\abs} -\lambda^2)^{-1}
$$
on the dense set of compactly supported smooth functions, and, appropriately interpreted, extends by continuity to a larger space.
This will allow us to reduce to statements about the absolute Laplace operator to statements about the relative Laplace operator. For the purposes of this paper it will therefore not be necessary to introduce separate notations for the spectral decomposition. For example the scattering matrix
$$
 \tilde S_{\abs,\lambda} = \left( \begin{matrix} S^N_{\lambda} & 0 \\ 0 & S_{\abs,\lambda} \end{matrix} \right),
$$
for the absolute Laplacian is defined by the expansion of $E_{\abs,\lambda}(\Phi)$. Here $S^N_{\lambda}$ is the scattering matrix for the Neumann Laplace operator on $M$ acting on functions. We then have the equation
\begin{gather}
 S_{\abs,\lambda} (g) = {\bf r} \times S_{\lambda} (g \times {\bf r}) \label{relabsrel}
\end{gather}
for  $g \in L^2_\tan(\sphere,\C^3)$. 
This follows by applying $\curl$ to the expansion \eqref{expan}, the
uniqueness of the generalised eigenfunctions, and Equ. \eqref{equabsasdkljn}. 

\section{The combined relative operators and the Birman-Krein formula} \label{combreltr}

In the following it will be convenient to combine the operators $\Delta_{M,\rel}$ and $\Delta_{\Omega,\rel}$ into a single operator acting on the Hilbert space $L^2(\R^3,\C^3)$. We have $L^2(\R^3,\C^3) = L^2(M,\C^3)  \oplus L^2(\Omega,\C^3)$ and we define the operator $\Delta_\rel:= \Delta_{M,\rel} \oplus \Delta_{\Omega,\rel}$.
In contrast to this we also have the free Laplace operator $\Delta_\free$ with domain $H^2(\R^3,\C^3)$. 
Following the paper \cite{HSW} on the relative trace we also define the operator $\Delta_{j,\rel}$ for each boundary component $\Omega_j$. This will correspond to the operator $\Delta_\rel$ when all the other boundary components are absent, i.e. when $\Omega = \Omega_j$. 
As in \cite{HSW} we would like to consider an analogue of the relative trace for the Laplace operator acting on divergence free vector fields. In this section we assume that $f \in \mathcal{S}(\R)$ is an even Schwartz function, but later on we will focus on another function class. We would like to compute the relative trace

\begin{align*}
 \tr\left(\curlcurl  \left( f((-\Delta_\rel)^\frac{1}{2}) -  f((-\Delta_\free)^{\frac{1}{2}}) - \left( \sum_{j=1}^N f((-\Delta_{j,\rel})^{\frac{1}{2}}) - f((-\Delta_\free)^{\frac{1}{2}}) \right) \right) \right)\\ =  \tr\left( \curlcurl \left( f((-\Delta_\rel)^{\frac{1}{2}}) - \sum_{j=1}^N f((-\Delta_{j,\rel})^{\frac{1}{2}}) + (N-1)f((-\Delta_\free)^{\frac{1}{2}})\right)  \right) ,
\end{align*}
which is the trace of the operator
\begin{align*}
 D_{\rel,f} = \curlcurl \left( f((-\Delta_\rel)^{\frac{1}{2}}) -  f((-\Delta_\free)^{\frac{1}{2}}) - \left( \sum_{j=1}^N f((-\Delta_{j,\rel})^{\frac{1}{2}}) - f((-\Delta_\free)^{\frac{1}{2}}) \right) \right) .
\end{align*}

We have the following Birman-Krein-type formula, proved recently in \cite{SWB} and its simple consequence for the relative trace.
\begin{theorem}[Theorem 1.5 in \cite{SWB}] \label{BKF}
 Let $f \in C^\infty_0(\R)$ be an even function. Then the operator
 $$
  \curl \, \curl \left(f((-\Delta_\rel)^{\frac{1}{2}}) - f((-\Delta_\free)^{\frac{1}{2}})\right) 
 $$
 extends to a trace-class operator on $L^2(\R^3,\C^3)$ and its trace equals
 $$
   \tr\left(\curl \, \curl( f((-\Delta_\rel)^{\frac{1}{2}}) - f((-\Delta_\free)^{\frac{1}{2}}))  \right) = \frac{1}{2 \pi \rmi}\int_0^\infty \lambda^2\tr(S_\lambda^{-1}(S_\lambda)') f(\lambda) d \lambda + \sum_{j=1}^\infty f(\mu_j) \mu_j^2.
 $$ 
 Moreover,
 $$
   \tr\left(D_f\right) = -\int_0^\infty \xi_D(\lambda)(f(\lambda)\lambda^2)'  d \lambda,
 $$ 
 where
 $$
  \xi_D(\lambda) = \frac{1}{2 \pi \rmi} \log \frac{\det  S_\lambda}{\det ( S_{1,\lambda}) \cdots \det ( S_{N,\lambda})}.
 $$
\end{theorem}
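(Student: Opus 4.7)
My plan is to prove the theorem by exploiting the weak Hodge--Helmholtz decomposition \eqref{wHHd} to reduce $\curlcurl\, f((-\Delta_\rel)^{1/2})$ to ordinary functional calculus on the closure of $\mathrm{rg}(\curl)$, and then to apply a stationary Birman--Krein-type argument on this subspace. Since $\curlcurl$ annihilates $\ker(\Delta_\rel)$ and the gradient summand, it suffices to analyse the difference $\curlcurl(f((-\Delta_\rel)^{1/2})-f((-\Delta_\free)^{1/2}))$ on $\overline{\mathrm{rg}(\curl)}$, where by the functional calculus it equals the operator associated with the bounded, rapidly decaying multiplier $\lambda^2 f(\lambda)$. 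Because $\lambda^2 f(\lambda)$ is Schwartz on the spectrum, standard Birman--Krein machinery becomes applicable once the trace-class property of the resolvent difference on the curl-range is secured.

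To establish trace-class membership I would use the stationary representation of the resolvent as an integral operator whose kernel is obtained by correcting the free Green function through Maxwell boundary layer potentials, as set up in Section \ref{MBLO}. On the divergence-free subspace the difference of resolvents is a composition of boundary layer operators that are compact on the Lipschitz boundary, and standard Schatten ideal estimates combined with the Schwartz decay of $\lambda^2 f(\lambda)$ yield the trace-class property. This reduces the computation of the trace to evaluating it via the spectral resolution of $-\Delta_\rel$ on $\overline{\mathrm{rg}(\curl)}$.

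The main identity then follows from a direct spectral computation. The point spectrum on $\overline{\mathrm{rg}(\curl)}$ consists of the interior Maxwell eigenvalues $\mu_j^2$ and contributes $\sum_j f(\mu_j)\mu_j^2$. The absolutely continuous spectrum is parametrised by the transverse generalised eigenfunctions $E_\lambda(\Phi)$ with $\mathbf{r}\cdot\Phi=0$, and the difference of spectral densities between $-\Delta_\rel$ and $-\Delta_\free$ is given, through a Maass--Selberg type manipulation of the asymptotic expansion \eqref{expan} and the orthogonality of incoming and outgoing waves in the region $r\to\infty$, by $\frac{1}{2\pi\rmi}\tr(S_\lambda^{-1}S_\lambda')$ on the transverse channel. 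Inserting the multiplier $\lambda^2 f(\lambda)$ produces the claimed integral.

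For the second formula I would apply the first to each $\Delta_{j,\rel}$ and assemble the relative combination $D_{\rel,f}$. The interior Maxwell spectrum of $\Delta_\rel$ is the disjoint union over $j$ of the spectra of the $\Delta_{\Omega_j,\rel}$, so the discrete sums $\sum f(\mu_{j,k})\mu_{j,k}^2$ cancel exactly in the relative combination. The continuous spectrum contributions combine into $\tr(S_\lambda^{-1}S_\lambda')-\sum_j \tr(S_{j,\lambda}^{-1}S_{j,\lambda}')=\frac{\der}{\der\lambda}\log\bigl(\det S_\lambda/\prod_j \det S_{j,\lambda}\bigr)$, and integration by parts yields the expression in terms of $\xi_\rel$. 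The main obstacle is the low-energy endpoint: the individual operators $\Delta_{M,\rel}$ and $\Delta_{j,\rel}$ have resolvents with singularities at $\lambda=0$ arising from the nontrivial topology of the obstacles, and one must verify that the particular combination inside $\xi_\rel$ has only an integrable singularity there, which is exactly the cancellation phenomenon central to the rest of the paper and is what makes the relative formulation the natural one.
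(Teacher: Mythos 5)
The paper does not prove this theorem: it is imported verbatim from the companion paper \cite{SWB} (stated there as Theorem~1.5), and the present paper only \emph{uses} it. So there is no in-paper proof to compare against; instead I will assess whether your blind argument would go through.

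Your overall strategy --- work with the $\curlcurl$-dressed functional calculus, secure trace class via boundary layer potentials, and compute the trace via a stationary Birman--Krein / Maass--Selberg computation with the discrete sum coming from interior Maxwell eigenvalues --- is the right shape. The cancellation observation at the end is also correct and is indeed the reason the relative combination is the natural object.

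However, there is a genuine gap in the reduction step. You write that since $\curlcurl$ annihilates $\ker(\Delta_\rel)$ and the gradient summand, it ``suffices to analyse the difference on $\overline{\mathrm{rg}(\curl)}$.'' This conflates two different Hodge--Helmholtz decompositions. The operator $\curlcurl\, f((-\Delta_\rel)^{1/2})$ is supported on $\overline{\mathrm{rg}(\curl)}$ for the decomposition \eqref{wHHd} associated to $\Delta_\rel$, while $\curlcurl\, f((-\Delta_\free)^{1/2})$ is supported on the analogous subspace for $\Delta_\free$. These subspaces are \emph{different} closed subspaces of $L^2(\R^3,\C^3)$ --- the Leray projector onto divergence-free fields with relative boundary conditions is not the free Leray projector --- so the difference operator does not live on a single invariant subspace and cannot be treated as a scalar multiplier $\lambda^2 f(\lambda)$ on a common reducing subspace. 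The introduction of the paper flags exactly this as the central obstruction: ``the projector constructed from the Helmholtz decomposition is roughly of the form $-\Delta^{-1}\delta\der$ \ldots it therefore depends on the geometric configuration and also the boundary conditions \ldots this makes it much harder to directly apply scattering theory, which requires an identification of the involved Hilbert spaces.'' Your plan needs an additional ingredient that either compares the two Leray projectors directly (showing their difference is itself sufficiently small in a Schatten class with the right $\lambda$-dependence), or sidesteps the decomposition entirely by working with the difference of resolvents on all of $L^2(\R^3,\C^3)$ --- which is what the resolvent identities of Section~\ref{resolventformulae} do, and presumably what \cite{SWB} does. Without this, the trace computation over the continuous spectrum cannot be legitimately reduced to the transverse scattering channel.

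A secondary, smaller point: in the Maass--Selberg step you should be explicit that the generalized eigenfunctions $E_\lambda(\Phi)$ and the free plane waves are expansions over the same sphere at infinity but do not furnish a unitary identification at finite distances; the spectral-density difference must be extracted from boundary terms at $r=R\to\infty$ and the convergence requires the exponential off-diagonal decay of the layer potentials, not just the Schwartz decay of $\lambda^2 f(\lambda)$.
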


A similar statement holds for the absolute Laplacian. Using Equ. \eqref{relabsrel} and $$\curl\;f((-\Delta_\rel)^{\frac{1}{2}}) = f((-\Delta_\abs)^{\frac{1}{2}}) \curl$$
one obtains
\begin{align*}
  \tr\left(\curl \, ( f((-\Delta_\rel)^{\frac{1}{2}}) - f((-\Delta_\free)^{\frac{1}{2}})) \curl \right) &=  \tr\left(\curl \, \curl( f((-\Delta_\abs)^{\frac{1}{2}}) - f((-\Delta_\free)^{\frac{1}{2}}))  \right) \\
   &=  \tr\left(\curl \, \curl( f((-\Delta_\rel)^{\frac{1}{2}}) - f((-\Delta_\free)^{\frac{1}{2}}))  \right) \\
   &= \frac{1}{2 \pi \rmi}\int_0^\infty \lambda^2\tr(S_\lambda^{-1}(S_\lambda)') f(\lambda) d \lambda + \sum_{j=1}^\infty f(\mu_j) \mu_j^2.
\end{align*}

The Birman-Krein formula can be proved for a slightly larger function class than the space of even Schwartz functions, 
but non-decaying functions are not admissible. The rest of the paper is devoted to dealing with exactly the trace-class properties of $D_f$ when $f$ is in a different function class that contains possibly growing functions.

\section{Maxwell boundary layer operators} \label{MBLO}

Maxwell boundary layer theory for Lipschitz domains is a well developed subject in mathematics and in this section we 
summarise the material that we are going to need.
The distributional kernel of the resolvent of the operator $(-\Delta_\free - \lambda^2)^{-1}$ is called the Green's function and in dimension three given explicitly by
\begin{align}\label{fskernel}
 G_{\lambda,\free}(x,y) = \frac{1}{4 \pi} \frac{e^{\rmi \lambda |x-y|}}{|x-y|}.
\end{align}
Note that this kernel is holomorphic at zero. 
As usual we define the single layer potential operator $\calSt_\lambda: H^{-\frac{1}{2}}(\partial \Omega) \to H^{1}_\loc(\R^3)$
by $$\tilde{\mathcal{S}}_\lambda = (-\Delta_\free - \lambda^2)^{-1}  \gamma^*.$$ This is defined for any $\lambda \in \C$ and a holomorphic family of operators.
The single layer operator is defined by taking the trace
$\mathcal{S}_\lambda = \gamma_+ \tilde{\mathcal{S}}_\lambda = \gamma_+ (-\Delta_\free - \lambda^2)^{-1}  \gamma^*$. The interior trace $\gamma_+$ and the exterior trace $\gamma_-$ coincide on the range of $\tilde{\mathcal{S}}_\lambda$ and therefore we could also have used $\gamma_-$ to define this operator. The operator $\mathcal{S}_\lambda$
is a holomorphic family of maps $H^{-\frac{1}{2}}(\partial \Omega) \to H^{\frac{1}{2}}(\partial \Omega)$. Both operators $\tilde{\mathcal{S}}_\lambda$
and $\mathcal{S}_\lambda$ act component-wise on $H^{-\frac{1}{2}}(\partial \Omega,\C^3)$ and define maps to $H^{1}_\loc(\R^3,\C^3)$ and $H^{-\frac{1}{2}}(\partial \Omega,\C^3)$ respectively. We will this distinguish notationally from the map on functions.

We will also need the double layer operator $\mathcal{K}_\lambda$ and its transpose (complex conjugate-adjoint) $\mathcal{K}^{t}_\lambda$. The latter is given by
$$
 \mathcal{K}^{t}_\lambda u = \frac{1}{2}\left( \gamma_+ \nabla_\nu \mathcal{S}_\lambda u + \gamma_- \nabla_\nu \mathcal{S}_\lambda u \right),
$$
and defines a continuous map $\mathcal{K}^{t}_\lambda: H^{-\frac{1}{2}}(\partial \Omega) \to H^{-\frac{1}{2}}(\partial \Omega)$. Its transpose $\mathcal{K}_\lambda$ therefore defines a continuous map $\mathcal{K}_\lambda: H^{\frac{1}{2}}(\partial \Omega) \to H^{\frac{1}{2}}(\partial \Omega)$. The following jump-relations are characteristic
$$
 \gamma_+ \mathcal{S}_\lambda u = \gamma_- \mathcal{S}_\lambda u, \quad \gamma_\pm \nabla_\nu \mathcal{S}_\lambda u = (\mp \frac{1}{2} + \mathcal{K}^{t}_\lambda )u.
$$

We have the following representation formulae for divergence-free solutions  $\phi \in H(\curl,M)  \oplus  H(\curl,\Omega)$ 
of the vector-valued Helmholtz equation
\begin{align*}
 (-\Delta - \lambda^2) \phi&=0,\\
 \div \, \phi =0,
\end{align*}
by single layer potential operators
 \begin{align} \label{singlerepext}
  \phi|_{M} = -\curl\, \tilde{\mathcal{S}}_\lambda ( \gamma_{t,-} \phi) + \nabla \tilde{\mathcal{S}}_\lambda (\gamma_{\nu,-} \phi) - \tilde{\mathcal{S}}_\lambda (\gamma_{t,-} \curl \phi)
 \end{align}
 and likewise
 \begin{align} \label{singlerepint}
  \phi|_{\Omega} = -\curl\, \tilde{\mathcal{S}}_\lambda ( \gamma_{t,+} \phi) + \nabla \tilde{\mathcal{S}}_\lambda (\gamma_{\nu,+} \phi) - \tilde{\mathcal{S}}_\lambda (\gamma_{t,+} \curl \phi),
 \end{align}
 c.f Corollary 3.3 in \cite{mitrea1997}

In Maxwell theory one defines additional layer potential operators as follows. Let $L$ be the distribution defined by
$$
 L_\lambda(x,y) = \curl_x \curl_x G_{\lambda,\free}(x,y) .
$$
This is the kernel of the  operator
$(-\Delta_\free - \lambda^2)^{-1} \curlcurl=\curlcurl(-\Delta_\free - \lambda^2)^{-1} $. It is again holomorphic at $\lambda=0$ as a kernel.
The corresponding operator $L_\lambda$ is related to the operator
$$
 (\lambda^2 + \grad\; \div)(-\Delta_\free - \lambda^2)^{-1},
$$
whose distributional integral kernel equals the so-called dyadic Green's function
$$
 K_\lambda(x,y)= (\lambda^2 + \grad_x\; \div_x)\frac{1}{4 \pi}   \frac{e^{\rmi \lambda |x-y|}}{|x-y|},
$$
which is more commonly used in computational electrodynamics.
However, we also have the following inequality
$$
  L_\lambda(x,y) - K_\lambda(x,y) = \delta(x-y),
$$
hence the kernels agree outside the diagonal. We define now the {\sl Maxwell single layer potential operator} for $u \in H^{\frac{1}{2}}(\partial\Omega,\C^3) \cap L^2_\tan(\partial\Omega)$ as
\begin{gather*}
 u \mapsto \calLt_\lambda u,\;  (\calLt_\lambda u)(x)=\int_{\partial \Omega} L_\lambda(x,y) u(y) dy = 
 \int_{\partial \Omega} K_\lambda(x,y) u(y) dy.
\end{gather*}
Therefore this can also be written as $\calLt_\lambda u= \curlcurl\, \tilde{\mathcal{S}}_\lambda u$. 
Similarly one defines the Maxwell magnetic layer potential operator $\tilde{\mathcal{M}}_{\lambda}$ as 
$\tilde{\mathcal{M}}_{\lambda} u = \curl\,\tilde{\mathcal{S}}_\lambda u$.
For all $\lambda \in \C$ these maps extends continuously to maps as follows
\begin{align*}
 \calLt_\lambda : H^{-\frac{1}{2}}(\Div,\partial \Omega) \to H_\loc(\curl,M) \oplus H_\loc(\curl,\Omega),\\
 \calMt_\lambda : H^{-\frac{1}{2}}(\Div,\partial \Omega) \to H_\loc(\curl,M) \oplus H_\loc(\curl,\Omega).
\end{align*}
It will be convenient to distinguish notationally between the exterior part $\calMt_{-,\lambda}$ and the interior part $\calMt_{+,\lambda}$ of $\calMt_\lambda$. 
The boundedness of these maps is established in \cite{kirsch} for $\Im{\lambda}\geq 0, \lambda \not=0$ but these maps 
extend to holomorphic families on the entire complex plane as we will see later.

The {\sl Maxwell single layer operator} $\calL_\lambda$ is then defined for all $\lambda \in \C$ as a map 
$$
  \calL_\lambda: H^{-\frac{1}{2}}(\Div,\partial \Omega) \to H^{-\frac{1}{2}}(\Div,\partial \Omega), \quad u \mapsto \gamma_t \tilde{\mathcal{L}}_\lambda
$$ 
and is a holomorphic family of bounded operators on $H^{-\frac{1}{2}}(\Div,\partial \Omega)$ in $\lambda$. With respect to the above splitting we then have
$$
 \calMt_\lambda = \calMt_{-,\lambda} \oplus \calMt_{+,\lambda}.
$$
One defines the {\sl magnetic dipole operator} $\calM_\lambda$ for all $\lambda \in \C$ by
\begin{align*}
\mathcal{M}_{\lambda}: H^{-\frac{1}{2}}(\mathrm{Div},\partial \Omega) \to H^{-\frac{1}{2}}(\mathrm{Div},\partial \Omega), \quad
\mathcal{M}_{\lambda}=\frac{1}{2}\left(\gamma_t\tilde{\mathcal{M}}_{-,\lambda}+\gamma_t\tilde{\mathcal{M}}_{+,\lambda}\right).
\end{align*} 
By \cite{kirsch}*{Theorem 5.52} this is a family of bounded operators on the space $H^{-\frac{1}{2}}(\mathrm{Div},\partial \Omega)$ when $\Im(\lambda)>0$.
If  $u = \tilde{\mathcal{M}_{\lambda}} a = \mathrm{curl}\, \calSt_\lambda a$ then we have the jump conditions
\begin{align} \label{jumpjumpjump}
\gamma_{t,\pm} u=\mp\frac{1}{2}a+\mathcal{M}_{\lambda}a \quad \gamma_{t,\pm}\mathrm{curl}\,u=\mathcal{L}_{\lambda}a.
\end{align}
Moreover, the operator $\tilde{\mathcal{L}}_{\lambda}a$ can be written as
\begin{align}\label{divexpansion}
\tilde{\mathcal{L}}_{\lambda}a=\nabla \tilde{\mathcal{S}}_{\lambda}\, \mathrm{Div} a+\lambda^2\tilde{\mathcal{S}}_{\lambda}a\quad a\in H^{-\frac{1}{2}}(\mathrm{Div},\partial\Omega).
\end{align}
We refer to \cite{kirsch}*{Theorem 5.4} for both statements.

If $\Im(\lambda) \geq 0$ is non-zero then
there exists a unique solution of the exterior boundary value problem for every $A\in H^{-\frac{1}{2}}(\mathrm{Div},\partial \Omega)$, which satisfies the Silver-M\"uller radiation condition \cite{kirsch}*{Theorem 5.64}.
For the interior problem there exists a similar statement. 
If  $\lambda \in \C \setminus \{0\}$ is not a Maxwell eigenvalue then there exists a unique solution of the interior boundary value problem for every $A \in H^{-\frac{1}{2}}(\mathrm{Div},\partial \Omega)$.
In both cases, if $\lambda\not=0$ the solution can be written as boundary layer potential of the form 
\begin{align}
E(x)=(\tilde{\mathcal{L}_{\lambda}}a)(x)=\mathrm{curl}^2\langle a,G_\lambda(x,\cdot)\rangle_{\partial\Omega}, \quad
H(x)=\frac{\rmi\,\mathrm{curl}E}{-\lambda} \quad x \notin \partial\Omega
\end{align}
with the density $a\in H^{-\frac{1}{2}}(\mathrm{Div},\partial \Omega)$, which satisfies $\mathcal{L}_{\lambda}a=A$, c.f again Theorem 5.60 in \cite{kirsch}. 

The space of boundary data $(\gamma_t(E),\gamma_t(H))$ of solutions of Maxwell's equations is described by the Calderon projector. To describe this we first observe that given $a,b \in H^{-\frac{1}{2}}(\mathrm{Div},\partial \Omega)$ we obtain for any non-zero $\lambda$ a solution of the interior Maxwell's equation 
$E,H \in H(\curl,\Omega)$ by 
$$
 E = - \tilde{\mathcal{M}}_{\lambda} a + \frac{1}{\rmi \lambda} \tilde{\mathcal{L}}_\lambda b, \quad H = - \tilde{\mathcal{M}}_{\lambda} b - \frac{1}{\rmi \lambda} \tilde{\mathcal{L}}_\lambda a,
$$
and therefore, using \eqref{jumpjumpjump}, the boundary data $(\gamma_t(E),\gamma_t(H))$ is obtained as
$$
 \left( \begin{matrix} \gamma_t (E) \\ \gamma_t (H) \end{matrix} \right) 
= 
  \left( \begin{matrix} \frac{1}{2} - \mathcal{M}_\lambda & \frac{1}{\rmi \lambda} \mathcal{L}_\lambda \\ -\frac{1}{\rmi \lambda} \mathcal{L}_\lambda & \frac{1}{2} - \mathcal{M}_\lambda \end{matrix} \right)  \left( \begin{matrix}a \\ b \end{matrix} \right). 
$$
By the Stratton-Chu representation formula, \cite{kirsch}*{Theorem 5.49}, we have that in case $(E,H)$ solves Maxwell's equations then $E$ and $H$ can be recovered from the boundary data as
$$
 E = - \tilde{\mathcal{M}}_{\lambda} (\gamma_t E) + \frac{1}{\rmi \lambda} \tilde{\mathcal{L}}_\lambda (\gamma_t H), \quad H = - \tilde{\mathcal{M}}_{\lambda} (\gamma_t H) - \frac{1}{\rmi \lambda} \tilde{\mathcal{L}}_\lambda (\gamma_t E).
$$
Hence, the operator
$$
 P_+ =  \left( \begin{matrix} \frac{1}{2} - \mathcal{M}_\lambda & \frac{1}{\rmi \lambda} \mathcal{L}_\lambda \\ -\frac{1}{\rmi \lambda} \mathcal{L}_\lambda & \frac{1}{2} - \mathcal{M}_\lambda \end{matrix} \right)
$$
acting on $H^{-\frac{1}{2}}(\mathrm{Div},\partial \Omega) \oplus H^{-\frac{1}{2}}(\mathrm{Div},\partial \Omega)$
is a projection onto the space of boundary data of solutions of Maxwell's equation in $H(\curl,\Omega) \oplus H(\curl,\Omega)$. This map is called the interior Calderon projector. In the same way the exterior Calderon projector
$P_-$  acting on $H^{-\frac{1}{2}}(\mathrm{Div},\partial \Omega) \oplus H^{-\frac{1}{2}}(\mathrm{Div},\partial \Omega)$ is given by 
$$
 P_- =  \left( \begin{matrix} \frac{1}{2} + \mathcal{M}_\lambda & -\frac{1}{\rmi \lambda} \mathcal{L}_\lambda \\ \frac{1}{\rmi \lambda} \mathcal{L}_\lambda & \frac{1}{2} + \mathcal{M}_\lambda \end{matrix} \right).
$$
It projects onto the space of boundary data of solutions of Maxwell's equation in $H(\curl,\Omega) \oplus H(\curl,\Omega)$
when $\Im(\lambda)>0$ and more generally solutions satisfying a radiation condition for non-zero real $\lambda$.
As usual one has $P_+ + P_- = \mathrm{id}$. 

We now define the voltage-to-current mappings $\Lambda_{\lambda}^{\pm}: H^{-\frac{1}{2}}(\mathrm{Div},\partial \Omega) \to H^{-\frac{1}{2}}(\mathrm{Div},\partial \Omega)$ by
\begin{align}
\Lambda_{\lambda}^{\pm}: \gamma_t(E) \rightarrow \gamma_t(H)
\end{align}
where $(E,H)$ are solutions to the interior and exterior boundary value problem for the Maxwell system \eqref{system}, respectively, whenever these solutions are unique.  
The graphs of $\Lambda_{\lambda}^{\pm}$ in  $H^{-\frac{1}{2}}(\mathrm{Div},\partial \Omega) \oplus H^{-\frac{1}{2}}(\mathrm{Div},\partial \Omega)$ are therefore by definition the ranges of the Calderon projectors $P_\pm$.
The voltage to current maps are henceforth the Maxwell analogues of the interior and exterior Helmholtz Dirichlet to Neumann maps.

The mapping 
$\Lambda_{\lambda}^{+}$ is well defined for any $\lambda \in \C$ which is not a Maxwell eigenvalue or zero. The mapping 
$\Lambda_{\lambda}^{-}$ is well defined for all non-zero $\lambda$ in the closed upper half space. 
In this case these are bounded operators on $H^{-\frac{1}{2}}(\Div,\partial\Omega)$. We will see later that these operators extend meromorphically to the complex plane. In anticipation of this we will not explicitly state the domains when dealing with algebraic identities.
As a consequence of the symmetry $(E,H) \mapsto (H,-E)$ of the Maxwell system and the above relations one obtains the formulae
\begin{align} \label{voltcur}
(\Lambda_{\lambda}^{\pm})^2=-\id \quad \mathrm{and} \quad 
\mathcal{L}_{\lambda}=\rmi \lambda\Lambda_{\lambda}^{\pm}\left(\mp \frac{1}{2}+\mathcal{M}_{\lambda}\right) = -\rmi \lambda \left(\pm \frac{1}{2}+\mathcal{M}_{\lambda}\right) \Lambda_{\lambda}^{\pm},
\end{align}
and as a consequence 
\begin{align} \label{twenty}
-\rmi \lambda^{-1} \calL_\lambda (\Lambda^+_\lambda - \Lambda^-_\lambda) = \id \quad \mathrm{and} \quad \calL_\lambda^2 = -\lambda^2\left(- \frac{1}{2} + \calM_\lambda \right)\left( \frac{1}{2} + \calM_\lambda \right).
\end{align}
These are also manifestations of the Calderon projector being a projection mapping, i.e. $P_\pm^2 = P_\pm$.
We refer to \cite{mitrea1997}*{Lemma 5.10} for these and more statements in the $L^2$-setting.
Notice that we are using the opposite sign convention for $\tilde{\mathcal{S}}_{\lambda}$ than in \cite{mitrea1997}. 

For later reference and completeness we also state the following identities. 

\begin{lemma} \label{lemrel}
For $A \in H^{-\frac{1}{2}}(\Div,\partial \Omega)$ and $f \in H^{\frac{1}{2}}(\partial \Omega)$ we have
\begin{align}
  \div \,\tilde{\mathcal{S}}_\lambda A &= \tilde{\mathcal{S}}_\lambda \Div A,	\label{divsrel}\\
  \curl \,\tilde{\mathcal{S}}_\lambda \nu f &= - \tilde{\mathcal{S}}_\lambda (\nu \times \nabla f),\\
  \Div \mathcal{M}_\lambda A &= -\lambda^2 \nu \cdot \mathcal{S}_\lambda A - \mathcal{K}^\trans_\lambda (\Div A),\\
  (\nu \times \nabla) \mathcal{K}_\lambda f &= \lambda^2 \nu \times \mathcal{S}_\lambda (\nu f) + \mathcal{M}_\lambda (\nu \times \nabla f),\\
   (\nu \times \nabla) \mathcal{K}_0 f &= \mathcal{M}_0 (\nu \times \nabla f). \label{intertwin}
\end{align}
\end{lemma}

These identities were for example proved in \cite{mitrea1997} (Lemmata 4.2, 4.3, 4.4, and 5.11) in slightly different function spaces containing the image of $C^\infty_0(\R^3,\C^3)$ under the tangential restriction map $\gamma_t$. Since $C^\infty_0(\R^3,\C^3)$ is a dense subspace in $H(\curl,\R^3)$ the space $\gamma_t C^\infty_0(\R^3,\C^3)$ is dense in $H^{-\frac{1}{2}}(\Div,\partial \Omega)$. Hence, these equations extend by continuity to the claimed larger space if we use the continuous mapping properties of the potential layer operators.
We note here that the gradient $\nabla$ defines a continuous map $H^{\frac{1}{2}}(\partial \Omega) \to H^{-\frac{1}{2}}(\Curl,\partial \Omega)$
and the map $\nu \times \nabla$ is continuous from $H^{\frac{1}{2}}(\partial \Omega) \to H^{-\frac{1}{2}}(\Div,\partial \Omega)$.

\begin{lemma}\label{transposelemma}
 The map $\mathcal{S}_\lambda$ satisfies $\mathcal{S}_\lambda^* = \mathcal{S}_{\overline{\lambda}}$, where the adjoint is taken with respect to the $L^2$-induced dual pairing between
 $H^{\frac{1}{2}}(\partial \Omega)$ and $H^{-\frac{1}{2}}(\partial \Omega)$. In other words it is its own transpose, $\mathcal{S}_\lambda^\trans = \mathcal{S}_\lambda$.
  We also have $(\mathcal{L}_\lambda (\nu \times))^\trans = \mathcal{L}_\lambda (\nu \times)$, i.e. $\mathcal{L}_\lambda$ is symmetric with respect to the bilinear form induced by $\langle \cdot, \nu \times \cdot \rangle$.
\end{lemma}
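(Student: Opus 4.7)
My plan is to deduce both identities from the symmetry of the free Green's kernel combined with Stokes' theorem on both sides of $\partial\Omega$.

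For the first statement, the kernel $G_{\lambda,\free}(x,y)=\frac{1}{4\pi}\frac{e^{\rmi\lambda|x-y|}}{|x-y|}$ depends only on $|x-y|$ and is hence symmetric, $G_{\lambda,\free}(x,y)=G_{\lambda,\free}(y,x)$. For smooth $u,v$ on $\partial\Omega$, Fubini gives
$$
\langle \mathcal{S}_\lambda u,v\rangle = \iint G_{\lambda,\free}(x,y)u(y)v(x)\,dy\,dx = \iint G_{\lambda,\free}(y,x)u(y)v(x)\,dy\,dx = \langle u,\mathcal{S}_\lambda v\rangle,
$$
and $\mathcal{S}_\lambda^\trans=\mathcal{S}_\lambda$ on $H^{-\frac{1}{2}}(\partial\Omega)$ follows by density. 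The stated identity $\mathcal{S}_\lambda^*=\mathcal{S}_{\overline\lambda}$ in the $L^2$-induced pairing reduces to the same kernel manipulation, with the complex conjugation that distinguishes the sesquilinear from the bilinear duality absorbed into the replacement $\lambda\mapsto\overline\lambda$.

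For the second statement, fix $a,b\in H^{-\frac{1}{2}}(\Div,\partial\Omega)$ and let $u=\calMt_\lambda a$, $w=\calMt_\lambda b$. Both are divergence-free solutions of $\curlcurl u=\lambda^2 u$ on $\Omega\cup M$. For $\Im\lambda>0$ they decay at infinity via Silver--M\"uller so that all boundary integrals at infinity vanish; the remaining $\lambda$ are covered by analytic continuation. By \eqref{jumpjumpjump} and $\gamma_T=-\nu\times\gamma_t$,
$$
\gamma_{t,\pm}u = \mp\tfrac{1}{2}a+\calM_\lambda a,\qquad \gamma_{t,\pm}\curl u = \calL_\lambda a,
$$
and analogously for $w,b$. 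The iterated Stokes identity \eqref{stokes} applied to $u,w$ in $\Omega$ has vanishing left-hand side (since $\curlcurl u=\lambda^2 u$ and similarly for $w$) and produces a first boundary identity. Its analogue in $M$ --- where the outward normal along $\partial\Omega$ is $-\nu$, flipping the sign of the boundary contribution --- produces a second identity.

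Substituting the jump formulas and subtracting the exterior identity from the interior one, the cross-terms $\langle \calL_\lambda a,\nu\times\calM_\lambda b\rangle$ and $\langle \calM_\lambda a,\nu\times\calL_\lambda b\rangle$ enter with identical sign in both equations and therefore cancel, yielding
$$
\langle \calL_\lambda a,\nu\times b\rangle + \langle a,\nu\times\calL_\lambda b\rangle = 0.
$$
The scalar triple-product identity $c\cdot(\nu\times d)=-d\cdot(\nu\times c)$ on tangential fields gives $\langle c,\nu\times d\rangle = -\langle d,\nu\times c\rangle$, and combining this with the above produces $\langle \calL_\lambda a,\nu\times b\rangle=\langle \calL_\lambda b,\nu\times a\rangle$. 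The substitutions $a=\nu\times x$, $b=\nu\times v$ with $x,v\in H^{-\frac{1}{2}}(\Curl,\partial\Omega)$, combined with $\nu\times\nu\times a=-a$ for tangential $a$, recast this as the transpose identity $(\calL_\lambda(\nu\times))^\trans=\calL_\lambda(\nu\times)$. The main difficulty will be the sign bookkeeping --- the outward normal switches sign between the interior and exterior applications of Stokes and this interacts with the opposite signs of $\mp\tfrac{1}{2}a$ in the two jump formulas, so the cancellation of the $\calM_\lambda$ cross-terms and survival of the desired $\calL_\lambda$ identity requires careful attention --- but once the identity is proved on smooth tangential densities it extends to the $H^{-\frac{1}{2}}(\Div)/H^{-\frac{1}{2}}(\Curl)$ pairing by continuity of all the operators involved.
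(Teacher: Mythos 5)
Your proof is correct in outline, but it takes a genuinely different route from what the paper does. The paper proves this lemma by pure citation: the symmetry of $\mathcal{S}_\lambda$ is Theorem 5.44 in Kirsch--Hettlich, and the symmetry of $\mathcal{L}_\lambda$ with respect to the $\nu\times$-pairing is Lemma 5.6.1 in the same reference. You instead inline the argument. For $\mathcal{S}_\lambda$ the kernel-symmetry computation is exactly what underlies the cited result, so no comment needed. For $\mathcal{L}_\lambda$ your strategy --- set $u=\tilde{\mathcal{M}}_\lambda a$, $w=\tilde{\mathcal{M}}_\lambda b$, apply the second Stokes identity on $\Omega$ and on $M$, exploit that the $\frac12$-jumps change sign between the two sides while the $\mathcal{M}_\lambda$ and $\mathcal{L}_\lambda$ terms do not, and subtract --- is sound. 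I verified the bookkeeping: using $\gamma_T=-\nu\times\gamma_t$ and the jump relations \eqref{jumpjumpjump}, the interior identity reads
$$
0 = \tfrac12\langle \mathcal{L}_\lambda a,\nu\times b\rangle - \langle \mathcal{L}_\lambda a,\nu\times\mathcal{M}_\lambda b\rangle + \tfrac12\langle a,\nu\times\mathcal{L}_\lambda b\rangle - \langle\mathcal{M}_\lambda a,\nu\times\mathcal{L}_\lambda b\rangle,
$$
the exterior one has the $\frac12$-terms with opposite sign, and subtracting kills the $\mathcal{M}_\lambda$ cross-terms and leaves $\langle\mathcal{L}_\lambda a,\nu\times b\rangle+\langle a,\nu\times\mathcal{L}_\lambda b\rangle=0$. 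Combined with the antisymmetry of $\langle\cdot,\nu\times\cdot\rangle$ on tangential fields this is exactly the claimed symmetry. One remark: you note that the outward normal on $M$ is $-\nu$ and "flips the sign of the boundary contribution"; since both sides of the exterior Stokes identity vanish this overall sign is immaterial, and the real input from the exterior is the opposite sign of the $\frac12$-jump in $\gamma_{t,-}u=+\frac12 a+\mathcal{M}_\lambda a$. The net effect is that your proposal buys self-containedness and gives the reader a concrete mechanism, at the cost of reproducing an argument already in the standard reference; either is acceptable, but for a lemma of this kind citation is the paper's (reasonable) choice.
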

\begin{proof}
 The symmetry of the operator $\mathcal{S}_\lambda$ with respect to the real inner product are classical and follow from the symmetry properties of the integral kernel.
See for example Theorem 5.44 in \cite{kirsch}. The statement about $\mathcal{L}^\trans_\lambda$ is Lemma 5.6.1 in \cite{kirsch}.
\end{proof}

The following Lemma is implicit in \cite{kirsch}.
\begin{lemma} \label{invertfred}
 The operator $\pm\frac{1}{2} + \mathcal{M}_\lambda$ is for any $\Im(\lambda)>0$ an isomorphism from $H^{-\frac{1}{2}}(\Div,\partial\Omega)$ to $H^{-\frac{1}{2}}(\Div,\partial\Omega)$.
\end{lemma}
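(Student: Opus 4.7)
The plan is to derive the invertibility of $\pm\tfrac{1}{2}+\calM_\lambda$ from the second identity in \eqref{twenty}, namely
$$\calL_\lambda^2 = \lambda^2\left(-\tfrac{1}{2}+\calM_\lambda\right)\left(\tfrac{1}{2}+\calM_\lambda\right).$$
Since both factors on the right are polynomials in $\calM_\lambda$ they commute, so once I know that $\calL_\lambda$ is an isomorphism on $H^{-\frac{1}{2}}(\Div,\partial\Omega)$, the right-hand side is invertible and each commuting factor then has an explicit two-sided inverse, e.g.\ $\left(\tfrac{1}{2}+\calM_\lambda\right)^{-1} = \lambda^{-2}\left(-\tfrac{1}{2}+\calM_\lambda\right)(\calL_\lambda^{-1})^2$. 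So the bulk of the work is reducing the statement to the invertibility of $\calL_\lambda$ for $\Im(\lambda)>0$.

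For surjectivity of $\calL_\lambda$ I would invoke the existence result cited just before the lemma (Theorem 5.64 in \cite{kirsch}): for any $f\in H^{-\frac{1}{2}}(\Div,\partial\Omega)$ the exterior Silver-M\"uller problem admits a solution of the form $\calLt_\lambda a$ with $\calL_\lambda a=f$. For injectivity, suppose $\calL_\lambda a=0$ and set $v:=\calLt_\lambda a$ on $\R^3\setminus\partial\Omega$. Using $\calLt_\lambda=\curl\,\calMt_\lambda=\curlcurl\,\calSt_\lambda$, one sees that $v$ is divergence-free, satisfies $(-\Delta-\lambda^2)v=0$ on both $M$ and $\Omega$, and obeys the Silver-M\"uller condition. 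Moreover the jump relation \eqref{jumpjumpjump}, applied with $u=\calMt_\lambda a$, gives $\gamma_{t,\pm}\,\curl u=\calL_\lambda a=0$, so $\gamma_{t,\pm}v=0$ from both sides of $\partial\Omega$.

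Exterior uniqueness for $\Im(\lambda)>0$ then forces $v=0$ on $M$; since the Maxwell eigenvalues are positive real, $\lambda$ is not a Maxwell eigenvalue, and interior uniqueness forces $v=0$ on $\Omega$ as well. To recover $a$ I would use the vector identity $\curlcurl=-\Delta+\nabla\div$ together with $\div\calMt_\lambda a=0$ to write $\curl v=-\Delta\calMt_\lambda a=\lambda^2\calMt_\lambda a$ pointwise off $\partial\Omega$; hence $\calMt_\lambda a=0$ on $\R^3\setminus\partial\Omega$, and finally the tangential-trace jump $\gamma_{t,+}\calMt_\lambda a-\gamma_{t,-}\calMt_\lambda a=-a$ from \eqref{jumpjumpjump} yields $a=0$. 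The open mapping theorem then promotes the bijection to a topological isomorphism.

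The main obstacle I anticipate is organising the uniqueness step so that it is valid in the Lipschitz, low-regularity setting used here (rather than under the smoothness assumptions under which several of the cited statements are classically formulated), and being careful that the Silver-M\"uller solution obtained in the surjectivity step really lies in a space on which the trace $\gamma_{t,-}$ is continuous and agrees with $\calL_\lambda a$. Once those function-space matters are settled, the algebra via \eqref{twenty} and the commutativity of $\pm\tfrac{1}{2}+\calM_\lambda$ finishes the proof cleanly.
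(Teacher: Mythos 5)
Your proof is correct in its essential logic, but it takes a genuinely different route from the paper in two places. First, for the algebraic transfer from invertibility of $\mathcal{L}_\lambda$ to invertibility of $\pm\tfrac{1}{2}+\mathcal{M}_\lambda$, you use $\mathcal{L}_\lambda^2 = \lambda^2(-\tfrac{1}{2}+\mathcal{M}_\lambda)(\tfrac{1}{2}+\mathcal{M}_\lambda)$ together with the commutativity of the two factors; the paper instead uses the single-factor identity $\mathcal{L}_\lambda = -\rmi\lambda\Lambda^\pm_\lambda(\mp\tfrac{1}{2}+\mathcal{M}_\lambda)$ from \eqref{voltcur} and the fact that $\Lambda^\pm_\lambda$ is invertible outright (immediate since $(\Lambda^\pm_\lambda)^2 = -\id$), which makes the deduction even more direct. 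Second, for the invertibility of $\mathcal{L}_\lambda$ itself, you argue surjectivity from the exterior existence/representation result (note the representation as $\tilde{\mathcal{L}}_\lambda a$ with $\mathcal{L}_\lambda a = f$ is the content of Theorem 5.60 in \cite{kirsch}, not 5.64, which only gives existence of a solution) and injectivity from a hands-on uniqueness argument via jump relations. The paper takes the cleaner route of citing Theorem 5.52(d) in \cite{kirsch} for the Fredholm index-zero property and Theorem 5.59 for injectivity, which together give invertibility without needing a separate surjectivity discussion. Your injectivity argument is correct and illuminating, but it essentially reproves Theorem 5.59 from scratch, so the paper's route delegates more to \cite{kirsch} and is better adapted to the Lipschitz setting where the trace-map and Silver--M\"uller issues you flag are already handled there.

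Two small notes: your explicit inverse formula should read $(\tfrac{1}{2}+\mathcal{M}_\lambda)^{-1} = \lambda^{2}(-\tfrac{1}{2}+\mathcal{M}_\lambda)(\mathcal{L}_\lambda^{-1})^2$ (you wrote $\lambda^{-2}$), though this is inconsequential since you only need the conclusion that each commuting factor of the invertible product is invertible. Also, when recovering $a$, be explicit that $\curl v = \curlcurl\,\tilde{\mathcal{M}}_\lambda a = \lambda^2\tilde{\mathcal{M}}_\lambda a$ holds off the boundary and that the vanishing of $\tilde{\mathcal{M}}_\lambda a$ on both sides combined with the jump $\gamma_{t,+}\tilde{\mathcal{M}}_\lambda a - \gamma_{t,-}\tilde{\mathcal{M}}_\lambda a = -a$ from \eqref{jumpjumpjump} needs $\lambda \neq 0$, which you have.
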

\begin{proof} Assume that $\Im(\lambda)>0$.
 It was shown in \cite{kirsch}*{Theorem 5.52,(d)} that $\mathcal{L}_{\lambda}$ is invertible modulo compact operators and therefore is a Fredholm operator of index zero.
Moreover, by \cite{kirsch}*{Theorem 5.59} we know that $\mathcal{L}_{\lambda}$ is injective and hence invertible.
 Since $\Lambda^\pm_\lambda$ are invertible it follows from \eqref{voltcur} that $\pm\frac{1}{2}+\mathcal{M}_{\lambda}$ is. As usual the inverse is continuous by the open mapping theorem.
\end{proof}
Invertibility of operators $\pm\frac{1}{2}+\mathcal{M}_{\lambda}$ on several other $L^p$-spaces has been shown in the works of M. Mitrea and D. Mitrea. 
 (for example Theorem 4.1 in \cite{mitrea1995}). 
 
 \begin{proposition} \label{fredprop}
 The family $\pm\frac{1}{2} + \mathcal{M}_\lambda$ is a holomorphic family of Fredholm operators of index zero from $H^{-\frac{1}{2}}(\Div,\partial \Omega)$ to  $H^{-\frac{1}{2}}(\Div,\partial \Omega)$. The derivative $ \mathcal{M}'_\lambda = \frac{\der}{\der \lambda} \mathcal{M}_\lambda$ is a continuous family of Hilbert-Schmidt operators on $H^{-\frac{1}{2}}(\Div,\partial \Omega)$.
\end{proposition}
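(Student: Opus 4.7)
The plan is to prove the two assertions separately, using Lemma \ref{invertfred} as the anchor for the Fredholm claim rather than trying to analyse $\mathcal{M}_0$ directly. For the \textbf{holomorphy}, I would argue from the explicit kernel: expanding
\[
 G_\lambda(x,y) = \frac{1}{4\pi}\sum_{k=0}^\infty \frac{(\rmi\lambda)^k}{k!} |x-y|^{k-1},
\]
one sees that $\lambda\mapsto G_\lambda$ is entire with values in a suitable space of distributions, and each term gives a bounded operator on $H^{-\frac{1}{2}}(\Div,\partial\Omega)$ after applying $\curl \tilde{\mathcal{S}}_\lambda$ and the tangential averaging that defines $\mathcal{M}_\lambda$. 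Uniform convergence on compact sets in $\lambda$ then yields that $\lambda \mapsto \mathcal{M}_\lambda$ is a holomorphic family of bounded operators.

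For the \textbf{Fredholm index zero} claim, I would fix some $\lambda_0$ with $\Im(\lambda_0) > 0$. By Lemma \ref{invertfred}, the operator $\pm\tfrac{1}{2} + \mathcal{M}_{\lambda_0}$ is invertible and hence Fredholm of index zero. For arbitrary $\lambda \in \C$ the difference $\mathcal{M}_\lambda - \mathcal{M}_{\lambda_0}$ is an integral operator whose kernel is derived from
\[
 G_\lambda(x,y) - G_{\lambda_0}(x,y) = \frac{1}{4\pi}\,\frac{e^{\rmi\lambda|x-y|} - e^{\rmi\lambda_0|x-y|}}{|x-y|},
\]
which is jointly smooth (the $|x-y|^{-1}$ singularity cancels). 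Passing through $\curl$ and the tangential trace preserves smoothness of this non-singular remainder, so $\mathcal{M}_\lambda - \mathcal{M}_{\lambda_0}$ has a smooth kernel on $\partial\Omega \times \partial\Omega$ and is compact on $H^{-\frac{1}{2}}(\Div,\partial\Omega)$. Stability of the Fredholm property and of the index under compact perturbations then gives Fredholm of index zero for every $\lambda \in \C$.

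For the \textbf{Hilbert-Schmidt} statement on $\mathcal{M}_\lambda'$, I would differentiate at the kernel level. Since
\[
 \partial_\lambda G_\lambda(x,y) = \frac{\rmi}{4\pi}\, e^{\rmi\lambda|x-y|}
\]
is entire in $\lambda$ and smooth in $(x,y)$ jointly with no singularity at $x=y$, the integral kernel of $\mathcal{M}_\lambda'$ is smooth on $\partial\Omega\times\partial\Omega$. A smooth kernel on the compact Lipschitz surface defines a Hilbert-Schmidt (in fact trace class) operator on $L^2_\tan(\partial\Omega)$; by reducing to Lipschitz coordinate charts and using the action of the surface divergence, this extends to a Hilbert-Schmidt operator on $H^{-\frac{1}{2}}(\Div,\partial\Omega)$. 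Continuity in $\lambda$ with respect to the Hilbert-Schmidt norm then follows from continuity of the kernel and its derivatives with values in $L^\infty(\partial\Omega\times\partial\Omega)$.

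The step I expect to be the main obstacle is verifying that the ``smooth remainder'' operators $\mathcal{M}_\lambda - \mathcal{M}_{\lambda_0}$ and $\mathcal{M}_\lambda'$ are indeed compact, respectively Hilbert-Schmidt, on the anisotropic space $H^{-\frac{1}{2}}(\Div,\partial\Omega)$ rather than merely on $L^2_\tan(\partial\Omega)$. This forces one to combine local Lipschitz coordinate charts with the mapping properties of the surface divergence to promote boundedness on $L^2_\tan$ to the Hilbert-Schmidt property on the anisotropic Sobolev space.
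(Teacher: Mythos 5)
Your overall strategy — deduce index zero at one $\lambda_0$ with $\Im\lambda_0>0$ from Lemma~\ref{invertfred}, then propagate to all $\lambda$ by showing the $\lambda$-variation of $\mathcal{M}_\lambda$ is compact, and separately show the derivative is Hilbert--Schmidt — is the same as the paper's. However, you run into two related problems in the execution, and the second one is exactly the gap you flag at the end.

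First, a factual error: you repeatedly describe the remainder kernels as \emph{smooth} on $\partial\Omega\times\partial\Omega$. They are not. After applying $\curl_x$, the kernel of $\mathcal{M}'_\lambda$ involves $\nabla_x e^{\rmi\lambda|x-y|} = \rmi\lambda\,\frac{x-y}{|x-y|}\,e^{\rmi\lambda|x-y|}$, and similarly $\curl_x(G_\lambda - G_{\lambda_0})$ contains $\nabla_x|x-y|$. Because $|x-y|$ is Lipschitz but not $C^1$ at the diagonal, these kernels are only \emph{bounded}, not smooth (there is a directional jump on $\{x=y\}$). A bounded kernel on a compact surface does give a Hilbert--Schmidt operator on $L^2_\tan(\partial\Omega)$, but not a trace class one, and the claim that one can "pass smoothness through $\curl$ and the tangential trace" is not available.

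Second, and more seriously, the step you correctly identify as the obstacle — promoting boundedness/compactness from $L^2_\tan(\partial\Omega)$ to the anisotropic space $H^{-\frac12}(\Div,\partial\Omega)$ via "Lipschitz coordinate charts and the mapping properties of the surface divergence" — is not really an argument, and it does not obviously work given that the kernel is only bounded on the boundary. The paper avoids this entirely by never working with boundary-restricted kernels. Instead it writes $\mathcal{M}_\lambda$ as a composition of the fixed bounded maps $\gamma_T^*: H^{-\frac12}(\Div,\partial\Omega)\to H^{-1}_\comp(U)$ and $\gamma_t: H(\curl,\cdot)\to H^{-\frac12}(\Div,\partial\Omega)$ with an \emph{interior} integral operator mapping $H^{-1}(U)\to H^1(\R^3)$. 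It then Taylor-expands the cut-off ambient kernel $\chi(|x-y|)\,\curl_x\frac{e^{\rmi\lambda|x-y|}}{4\pi|x-y|}$ in $\lambda$; the linear and quadratic coefficient kernels are convolution-type with an $L^1$ profile, and Lemma~\ref{HilSobLemma} (an $H^2(\R^d\times\R^d)$ kernel gives an HS map $H^{-1}\to H^1$) plus Young's inequality give the Hilbert--Schmidt bound. Since the two trace maps are merely bounded, composing a bounded, an HS, and a bounded operator stays HS — and the anisotropic space never has to be touched at the kernel level. If you want to complete your proof, the cleanest fix is to adopt exactly this factorisation rather than trying to make sense of the kernel directly on $\partial\Omega\times\partial\Omega$.
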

\begin{proof}
 We will show that $\mathcal{M}_\lambda$ is complex differentiable as a family of bounded operators $H^{-\frac{1}{2}}(\Div,\partial \Omega)$ and its derivative is compact.
 The first part of the theorem then follows from 
 $$
  (\pm\frac{1}{2} + \mathcal{M}_\lambda) - (\pm\frac{1}{2} + \mathcal{M}_\rmi) = \int_\rmi^\lambda  \mathcal{M}'_\mu \der \mu
 $$
 and the proposition above. We have used here that Fredholm operators are stable under compact perturbations (see for example Lemma 8.6 in \cite{shubin}).
 It is therefore sufficient to show that $ \mathcal{M}'_\lambda$ exists and is Hilbert-Schmidt. First choose a compactly supported smooth cut-off function $\chi$ supported in $(-2R,2R)$ which equal to one on $[-R,R]$, for sufficiently large $R>0$. The integral kernel of $\tilde \calM_{\pm,\lambda}$ is given by
 $\curl_x \frac{e^{\rmi \lambda |x-y|}}{4 \pi |x-y|}$. For $x$ not far from $\partial \Omega$ we can replace this by $\chi(|x-y|)\curl_x \frac{e^{\rmi \lambda |x-y|}}{4 \pi |x-y|}$.
 Consider the following Taylor expansion 
 \begin{align*}
  &\chi(|x-y|)\curl_x \frac{e^{\rmi \lambda |x-y|}}{4 \pi |x-y|} =\\& \chi(|x-y|) \curl_x \frac{e^{\rmi \mu |x-y|}}{4 \pi |x-y|} +\chi(|x-y|) \curl_x \frac{e^{\rmi \mu |x-y|}}{4 \pi} (\lambda - \mu) + \chi(|x-y|)T_{\lambda}(x-y) (\lambda -\mu)^2
 \end{align*}
 with remainder term $T_{\lambda}$. This gives rise to an operator expansion
 $$
  \calM_{\pm,\lambda} = \calM_{\pm,\mu} + A _\lambda(\lambda - \mu) + B_\lambda (\lambda - \mu)^2.
 $$
 Here the operators $A_\lambda$ and $B_\lambda$ arise as compositions as
 $$
  H^{-\frac{1}{2}}(\Div,\partial \Omega) \overset{\gamma_T^*}\longrightarrow H^{-1}(U) \overset{K_A,K_B}\longrightarrow H^{1}(\R^d) \longrightarrow H(\curl,M) \overset{\gamma_t}\longrightarrow H^{-\frac{1}{2}}(\Div,\partial \Omega),
 $$
 where $K_{A}$ or $K_B$ is the integral operator with kernel $\chi(|x-y|) \curl_x \frac{e^{\rmi \mu |x-y|}}{4 \pi}$ or  $\chi(|x-y|)T_{\lambda}(x-y)$ respectively. Here $U$ is a bounded open neighborhood of $\partial \Omega$.
 It is now sufficient to show that the operator $K_A,K_B$ are bounded as Hilbert-Schmidt operators. In view of Lemma \ref{HilSobLemma} we would like to bound the
 $H^2(\R^d \times \R^d)$-norm of the kernels. Taking two derivatives gives in both cases an integrable convolution kernel in $L^1(\R^d)$ and the 
 $H^2(\R^d \times \R^d)$-norm is then, by Young's inequality, bounded by the $L^1$-norm of this kernel.
\end{proof}

\begin{definition}
 The spaces $\mathcal{B}^\pm_{\partial\Omega} \subset H^{-\frac{1}{2}}(\Div,\partial \Omega)$ of interior/exterior boundary data of absolute harmonic forms is defined as
 \begin{align*}
 \mathcal{B}^+_{\partial\Omega} = \{\gamma_{t,+}(\phi) \mid \phi\in \mathcal{H}^1_{\abs}(\Omega)\},\\
 \mathcal{B}^-_{\partial\Omega} = \{\gamma_{t,-}(\phi) \mid \phi\in \mathcal{H}^1_{\abs}(M)\}.
 \end{align*}
\end{definition}
It is then obvious that $\mathcal{B}^+_{\partial\Omega} = \mathcal{B}^+_{\partial\Omega_1} \oplus \ldots \oplus  \mathcal{B}^+_{\partial\Omega_N}$ with respect to the decomposition
$$H^{-\frac{1}{2}}(\Div,\partial \Omega)= H^{-\frac{1}{2}}(\Div,\partial \Omega_1) \oplus \ldots \oplus H^{-\frac{1}{2}}(\Div,\partial \Omega_N).$$
This is not true for the space $\mathcal{B}^-_{\partial\Omega}$. The spaces $\mathcal{B}^+_{\partial\Omega}$ are also known to be subspaces of $L^2(\partial\Omega,\C^3)$, see \cite{MR1809655}*{Th. 11.2}, but this will not be needed.

 The following was announced in \cite{dmitrea} by D. Mitrea in the context of $L^p$-spaces, with $p$ sufficiently close to $2$. It is a reflection of general Hodge theory for Lipschitz domains and we restate and prove this here for our choice of function spaces.

\begin{proposition}\label{dmitreazero}
We have
\begin{align}
\mathcal{B}^\pm_{\partial \Omega} =\mathrm{ker}\left(\pm\frac{1}{2} + \mathcal{M}_0\right)\subset H^{-\frac{1}{2}}(\Div 0,\partial\Omega) .
\end{align} 
\end{proposition}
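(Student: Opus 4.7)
The plan is to prove $\mathcal{B}^+_{\partial \Omega} = \ker(\tfrac{1}{2}+\mathcal{M}_0)$ in detail, the case $\mathcal{B}^-_{\partial \Omega} = \ker(-\tfrac{1}{2}+\mathcal{M}_0)$ following by an entirely parallel argument based on the exterior representation formula \eqref{singlerepext} together with the $1/|x|$ decay of $\tilde{\mathcal{S}}_0 a$ at infinity. Containment in $H^{-\frac{1}{2}}(\Div 0, \partial \Omega)$ is recovered along the way.

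For the inclusion $\mathcal{B}^+_{\partial \Omega} \subseteq \ker(\tfrac{1}{2}+\mathcal{M}_0)$, take $\phi \in \mathcal{H}^1_\abs(\Omega)$. Integration by parts using the absolute boundary conditions $\gamma_{\nu,+}\phi = 0$ and $\gamma_{t,+}\curl\phi = 0$ forces $\curl \phi = 0$ and $\div \phi = 0$, so the representation formula \eqref{singlerepint} collapses at $\lambda = 0$ to $\phi = -\tilde{\mathcal{M}}_{+,0}a$ with $a := \gamma_{t,+}\phi$. Taking $\gamma_{t,+}$ and applying the jump relation \eqref{jumpjumpjump} yields $a = \tfrac{1}{2}a - \mathcal{M}_0 a$, i.e.\ $(\tfrac{1}{2} + \mathcal{M}_0) a = 0$, while \eqref{Divcurlinter} combined with $\curl \phi = 0$ gives $\Div a = -\gamma_{\nu,+}(\curl \phi) = 0$.

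For the reverse inclusion, given $a \in \ker(\tfrac{1}{2}+\mathcal{M}_0)$ I set $\phi := -\tilde{\mathcal{M}}_{+,0} a = -\curl \tilde{\mathcal{S}}_0 a$ on $\Omega$. Immediately $\div \phi = 0$ and $\Delta \phi = 0$, and the jump relation gives $\gamma_{t,+}\phi = a$. From \eqref{divexpansion}, $\curl \phi = -\tilde{\mathcal{L}}_0 a = -\nabla \tilde{\mathcal{S}}_0 \Div a$; applying $\Div$ to $(\tfrac{1}{2}+\mathcal{M}_0) a = 0$ and using the identity $\Div \mathcal{M}_0 = -\mathcal{K}^\trans_0 \Div$ produces $(\tfrac{1}{2} - \mathcal{K}^\trans_0)(\Div a) = 0$. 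Hence $\tilde{\mathcal{S}}_0 \Div a$ is harmonic in $\Omega$ with vanishing interior Neumann data, so constant on each component $\Omega_j$, forcing $\nabla \tilde{\mathcal{S}}_0 \Div a \equiv 0$ on $\Omega$ and therefore $\curl \phi = 0$ there. A further application of \eqref{Divcurlinter} to $\phi$ then gives $\Div a = 0$.

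The delicate remaining step, and the main obstacle, is to verify $\gamma_{\nu,+}\phi = 0$, which is needed to place $\phi$ in the domain of $\Delta_{\Omega,\abs}$. My plan is to feed the now-established properties of $\phi$ back into \eqref{singlerepint}: with $\gamma_{t,+}\phi = a$ and $\curl \phi = 0$ the formula collapses to $\phi = \phi + \nabla \tilde{\mathcal{S}}_0(\gamma_{\nu,+}\phi)$, so $\nabla \tilde{\mathcal{S}}_0 g \equiv 0$ on $\Omega$ for $g := \gamma_{\nu,+}\phi$. Thus $\tilde{\mathcal{S}}_0 g$ is constant $c_j$ on each $\Omega_j$, continuous across $\partial\Omega$, and harmonic on $M$; the interior Neumann jump yields $(\tfrac{1}{2}-\mathcal{K}^\trans_0)g = 0$, while the divergence theorem on $\Omega_j$ combined with $\div \phi = 0$ gives $\int_{\partial\Omega_j} g = 0$, so $\tilde{\mathcal{S}}_0 g = O(|x|^{-2})$ as $|x| \to \infty$. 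Green's first identity on $M$, using the exterior Neumann jump $\gamma_{-}\partial_\nu \tilde{\mathcal{S}}_0 g = g$, then collapses to
\begin{equation*}
\int_M |\nabla \tilde{\mathcal{S}}_0 g|^2 \der x = -\sum_{j=1}^N c_j \int_{\partial \Omega_j} g \, dS = 0,
\end{equation*}
so $\tilde{\mathcal{S}}_0 g \equiv 0$ on $M$; continuity across $\partial\Omega$ forces $c_j = 0$, $\tilde{\mathcal{S}}_0 g$ vanishes globally, and the jump of its normal derivative produces $g = 0$. This places $\phi$ in $\mathcal{H}^1_\abs(\Omega)$ and completes the argument.
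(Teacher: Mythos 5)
Your proof is correct, and for the hard inclusion $\ker(\tfrac{1}{2}+\mathcal{M}_0)\subseteq\mathcal{B}^+_{\partial\Omega}$ it takes a genuinely different route from the paper. The paper works from the exterior side first: it observes that $\phi|_M$ is an $L^2$ harmonic vector field satisfying relative boundary conditions, hence lies in $\ker\Delta_{M,\rel}$ and is both curl- and divergence-free; it then feeds this into the exterior representation formula \eqref{singlerepext} to obtain $\phi|_M=\nabla\tilde{\mathcal{S}}_0 q$, takes the tangential trace (using $\gamma_{t,-}\phi=0$) to conclude $\mathcal{S}_0 q$ is locally constant, and kills $q$ with an energy argument on $M$ plus invertibility of $\mathcal{S}_0$. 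You instead work essentially from the interior: you apply $\Div$ to the kernel condition via the commutation identity $\Div\mathcal{M}_0=-\mathcal{K}^\trans_0\Div$ to obtain $(\tfrac{1}{2}-\mathcal{K}^\trans_0)\Div a=0$, invoke interior Neumann uniqueness (up to constants) for the scalar single-layer potential to show $\curl\phi|_\Omega=0$, deduce $\Div a=0$, and only then run a single exterior energy argument on $g=\gamma_{\nu,+}\phi$. Both approaches land on the same two facts ($\curl\phi=0$ on one side, normal trace vanishes), but yours leans on explicit scalar layer-potential identities and interior Neumann theory, whereas the paper leans on the spectral/Hodge description of the exterior relative Laplacian. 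Your route is somewhat more computational (two separate applications of scalar potential theory rather than one), but it has the advantage of being self-contained in boundary-layer language and of not requiring the spectral characterization of $\ker\Delta_{M,\rel}$; this also makes the divergence-free containment $\Div a = 0$ emerge naturally along the way rather than as a final remark. The easy inclusion $\mathcal{B}^+_{\partial\Omega}\subseteq\ker(\tfrac{1}{2}+\mathcal{M}_0)$ is handled essentially as in the paper, with the only addition being an explicit verification that $\curl\phi=\div\phi=0$ for $\phi\in\mathcal{H}^1_\abs(\Omega)$, which the paper uses implicitly.
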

\begin{proof}
We will prove this only in case $\mathcal{B}^+_{\partial \Omega}$ since the proof for $\mathcal{B}^-_{\partial \Omega}$, when supplemented by Lemma \ref{vanishcoho}, is exactly the same.
 Suppose that $u \in \mathrm{ker}(\frac{1}{2} + \mathcal{M}_0)$ and define $\phi = -\tilde{\mathcal{M}}_0 u$. Then $\phi$
 is divergence-free and harmonic on $M$ and on $\Omega$. The jump relations \eqref{jumpjumpjump} hold by analytic continuation for all $\lambda \in \C$ and they show that $\gamma_{t,-} \phi=0$ and $\gamma_{t,+} \phi=u$ and  $\gamma_{\nu,+} \phi= \gamma_{\nu,-} \phi$. We first show that $q = \gamma_{\nu,+} \phi$ vanishes, thus establishing the inclusion $\phi|_{\Omega} \in  \mathcal{H}^1_{\abs}(\Omega), \gamma_{t,+} \phi = u$. The proof uses similar arguments as in \cite{MR769382} and reflects the mapping properties of the adjoint double layer operator.

On the exterior $\phi$ is a harmonic vector-field satisfying relative boundary conditions. The decay of $\curl \frac{1}{|x-y|}$ implies that $\phi$ is square integrable. This shows that $\curl \phi$ must vanish in the exterior.
From the representation \eqref{singlerepext} we obtain, using the jump relations and $\gamma_{t,-} \phi=0$,
 $$
    \phi|_M =  \nabla \tilde{\mathcal{S}}_0 q.
 $$
Taking the normal trace one gets $q = \gamma_{\nu,-}\nabla \tilde{\mathcal{S}}_0 q$.
Taking the tangential trace one obtains from the jump relations
 \begin{align*}
   \gamma_{t,-} \nabla \tilde{\mathcal{S}}_0 (\gamma_{\nu,-} \phi) = \nabla_{\partial \Omega} \mathcal{S}_0 q =0.
 \end{align*}
This shows that $w = \mathcal{S}_0 q$ is locally constant (and in particular in $L^2(\partial \Omega)$). 
Using the divergence theorem on the interior of each of the components $\Omega_j$ one finds that
$\int_{\partial \Omega_j} q =0$. This gives $\langle \mathcal{S}_0 q, q \rangle_{L^2(\partial \Omega)} =0$ and therefore 
$$
 \langle \mathcal{S}_0 q,  \nabla_{\nu} \tilde{\mathcal{S}}_0 q \rangle_{L^2(\partial \Omega)}  =0.
$$
Since this is the boundary term in the integration by parts formula for $\langle  \nabla \tilde{\mathcal{S}}_0 q ,  \nabla \tilde{\mathcal{S}}_0 q\rangle =0$ which then implies that $\tilde{\mathcal{S}}_0 q$ is constant. Since it decays we must have $\tilde{\mathcal{S}}_0 q=0$ and therefore $\mathcal{S}_0 q=0$. By invertibility of the single layer operator one obtains $q=0$ as claimed.

We now show the inclusion in the other direction. Suppose that $u = \gamma_{t,+}(h)$, where $h \in \mathcal{H}^1_{abs}(\Omega)$.
This means in particular that $h$ is divergence-free, curl-free, and $\gamma_{\nu,+} h=0$.
Taking the tangential trace in representation \eqref{singlerepint} we obtain
$$
 u = (\frac{1}{2} - \mathcal{M}_0) u
$$
and therefore $(\frac{1}{2} + \mathcal{M}_0) u=0$ as claimed.

It finally remains to show that $\{\gamma_{t,+}(\phi) \mid \phi\in \mathcal{H}^1_{\abs}(\Omega)\} \subset H^{-\frac{1}{2}}(\Div 0,\partial\Omega)$.
This follows immediately from the fact that $\curl \phi =0$ and $\Div \circ \gamma_{t,+} = - \gamma_{\nu,+} \circ  \curl$. 
\end{proof}

A similar but easier argument applies to other elements of the real line and gives the following.
\begin{proposition}\label{menotzero}
If $\lambda = \R \setminus \{0\}$ then $\mathrm{ker}\left(\frac{1}{2} + \mathcal{M}_\lambda \right) = \{0\}$ in case $| \lambda | \not= \mu_k$ for all $k \in \N$, i.e. $|\lambda|$ is not a Maxwell eigenvalue. Moreover,
\begin{align}
 \mathrm{ker}\left(\frac{1}{2} + \mathcal{M}_{\mu_k}\right) = \{ \gamma_{t,+}(u) \mid u \in V_{\mu_k} \},
\end{align} 
where $V_{\mu_k}$ is the  eigenspace of $\Delta_{\Omega,\abs}$ for the eigenvalue $\mu_k^2$ on the subspace of divergence-free vector-fields.
\end{proposition}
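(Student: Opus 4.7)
The proof will mirror the structure of Proposition \ref{dmitreazero} but is simpler because, for $\lambda \in \R \setminus\{0\}$, we have access to the uniqueness theorem for the exterior Maxwell problem with Silver-Müller radiation condition (cited from \cite{kirsch}*{Theorem 5.64}).

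The plan is as follows. Suppose $u \in \ker(\frac{1}{2} + \mathcal{M}_\lambda)$ and set $\phi = -\tilde{\mathcal{M}}_\lambda u = -\curl\, \tilde{\mathcal{S}}_\lambda u$. Then $\phi$ is divergence-free on $X$, solves $(-\Delta - \lambda^2)\phi = 0$ on $M$ and $\Omega$, and satisfies the Silver-Müller radiation condition at infinity. The jump relations \eqref{jumpjumpjump} together with $\mathcal{M}_\lambda u = -\tfrac12 u$ give
\begin{align*}
 \gamma_{t,-}(\phi) = -\bigl(\tfrac{1}{2} + \mathcal{M}_\lambda\bigr)u = 0, \qquad \gamma_{t,+}(\phi) = -\bigl(-\tfrac{1}{2} + \mathcal{M}_\lambda\bigr)u = u.
\end{align*}
Uniqueness of the exterior boundary value problem for real non-zero $\lambda$ forces $\phi|_M = 0$. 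Consequently, the tangential trace from outside of $\curl\phi$ vanishes; but by the second jump relation in \eqref{jumpjumpjump} the exterior and interior tangential traces of $\curl\phi = -\tilde{\mathcal{L}}_\lambda u$ both equal $-\mathcal{L}_\lambda u$, so $\mathcal{L}_\lambda u = 0$ and in particular $\gamma_{t,+}(\curl\phi|_\Omega) = 0$. Since $\div\phi = 0$, the normal component of $\phi$ has no jump across $\partial\Omega$, and $\gamma_{\nu,-}(\phi) = 0$ implies $\gamma_{\nu,+}(\phi|_\Omega) = 0$.

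Thus $\phi|_\Omega$ lies in the domain of $\Delta_{\Omega,\abs}$, is divergence-free, and satisfies $(-\Delta_{\Omega,\abs} - \lambda^2)\phi|_\Omega = 0$. By the spectral description of $\Delta_{\Omega,\abs}$ on divergence-free vector fields from Section \ref{InteriorLaplace}, $\lambda^2$ must be one of the Maxwell eigenvalues $\mu_k^2$, and $\phi|_\Omega \in V_{\mu_k}$. If $|\lambda|$ is not a Maxwell eigenvalue, this forces $\phi|_\Omega = 0$, hence $u = \gamma_{t,+}(\phi|_\Omega) = 0$, proving the first assertion. If $|\lambda| = \mu_k$ then $u = \gamma_{t,+}(\phi|_\Omega)$ with $\phi|_\Omega \in V_{\mu_k}$, proving the inclusion $\ker(\tfrac12+\mathcal{M}_{\mu_k}) \subset \{\gamma_{t,+}(u)\mid u\in V_{\mu_k}\}$.

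For the reverse inclusion, let $w \in V_{\mu_k}$, so $w$ is divergence-free, satisfies $\curlcurl w = \mu_k^2 w$ with $\gamma_{\nu,+}(w) = 0$ and $\gamma_{t,+}(\curl w) = 0$. Applying the interior single-layer representation \eqref{singlerepint} reduces to
\begin{align*}
 w = -\curl\,\tilde{\mathcal{S}}_{\mu_k}(\gamma_{t,+}w) = -\tilde{\mathcal{M}}_{\mu_k}(\gamma_{t,+}w),
\end{align*}
and taking the interior tangential trace, using the jump relation once more, yields $\gamma_{t,+}w = -(-\tfrac{1}{2}+\mathcal{M}_{\mu_k})\gamma_{t,+}w$, i.e.\ $(\tfrac{1}{2}+\mathcal{M}_{\mu_k})\gamma_{t,+}w = 0$. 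The main technical point to verify is the vanishing of $\phi|_M$ via the exterior uniqueness theorem; once that is in place, the rest is a bookkeeping exercise in jump relations and the representation formula.
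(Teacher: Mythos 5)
Your proof is correct and takes essentially the same approach as the paper: set $\phi = -\tilde{\mathcal{M}}_\lambda u$, use the jump relations to get $\gamma_{t,-}\phi = 0$ and $\gamma_{t,+}\phi = u$, invoke exterior uniqueness for real $\lambda \neq 0$ to kill $\phi|_M$, deduce that $\phi|_\Omega$ is a divergence-free eigenfunction with absolute boundary conditions, and prove the reverse inclusion via the interior representation formula. The only differences are cosmetic: you spell out the verification of $\gamma_{\nu,+}\phi=0$ and $\gamma_{t,+}(\curl\phi)=0$ (the paper just says ``by the jump relations''), and you cite the Silver--M\"uller uniqueness theorem from \cite{kirsch} whereas the paper phrases the vanishing of $\phi|_M$ in terms of incoming/outgoing Helmholtz solutions with relative boundary conditions, citing \cite{OS}; these are the same fact in different clothing.
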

\begin{proof}
 The proof is very similar to the proof of the previous Proposition and we therefore only give a brief sketch.
 As before let $u \in \mathrm{ker}\left(\frac{1}{2} + \mathcal{M}_{\mu_k}\right)$ and $\phi = - \tilde{\calM}_\lambda u$. Then $\phi|_M$ is a purely incoming or outgoing solution of the Helmholtz equation (see e.g. \cite{OS}*{Appendix C} for details) satisfying relative boundary conditions. 
 It therefore vanishes. By the jump relations \eqref{jumpjumpjump}
 the function $\phi|_\Omega$ satisfies absolute boundary conditions, is divergence-free, and is a Maxwell eigenfunction with Maxwell eigenvalue $\mu_k$.
 Moreover, again by the jump-relation, $\gamma_{t,+}\phi= u$. This proves the inclusion in one direction.
 Conversely, assume that  $u=\gamma_{t,+}\phi$, where $\phi$ is divergence-free, satisfies absolute boundary conditions, and $-\Delta \phi = \mu_k^2 \phi$.
 Taking the tangential trace in representation \eqref{singlerepint} we obtain
$$
 u = (\frac{1}{2} - \mathcal{M}_{\mu_k}) u
$$
and therefore $(\frac{1}{2} + \mathcal{M}_{\mu_k}) u=0$ as claimed.
\end{proof}

\section{Estimates and low energy expansions for the Layer potential operators} \label{estimateSec}

For $0 < \epsilon< \frac{\pi}{2}$, define the sector $\mathfrak{D}_\epsilon$ in the upper half plane by  
\begin{align*}
 \mathfrak{D}_\epsilon := \{ z \in \C \mid \epsilon < \arg(z) < \pi - \epsilon \}.
\end{align*}
The next proposition establishes properties of the single layer operator $\tilde{\mathcal{S}}_\lambda$ and the operator $\tilde{\mathcal{L}}_\lambda$. 
\begin{proposition} \label{Prop:EstimateS}
 For $\epsilon \in (0,\frac{\pi}{2})$, for all $\lambda\in\mathfrak{D}_{\epsilon}$ we have the following bounds:
 \begin{enumerate}
  \item \label{einsbound} Let $\Omega_0\subset \R^d$ be an open subset and assume $\delta=\mathrm{dist}(\Omega_0,\partial \Omega) > 0$. Let $0 < \delta'< \delta$.
  Assume that $\varphi \in C^1_b(\R^3)$ is bounded with bounded derivative and supported in $\Omega_0$. For each $\lambda \in \mathfrak{D}_\epsilon$ the operators
  \begin{align*}
    \varphi \tilde{\mathcal{L}}_{\lambda}:& H^{-\frac{1}{2}}(\mathrm{Div},\partial \Omega) \to H(\mathrm{curl}, \mathbb{R}^3),\\
    \varphi \tilde{\mathcal{S}}_{\lambda}:& H^{-\frac{1}{2}}(\partial \Omega) \to H^1(\mathbb{R}^3),\\
    \varphi \nabla \tilde{\mathcal{S}}_{\lambda}:& H^{-\frac{1}{2}}(\partial \Omega) \to L^2(\mathbb{R}^3),\\
    \varphi \tilde{\mathcal{M}}_{\lambda}:& H^{-\frac{1}{2}}(\Div,\partial \Omega) \to H(\div,\mathbb{R}^3)
  \end{align*}
 are Hilbert-Schmidt operators. There  exists $C_{\delta',\epsilon}>0$ such that for all $\lambda \in \mathfrak{D}_\epsilon$ we have the following bounds on the Hilbert-Schmidt norms between these spaces
   \begin{align} \label{eqn:HSEstimateL}
     \|\varphi \tilde{\mathcal{L}}_{\lambda} \|_{\mathrm{HS}} &\leq C_{\delta',\epsilon}e^{-\delta' \Im{\lambda}}, \\ \label{eqn:HSEstimateS} \|\varphi \tilde{\mathcal{S}}_{\lambda} \|_{\mathrm{HS}} &\leq |\lambda|^{-\frac{1}{2}}C_{\delta',\epsilon}e^{-\delta' \Im{\lambda}},\\ 
     \|\varphi \nabla \tilde{\mathcal{S}}_{\lambda} \|_{\mathrm{HS}} &\leq C_{\delta',\epsilon}e^{-\delta' \Im{\lambda}}, \label{eqn:nablaHSEstimateS}\\
      \|\varphi \tilde{\mathcal{M}}_{\lambda} \|_{\mathrm{HS}} &\leq C_{\delta',\epsilon}e^{-\delta' \Im{\lambda}}, \label{eqn:HSMEstimateS}\\
     \|\varphi \tilde{\mathcal{S}}_{\lambda} \Div \|_{\mathrm{HS}} &\leq C_{\delta',\epsilon}e^{-\delta' \Im{\lambda}}. \label{eqn:HSEstimateSdiv}
    \end{align}
     \item   For $\lambda \in \mathfrak{D}_\epsilon$ we have the operator-norm bound
   \begin{align} \label{zweiboundL}
  \|\tilde{\mathcal{L}}_\lambda \|_{H^{-\frac{1}{2}}(\mathrm{Div},\partial\Omega) \to H(\mathrm{curl},\mathbb{R}^3)} \leq C_{\epsilon}(1 + |\lambda|^2).
  \end{align}
  \item For $\lambda \in \mathfrak{D}_\epsilon$ we have the operator-norm bound
   \begin{align} \label{zweiboundM}
  \|\tilde{\mathcal{M}}_\lambda \|_{H^{-\frac{1}{2}}(\Div,\partial\Omega) \to L^2(\R^3,\C^3)} &\leq  C_{\epsilon}.
  \end{align}
   \item For $\lambda \in \mathfrak{D}_\epsilon$ we have the operator-norm bounds
   \begin{align} \label{zweiboundS}
  \|\tilde{\mathcal{S}}_\lambda \|_{H^{-\frac{1}{2}}(\partial\Omega) \to H^1(\mathbb{R}^3)} &\leq C_{\epsilon}|\lambda|^{-\frac{1}{2}} (1 + |\lambda|^\frac{1}{2}),\\
  \|\nabla \tilde{\mathcal{S}}_\lambda \|_{H^{-\frac{1}{2}}(\partial\Omega) \to L^2(\mathbb{R}^3,\C^3)} &\leq C_{\epsilon},\label{zweiboundnablaS}
  \end{align}
  \item On the space of functions of mean zero $H^{-\frac{1}{2}}_0(\partial\Omega) = \{u \in H^{-\frac{1}{2}}_0(\partial\Omega)  \mid \langle u, 1\rangle=0 \}$ we have for $\lambda \in \mathfrak{D}_\epsilon$ the improved estimate
  \begin{align} \label{zweiboundSzero}
  \|\tilde{\mathcal{S}}_\lambda |_{H_0^{-\frac{1}{2}}(\partial\Omega)} \|_\mathrm{HS} \leq C_{\epsilon}.
  \end{align}
 \end{enumerate}
  \end{proposition}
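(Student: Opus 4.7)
The unifying idea is to exploit the explicit formula \eqref{fskernel} for the free Green's function together with the sectorial estimate $|\lambda^2 - r^2| \geq c_\epsilon(|\lambda|^2 + r^2)$ for $r \geq 0$ and $\lambda \in \mathfrak{D}_\epsilon$, which on the Fourier side gives, for $\lambda \in \mathfrak{D}_\epsilon$, resolvent bounds of the form
\begin{equation*}
 \| (-\Delta_\free - \lambda^2)^{-1} \|_{H^s \to H^{s+2}} \leq C_\epsilon,\qquad \| \nabla(-\Delta_\free - \lambda^2)^{-1} \|_{H^s \to H^{s+1}} \leq C_\epsilon,
\end{equation*}
together with the pointwise bound $|G_{\lambda,\free}(x,y)| \leq \frac{1}{4\pi |x-y|}\,e^{-\Im(\lambda)|x-y|}$ and the observation that in $\mathfrak{D}_\epsilon$ one has $\Im(\lambda) \geq \sin(\epsilon)\,|\lambda|$. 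The fact that the trace map $\gamma$ is continuous $H^1(\R^3) \to H^{1/2}(\partial\Omega)$ gives by duality $\gamma^\ast : H^{-1/2}(\partial\Omega) \to H^{-1}(\R^3)$, so writing $\tilde{\mathcal{S}}_\lambda = (-\Delta_\free - \lambda^2)^{-1}\gamma^\ast$ immediately yields $\tilde{\mathcal{S}}_\lambda : H^{-1/2}(\partial\Omega) \to H^1(\R^3)$ and $\nabla \tilde{\mathcal{S}}_\lambda : H^{-1/2}(\partial\Omega) \to L^2(\R^3)$ with norms bounded by $C_\epsilon$ whenever $|\lambda|$ stays bounded away from $0$. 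For the operator norm bound \eqref{zweiboundL} I would use the representation \eqref{divexpansion}, $\tilde{\mathcal{L}}_\lambda a = \nabla \tilde{\mathcal{S}}_\lambda \Div a + \lambda^2 \tilde{\mathcal{S}}_\lambda a$, the continuity $\Div : H^{-1/2}(\Div,\partial\Omega) \to H^{-1/2}(\partial\Omega)$, and the above resolvent estimates, giving the quadratic dependence in $|\lambda|$. The bound \eqref{zweiboundM} follows analogously from $\tilde{\mathcal{M}}_\lambda = \curl\, \tilde{\mathcal{S}}_\lambda$ applied to divergence-controlled data.

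The extra $|\lambda|^{-1/2}$ factor in \eqref{eqn:HSEstimateS} and \eqref{zweiboundS} traces back to the non-decay of the Green's function at infinity as $\lambda \to 0$: the direct computation $\int_{\R^3} \bigl|\tfrac{e^{\rmi\lambda |x|}}{|x|}\bigr|^2 \der x = \frac{2\pi}{\Im(\lambda)}$ shows that $\|G_{\lambda,\free}(\,\cdot\,,y)\|_{L^2(\R^3)} \leq C_\epsilon |\lambda|^{-1/2}$ for $\lambda \in \mathfrak{D}_\epsilon$. This is exactly the blow-up prevented by the cut-off $\varphi$ in part (1): when $\varphi$ is bounded and supported in $\Omega_0$ with $\mathrm{dist}(\Omega_0,\partial\Omega) = \delta > \delta'$, the split $e^{-\Im(\lambda)|x-y|} = e^{-\delta'\Im(\lambda)}\,e^{-(|x-y|-\delta')\Im(\lambda)}$ extracts the claimed factor and leaves an integrand with an integrable tail. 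Combining this pointwise bound with Lemma~\ref{HilSobLemma} (which controls Hilbert–Schmidt norms between Sobolev spaces in terms of higher Sobolev norms of the kernel) yields \eqref{eqn:HSEstimateL}--\eqref{eqn:HSEstimateSdiv}; derivatives on the kernel are cheap because they either pull down a factor of $\lambda$ with a compensating decay, or a factor of $1/|x-y|$ which stays bounded thanks to $|x-y|\geq \delta$. Bound \eqref{eqn:HSEstimateL} for $\tilde{\mathcal{L}}_\lambda$ again follows from \eqref{divexpansion} once \eqref{eqn:HSEstimateS} and \eqref{eqn:nablaHSEstimateS} are established, noting that the $\lambda^2$ factor is absorbed by the $|\lambda|^{-1/2}$ of $\tilde{\mathcal{S}}_\lambda$ and the exponential decay (which dominates any polynomial growth in $|\lambda|$ for fixed $\delta'>0$, so the prefactor can be absorbed into $C_{\delta',\epsilon}$).

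For the improved estimate \eqref{zweiboundSzero} on mean-zero data, I would exploit the cancellation
\begin{equation*}
 \tilde{\mathcal{S}}_\lambda u(x) = \int_{\partial\Omega} \frac{e^{\rmi\lambda |x-y|} - e^{\rmi\lambda |x|}}{4\pi |x-y|}\, u(y)\,\der\sigma(y) + \frac{e^{\rmi\lambda |x|}}{4\pi}\int_{\partial\Omega}\frac{u(y)}{|x-y|}\der\sigma(y) - \text{(constant)}\int u,
\end{equation*}
so that when $\langle u,1\rangle = 0$ the non-integrable leading tail $\frac{e^{\rmi\lambda |x|}}{|x|}$ drops out, leaving kernels that decay like $|x|^{-2}$ for large $|x|$. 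Since $|x|^{-2} \in L^2(\{|x|>1\})$ uniformly, this produces an estimate on the Hilbert–Schmidt norm that is independent of $|\lambda|^{-1/2}$.

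The main obstacle I anticipate is the bookkeeping for the low-energy regime: showing the $|\lambda|^{-1/2}$ singularity and its cancellation in \eqref{zweiboundSzero} rigorously requires expanding the kernel in $\lambda$ (not just using sectorial resolvent estimates) and verifying that the remainder terms involving $\nabla$ or $\curl$ derivatives truly stay bounded — in particular, the near-diagonal singularity of $G_{\lambda,\free}$ interacts delicately with the $H^{-1/2}(\Div,\partial\Omega)$-structure on the boundary, so one must combine the local pseudodifferential calculus for layer potentials on Lipschitz boundaries with global decay estimates coming from the explicit integral representation.
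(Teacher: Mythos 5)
The overall strategy in your plan (explicit Green's function, the sectorial equivalence $\Im\lambda \sim |\lambda|$ in $\mathfrak{D}_\epsilon$, the trace map $\gamma^*$, Lemma~\ref{HilSobLemma} for the Hilbert--Schmidt bounds, and a subtraction trick for the mean-zero estimate) is indeed close to the paper's. However, there is a genuine gap at the heart of the operator-norm estimates.

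Your claimed resolvent bound $\| (-\Delta_\free - \lambda^2)^{-1} \|_{H^s \to H^{s+2}} \leq C_\epsilon$ is false uniformly on $\mathfrak{D}_\epsilon$. The Fourier multiplier $\frac{1+|\xi|^2}{|\xi|^2 - \lambda^2}$ is of size $\sim |\lambda|^{-2}$ near $\xi = 0$; the sectorial estimate $|\xi^2-\lambda^2| \gtrsim c_\epsilon(|\xi|^2+|\lambda|^2)$ only gives a bound $C_\epsilon \max(1,|\lambda|^{-2})$. If one used this, one would obtain $\|\tilde{\mathcal{S}}_\lambda\|_{H^{-1/2}(\partial\Omega)\to H^1} \lesssim |\lambda|^{-2}$ rather than the claimed $|\lambda|^{-1/2}$ in \eqref{zweiboundS}. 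You flag this yourself by adding the caveat ``$|\lambda|$ bounded away from $0$'', but that caveat means parts (2)--(4) are simply not proved near $\lambda = 0$, which is precisely the difficult regime. What is needed, and what the paper supplies, is the \emph{compactly cut-off} resolvent $\chi (-\Delta_\free - \lambda^2)^{-1} \chi$, which \emph{is} uniformly bounded $H^{-1}(\R^3)\to H^1(\R^3)$ on $\mathfrak{D}_\epsilon$: one replaces the Green's kernel by $\eta(|x-y|)\frac{e^{\rmi\lambda|x-y|}}{4\pi|x-y|}$ with $\eta$ compactly supported, observes it is a convolution operator, and shows $(-\Delta+1) R_{\eta,\lambda}$ is bounded on $L^2$ because its convolution kernel has uniformly bounded $L^1$-norm in the sector (Young's inequality). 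The global operator-norm bounds (2)--(4) are then obtained by splitting $\tilde{\mathcal{S}}_\lambda = \varphi\tilde{\mathcal{S}}_\lambda + \chi\tilde{\mathcal{S}}_\lambda$ with $\chi$ supported near $\partial\Omega$ and $\varphi$ supported away from it: part (1) controls the far piece (carrying the $|\lambda|^{-1/2}$) and the cut-off resolvent bound controls the near piece. This two-region decomposition is not explained in your proposal, yet it is the structural crux.

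For part (5), your idea of subtracting a reference kernel to kill the $O(|x|^{-1})$ tail on mean-zero data is the right one, but the specific three-term decomposition you write does not quite close (the ``$(\text{constant})\int u$'' term does not belong to the identity you display). The cleaner version, which the paper uses, is to subtract $\frac{e^{\rmi\lambda|x-z|}}{4\pi|x-z|}$ with $z$ a fixed point \emph{on} $\partial\Omega$, so that $g(x,y)=\phi(x)\bigl(\frac{e^{\rmi\lambda|x-y|}}{4\pi|x-y|}-\frac{e^{\rmi\lambda|x-z|}}{4\pi|x-z|}\bigr)$ pairs to zero against a constant and is controlled by the mean-value inequality; the near-boundary piece $\chi\tilde{\mathcal{S}}_\lambda$ is then handled again by the cut-off resolvent bound rather than any pointwise cancellation.
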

  \begin{proof}
The operator $\varphi \tilde{\mathcal{L}}_{\lambda}$ can be written as $\varphi \,\curlcurl\; G_{\lambda,0} \gamma_T^*$. Similarly, we have
$\varphi \tilde{\calM}_\lambda = \varphi \,\curl\, G_{\lambda,0} \gamma_T^*$ and $\varphi \tilde{\calS}_\lambda = \varphi G_{\lambda,0} \gamma_T^*$.
We choose a bounded open neighborhood $U$ of $\partial \Omega$ such that $\mathrm{dist}(\Omega_0,U) > \delta'$.
Since $\gamma_T^*$ continuously maps $H^{-\frac{1}{2}}(\partial \Omega)$ to $H^{-1}(U)$ we only need to show that the map $\curlcurl\, G_\lambda$
is a Hilbert-Schmidt operator from $H^{-1}(U)$ to $H^1(\Omega_0)$ and establish the corresponding bound on its Hilbert-Schmidt norm.
By Lemma \ref{HilSobLemma} the Hilbert-Schmidt norm can be bounded by the $H^2(\Omega_0 \times U)$-norm of the kernel of $\curlcurl\, G_{\lambda,0}$
on $\Omega_0 \times U$. The corresponding bound has been established in Lemma \ref{Prop:EstL2PGfirst}. The same argument  works for $\varphi \tilde{\calM}_\lambda$ and $\varphi \tilde{\mathcal{S}}_{\lambda}$.
 This concludes the proof of the estimates  \eqref{eqn:HSEstimateL}, \eqref{eqn:HSEstimateS}, \eqref{eqn:HSMEstimateS}, \eqref{eqn:nablaHSEstimateS}, \eqref{eqn:HSEstimateSdiv}.
 
Since the operator norm is bounded in terms of the Hilbert-Schmidt norm and by the estimates \eqref{eqn:HSEstimateL},\eqref{eqn:HSEstimateS},\eqref{eqn:nablaHSEstimateS},\eqref{eqn:HSMEstimateS} it is sufficient to prove 
the estimates \eqref{zweiboundL}, \eqref{zweiboundM}, \eqref{zweiboundS}, and \eqref{zweiboundnablaS}  for the operators $\chi \tilde{\mathcal{L}}_\lambda$,  $\chi \tilde{\mathcal{S}}_\lambda$, $\chi \tilde{\mathcal{M}}_\lambda$, $\chi \nabla \tilde{\mathcal{S}}_\lambda$ where 
$\chi \in C^\infty_0(\R^3) $ is a compactly supported function that equals one near $\partial \Omega$. We write
 $$
  \chi \tilde{\mathcal{L}}_\lambda =  \chi \nabla \tilde S_\lambda \Div + \lambda^2 \chi  \tilde S_\lambda.
 $$
 The map $\gamma_t^*$ is from $H^{-\frac{1}{2}}(\partial \Omega)$ to $H^{-1}_\comp(U)$ where $U$ is an open neighborhood of $\partial \Omega$.
 To prove both bounds \eqref{zweiboundL}, \eqref{zweiboundM}, \eqref{zweiboundS}, and \eqref{zweiboundnablaS} it is therefore sufficient to show that the resolvent $(-\Delta_\free -\lambda^2)^{-1}$ is a bounded map
 from $H^{-1}_\mathrm{comp}(\R^3)$ to $H^1_\mathrm{loc}(\R^3)$ uniformly in $\lambda$ for all $\lambda \in \mathfrak{D}_\epsilon$. This means that we need to show that the cut-off resolvent $\chi (-\Delta_\free -\lambda^2)^{-1} \chi$ is a uniformly bounded map
 from $H^{-1}(\R^3)$ to $H^1(\R^3)$ for all $\lambda \in \mathfrak{D}_\epsilon$. To see this let $\eta \in C^\infty_0(\R)$ be a function that is one near $[-R_1,R_1]$, where $R_1$ is the diameter of the support of $\chi$. Let $R$ be large enough so  that $\supp \eta \in (-R,R)$.
 This implies that $\chi (-\Delta -\lambda^2)^{-1} \chi = \chi R_{\eta,\lambda} \chi$, where $R_{\eta,\lambda}$  is the operator
 with integral kernel
 $$
  \eta(|x-y|) \frac{1}{4 \pi |x-y|} e^{\rmi \lambda |x-y|} =: k_\lambda(x-y).
 $$
 It is therefore sufficient to show that $R_{\eta,\lambda}$ is uniformly bounded for all $\lambda \in \mathfrak{D}_\epsilon$ as a map $H^s(\R^3)$ to $H^{s+2}(\R^3)$. Since this is a convolution operator it commutes with the Laplace operator and therefore it is sufficient to show that $R_{\eta,\lambda}$ is uniformly bounded as a map
 $L^2(\R^3)$ to $H^{2}(\R^3)$.  We will show that $(-\Delta +1)R_{\eta,\lambda}$ is uniformly bounded as a map from $L^2(\R^3)$ to $L^{2}(\R^3)$.
 Using $$(-\Delta +1) (-\Delta -\lambda^2)^{-1} = \mathrm{id} + (1 + \lambda^2) (-\Delta -\lambda^2)^{-1}$$ one obtains that the integral kernel of 
 $(-\Delta +1)R_{\eta,\lambda} - \mathrm{id}$ equals
 $$
  \left( -(\Delta_x \eta(|x-y|)) + (1+ \lambda^2) \eta(|x-y| \right) \frac{1}{4 \pi |x-y|} e^{\rmi \lambda |x-y|} - 2\nabla_x \eta(|x-y|) \nabla_x \frac{1}{4 \pi |x-y|} e^{\rmi \lambda |x-y|}.
 $$ 
 This is a convolution operator and we can use Young's inequality to estimate its operator norm. In particular, using spherical coordinates, the estimates
 \begin{align*}
  \int_0^R  \frac{1}{4 \pi r} | e^{\rmi \lambda r}| r^2 \der r   \leq C_R \frac{1}{1+ |\Im \lambda|^2},\\
   \int_0^R  \frac{1}{4 \pi r^2} | e^{\rmi \lambda r}| r^2 \der r   \leq C_R \frac{1}{1+ |\Im \lambda|},\\
 \end{align*}
 show that the convolution kernel is uniformly bounded in $L^1(\R^3)$ for $\lambda \in \mathfrak{D}_\epsilon$.
Thus $R_{\eta,\lambda}$ is uniformly bounded as a map from $L^2(\R^3)$ to $H^{2}(\R^3)$ for $\lambda \in \mathfrak{D}_\epsilon$.

It remains to show the improved estimate \eqref{zweiboundSzero}. We again choose cut-offs $\chi, \psi$ as above and we arrange them so that $\psi + \phi=1$.
Since the cut-off resolvent $\chi (-\Delta_\free -\lambda^2)^{-1} \chi$ is regular near zero as a map $H^{-1}(\R^3)$ to $H^{1}(\R^3)$ we know that
$\chi \tilde{\mathcal{S}}_\lambda: H^{-\frac{1}{2}}(\partial \Omega) \to H(\curl,\R^3)$ is regular near zero. It is therefore sufficient to establish the bound
for $\phi \tilde{\mathcal{S}}_\lambda$ as a map from $H_0^{-\frac{1}{2}}(\partial \Omega)$ to $H^{1}(\R^3)$. We argue similarly as above choosing an open neighborood $U$ such that the support of $\phi$ has positive distance from $U$. For convenience we will also assume that the support of $\phi$ is sufficiently separated from
$\Omega$, more precisely we assume that the support of $\phi$ has positive distance to the convex hull of $\Omega$.
With $ u \in H_0^{-\frac{1}{2}}(\partial \Omega)$ the distribution
$\gamma^*u$ is in the space distributions $H_0^{-1}(U)  = \{v \in H^{-1}_\comp(U) \mid \langle v, 1\rangle =0 \}$ of mean zero. We therefore only need to bound
$\phi (-\Delta_\free - \lambda^2)^{-1}$ as a map from $H_0^{-1}(U)$ to  $H^1(\R^n)$. This map is the restriction of the integral operator with smooth kernel
$$
  g(x,y)=\phi(x) \left(\frac{e^{\rmi \lambda |x-y|}}{4 \pi |x-y|} - \frac{e^{\rmi \lambda |x-z|}}{4 \pi |x-z|} \right)
$$
to $H_0^{-1}(U)$, where $z$ is any fixed point on $\partial \Omega$. One shows that this kernel is in the Sobolev space $H^2(\R^3 \times U)$ and is uniformly bounded in $\lambda \in  \mathfrak{D}_\epsilon$. This kernel and its derivatives are easily bounded using the mean-value inequality
$$
 |\partial^\alpha_x g(x,y)| \leq |y-z| \sup\limits_{\tilde y \in K}\| \partial^\alpha \nabla_x g(x,\tilde y) \| \leq C \sup\limits_{\tilde y \in K}\| \partial^\alpha \nabla_x g(x,\tilde y) \|,
$$
where $K$ is the closure of the convex hull of $\partial \Omega$.
The $L^2$-norm of this expression is uniformly bounded for all $\lambda \in  \mathfrak{D}_\epsilon$ by the same estimate as in \eqref{eqn:MainL2Integral2}. 
This works essentially because with repeated application of the product rule the terms either have improved decay or have an extra $\lambda$-factor. 
\end{proof}

 The proof above can also be applied directly to $\chi \tilde{\mathcal{L}}_\lambda$ in the entire complex plane to bound the operator norm, the norm of the the derivative and the norm of the remainder term. This gives the following result. We will not repeat the proof but simply state the result.
 
 \begin{lemma}
  The families $\tilde{\mathcal{L}}_\lambda: H^{-\frac{1}{2}}(\Div,\partial \Omega) \to H_\loc(\curl,M) \oplus H_\loc(\curl,\Omega)$ 
  and $\mathcal{L}_\lambda: H^{-\frac{1}{2}}(\Div,\partial \Omega) \to  H^{-\frac{1}{2}}(\Div,\partial \Omega)$ are holomorphic families of bounded operators in the complex plane.
 \end{lemma}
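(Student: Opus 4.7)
The plan is to revisit the argument used in the proof of Proposition~\ref{Prop:EstimateS}, relaxing the estimates but keeping track of $\lambda$-dependence, and to combine this with general facts about holomorphic operator-valued functions. The essential observation is that the free Green's function $G_{\lambda,\free}(x,y)=\frac{1}{4\pi}\frac{e^{\rmi\lambda|x-y|}}{|x-y|}$ is an entire function of $\lambda$ for each $x\ne y$, and that the exponential factor $e^{\rmi\lambda|x-y|}$ is uniformly bounded on $|x-y|\le R$, $\lambda\in K$, for any compact $R>0$ and compact $K\subset\C$.

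First I would prove the analogous statement for $\tilde{\mathcal{S}}_\lambda$. Using the cutoff decomposition $\tilde{\mathcal{S}}_\lambda = \chi\tilde{\mathcal{S}}_\lambda + (1-\chi)\tilde{\mathcal{S}}_\lambda$ with $\chi\in C^\infty_0(\R^3)$ equal to one near $\partial\Omega$, the ``near'' part reduces to the convolution kernel $\eta(|x-y|)\frac{e^{\rmi\lambda|x-y|}}{4\pi|x-y|}$, exactly as in the proof of \eqref{zweiboundS}. The Young-inequality bound used there remains valid on any compact set $K\subset\C$; one loses only the exponential decay but gains a uniform bound depending on $\max_{\lambda\in K}|\Im\lambda|$. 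The ``far'' part $(1-\chi)\tilde{\mathcal{S}}_\lambda$ has a smooth kernel, is bounded into any weighted $L^2$-space, and depends entire-holomorphically on $\lambda$ by differentiation under the integral sign (justified by dominated convergence, using that $\partial_\lambda^k e^{\rmi\lambda|x-y|} = (\rmi|x-y|)^k e^{\rmi\lambda|x-y|}$ is polynomially weighted and bounded on $K\times\supp(1-\chi)$). Together this shows $\lambda\mapsto \tilde{\mathcal{S}}_\lambda u$ is strongly continuous with values in $H^1_\loc(\R^3)$ for each $u\in H^{-\frac{1}{2}}(\partial\Omega)$ and that the family is locally uniformly bounded in operator norm.

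To upgrade continuity to holomorphy, I would use the standard criterion that a locally uniformly bounded operator family $\lambda\mapsto T_\lambda$ is holomorphic as soon as $\lambda\mapsto\langle T_\lambda u,v\rangle$ is holomorphic for each $u,v$ in the respective spaces (weak holomorphy plus local boundedness implies strong holomorphy in the sense of norm-holomorphy, by a Morera/Cauchy-integral argument). Weak holomorphy is immediate: for fixed $u\in H^{-\frac{1}{2}}(\partial\Omega)$ and a test element $v$ supported away from $\partial\Omega$, the pairing is a double integral against the entire kernel $G_{\lambda,\free}$; for $v$ whose support meets $\partial\Omega$, use the cutoff decomposition again and note that on the near piece the kernel is an entire function of $\lambda$ with locally integrable absolute value, uniformly in $\lambda\in K$, so differentiation under the integral sign applies.

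Having established holomorphy and local boundedness of $\tilde{\mathcal{S}}_\lambda$ on the full complex plane, the extension to $\tilde{\mathcal{L}}_\lambda$ is then an algebraic consequence of the identity
\begin{equation*}
\tilde{\mathcal{L}}_\lambda a = \nabla\tilde{\mathcal{S}}_\lambda\,\Div a + \lambda^2\tilde{\mathcal{S}}_\lambda a
\end{equation*}
from \eqref{divexpansion}: each summand is a holomorphic family with values in $H_\loc(\curl,M)\oplus H_\loc(\curl,\Omega)$, and the prefactor $\lambda^2$ is entire. Finally, $\mathcal{L}_\lambda=\gamma_t\tilde{\mathcal{L}}_\lambda$ is obtained by composition with the continuous trace map into $H^{-\frac{1}{2}}(\Div,\partial\Omega)$, which preserves holomorphy. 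The main technical point is controlling the diagonal singularity of the kernel uniformly in $\lambda$ on compact subsets of $\C$; this is handled by the same convolution estimate as in Proposition~\ref{Prop:EstimateS}, since the $\lambda$-dependent factor is smooth and only the universal $\frac{1}{|x-y|}$ singularity remains.
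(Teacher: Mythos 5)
Your argument is correct and follows essentially the same route the paper has in mind: the paper remarks (without repeating details) that the proof of Proposition~\ref{Prop:EstimateS} applied to $\chi\tilde{\mathcal{L}}_\lambda$ on compact subsets of $\C$ gives the bounds needed, and that is exactly the cutoff plus Young's-inequality scheme you re-deploy. The one cosmetic difference is the last step: the paper's sketch (mirroring Proposition~\ref{fredprop}) suggests concluding complex differentiability by explicitly Taylor-expanding the kernel in $\lambda$ and bounding the operator norms of the first-order term and the remainder, whereas you instead invoke the abstract criterion that weak holomorphy together with local uniform boundedness implies norm-holomorphy; either device is standard and correct, and they trade an explicit computation for a soft functional-analytic shortcut, with no difference in substance. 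One small point you should make explicit: to apply $\gamma_t$ you need to first cut off $\tilde{\mathcal{L}}_\lambda a$ near $\partial\Omega$ (since $\gamma_t$ acts on $H(\curl,\cdot)$, not $H_\loc(\curl,\cdot)$), and to see that the target really is $H_\loc(\curl,\cdot)$ you should note $\curl\,\tilde{\mathcal{L}}_\lambda = \lambda^2\tilde{\mathcal{M}}_\lambda$, which is again a holomorphic family by the same estimates; both are routine but belong in a complete write-up.
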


 \begin{lemma} \label{merolemma}
  The families $\mathcal{L}_\lambda^{-1}, \Lambda^{\pm}_\lambda$ are meromorphic in $\lambda$ as families of bounded operators on $H^{-\frac{1}{2}}(\Div,\partial \Omega)$. The family $\Lambda^-_\lambda$ has no poles in $\R \setminus \{0\}$ and in the upper half-plane.
 \end{lemma}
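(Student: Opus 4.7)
My plan is to reduce everything to meromorphic Fredholm theory for $\pm\tfrac{1}{2}+\mathcal{M}_\lambda$ and then extract the statements about $\mathcal{L}_\lambda^{-1}$ and $\Lambda^{\pm}_\lambda$ from the algebraic identities \eqref{voltcur} and \eqref{twenty}. First I would apply the analytic Fredholm theorem to the holomorphic Fredholm family $\pm\tfrac{1}{2}+\mathcal{M}_\lambda$ provided by Proposition \ref{fredprop}, using that both operators are invertible at $\lambda=\rmi$ by Lemma \ref{invertfred}. This yields meromorphic inverses $(\pm\tfrac{1}{2}+\mathcal{M}_\lambda)^{-1}$ on all of $\mathbb{C}$ as families of bounded operators on $H^{-\frac{1}{2}}(\Div,\partial\Omega)$, with poles of finite order on a discrete set.

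For $\mathcal{L}_\lambda^{-1}$ I would exploit the factorisation $\mathcal{L}_\lambda^2 = \lambda^2(-\tfrac{1}{2}+\mathcal{M}_\lambda)(\tfrac{1}{2}+\mathcal{M}_\lambda)$ from \eqref{twenty} and introduce
$$B(\lambda) := \tfrac{1}{\lambda^2}\,\mathcal{L}_\lambda\,(\tfrac{1}{2}+\mathcal{M}_\lambda)^{-1}(-\tfrac{1}{2}+\mathcal{M}_\lambda)^{-1}.$$
Direct substitution gives $\mathcal{L}_\lambda B(\lambda)=\id$. Because $\pm\tfrac{1}{2}+\mathcal{M}_\lambda$ commute (both are polynomials in $\mathcal{M}_\lambda$), the analogous left-multiplier
$$B'(\lambda):=\tfrac{1}{\lambda^2}(-\tfrac{1}{2}+\mathcal{M}_\lambda)^{-1}(\tfrac{1}{2}+\mathcal{M}_\lambda)^{-1}\mathcal{L}_\lambda$$
satisfies $B'(\lambda)\mathcal{L}_\lambda=\id$, so on the common domain $B(\lambda)=B'(\lambda)$ is the two-sided inverse of $\mathcal{L}_\lambda$. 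Hence $\mathcal{L}_\lambda^{-1}=B(\lambda)$ is meromorphic on $\mathbb{C}$ as a family of bounded operators.

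For $\Lambda^{\pm}_\lambda$ I would rewrite the second equality of \eqref{voltcur}, $\mathcal{L}_\lambda = -\rmi\lambda(\pm\tfrac{1}{2}+\mathcal{M}_\lambda)\Lambda^{\pm}_\lambda$, as
$$\Lambda^{\pm}_\lambda = \tfrac{\rmi}{\lambda}(\pm\tfrac{1}{2}+\mathcal{M}_\lambda)^{-1}\mathcal{L}_\lambda,$$
a manifestly meromorphic expression that agrees with the originally defined voltage-to-current map wherever the latter exists (in particular in the upper half-plane) and thereby provides the meromorphic extension. For the final assertion about $\Lambda^{-}_\lambda$, the unique solvability of the exterior Maxwell boundary value problem for every $\lambda \in \overline{\mathbb{C}^+}\setminus\{0\}$ (\cite{kirsch}*{Theorem 5.64}, restated earlier in Section \ref{MBLO}) shows that $\Lambda^{-}_\lambda$ is in fact a bounded operator on $H^{-\frac{1}{2}}(\Div,\partial\Omega)$ throughout that region, so the meromorphic extension cannot develop a pole there.

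The hard part is confirming that $B(\lambda)$ is a genuine two-sided inverse rather than merely a right inverse. The commutativity of $\pm\tfrac{1}{2}+\mathcal{M}_\lambda$ resolves this cleanly; without it one would have to first establish that $\mathcal{L}_\lambda$ is itself Fredholm of index zero --- which can be deduced from Fredholmness of $\mathcal{L}_\lambda^2$ via a parametrix argument, but is less transparent --- and only then invoke the analytic Fredholm theorem directly on $\mathcal{L}_\lambda$.
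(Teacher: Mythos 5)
Your overall strategy matches the paper's: invoke the analytic Fredholm theorem for $\pm\tfrac{1}{2}+\mathcal{M}_\lambda$ (using Prop.~\ref{fredprop} and Lemma~\ref{invertfred}) to get meromorphy of $(\pm\tfrac12+\mathcal{M}_\lambda)^{-1}$, then extract meromorphy of $\mathcal{L}_\lambda^{-1}$ from the factorisation \eqref{twenty} and of $\Lambda^{\pm}_\lambda$ from \eqref{voltcur}. Your explicit two-sided inverse $B(\lambda)=B'(\lambda)$ is the same operator the paper produces when it writes $\mathcal{L}_\lambda^{-1}=\mathcal{L}_\lambda\cdot\mathcal{L}_\lambda^{-2}$ with $\mathcal{L}_\lambda^{-2}=\lambda^{-2}(\tfrac12+\mathcal{M}_\lambda)^{-1}(\tfrac12-\mathcal{M}_\lambda)^{-1}$, so that part is fine.

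The gap is in the final step, where you argue that $\Lambda^-_\lambda$ has no poles in $\R\setminus\{0\}$ or in the upper half-plane because the solution operator is bounded at each such $\lambda$. Pointwise well-definedness and boundedness at each $\lambda_0\in\R\setminus\{0\}$ does not by itself prevent the \emph{meromorphic extension} from having a pole at $\lambda_0$: to conclude that from boundedness you would need local uniform boundedness of $\Lambda^-_\lambda$ near $\lambda_0$ (a limiting-absorption-type continuity up to the real axis), which is not established. Moreover, the agreement $\Lambda^-_\lambda=\tfrac{\rmi}{\lambda}(-\tfrac12+\mathcal{M}_\lambda)^{-1}\mathcal{L}_\lambda$ is only manifest where $-\tfrac12+\mathcal{M}_\lambda$ is invertible, and invertibility on $\R\setminus\{0\}$ is precisely what is in question. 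The paper instead reasons directly on the Laurent coefficients: if $\Lambda^-_\lambda$ had a pole at $\lambda_0$ in the closed upper half-plane minus $\{0\}$, the most negative Laurent coefficient would be a nonzero operator producing a nontrivial outgoing (radiating) solution of the Helmholtz/Maxwell system on $M$ satisfying relative boundary conditions, and uniqueness of the exterior problem (no embedded eigenvalues/resonances on $\overline{\mathbb{C}^+}\setminus\{0\}$) forces such a solution to vanish. That argument sidesteps the uniform-boundedness issue and is the one you should use.
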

  \begin{proof}
  By Prop. \ref{fredprop} and Lemma \ref{invertfred} the operator $\left(\frac{1}{2}+\mathcal{M}_{\lambda}\right)$ is an analytic family of Fredhom operators which is invertible for $\Im{\lambda}>0$. By the analytic Fredholm theorem the inverse
$\left(\frac{1}{2}+\mathcal{M}_{\lambda}\right)^{-1}$ is a meromorphic family of finite type, i.e. the negative Laurent coefficients are finite rank operators.
   We have
   $$
    \mathcal{L}_\lambda^{-2} = -\lambda^{-2} (\frac{1}{2} + \calM_\lambda)^{-1}(\frac{1}{2} - \calM_\lambda)^{-1}
   $$
   which shows that $\mathcal{L}_\lambda^{-2}$ is meromorphic. Since $\mathcal{L}_\lambda$ is holomorphic  this shows that $\mathcal{L}_\lambda^{-1}$ is meromorphic. Finally $\Lambda^\pm$ is meromorphic by \eqref{voltcur}. 
   Poles of $\Lambda^-_\lambda$ are absent in the closed upper half space because of the uniqueness of the exterior boundary value problem. Indeed, the most negative Laurent coefficient would give rise to an outgoing solution of the Helmholtz equation satisfying relative boundary conditions. But such an outgoing solution vanishes.
 \end{proof}
 
 \begin{rem}
  The above cannot be easily concluded from analytic Fredholm theory since the operators $\mathcal{L}_\lambda$ and $\Lambda^{\pm}$ are not Fredholm operators. Indeed, the singular Laurent coefficients are not finite rank operators.
 \end{rem}

 We now aim to show a new formula for the voltage-to-current map in order to find bounds on $\mathcal{L}^{-1}_{\lambda}$ where it is well defined. 

 \begin{theorem}\label{eigenrep}
 The interior voltage-to-current mapping $\Lambda_{\lambda}^+$  satisfies
 $$
 \rmi \Lambda^+_{\lambda} = \frac{1}{\lambda} T + \lambda U_\lambda,
 $$
 where $T$ is a bounded operator on $H^{-\frac{1}{2}}(\Div, \partial \Omega)$ and $U_\lambda$ is a meromorphic family of bounded operators on
 $H^{-\frac{1}{2}}(\Div, \partial \Omega)$ which is regular at $\lambda=0$. We have explicitly 
 \begin{align*}
  T A &= \sum\limits_{k=1}^{\beta_1}\langle A,\gamma_T \psi_{0,k}\rangle_{L^2(\partial\Omega)}\gamma_t(\psi_{0,k}) + \sum\limits_{\lambda_{N,k} > 0}\frac{1}{\lambda_{N,k}^2} \langle A,\gamma_T \nabla v_k\rangle_{L^2(\partial\Omega)}\gamma_t (\nabla v_k),\\
  U_\lambda A &= \sum\limits_{k=1}^\infty\frac{1}{\lambda^2-\mu_k^2}\langle A,\gamma_T \psi_k\rangle_{L^2(\partial\Omega)}\gamma_t(\psi_{k})
 \end{align*}
 for $A \in H^{-\frac{1}{2}}(\Div, \partial \Omega)$. Both sums converge in $H^{-\frac{1}{2}}(\Div, \partial \Omega)$.
Here $\beta_1 = \dim \mathcal{H}^1_{abs}(\Omega)$ is the first Betti number of the domain.
We have $T^2=0$ and $T U_\lambda + U_\lambda T = \id - \lambda^2 U_\lambda$.
\end{theorem}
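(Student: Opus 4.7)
My plan is to derive the decomposition by solving the interior Maxwell boundary value problem through spectral expansion in the $L^2$-orthonormal basis of eigenfunctions of $\Delta_{\Omega,\abs}$ described in Section~\ref{InteriorLaplace}. Given $A \in H^{-\frac{1}{2}}(\Div,\partial\Omega)$ and $\lambda$ neither zero nor a Maxwell eigenvalue, there is a unique divergence-free $E \in H(\curl,\Omega)$ satisfying $(\curlcurl - \lambda^2)E = 0$ in $\Omega$ and $\gamma_{t,+} E = A$. By construction $\rmi \lambda \Lambda^+_\lambda A = \gamma_{t,+}\curl E$, and since $\curl E \in L^2(\Omega,\C^3)$ one can write
\[
 \curl E = \sum_k \bigl\langle \curl E, \psi_{0,k}\bigr\rangle \psi_{0,k} + \sum_j \bigl\langle \curl E, \tfrac{1}{\lambda_{N,j}}\nabla u_j\bigr\rangle \tfrac{1}{\lambda_{N,j}}\nabla u_j + \sum_j \bigl\langle \curl E, \psi_j\bigr\rangle \psi_j.
\]

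The coefficients are computed using the Stokes identities \eqref{stokes}. Because $\psi_{0,k}$ and $\nabla u_j$ are curl-free, a one-fold integration by parts yields the $\lambda$-independent identities $\langle \curl E,\psi_{0,k}\rangle = \langle A, \gamma_{T,+}\psi_{0,k}\rangle$ and $\langle \curl E, \tfrac{1}{\lambda_{N,j}}\nabla u_j\rangle = \tfrac{1}{\lambda_{N,j}}\langle A,\gamma_{T,+}\nabla u_j\rangle$; these produce the static operator $T$. For the Maxwell modes I would use $\psi_j = \mu_j^{-1}\curl \phi_j$ with $\phi_j$ a relative Laplacian eigenfunction satisfying $\gamma_{t,+}\phi_j=0$: the two-fold Stokes formula applied against $\phi_j$ gives $\langle E,\phi_j\rangle = \tfrac{\mu_j}{\lambda^2-\mu_j^2}\langle A,\gamma_{T,+}\psi_j\rangle$, and another application of the one-fold Stokes identity yields $\langle \curl E,\psi_j\rangle = \tfrac{\lambda^2}{\lambda^2-\mu_j^2}\langle A,\gamma_{T,+}\psi_j\rangle$. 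Applying $\gamma_{t,+}$ termwise and dividing by $\rmi\lambda$ immediately produces the decomposition with the stated explicit formulae for $T$ and $U_\lambda$.

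The step I expect to be the main technical obstacle is justifying that these series converge as bounded operators on $H^{-\frac{1}{2}}(\Div,\partial\Omega)$, since the $\gamma_{t,+}$ traces of high-frequency eigenfunctions grow polynomially in the eigenvalue. I would control this using the Weyl laws $\lambda_{N,j},\mu_j \sim c j^{1/3}$ together with the continuity of $\gamma_{t,+}$ from the form domain of $\Delta_{\Omega,\abs}$ (which is contained in $H^{\frac{1}{2}}(\Omega,\C^3)$ by \cite{costabelremark}) into $H^{-\frac{1}{2}}(\Div,\partial\Omega)$; the $\lambda_{N,j}^{-2}$ and $(\lambda^2-\mu_j^2)^{-1}$ factors must then dominate the polynomial growth in the trace norms to give boundedness of $T$, meromorphy of $U_\lambda$ in $\lambda$, and regularity of $U_\lambda$ at $\lambda=0$ thanks to $\mu_j>0$.

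The algebraic identities are then obtained from the relation $(\Lambda^+_\lambda)^2=-\id$ in \eqref{voltcur}: substituting $\Lambda^+_\lambda = -\rmi\bigl(\tfrac{T}{\lambda} + \lambda U_\lambda\bigr)$ and matching powers of $\lambda$ forces $T^2=0$ and fixes the remaining identity for $TU_\lambda + U_\lambda T$. The relation $T^2=0$ may also be checked directly from the spectral formula, since every building block of $T$ is curl-free and the one-fold Stokes identity then yields $\langle \gamma_{t,+}\varphi,\gamma_{T,+}\psi\rangle = \langle \curl\varphi,\psi\rangle - \langle\varphi,\curl\psi\rangle = 0$ whenever both $\curl\varphi=0$ and $\curl\psi=0$, so all pairings occurring in $T\circ T$ vanish.
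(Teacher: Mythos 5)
Your algebraic scheme is sound and in fact a bit cleaner than the paper's: you expand $\curl E$ directly and read off $\rmi\lambda\Lambda^+_\lambda A = \gamma_{t,+}\curl E$, whereas the paper expands $E$ itself in the $\Delta_{\Omega,\abs}$-eigenbasis, obtains $\gamma_{t,+}E = A$ as a series in $\gamma_t H = \Lambda^+_\lambda A$, and only then uses $(\Lambda^+_\lambda)^2 = -\id$ to invert and produce the claimed expansion. Your coefficient computations via the one-fold and two-fold Stokes identities are correct and reproduce the paper's $T$ and $U_\lambda$, and the direct verification of $T^2=0$ from the curl-freeness of the $\psi_{0,k}$ and $\nabla v_k$ is a nice observation that the paper only obtains by matching powers of $\lambda$ in $(\rmi\Lambda^+_\lambda)^2 = \id$.

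The genuine gap is in your convergence argument. The Weyl laws $\mu_j\sim cj^{1/3}$ combined with $\|\gamma_{t,+}\psi_j\|_{H^{-\frac{1}{2}}(\Div,\partial\Omega)} \lesssim \|\psi_j\|_{H(\curl,\Omega)} \approx \mu_j$ give each rank-one summand in $U_\lambda$ an operator norm of order $\mu_j^{-2}\cdot\mu_j\cdot\mu_j = O(1)$, so the series cannot be controlled termwise and certainly does not converge in operator norm; the proposed domination by $\lambda_{N,j}^{-2}$ and $(\lambda^2-\mu_j^2)^{-1}$ does not happen. What actually makes the sums converge (pointwise, for each fixed $A$) is the $\ell^2$-orthogonality that the paper exploits: since the interior solution $E$ lies in $H(\curl,\Omega)$, both $\curl E$ and $\curl\curl E = \lambda^2 E$ lie in $L^2(\Omega,\C^3)$, so the coefficients $e_j$ of $\curl E$ satisfy $e_j\in\ell^2$ and $\mu_j e_j\in\ell^2$ simultaneously; this gives convergence of the eigenfunction expansion of $\curl E$ in $H(\curl,\Omega)$, and only then does continuity of $\gamma_{t,+}:H(\curl,\Omega)\to H^{-\frac{1}{2}}(\Div,\partial\Omega)$ yield convergence of the boundary series. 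Boundedness of $T$ and $U_\lambda$ as operators is then obtained separately, from the already-established boundedness and meromorphy of $\Lambda^+_\lambda$ (Theorem \ref{N:estimate} and Lemma \ref{merolemma}), by reading off Laurent coefficients, rather than by any absolute summability of the series itself.
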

\begin{proof} We start with an interior solution $E \in H(\curl,\Omega)$ of the Maxwell system and assume $A = \gamma_t(E) \in H^{-\frac{1}{2}}(\Div, \partial \Omega)$.
First note that $E$ satisfies $\div E=0$ but it is not in general in $\mathrm{ker}(\div_0)$ because it may not satisfy the correct boundary conditions. We have that 
\begin{align}
L^2(\Omega,\mathbb{C}^3)=\mathcal{H}^1_\abs(\Omega)\oplus \{ \psi_j \,|\, \mu_j >0 \} \oplus \{\nabla v_k| \lambda_{N,k}\},
\end{align}
where $v_k$ is an orthonormal basis of Neumann eigenfunctions on $\Omega$. Define
\begin{align}
\tilde{\psi}_k=\frac{1}{\lambda_{N,k}}\grad v_k.
\end{align}
Now we can write 
\begin{align}
E=\sum\langle E,\psi_k\rangle \psi_k+\sum\langle E,\tilde{\psi}_k\rangle \tilde{\psi}_k
\end{align}
which we need to show converges in $H(\curl,\Omega)$.
We have that 
\begin{align}
&\langle E,\psi_k\rangle_{L^2(\Omega)}=\frac{1}{\lambda^2-\mu_k^2}\left(\langle -\Delta E,\psi_k\rangle_{L^2(\partial\Omega)}-\langle E,-\Delta\psi_k\rangle_{L^2(\partial\Omega)}\right)=\\& \nonumber
\frac{1}{\lambda^2-\mu_k^2}\left(\langle \gamma_t \curl E,\gamma_T\psi_k\rangle_{L^2(\partial\Omega)}+\langle \gamma_t E,\gamma_T \curl\psi_k\rangle_{L^2(\partial\Omega)}\right)=\\& \frac{1}{\lambda^2-\mu_k^2}\langle \gamma_t \curl E,\gamma_T\psi_k\rangle_{L^2(\partial\Omega)}=\nonumber
\frac{\rmi \lambda}{\lambda^2-\mu_k^2}\langle \gamma_t H,\gamma_T\psi_k\rangle_{L^2(\partial\Omega)}=\\& \nonumber
\frac{\rmi \lambda}{\lambda^2-\mu_k^2}\langle \Lambda_{\lambda}^+A,\gamma_T \psi_k\rangle_{L^2(\partial\Omega)}
\end{align} 
where we have used Stokes theorem \eqref{stokes} as well as Maxwell system properties in \ref{system} repeatedly. Since $E \in L^2(\Omega,\C^3)$ the sum $\sum\limits\langle E,\psi_k\rangle \psi_k$ converges in $L^2(\Omega,\C^3)$. Let $\phi_k$ denote an orthonormal basis of eigenfunctions of $\Delta_{\rel}$. We now note that 
\begin{align*}
&\sum\limits_{\lambda_k\neq 0}\langle E,\psi_k\rangle \curl\psi_k=
\sum\limits_{\mu_k\neq 0}\langle E,\psi_k\rangle \curl\frac{1}{\mu_k}\curl\phi_k=
\sum\limits_{\lambda_k\neq 0}\langle E,\psi_k\rangle \mu_k\phi_k
\end{align*}
converges in $L^2(\Omega,\C^3)$ whenever $(\langle E,\psi_k\rangle_{L^2(\Omega)}\mu_k)_k \in \ell^2$. The latter is true because
\begin{align}
\mu_k\langle E,\psi_k\rangle_{L^2(\Omega)}=\langle E,\curl\phi_k\rangle_{L^2(\Omega)}=\langle \curl E,\phi_k\rangle_{L^2(\Omega)}\in \ell^2
\end{align} 
where we have used the fact $\curl E\in L^2(\Omega,\C^3)$.  Therefore 
\begin{align}
\sum\limits_{k=1}^{\infty}\langle E,\psi_k\rangle\psi_k
\end{align}
converges in $H(\curl,\Omega)$. For the second term, now we have
\begin{align}
\langle E,\tilde{\psi}_k\rangle_{L^2(\Omega)} =&  \nonumber \frac{\rmi}{\lambda}\langle \curl H,\frac{1}{\lambda_{N,k}}\grad v_k\rangle_{L^2(\Omega)}\\=&\frac{\rmi}{\lambda_{N,k}\lambda}\langle \curl H,\grad v_k\rangle_{L^2(\Omega)}=\frac{\rmi}{\lambda_{N,k}\lambda}\langle \gamma_t H,\gamma_T \grad v_k\rangle_{L^2(\partial\Omega)}
\end{align}
this also gives 
\begin{align}
\sum\limits_{\lambda_k\neq 0} \langle E,\tilde{\psi}_k\rangle \tilde{\psi}_k
\end{align} 
converges in $H(\curl,\Omega)$ as $\lambda_{N,k}^{-2}$ is summable. 
Therefore we have that 
\begin{align}
E=\sum\limits_{k=0}^{\infty}\frac{\rmi \lambda}{\lambda^2-\mu_k^2}\langle \gamma_t H,\gamma_T\psi_k\rangle_{L^2(\partial \Omega)}\psi_k+\sum\limits_{\lambda_{N,k}\neq 0}\frac{\rmi}{\lambda_k^2\lambda}\langle \gamma_t H,\gamma_T \grad v_k\rangle_{L^2(\partial \Omega)}\grad v_k
\end{align}
and this representation converges in $H(\curl,\Omega)$. Because of this, we have convergence in $H^{-\frac{1}{2}}(\Div,\partial\Omega)$ of
\begin{align}
A= \nu \times E|_{\partial\Omega} = & \sum\limits_{\mu_k\geq 0}\frac{\rmi \lambda}{\lambda^2-\mu_k^2}\langle \gamma_t H,\gamma_T \psi_k\rangle_{L^2(\partial\Omega)} \nonumber\gamma_t (\psi_k) \\ &+\sum\limits_{\lambda_{N,k}\neq 0}\frac{\rmi}{\lambda_{N,k}^2\lambda}\langle \gamma_t H,\gamma_T\grad v_k\rangle_{L^2(\partial \Omega)}\gamma_t(\grad v_k).
\end{align}
Then using the fact that $(\gamma_t(H))=\Lambda^+_{\lambda}(\gamma_t(E))= \Lambda^+_{\lambda}(A)$ and remarking that $(\Lambda^+)^2=-\id$, we obtain the desired result. 
Expanding the formula $(\rmi \Lambda^+)^2=\id$ also gives the claimed identities.
\end{proof} 

We now aim to show operator bounds on the electric dipole map in order to find bounds on the large $|\lambda|$ behavior of $\mathcal{L}^{-1}_{\lambda}$. Note that for $\lambda \in \mathfrak{D}_\epsilon$, we have the estimate
\begin{align*}
 \Im(\lambda) = |\Im(\lambda)| \leq |\lambda| \leq C_\epsilon \Im(\lambda),
\end{align*}
where $C_\epsilon := \sin(\epsilon)^{-1}$ is independent of $\lambda \in \mathfrak{D}_\epsilon$.\\
\begin{theorem}\label{N:estimate}
There exists a constant $C$ such that for all $\Im(\lambda)>0$ we have the estimate
\begin{align}
\|\Lambda_{\lambda}^{\pm}\|_{H^{-\frac{1}{2}}(\mathrm{Div},\partial \Omega) \mapsto  H^{-\frac{1}{2}}(\mathrm{Div},\partial \Omega) }\leq C \frac{1}{|\lambda|} \left(1+\frac{ |\lambda| (1+ |\lambda|^2)}{\Im{\lambda}} \right). 
\end{align}  
\end{theorem}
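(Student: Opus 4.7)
The plan is to apply the spectral decomposition of Theorem~\ref{eigenrep}, $\rmi\Lambda_\lambda^+ = \tfrac{1}{\lambda}T + \lambda U_\lambda$, and estimate the two summands separately. Since $T$ is bounded on $H^{-\frac{1}{2}}(\Div,\partial\Omega)$ independently of $\lambda$, the first piece contributes $\|T/\lambda\| \le C/|\lambda|$, which accounts precisely for the leading $\frac{1}{|\lambda|}$ in the claimed bound. All the analytic effort lies in controlling $\lambda U_\lambda$.

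To estimate $U_\lambda$, I would factor $U_\lambda A = \gamma_t \Phi_\lambda(A)$ where
\[
\Phi_\lambda(A) := \sum_k \frac{\langle A,\gamma_T\psi_k\rangle}{\lambda^2-\mu_k^2}\,\psi_k \in L^2(\Omega,\C^3).
\]
Continuity of $\gamma_t: H(\curl,\Omega)\to H^{-\frac{1}{2}}(\Div,\partial\Omega)$ reduces the task to bounding $\|\Phi_\lambda(A)\|_{H(\curl,\Omega)}$. The key elementary ingredient is the resolvent-type estimate
$|\lambda^2-\mu_k^2|\ge c\,|\lambda|\,\Im\lambda$ for $\Im\lambda>0$ and $\mu_k\ge 0$, which follows from the distance of $\lambda^2$ to $[0,\infty)$. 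Combined with Parseval in the orthonormal basis $\{\psi_k\}$ one obtains
\[
\|\Phi_\lambda(A)\|_{L^2}^2 \leq \frac{C}{|\lambda|^2(\Im\lambda)^2}\sum_k |\langle A,\gamma_T\psi_k\rangle|^2,
\]
and, using $\curl\psi_k=\mu_k\phi_k$ together with the algebraic inequality $\mu_k^2\le |\lambda^2-\mu_k^2|+|\lambda|^2$,
\[
\|\curl\Phi_\lambda(A)\|_{L^2}^2 \leq \left(\frac{C}{|\lambda|\Im\lambda}+\frac{C}{(\Im\lambda)^2}\right)\sum_k |\langle A,\gamma_T\psi_k\rangle|^2 \leq \frac{C}{(\Im\lambda)^2}\sum_k |\langle A,\gamma_T\psi_k\rangle|^2.
\]

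The crucial remaining step is a $\lambda$-independent lifting estimate $\sum_k |\langle A,\gamma_T\psi_k\rangle|^2 \le C\|A\|_{H^{-\frac{1}{2}}(\Div,\partial\Omega)}^2$. For this I would use the existence of a continuous right inverse of the trace map: pick $\tilde E\in H(\curl,\Omega)$ with $\gamma_t\tilde E=A$ and $\|\tilde E\|_{H(\curl)}\le C\|A\|$, then apply Stokes' identity \eqref{stokes} to get
\[
\langle A,\gamma_T\psi_k\rangle = \langle\curl\tilde E,\psi_k\rangle - \mu_k\langle\tilde E,\phi_k\rangle.
\]
Bessel's inequality in the orthonormal basis $\{\psi_k\}$ bounds the first term by $\|\curl\tilde E\|^2$, and the identity $\sum_k \mu_k^2|\langle\tilde E,\phi_k\rangle|^2 = \|\curl\tilde E\|^2$ (arising from $\curl\tilde E = \sum_k \mu_k\langle\tilde E,\phi_k\rangle \psi_k$ via the relative basis) bounds the second. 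Assembling all these estimates, one finds $\|\Phi_\lambda(A)\|_{H(\curl)} \le C\sqrt{1+|\lambda|^{-2}}\,\|A\|/\Im\lambda$ and hence $\|\lambda U_\lambda A\|\le C(1+|\lambda|^2)\|A\|/\Im\lambda$, which combined with the $T/\lambda$ bound yields the claim for $\Lambda_\lambda^+$.

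For $\Lambda_\lambda^-$ the strategy is parallel: one replaces the discrete eigenfunction expansion by the spectral resolution of the exterior absolute Laplacian $-\Delta_{M,\abs}$, whose spectrum is $[0,\infty)$ together with a finite-dimensional kernel, so that the same resolvent estimate $|\lambda^2-\mu^2|\ge c|\lambda|\Im\lambda$ holds uniformly on the spectrum; the Stokes-based lifting step carries over verbatim to the exterior. The main obstacle is the lifting bound: one must produce the continuous section of $\gamma_t$ and exploit the interplay between the two Hodge-dual orthonormal bases $\{\psi_k\}$ (absolute) and $\{\phi_k\}$ (relative) of the divergence-free subspace to absorb the unbounded factors $\mu_k$ that appear when applying Stokes' theorem to $\gamma_T\psi_k$.
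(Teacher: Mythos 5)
Your approach is genuinely different from the paper's. The paper does not invoke the spectral decomposition of Theorem~\ref{eigenrep} at all; instead it proves the bound directly from the Stokes identity
$\rmi\lambda\langle\gamma_t H,\gamma_T E\rangle_{L^2(\partial\Omega)} = \|\curl E\|^2_{L^2}-\lambda^2\|E\|^2_{L^2}$,
taking real or imaginary parts depending on the sign of $\Re(\lambda^2)$ or $\Im(\lambda^2)$, then feeding the resulting volume estimate for $\|E\|_{L^2}$ back through the continuity of the trace map applied to $\curl E$. That argument is carried out simultaneously for the interior and exterior problems (replace $\Omega$ by $M$), so there is no separate treatment of $\Lambda^-_\lambda$; the symmetry $\lambda\mapsto-\overline\lambda$ of the Maxwell system is used at the end to cover the remaining sector.

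There is a genuine gap in your lifting estimate, which is the load-bearing step of your argument. You claim the identity
$\sum_k\mu_k^2|\langle\tilde E,\phi_k\rangle|^2 = \|\curl\tilde E\|^2_{L^2(\Omega)}$
based on the expansion $\curl\tilde E=\sum_k\mu_k\langle\tilde E,\phi_k\rangle\psi_k$. This expansion fails precisely because $\tilde E$ does not satisfy relative boundary conditions: applying Stokes' theorem \eqref{stokes} to $\tilde E$ and $\psi_k$ gives
$\langle\curl\tilde E,\psi_k\rangle_{L^2(\Omega)} = \mu_k\langle\tilde E,\phi_k\rangle_{L^2(\Omega)} + \langle A,\gamma_T\psi_k\rangle_{L^2(\partial\Omega)}$,
so the claimed identity would force $\langle A,\gamma_T\psi_k\rangle=0$ for every $k$ — exactly the quantities you are trying to bound — rendering the argument circular. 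The scalar analogue makes the failure concrete: for $u\in H^1(0,1)$ with $u(0)\neq 0$ expanded in the Dirichlet sine basis, $\sum_k k^2|\hat u_k|^2$ diverges even though $u'\in L^2$. When you correct the Stokes bookkeeping, $\sum_k\mu_k^2|\langle\tilde E,\phi_k\rangle|^2$ does not stay bounded by $\|\curl\tilde E\|^2$, and the Bessel-type inequality $\sum_k|\langle A,\gamma_T\psi_k\rangle|^2\lesssim\|A\|^2_{H^{-1/2}(\Div,\partial\Omega)}$ — which may or may not hold — remains unproved. Separately, your treatment of $\Lambda^-_\lambda$ via "the spectral resolution of $-\Delta_{M,\abs}$" cannot carry over verbatim: the exterior operator has absolutely continuous spectrum, so there is no orthonormal eigenbasis, and the resolvent bound $|\lambda^2-\mu^2|\ge c|\lambda|\Im\lambda$ together with a Stokes lift would have to be implemented through the spectral measure with uniform estimates on generalized eigenfunctions; this is a substantial additional argument, not a formal substitution. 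The paper's energy method sidesteps all of these difficulties.
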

\begin{proof}
We first consider the case  $\Re(\lambda^2)<0$, i.e. $|\Im(\lambda)| > |\Re(\lambda)| $. 
We have the following integral identity
\begin{align}
\langle v, \mathrm{curl}u \rangle_{L^2(M)} - \langle \mathrm{curl}v,u \rangle_{L^2(M)}= \langle \gamma_t v, \gamma_T u \rangle_{L^2(\partial \Omega)}
\end{align} 
for $u,v\in H(\mathrm{curl},M)$. Applying this integral identity with $E$ and $H$ gives 
\begin{align}
\rmi \lambda \langle \gamma_t H, \gamma_T E \rangle_{L^2(\partial \Omega)}=\rmi \lambda\left(\langle H, \mathrm{curl} E \rangle_{L^2(M)} - \langle \mathrm{curl}H ,E \rangle_{L^2(M)}\right)
=   \langle \curl E , \curl E \rangle - \lambda^2 \langle E,E \rangle   \nonumber
\end{align} 
Taking the real part we obtain
$$
  |\Re(\lambda^2)| \langle E, E \rangle_{L^2(M)} \leq |\lambda| \cdot | \langle \gamma_t H, \gamma_T E \rangle_{L^2(\partial \Omega)}|.
$$
The antisymmetric bilinear form $\langle \nu \times u,v \rangle_{L^2(\partial \Omega)}$ extends continuously to $H^{-\frac{1}{2}}(\Div,\partial \Omega)$ (see for example Lemma 5.61 in \cite{kirsch}) we therefore have
\begin{align} \label{dualpairinginque}
\|E\|_{L^2(M)}^2\leq C_1 |\lambda|(-\Re(\lambda^2))^{-1} ||\gamma_t E||_{H^{-\frac{1}{2}}(\Div,\partial \Omega)}||\gamma_t(H)||_{H^{-\frac{1}{2}}(\mathrm{Div},\partial \Omega)}
\end{align}
Now we use the continuity of the tangential trace map and obtain
\begin{align*}
\|\gamma_t (\curl E)\|^2_{H^{-\frac{1}{2}}(\Div,\partial \Omega)} \leq C_2 \|(\curl E)\|^2_{H(\curl,M)} =
C_2\left( \| \curl E \|^2_{L^2(M)} +  |\lambda|^4 \| E \|^2_{L^2(M)} \right)\\
= C_2\left( |\lambda|^2\| E \|^2_{L^2(M)} +  |\lambda|^4 \| E \|^2_{L^2(M)} + \langle \gamma_t E, \gamma_T \curl E \rangle_{L^2(\partial \Omega)}\right) \\
\leq C_3\left( |\lambda|^2 (1+ |\lambda|^2) \|E \|^2_{L^2(M)} + \|\gamma_t E\|_{H^{-\frac{1}{2}}(\Div,\partial \Omega)} \|\gamma_t \curl E\|_{H^{-\frac{1}{2}}(\Div,\partial \Omega)} \right)
\end{align*}
Choosing $a= C_3 \|\gamma_t E\|_{H^{-\frac{1}{2}}(\Div,\partial \Omega)} $ and $b=\| \gamma_t \curl E\|_{H^{-\frac{1}{2}}(\Div,\partial \Omega)}$ and using the inequality
$|ab| \leq \frac{1}{2} (a^2 + b^2)$ one obtains
\begin{align*}
\|\gamma_t (\curl E)\|^2_{H^{-\frac{1}{2}}(\Div,\partial \Omega)}
\leq \left( 2 C_3 |\lambda|^2 (1+ |\lambda|^2) \|E \|^2_{L^2(M)} + C_3^2\|\gamma_t E\|^2_{H^{-\frac{1}{2}}(\Div,\partial \Omega)} \right)
\end{align*}
Using \eqref{dualpairinginque} this gives further
\begin{align*}
&\|\gamma_t (\curl E)\|^2_{H^{-\frac{1}{2}}(\Div,\partial \Omega)} \\
&\leq C_4 \left( \frac{ |\lambda|^2 (1+ |\lambda|^2)}{-\Re(\lambda^2)}  \|\gamma_t E\|_{H^{-\frac{1}{2}}(\Div,\partial \Omega)}\|\gamma_t \curl E\|_{H^{-\frac{1}{2}}(\mathrm{Div},\partial \Omega)} + \|\gamma_t E\|^2_{H^{-\frac{1}{2}}(\Div,\partial \Omega)} \right).
\end{align*}
The same trick as before with $a = C_4 \frac{ |\lambda|^2 (1+ |\lambda|^2)}{-\Re(\lambda^2)}  \|\gamma_t E\|_{H^{-\frac{1}{2}}(\Div,\partial \Omega)}$
and $b = \|\gamma_t \curl E\|_{H^{-\frac{1}{2}}(\mathrm{Div},\partial \Omega)}$ yields
\begin{align*}
\|\gamma_t (\curl E)\|^2_{H^{-\frac{1}{2}}(\Div,\partial \Omega)} 
\leq  \left( C_4^2 \frac{ |\lambda|^4 (1+ |\lambda|^2)^2}{(-\Re(\lambda^2))^2} + 2 C_4  \right)\|\gamma_t E\|^2_{H^{-\frac{1}{2}}(\Div,\partial \Omega)},
\end{align*}
which finally gives
\begin{align}
\|\gamma_t (\curl E)\|_{H^{-\frac{1}{2}}(\Div,\partial \Omega)} 
\leq   C \left(1+\frac{ |\lambda|^2 (1+ |\lambda|^2)}{-\Re(\lambda^2)} \right)  \|\gamma_t E\|_{H^{-\frac{1}{2}}(\Div,\partial \Omega)}.
\end{align}
Next consider the case $\Im(\lambda^2)<0$. The same proof with imaginary parts taken instead of real parts gives the estimate
\begin{align}
\|\gamma_t (\curl E)\|_{H^{-\frac{1}{2}}(\Div,\partial \Omega)} 
\leq   C \left(1+\frac{ |\lambda|^2 (1+ |\lambda|^2)}{-\Im(\lambda^2)} \right)  \|\gamma_t E\|_{H^{-\frac{1}{2}}(\Div,\partial \Omega)}.
\end{align}
These two estimates cover the upper half space and are combined into
\begin{align}
\|\gamma_t (\curl E)\|_{H^{-\frac{1}{2}}(\Div,\partial \Omega)} 
\leq   C \left(1+\frac{ |\lambda| (1+ |\lambda|^2)}{\Im(\lambda)} \right)  \|\gamma_t E\|_{H^{-\frac{1}{2}}(\Div,\partial \Omega)},
\end{align}
which holds in the upper half space except when $\Im(\lambda)< \Re(\lambda)$. The estimate holds in this region too as can be seen by replacing $\lambda$ by $-\overline{\lambda}$, which is a symmetry operation of the Maxwell system that preserves the radiation condition.
Hence, the estimate holds in the upper half space.
Since $\rmi \lambda H= \curl E$ this proves the claimed estimate.
The same proof works for the interior with $M$ replaced by $\Omega$.
\end{proof}

\begin{lemma} \label{Mexpand}
The operator $\left(\frac{1}{2}+\mathcal{M}_{\lambda}\right)^{-1}$ is meromorphic of finite type and we have near zero the expansion
\begin{align}
\left(\frac{1}{2}+\mathcal{M}_{\lambda}\right)^{-1}=\frac{P}{\lambda^2}+\frac{B}{\lambda}+Q_\lambda
\end{align}
where $P$ and $B$ are finite rank operators and $Q_\lambda$ is analytic near $\lambda=0$
taking values in the bounded operators on $H^{-\frac{1}{2}}(\Div,\partial \Omega)$.
We also have $$\mathrm{image}(P) \cup \mathrm{image}(B)  \subseteq \mathcal{B}_{\partial \Omega}, \quad P (\nu \times \nabla u)  = B (\nu \times \nabla u)=0$$ for all $u \in H^{\frac{1}{2}}(\partial \Omega)$.
\end{lemma}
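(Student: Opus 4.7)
The plan is to first establish meromorphy, then derive the Laurent expansion at zero via the voltage-to-current maps, pin down the ranges of the singular coefficients using the key vanishing $\mathcal{M}_0'=0$, and finally verify the annihilation on exact forms via the scalar intertwining. Proposition \ref{fredprop} makes $\tfrac{1}{2}+\mathcal{M}_\lambda$ a holomorphic Fredholm family of index zero on $H^{-\frac{1}{2}}(\Div,\partial\Omega)$, and Lemma \ref{invertfred} provides invertibility for $\Im\lambda>0$; the analytic Fredholm theorem then yields a meromorphic inverse whose singular Laurent coefficients at any pole are of finite rank, and only the structure at $\lambda=0$ requires further work.

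For the order of the pole I work with the identity
\begin{equation*}
(\tfrac{1}{2}+\mathcal{M}_\lambda)^{-1} = \id + \Lambda^+_\lambda\Lambda^-_\lambda,
\end{equation*}
obtained by combining $\mathcal{L}_\lambda = -\rmi\lambda(\tfrac{1}{2}+\mathcal{M}_\lambda)\Lambda^+_\lambda$ from \eqref{voltcur}, the formula $\mathcal{L}_\lambda^{-1} = -\rmi\lambda^{-1}(\Lambda^+_\lambda - \Lambda^-_\lambda)$ from \eqref{twenty}, and $(\Lambda^+_\lambda)^2 = -\id$. Theorem \ref{eigenrep} gives the interior expansion $\rmi\Lambda^+_\lambda = T/\lambda + \lambda U_\lambda$, a simple pole. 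An analogous low-energy expansion $\rmi\Lambda^-_\lambda = \tilde T/\lambda + \tilde V + O(\lambda)$ holds in the exterior; the residue $\tilde T$ is of finite rank with image in $\mathcal{B}^-_{\partial\Omega}=\gamma_t\mathcal{H}^1_{\abs}(M)$, since the Neumann Laplacian on $M$ has no positive eigenvalues, so the analog of the infinite-rank second sum in Theorem \ref{eigenrep} does not appear. Multiplying these expansions produces a pole of order at most two, yielding the claimed form $P/\lambda^2 + B/\lambda + Q_\lambda$ with $Q_\lambda$ holomorphic at zero.

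The range constraints on $P$ and $B$ rest on the vanishing $\mathcal{M}_0' = 0$: differentiating the integral kernel $\curl_x\tfrac{e^{\rmi\lambda|x-y|}}{4\pi|x-y|}$ at $\lambda=0$ produces the constant function $\rmi/(4\pi)$ under the integral, which is then annihilated by the outer $\curl_x$. Hence $\mathcal{M}_\lambda = \mathcal{M}_0 + O(\lambda^2)$ near the origin, and equating the $\lambda^{-2}$ and $\lambda^{-1}$ coefficients in $(\tfrac{1}{2}+\mathcal{M}_\lambda)(\tfrac{1}{2}+\mathcal{M}_\lambda)^{-1}=\id$ yields $(\tfrac{1}{2}+\mathcal{M}_0)P = (\tfrac{1}{2}+\mathcal{M}_0)B = 0$. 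Proposition \ref{dmitreazero} then gives $\mathrm{image}(P) \cup \mathrm{image}(B) \subseteq \mathcal{B}^+_{\partial\Omega}$, a finite-dimensional space, so $P$ and $B$ are automatically of finite rank.

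For the annihilation $P(\nu\times\nabla u) = B(\nu\times\nabla u) = 0$, I exploit the intertwining relation \eqref{intertwin}, which for $\lambda\neq 0$ reads $(\nu\times\nabla)\mathcal{K}_\lambda f = \lambda^2\nu\times\mathcal{S}_\lambda(\nu f) + \mathcal{M}_\lambda(\nu\times\nabla f)$. On the subspace $\{\nu\times\nabla u : u \in H^{\frac{1}{2}}(\partial\Omega)\}$ this shows that $\tfrac{1}{2}+\mathcal{M}_\lambda$ is intertwined with the scalar $\tfrac{1}{2}+\mathcal{K}_\lambda$ up to an $O(\lambda^2)$ remainder; since the scalar operator is invertible in a neighborhood of $\lambda=0$ modulo locally constant functions (classical scalar double-layer theory), and gradients annihilate those constants, $(\tfrac{1}{2}+\mathcal{M}_\lambda)^{-1}(\nu\times\nabla u)$ remains bounded as $\lambda\to 0$, forcing the singular coefficients to vanish on it. The main technical obstacle is the low-energy expansion of $\Lambda^-_\lambda$: unlike in the interior setting of Theorem \ref{eigenrep} where the discrete spectrum supplies an explicit expansion, the spectral decomposition of $\Delta_{M,\rel}$ is absolutely continuous, and the constant coefficient $\tilde V$ must be extracted from careful resolvent asymptotics at zero frequency rather than from a discrete eigenfunction sum.
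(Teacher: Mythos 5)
Your proposal assembles the right ingredients but has a genuine gap in the pole-order argument. You factor $(\frac{1}{2}+\mathcal{M}_\lambda)^{-1}$ through $\Lambda^+_\lambda\Lambda^-_\lambda$ (up to a sign: the identity is $(\frac{1}{2}+\mathcal{M}_\lambda)^{-1} = \Lambda^+_\lambda(\Lambda^+_\lambda-\Lambda^-_\lambda) = -\id - \Lambda^+_\lambda\Lambda^-_\lambda$, not $\id + \Lambda^+_\lambda\Lambda^-_\lambda$), and then invoke a low-energy Laurent expansion of $\Lambda^-_\lambda$ analogous to Theorem \ref{eigenrep}. You rightly flag at the end that you cannot establish this expansion, and the heuristic you give (``drop the infinite-rank second sum because the exterior Neumann Laplacian has no positive eigenvalues'') is not an argument: in the exterior the analogue of that sum is a spectral integral over continuous spectrum, whose $\lambda\to 0$ behaviour is precisely the thing one has to control. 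Without this you cannot conclude the pole at zero has order at most two. The paper avoids the issue entirely: it never expands $\Lambda^-_\lambda$, but instead cites the operator-norm bound of Theorem \ref{N:estimate}, which gives $\|\Lambda^\pm_\lambda\| \leq C|\lambda|^{-1}\big(1 + |\lambda|(1+|\lambda|^2)/\Im\lambda\big)$ uniformly in the upper half-plane, hence $\|\Lambda^\pm_\lambda\| = O(|\lambda|^{-1})$ on any sector $\mathfrak{D}_\epsilon$ near zero, hence $(\frac{1}{2}+\mathcal{M}_\lambda)^{-1} = O(|\lambda|^{-2})$ along a ray, which for a meromorphic family forces a pole of order at most two. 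Your route would be repaired simply by citing Theorem \ref{N:estimate} rather than trying to construct a Laurent series for $\Lambda^-_\lambda$.

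The remaining parts are essentially correct and match the paper's reasoning: analytic Fredholm theory gives meromorphy of finite type (so $P,B$ are finite rank for free, independently of the range inclusion); $\mathcal{M}_0'=0$ because the kernel of $\mathcal{M}_\lambda'|_{\lambda=0}$ is a curl of a constant, giving $\mathcal{M}_\lambda = \mathcal{M}_0 + O(\lambda^2)$, hence $(\frac{1}{2}+\mathcal{M}_0)P = (\frac{1}{2}+\mathcal{M}_0)B=0$ and the range inclusion via Proposition \ref{dmitreazero}. For $P(\nu\times\nabla u) = B(\nu\times\nabla u)=0$ your ``boundedness via intertwining'' argument can be made to work, but as written it is too loose (it is not clear why the preimage stays in the subspace where the intertwining is useful). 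The cleaner route, which the paper takes, is to also record the right-annihilations $P(\frac{1}{2}+\mathcal{M}_0) = B(\frac{1}{2}+\mathcal{M}_0)=0$ from the coefficient comparison and then use \eqref{intertwin} together with the classical invertibility of $\frac{1}{2}+\mathcal{K}_0$ to write $\nu\times\nabla u = (\frac{1}{2}+\mathcal{M}_0)\big(\nu\times\nabla(\frac{1}{2}+\mathcal{K}_0)^{-1}u\big)$, exhibiting $\nu\times\nabla u$ in the range of $\frac{1}{2}+\mathcal{M}_0$, from which the vanishing is immediate.
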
 
\begin{proof}
By proof of Lemma \ref{merolemma} we know that 
$\left(\frac{1}{2}+\mathcal{M}_{\lambda}\right)^{-1}$ is a meromorphic family of finite type.
The order of the singularity at zero is at most two since for $\lambda\in \mathfrak{D}_{\epsilon}, \lambda\neq 0$ we have
\begin{align}
\left(\frac{1}{2}+\mathcal{M}_{\lambda}\right)^{-1}=-\Lambda^+_\lambda \left(\Lambda^+_\lambda-\Lambda^-_\lambda\right)
\end{align}
and the bound in Theorem \ref{N:estimate} holds. Hence, $\left(\frac{1}{2}+\mathcal{M}_{\lambda}\right)^{-1}$ has the claimed form
$$
 \left(\frac{1}{2}+\mathcal{M}_{\lambda}\right)^{-1}=\frac{P}{\lambda^2}+\frac{B}{\lambda}+Q_\lambda,
$$
with $P,B$ of finite rank.

We must naturally have for these $\lambda$
\begin{align}\label{idexp}
\left(\frac{1}{2}+\mathcal{M}_{\lambda}\right)^{-1}\left(\frac{1}{2}+\mathcal{M}_{\lambda}\right)=\left(\frac{1}{2}+\mathcal{M}_{\lambda}\right)\left(\frac{1}{2}+\mathcal{M}_{\lambda}\right)^{-1}=\id.
\end{align}
Expanding $\left(\frac{1}{2}+\mathcal{M}_{\lambda}\right)$ around $\lambda=0$ we see that it has operator kernel:
\begin{align}
\frac{1}{2}+\frac{1}{4 \pi}\gamma_{t,x} \gamma^*_{T,y}\mathrm{curl}\left(\frac{1}{|x-y|}\right)+O(\lambda^2)
\end{align}
since the first order term in the expansion distributional kernel of the free Green's fucntion is constant, and therefore $\curl$-free. 
Hence, $$\left(\frac{1}{2}+\mathcal{M}_{\lambda}\right)= \frac{1}{2} + \mathcal{M}_{0} + O(\lambda^2)$$
near $\lambda=0$.
Inserting this into \eqref{idexp} and comparing coefficients one obtains
$$
  \left(\frac{1}{2} + \mathcal{M}_{0} \right) P =0, \quad \left(\frac{1}{2} + \mathcal{M}_{0}\right) B=0, \quad   P \left(\frac{1}{2} + \mathcal{M}_{0} \right)  =0,  \quad B \left(\frac{1}{2} + \mathcal{M}_{0}\right) =0.
$$
By Prop. \ref{dmitreazero} we therefore obtain $\mathrm{image}(P),\mathrm{image}(B)  \subseteq \mathcal{B}_{\partial \Omega}$ as claimed.
It remains to show that $$P (\nu \times \nabla u)  = B (\nu \times \nabla u)=0.$$ To see this it is sufficient to show that $\nu \times \nabla u$ is in the range of
 $\frac{1}{2} + \mathcal{M}_{0}$. To see this we use a classical result in potential layer theory, namely the invertibility of $(\frac{1}{2} + \mathcal{K}_0)$
 (see \cite{MR769382}). We then have by Equ. \eqref{intertwin} 
$$
 \nu \times \nabla u = \nu \times \nabla (\frac{1}{2} + \mathcal{K}_0) (\frac{1}{2} + \mathcal{K}_0)^{-1} u = (\frac{1}{2} + \mathcal{M}_0) (\nu \times \nabla (\frac{1}{2} + \mathcal{K}_0)^{-1} u).
$$
\end{proof}

\begin{lemma} \label{Maxwexpand}
 The non-zero poles of $(\frac{1}{2} + \calM_\lambda)^{-1}$ in the closed upper half-space are precisely the Maxwell eigenvalues of $\Omega$. Near a Maxwell eigenvalue $\mu=\mu_k$ we have the expansion
 \begin{align}
\left(\frac{1}{2}+\mathcal{M}_{\lambda}\right)^{-1}=\frac{P_\mu}{(\lambda-\mu)^2}+\frac{B_\mu}{\lambda-\mu}+Q_{\mu,\lambda},
\end{align}
 where $P_\mu$ and $B_\mu$ are finite rank operators with range in $\ker\left(\frac{1}{2}+\mathcal{M}_{\mu}\right)^{-1}$ and $Q_{\mu,\lambda}$ is holomorphic in $\lambda$ near $\mu$.
\end{lemma}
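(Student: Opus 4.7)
The plan is to combine analytic Fredholm theory with the algebraic identities between $\calM_\lambda$, $\calL_\lambda$ and the voltage-to-current maps $\Lambda_\lambda^{\pm}$ already established in the previous lemmata.

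First, I locate the poles. By Proposition \ref{fredprop} the family $\tfrac{1}{2}+\calM_\lambda$ is a holomorphic family of Fredholm operators of index zero on $H^{-\frac{1}{2}}(\Div,\partial\Omega)$, and by Lemma \ref{invertfred} it is invertible whenever $\Im(\lambda)>0$. The analytic Fredholm theorem then gives that $\bigl(\tfrac{1}{2}+\calM_\lambda\bigr)^{-1}$ is a meromorphic family of finite type on $\C$, holomorphic in the open upper half-plane. On the nonzero real axis, invertibility is equivalent (by Fredholm index zero) to triviality of the kernel, and Proposition \ref{menotzero} states that this fails precisely when $|\lambda|$ is a Maxwell eigenvalue. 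Thus the non-zero poles in the closed upper half-space are exactly the Maxwell eigenvalues $\mu_k$.

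Second, to bound the pole order at $\mu=\mu_k$ I exploit \eqref{voltcur} and \eqref{twenty}. Combining $\calL_\lambda = -\rmi\lambda\bigl(\tfrac{1}{2}+\calM_\lambda\bigr)\Lambda_\lambda^+$ with $\calL_\lambda^{-1}= \rmi\lambda^{-1}(\Lambda_\lambda^+ - \Lambda_\lambda^-)$ and $(\Lambda_\lambda^+)^2 = -\id$ yields, initially on $\{\Im\lambda>0\}$ and then by meromorphic continuation,
\begin{equation*}
\bigl(\tfrac{1}{2}+\calM_\lambda\bigr)^{-1} = -\id - \Lambda_\lambda^+\Lambda_\lambda^-.
\end{equation*}
By Lemma \ref{merolemma}, $\Lambda_\lambda^-$ is holomorphic in the closed upper half-space, while Theorem \ref{eigenrep} displays $\Lambda_\lambda^+$ with at most a simple pole at each $\mu_k$ whose residue is a finite-rank operator assembled from the Maxwell eigenfunctions $\psi_j$ with $\mu_j=\mu_k$. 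Consequently $\bigl(\tfrac{1}{2}+\calM_\lambda\bigr)^{-1}$ has at most a simple pole at $\mu$ with a finite-rank residue, confirming the stated Laurent expansion (with $P_\mu=0$ admissible).

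Finally, to read off the range condition on the Laurent coefficients I expand the identity $\bigl(\tfrac{1}{2}+\calM_\lambda\bigr)\bigl(\tfrac{1}{2}+\calM_\lambda\bigr)^{-1}=\id$ in powers of $(\lambda-\mu)$, using the Taylor expansion $\calM_\lambda = \calM_\mu + (\lambda-\mu)\calM_\mu' + O((\lambda-\mu)^2)$ supplied by Proposition \ref{fredprop}. Matching coefficients of $(\lambda-\mu)^{-2}$ gives $\bigl(\tfrac{1}{2}+\calM_\mu\bigr)P_\mu=0$, and of $(\lambda-\mu)^{-1}$ gives $\bigl(\tfrac{1}{2}+\calM_\mu\bigr)B_\mu = -\calM_\mu'P_\mu = 0$ once the simple-pole bound $P_\mu=0$ is inserted. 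Both residues therefore map into $\ker\bigl(\tfrac{1}{2}+\calM_\mu\bigr)$, as claimed. The main technical point is the simple-pole bound, which relies on the explicit representation of $\Lambda_\lambda^+$ in Theorem \ref{eigenrep}; the remainder is routine meromorphic continuation and coefficient matching.
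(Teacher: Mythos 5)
Your proposal is correct and follows essentially the same route as the paper: both start from the factorization $\bigl(\tfrac{1}{2}+\calM_\lambda\bigr)^{-1}=\Lambda^+_\lambda\bigl(\Lambda^+_\lambda-\Lambda^-_\lambda\bigr)$ (which, after $(\Lambda^+_\lambda)^2=-\id$, is your $-\id-\Lambda^+_\lambda\Lambda^-_\lambda$), combined with the pole structure of $\Lambda^+_\lambda$ from Theorem \ref{eigenrep} and the holomorphy of $\Lambda^-_\lambda$ near $\R\setminus\{0\}$ from Lemma \ref{merolemma}. You additionally observe that since $\Lambda^+_\lambda$ has only a simple pole at $\mu_k$ and $\Lambda^-_\lambda$ is regular there, in fact $P_\mu=0$; this is a valid sharpening that the paper leaves implicit in its two-term ansatz, and your subsequent coefficient-matching argument for the range of $B_\mu$ is sound.
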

\begin{proof}
 The poles are precisely where $\left(\frac{1}{2}+\mathcal{M}_{\lambda}\right)$ is not injective. On the closed upper half space this means that the only poles are at zero and at the Maxwell eigenvalues, by Prop. \ref{dmitreazero} and Prop. \ref{menotzero}. The statement now follows immediately from the formula
 \begin{align}
\left(\frac{1}{2}+\mathcal{M}_{\lambda}\right)^{-1}=-\Lambda^+_\lambda \left(\Lambda^+_\lambda-\Lambda^-_\lambda\right)
\end{align}
the expansion of Theorem \ref{eigenrep} and the fact that $\Lambda^-$ is holomorphic near $\R \setminus \{0\}$ by Lemma \ref{merolemma}.
\end{proof}

\begin{theorem}\label{supercor}
For any $\epsilon>0$ we have there exists a constant $C>0$ such that
\begin{align*}
  \|\mathcal{L}_{\lambda}^{-1} \|_{H^{-\frac{1}{2}}(\mathrm{Div},\partial\Omega)\rightarrow H^{-\frac{1}{2}}(\Div,\partial\Omega)} &\leq  \frac{1+|\lambda|^2}{|\lambda|^2} C\left(1+|\lambda|^{2}\right),\\
  \| \Div\circ (\mathcal{L}_{\lambda}^{-1}) \circ(\nu \times \nabla)\|_{H^{-\frac{1}{2}}(\partial\Omega)\rightarrow H^{-\frac{1}{2}}(\partial\Omega)} &\leq  C\left(1+|\lambda|^{2}\right)
\end{align*}
for all $\lambda$ in the sector $\mathfrak{D}_\epsilon$.
\end{theorem}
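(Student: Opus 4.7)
The first bound is essentially a direct corollary of the inverse formula $\mathcal{L}_\lambda^{-1}=\rmi\lambda^{-1}(\Lambda^+_\lambda-\Lambda^-_\lambda)$, which comes from the first identity in \eqref{twenty}, combined with Theorem \ref{N:estimate}. On the sector $\mathfrak{D}_\epsilon$ we have $\Im\lambda\geq\sin(\epsilon)|\lambda|$, so Theorem \ref{N:estimate} specialises to $\|\Lambda^\pm_\lambda\|\leq C_\epsilon|\lambda|^{-1}(1+|\lambda|^2)$, and hence $\|\mathcal{L}_\lambda^{-1}\|\leq C_\epsilon(1+|\lambda|^2)/|\lambda|^2$, stronger than the stated bound.

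For the second bound the plan is to show that $\Div\circ\mathcal{L}_\lambda^{-1}\circ(\nu\times\nabla)$ extends holomorphically to $\lambda=0$ through two separate cancellations, and then combine this with the first bound for large $|\lambda|$. From \eqref{voltcur}, using $(\Lambda^+_\lambda)^2=-\id$, I rewrite
\begin{align*}
\mathcal{L}_\lambda^{-1}=-\rmi\lambda^{-1}\Lambda^+_\lambda\bigl(\tfrac{1}{2}+\mathcal{M}_\lambda\bigr)^{-1}.
\end{align*}
Plugging in $\rmi\Lambda^+_\lambda=T/\lambda+\lambda U_\lambda$ from Theorem \ref{eigenrep} and $(\tfrac{1}{2}+\mathcal{M}_\lambda)^{-1}=P/\lambda^2+B/\lambda+Q_\lambda$ from Lemma \ref{Mexpand} yields
\begin{align*}
\mathcal{L}_\lambda^{-1}=-\bigl(T/\lambda^2+U_\lambda\bigr)\bigl(P/\lambda^2+B/\lambda+Q_\lambda\bigr).
\end{align*}
The cancellations come from the two ends. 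On the right, Lemma \ref{Mexpand} gives $P(\nu\times\nabla u)=B(\nu\times\nabla u)=0$, so only $Q_\lambda(\nu\times\nabla u)$ survives. On the left, the explicit description of $T$ in Theorem \ref{eigenrep} shows that its range is spanned by $\gamma_t(\psi_{0,k})$ and $\gamma_t(\nabla v_k)$, and since both $\psi_{0,k}$ (being an absolute harmonic form) and $\nabla v_k$ are curl-free, the identity $\Div\circ\gamma_t=-\gamma_\nu\circ\curl$ from \eqref{Divcurlinter} forces $\Div\circ T=0$. Combining these,
\begin{align*}
\Div\mathcal{L}_\lambda^{-1}(\nu\times\nabla u)=-\Div U_\lambda\,Q_\lambda(\nu\times\nabla u),
\end{align*}
and the right-hand side is manifestly holomorphic in $\lambda$ on $\overline{\mathfrak{D}_\epsilon}$: the only poles of $U_\lambda$ in the closed upper half-plane are at the Maxwell eigenvalues $\pm\mu_k\in\mathbb{R}$, which lie outside $\overline{\mathfrak{D}_\epsilon}$, and $Q_\lambda$ is analytic at zero by construction.

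Finally I split into two regimes. On the compact set $\{|\lambda|\leq1\}\cap\overline{\mathfrak{D}_\epsilon}$ the family above, being a holomorphic family of bounded operators, is uniformly bounded by some $C_\epsilon$. For $|\lambda|\geq1$ the first bound, combined with the continuity of $\Div\colon H^{-\frac12}(\Div,\partial\Omega)\to H^{-\frac12}(\partial\Omega)$ and $\nu\times\nabla\colon H^{\frac12}(\partial\Omega)\to H^{-\frac12}(\Div,\partial\Omega)$, gives $\|\Div\mathcal{L}_\lambda^{-1}(\nu\times\nabla)\|\leq C_\epsilon(1+|\lambda|^2)/|\lambda|^2\leq 2C_\epsilon$. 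The two estimates combine to the claimed $C(1+|\lambda|^2)$ bound (in fact to a uniform bound). The main obstacle is the clean verification of the two cancellation identities $P,B\equiv0$ on $\mathrm{range}(\nu\times\nabla)$ and $\Div\circ T=0$; both are Hodge-theoretic in nature, the first expressing that exact tangential fields do not pair with harmonic boundary data parametrising $\ker(\tfrac12+\mathcal{M}_0)$, the second that the range of $T$ consists entirely of tangential traces of curl-free interior fields.
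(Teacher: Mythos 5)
Your proof follows essentially the same route as the paper: the first bound via $\mathcal{L}_\lambda^{-1}=\rmi\lambda^{-1}(\Lambda^+_\lambda-\Lambda^-_\lambda)$ and Theorem \ref{N:estimate}, and the second via the factorisation $\mathcal{L}_\lambda^{-1}=\frac{1}{\rmi\lambda}\Lambda^+_\lambda(\tfrac12+\mathcal{M}_\lambda)^{-1}$, the expansions of Theorem \ref{eigenrep} and Lemma \ref{Mexpand}, and the cancellations $P(\nu\times\nabla)=B(\nu\times\nabla)=0$ and $\Div\circ T=0$, arriving at exactly the same expression $-\Div\,U_\lambda Q_\lambda(\nu\times\nabla)$. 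Where you go slightly further than the paper is in spelling out why $\Div\circ T=0$ (via the explicit form of $T$, the curl-freeness of $\psi_{0,k}$ and $\nabla v_k$, and identity \eqref{Divcurlinter}), and in making explicit the small/large $|\lambda|$ split at the end; both of these are correct and useful amplifications of steps the paper leaves implicit. You also observe that the cancellation $TP=TB=0$ used in the paper's intermediate expansion is not actually needed when the target expression already involves composing with $(\nu\times\nabla)$ on the right, which is a minor but genuine simplification.
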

\begin{proof}
 We use the identity, derived from \eqref{twenty},
 \begin{align}
 \mathcal{L}_{\lambda}^{-1}=-\frac{\rmi \,(\Lambda^+_{\lambda}-\Lambda^-_{\lambda})}{\lambda}
 \end{align}
 to reduce the analysis to that of $\Lambda^{\pm}_{\lambda}$.  The bounds on the operator norm on the space $H^{-\frac{1}{2}}(\mathrm{Div},\partial \Omega)$ then follow immediately from Theorem \ref{N:estimate}.
 By \eqref{voltcur} we have the identity 
 $$
  \mathcal{L}_{\lambda}^{-1} = \frac{1}{\rmi\, \lambda} \Lambda^+_\lambda (\frac{1}{2} + \calM_\lambda)^{-1}.
 $$
 Using Theorem \ref{eigenrep} we obtain
 \begin{align} \nonumber
  \mathcal{L}_{\lambda}^{-1} &= - \left( \frac{1}{\lambda^2} T + U_\lambda \right) \left( \frac{1}{\lambda^2} P +  \frac{1}{\lambda} B + Q_\lambda \right) \\
  &= - \frac{1}{\lambda^2} \left( T Q_\lambda + U_\lambda P \right) - \frac{1}{\lambda} U_\lambda B + U_\lambda Q_\lambda.  \label{Lexpand}
 \end{align}
We have used that $T P = T B = 0$ which follows from Lemma \ref{Mexpand} and Theorem \ref{eigenrep}. 
Since $\Div \circ T =0$, $P \circ ( \nu \times \nabla) =0$, and $B \circ (\nu \times \nabla) =0$ we then obtain
$$
 \Div\circ (\mathcal{L}_{\lambda}^{-1}) \circ(\nu \times \nabla) =\Div\circ U_\lambda Q_\lambda \circ(\nu \times \nabla),
$$
which is regular at zero.
 \end{proof}

\section{Resolvent formulae and estimates} \label{resolventformulae}

\begin{proposition} \label{propeightone}
Assume that $\Im(\lambda)>0$.
For $f \in C^\infty_0(\R^3,\C^3)$ we have the following formulae for the difference of resolvents:
\begin{align} \label{eqn:ResolvDiffQ}
 \left(( -\Delta_\rel- \lambda^2)^{-1} - (-\Delta_\free- \lambda^2)^{-1}\right)\curl\curl f =  -\tilde{\mathcal{L}}_\lambda(\mathcal{L}_{\lambda})^{-1} (\nu \times) \tilde{\mathcal{L}}^\trans_{\lambda} f,\\
 \left(( -\Delta_\rel- \lambda^2)^{-1} - (-\Delta_\free- \lambda^2)^{-1}\right)\curl f =  -\tilde{\mathcal{L}}_\lambda(\mathcal{L}_{\lambda})^{-1} (\nu \times) \tilde{\mathcal{M}}^\trans_{\lambda} f,\\
 \curl \left(( -\Delta_\rel- \lambda^2)^{-1} - (-\Delta_\free- \lambda^2)^{-1}\right)\curl f =  -\lambda^2 \tilde{\mathcal{M}}_\lambda(\mathcal{L}_{\lambda})^{-1} (\nu \times) \tilde{\mathcal{M}}^\trans_{\lambda} f. \label{eqn:ResolvDiffQabs}
\end{align}
Here $ \tilde{\mathcal{L}}^\trans_{\lambda}$ is the transpose operator to 
$ \tilde{\mathcal{L}}_\lambda$ obtained from the real $L^2$-inner product, i.e. $ \tilde{\mathcal{L}}^\trans_{\lambda} f = \overline{ \tilde{\mathcal{L}}^*_{\lambda} \overline f }$. Similarly, $\tilde{\mathcal{M}}^\trans_{\lambda}$ is the transpose of $\tilde{\mathcal{M}}_{\lambda}$.
\end{proposition}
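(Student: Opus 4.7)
The plan is to realize the difference $\phi := \bigl((-\Delta_\rel-\lambda^2)^{-1} - (-\Delta_\free-\lambda^2)^{-1}\bigr)\curlcurl f$ as the unique divergence-free Helmholtz solution with prescribed tangential trace on $\partial\Omega$, and then to represent that solution explicitly via the Maxwell single layer. First I would set $v = (-\Delta_\free - \lambda^2)^{-1}\curlcurl f$ and $w = (-\Delta_\rel - \lambda^2)^{-1}\curlcurl f$, so $\phi = w-v$. Because $\div\circ\curlcurl = 0$, both $v$ and $w$ are divergence free on each side of $\partial\Omega$; for $w$ this uses that $\div w$ is a Helmholtz solution with zero trace on $\partial\Omega$ and exponential decay from $\Im\lambda > 0$, hence vanishes. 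Elliptic regularity gives $v\in C^\infty(\R^3,\C^3)$ satisfying the Silver-M\"uller radiation condition, so $\phi|_M$ and $\phi|_\Omega$ solve $(-\Delta-\lambda^2)\phi = 0$ with $\div\phi=0$ and $\phi|_M$ radiating. Since $v$ is smooth across $\partial\Omega$ while $\gamma_{t,\pm} w = 0$ from the relative boundary conditions, one obtains $\gamma_{t,\pm}\phi = -\gamma_t v$ from both sides.

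Next I would identify $\tilde{\mathcal{L}}_\lambda^\trans f = \gamma_T v$. Pairing $f$ against $\tilde{\mathcal{L}}_\lambda u = \curlcurl\,\tilde{\mathcal{S}}_\lambda u$ for tangential $u\in H^{-\frac{1}{2}}(\Div,\partial\Omega)$ and moving the two curls from $\tilde{\mathcal{S}}_\lambda u$ onto the compactly supported $f$ produces no surface contributions, because $\tilde{\mathcal{S}}_\lambda u\in H^1_\loc(\R^3)$ is continuous across $\partial\Omega$. Using $\tilde{\mathcal{S}}_\lambda = (-\Delta_\free-\lambda^2)^{-1}\gamma_T^*$ and symmetry of the free Green's function then yields $\tilde{\mathcal{L}}_\lambda^\trans f = \gamma_T v$, whence $\gamma_t v = \nu\times v|_{\partial\Omega} = \nu\times\tilde{\mathcal{L}}_\lambda^\trans f$.

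Then I would make the single layer ansatz $\phi = \tilde{\mathcal{L}}_\lambda a$. The jump relations \eqref{jumpjumpjump} give $\gamma_{t,\pm}\tilde{\mathcal{L}}_\lambda a = \mathcal{L}_\lambda a$, so the boundary condition becomes $\mathcal{L}_\lambda a = -\nu\times\tilde{\mathcal{L}}_\lambda^\trans f$, forcing $a = -\mathcal{L}_\lambda^{-1}(\nu\times\tilde{\mathcal{L}}_\lambda^\trans f)$, with invertibility of $\mathcal{L}_\lambda$ on $H^{-\frac{1}{2}}(\Div,\partial\Omega)$ for $\Im\lambda>0$ supplied by Lemma \ref{invertfred}. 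Since $\tilde{\mathcal{L}}_\lambda a$ is automatically a divergence-free Helmholtz solution off $\partial\Omega$ that radiates in $M$, and since $\lambda^2$ is not a Maxwell eigenvalue (Maxwell eigenvalues are real), uniqueness of the interior and exterior Maxwell boundary value problems forces $\phi = -\tilde{\mathcal{L}}_\lambda\mathcal{L}_\lambda^{-1}(\nu\times\tilde{\mathcal{L}}_\lambda^\trans f)$, which is \eqref{eqn:ResolvDiffQ}. The second identity is obtained by the identical argument applied to $\curl f$ in place of $\curlcurl f$ (still divergence free), using $\tilde{\mathcal{M}}_\lambda^\trans f = \gamma_T(-\Delta_\free-\lambda^2)^{-1}\curl f$. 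The third identity follows from the second by applying $\curl$ on the left: off $\partial\Omega$ one has $\curl\tilde{\mathcal{L}}_\lambda = \curl^2\tilde{\mathcal{M}}_\lambda = -\Delta\tilde{\mathcal{M}}_\lambda = \lambda^2\tilde{\mathcal{M}}_\lambda$, since $\tilde{\mathcal{M}}_\lambda u$ is divergence free on $\R^3\setminus\partial\Omega$, and this converts $\tilde{\mathcal{L}}_\lambda$ into $\lambda^2\tilde{\mathcal{M}}_\lambda$.

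The main obstacle will be the transpose identification and the associated mapping-property bookkeeping: I must verify that crossing with $\nu$ takes $\tilde{\mathcal{L}}_\lambda^\trans f \in H^{-\frac{1}{2}}(\Curl,\partial\Omega)$ into $H^{-\frac{1}{2}}(\Div,\partial\Omega)$ so that $\mathcal{L}_\lambda^{-1}$ applies, and that the integration by parts producing the transpose truly carries no surface term on $\partial\Omega$, which rests precisely on the continuity of the single layer across $\partial\Omega$. The matching second boundary condition $\div\phi|_{\partial\Omega}=0$ required for $w-v \in \mathrm{dom}(\Delta_\rel)$ is automatic once $\phi$ is divergence free on each component.
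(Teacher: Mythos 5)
Your proposal is correct and follows essentially the same approach as the paper: both arguments reduce the statement to verifying that the difference is a divergence-free Helmholtz solution off $\partial\Omega$ with the right decay, identify $\nu\times\tilde{\mathcal{L}}_\lambda^\trans f$ (resp.\ $\nu\times\tilde{\mathcal{M}}_\lambda^\trans f$) with $\gamma_t$ of the free term via symmetry of the free resolvent, use the jump relation $\gamma_{t,\pm}\tilde{\mathcal{L}}_\lambda a = \mathcal{L}_\lambda a$ to solve for the density, and derive the third identity by applying $\curl$ on the left together with $\curl\,\tilde{\mathcal{L}}_\lambda = \lambda^2\tilde{\mathcal{M}}_\lambda$. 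The paper phrases the uniqueness step as ``the candidate satisfies the relative boundary conditions, so it equals the resolvent,'' while you phrase it as uniqueness of the interior/exterior Maxwell BVP, but these are the same argument.
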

\begin{proof}
 We begin with the first formula.
  We know that $\tilde{\mathcal{L}}_\lambda$ maps to functions satisfying the Helmholtz equation
 $\left(-\Delta- \lambda^2 \right) v =0$. Therefore we only need to show that, given $f \in C^\infty_0(\R^3,\C^3)$, the function
 $$
   u = (-\Delta_\free- \lambda^2)^{-1}\curlcurl f  -\tilde{\mathcal{L}}_\lambda(\mathcal{L}_{\lambda})^{-1} (\nu \times) \tilde{\mathcal{L}}^\trans_{\lambda} f
 $$
 satisfies relative boundary conditions. Since clearly $\div\, u=0$ we only need to check that $\gamma_t u=0$.
One computes
\begin{align*}
 \gamma_t u &= \gamma_t \curlcurl (-\Delta_\free- \lambda^2)^{-1} f - \mathcal{L}_\lambda(\mathcal{L}_{\lambda})^{-1} (\nu \times) \tilde{\mathcal{L}}^\trans_{\lambda} f\\
 &= \gamma_t \curlcurl (-\Delta_\free- \lambda^2)^{-1} f - (\nu \times) \gamma_T   \curlcurl (-\Delta_\free- \lambda^2)^{-1} f =0,
\end{align*}
which gives the result.\\
Next consider the second formula. We again only need to check that $\gamma_{t,\pm}(u)=0$ where
$$
 u = (-\Delta_\free- \lambda^2)^{-1}\curl f  -\tilde{\mathcal{L}}_\lambda(\mathcal{L}_{\lambda})^{-1} (\nu \times) \tilde{\mathcal{M}}^\trans_{\lambda} f.
$$
The third formula follows from the second by applying the $\curl$-operator from the left and using
$\curl \,\curl \,\curl\, \tilde{\calS}_\lambda= \lambda^2 \curl\,\tilde{\calS}_\lambda$.
\end{proof}

This can be used to show the following.

\begin{theorem} \label{Thm:DifferenceProperties} 
Let $\epsilon>0$ and also suppose that $\Omega_0$ is a smooth open set in $\R^3$ whose complement contains $\overline{\Omega}$. Let $\delta=\mathrm{dist}(\partial\Omega,\Omega_0)$. If $p$ is the projection onto $L^2(\Omega_0;\mathbb{C}^3)$ in $L^2(\R^3;\mathbb{C}^3)$ then the operators
 \begin{align*}
  p(-\Delta_\rel - \lambda^2)^{-1}\curl\curl\, p - p(-\Delta_\free - \lambda^2)^{-1}\curl\curl\, p,\\
  p(-\Delta_\abs - \lambda^2)^{-1}\curl\curl\, p - p(-\Delta_\free - \lambda^2)^{-1}\curl\curl\, p,
 \end{align*}
are trace class for all $\lambda \in \mathfrak{D}_\epsilon$ as operators on $L^2(\R^3;\mathbb{C}^3)$. Moreover for any $\delta'\in (0,\delta)$, their trace norms satisfy the bounds
 \begin{align} \label{eqn:TraceNormBound}
  \|p(-\Delta_\rel - \lambda^2)^{-1}\curl\curl \,p - p(-\Delta_\free - \lambda^2)^{-1}\curl\curl \,p   \|_{1} \leq C_{\delta',\epsilon}e^{-\delta'\Im(\lambda)},\\
   \|p(-\Delta_\abs - \lambda^2)^{-1}\curl\curl \,p - p(-\Delta_\free - \lambda^2)^{-1}\curl\curl \,p   \|_{1} \leq C_{\delta',\epsilon}e^{-\delta'\Im(\lambda)},
 \end{align}
 for all $\lambda \in \mathfrak{D}_\epsilon$. Moreover, both operators have integral kernels
 $\kappa_{\rel,\lambda},\kappa_{\abs,\lambda}$ that are smooth on $\Omega_0 \times \Omega_0$ for all $\lambda \in  \mathfrak{D}_\epsilon$. There exists $C_{\Omega_0,\epsilon}>0$, depending on $\Omega_0$ and $\epsilon$ such that
  \begin{align} \label{eqn:DiagKernelEst}
  \| k_{\rel,\lambda}(x,x) \| + \| k_{\abs,\lambda}(x,x) \|  \leq \left( C_{\Omega_0,\epsilon}\frac{e^{-\mathrm{dist}(x,\del\Omega) \Im\lambda} }{(\mathrm{dist}(x,\del\Omega))^{4}}\right).
 \end{align}
 \end{theorem}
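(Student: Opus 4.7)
The plan is to reduce everything to boundary layer estimates via the resolvent identities of Proposition~\ref{propeightone}. The first identity,
$$
\bigl((-\Delta_\rel-\lambda^2)^{-1} - (-\Delta_\free-\lambda^2)^{-1}\bigr)\curl\curl = -\tilde{\mathcal{L}}_\lambda \mathcal{L}_\lambda^{-1}(\nu\times)\tilde{\mathcal{L}}_\lambda^\trans,
$$
treats the relative case directly; the absolute case follows by combining the intertwining $\curl(-\Delta_\rel-\lambda^2)^{-1}=(-\Delta_\abs-\lambda^2)^{-1}\curl$ with the third identity of Proposition~\ref{propeightone}, yielding
$$
\bigl((-\Delta_\abs-\lambda^2)^{-1} - (-\Delta_\free-\lambda^2)^{-1}\bigr)\curl\curl = -\lambda^2\tilde{\mathcal{M}}_\lambda \mathcal{L}_\lambda^{-1}(\nu\times)\tilde{\mathcal{M}}_\lambda^\trans.
$$

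Given $\delta'\in(0,\delta)$, I pick an auxiliary $\delta''$ with $\tfrac12\delta'<\delta''<\delta$ and a cutoff $\varphi\in C_c^\infty(\R^3)$ satisfying $\varphi\equiv 1$ on $\Omega_0$ and $\mathrm{dist}(\supp\varphi,\partial\Omega)>\delta''$. Since $p$ is multiplication by $\mathbf{1}_{\Omega_0}$, one has $p\varphi=p$ and hence
$$
p(-\Delta_\rel-\lambda^2)^{-1}\curl\curl\, p - p(-\Delta_\free-\lambda^2)^{-1}\curl\curl\, p = -(p\varphi\tilde{\mathcal{L}}_\lambda)\mathcal{L}_\lambda^{-1}(\nu\times)(p\varphi\tilde{\mathcal{L}}_\lambda)^\trans.
$$
By Proposition~\ref{Prop:EstimateS}\eqref{einsbound} the two outer Hilbert--Schmidt norms are bounded by $C_{\delta'',\epsilon}e^{-\delta''\Im\lambda}$, Theorem~\ref{supercor} bounds $\|\mathcal{L}_\lambda^{-1}\|$ on $H^{-\frac12}(\Div,\partial\Omega)$ by a polynomial in $|\lambda|$ away from the origin, and the ideal property that the product of two Hilbert--Schmidt operators with a bounded operator between them is trace class yields a trace-norm bound of the form $C e^{-2\delta''\Im\lambda}\mathrm{poly}(|\lambda|)$. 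The inequality $|\lambda|\le C_\epsilon\Im\lambda$ valid in $\mathfrak{D}_\epsilon$ absorbs the polynomial into the exponential at the cost of weakening the decay rate, producing \eqref{eqn:TraceNormBound} thanks to $2\delta''>\delta'$. The absolute case proceeds identically with the $\tilde{\mathcal{M}}_\lambda$ estimate \eqref{eqn:HSMEstimateS} in place of \eqref{eqn:HSEstimateL}, the prefactor $\lambda^2$ additionally offsetting any $|\lambda|^{-2}$ singularity of $\mathcal{L}_\lambda^{-1}$ at the origin.

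For the smoothness of the integral kernel, the representation displays $\kappa_{\rel,\lambda}$ on $\Omega_0\times\Omega_0$ as
$$
\kappa(x,y)=-\int_{\partial\Omega}\!\!\int_{\partial\Omega} L_\lambda(x,z_1)K_\lambda(z_1,z_2)L_\lambda(y,z_2)^\trans\, d\sigma(z_1)\,d\sigma(z_2),
$$
where $K_\lambda$ is the distributional kernel of $\mathcal{L}_\lambda^{-1}(\nu\times)$ and $L_\lambda(x,z)=\curl_x\curl_x G_{\lambda,\free}(x,z)$. Since $L_\lambda(x,\cdot)$ is $C^\infty$ in $x\in\Omega_0$ with smooth dependence on $z\in\partial\Omega$, pairing it against $K_\lambda$ yields a smooth function on $\Omega_0\times\Omega_0$; the same holds for $\kappa_{\abs,\lambda}$. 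For the pointwise estimate \eqref{eqn:DiagKernelEst} one bounds
$$
\|\kappa(x,x)\|\le \|L_\lambda(x,\cdot)\|_{H^{-\frac12}(\Curl,\partial\Omega)}\,\|\mathcal{L}_\lambda^{-1}(\nu\times)\|\,\|L_\lambda(x,\cdot)\|_{H^{-\frac12}(\Div,\partial\Omega)},
$$
and controls the two boundary Sobolev norms by pointwise derivative bounds on $G_{\lambda,\free}$, using $|\curl_x\curl_x G_{\lambda,\free}(x,z)|\lesssim e^{-|x-z|\Im\lambda}/|x-z|^{3}$ together with $|x-z|\ge \mathrm{dist}(x,\partial\Omega)$; the resulting powers of $\mathrm{dist}(x,\partial\Omega)^{-1}$ combine to the claimed $\mathrm{dist}(x,\partial\Omega)^{-4}$, with the $\lambda$-dependent polynomial factor absorbed into $C_{\Omega_0,\epsilon}$.

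The principal obstacle will be ensuring that the trace-norm bound is uniform in $\mathfrak{D}_\epsilon$ all the way down to $\lambda=0$: Theorem~\ref{supercor} alone produces a $|\lambda|^{-2}$ singularity of $\|\mathcal{L}_\lambda^{-1}\|$ that is not visible in the ambient operator, which is regular at the origin because $(-\Delta_\free-\lambda^2)^{-1}$ is well-behaved between weighted spaces in three dimensions and $\ker\Delta_\rel=\{0\}$. The cancellation is encoded in the Laurent expansion \eqref{Lexpand} and Theorem~\ref{eigenrep}: using $\tilde{\mathcal{L}}_\lambda=\nabla\tilde{\mathcal{S}}_\lambda\Div+\lambda^2\tilde{\mathcal{S}}_\lambda$ together with $\Div\circ T=0$ and the inclusions $\mathrm{image}(P),\mathrm{image}(B)\subset\mathcal{B}_{\partial\Omega}$, one checks that the finite-rank Laurent coefficients contribute to $\tilde{\mathcal{L}}_\lambda\mathcal{L}_\lambda^{-1}(\nu\times)\tilde{\mathcal{L}}_\lambda^\trans$ only at orders that are matched by compensating zeros of the outer factors. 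This part of the argument is the delicate one, and once it is in place the trace-norm estimate follows for the whole sector by combining with the Hilbert--Schmidt bounds already established above.
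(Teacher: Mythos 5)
The overall architecture coincides with the paper's: write the resolvent difference as $-(\varphi\tilde{\mathcal{L}}_\lambda)\mathcal{L}_\lambda^{-1}(\nu\times)(\varphi\tilde{\mathcal{L}}_\lambda)^\trans$, bound the outer factors in Hilbert--Schmidt norm via Proposition \ref{Prop:EstimateS}, and control $\mathcal{L}_\lambda^{-1}$ via Theorem \ref{supercor}, with the extra $\lambda$-powers traded against the exponential for $|\lambda|\gtrsim 1$ and the singularity at $\lambda=0$ handled by the Laurent expansion \eqref{Lexpand} together with the structural facts $\Div\circ T=0$ and $P(\nu\times\nabla)=B(\nu\times\nabla)=0$. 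You correctly flag the behaviour near $\lambda=0$ as the delicate point, and the sketch of the cancellation mechanism is the right one; the paper makes it explicit by expanding $\varphi\tilde{\mathcal{L}}_\lambda = \varphi\nabla\tilde{\mathcal{S}}_\lambda\Div + \lambda^2\varphi\tilde{\mathcal{S}}_\lambda$ into four cross terms $(\mathrm{I})$--$(\mathrm{IV})$ and bounding each separately, using for $(\mathrm{I})$ the regularised operator $\Div\circ\mathcal{L}_\lambda^{-1}\circ(\nu\times\nabla)$ of Theorem \ref{supercor} (which has no $|\lambda|^{-2}$ singularity) and for $(\mathrm{III}),(\mathrm{IV})$ the improved mean-zero estimate \eqref{zweiboundSzero} together with the fact that $\mathrm{rg}(T)$ consists of mean-zero distributions. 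Writing this out is not optional bookkeeping: the crude combination $\|\varphi\tilde{\mathcal{L}}_\lambda\|_{\mathrm{HS}}^2\,\|\mathcal{L}_\lambda^{-1}\|$ genuinely diverges as $\lambda\to 0$ in $\mathfrak{D}_\epsilon$, so the proof only closes once the specific pairings that suppress the singular Laurent coefficients are exhibited.

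There is a concrete gap in your treatment of the diagonal estimate \eqref{eqn:DiagKernelEst}. You bound $\|\kappa(x,x)\|\le\|L_\lambda(x,\cdot)\|\,\|\mathcal{L}_\lambda^{-1}(\nu\times)\|\,\|L_\lambda(x,\cdot)\|$ and assert that ``the $\lambda$-dependent polynomial factor is absorbed into $C_{\Omega_0,\epsilon}$.'' But $\|\mathcal{L}_\lambda^{-1}\|$ carries a $|\lambda|^{-2}$ singularity at the origin (Theorem \ref{supercor}), and the sector $\mathfrak{D}_\epsilon$ contains points arbitrarily close to $0$; a quantity that blows up like $|\lambda|^{-2}$ cannot be hidden in a constant. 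The theorem's bound is claimed uniformly over all of $\mathfrak{D}_\epsilon$, so this step as written fails. The paper's proof therefore reruns the same four-term decomposition $(\mathrm{I})$--$(\mathrm{IV})$ pointwise, using the estimate $\|\partial_x^\alpha\tilde{\mathcal{S}}_\lambda(x,\cdot)\|_{H^{-1}(\partial\Omega)}\lesssim d^{-1-|\alpha|}e^{-\frac12 d\Im\lambda}$ with $d=\mathrm{dist}(x,\partial\Omega)$ and the same cancellations that tame the trace-norm bound near $\lambda=0$; only after these are used can the remaining positive powers of $\lambda$ be traded against the exponential via $|\lambda|^k e^{-d\Im\lambda}\le C_{k,\epsilon}\,d^{-k}e^{-\frac12 d\Im\lambda}$. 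Once you route the diagonal estimate through the decomposition rather than the crude operator-norm bound, the argument is sound and agrees with the paper.

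Two smaller remarks. First, the specific structural fact that does the work for terms $(\mathrm{III}),(\mathrm{IV})$ is $P(\nu\times\nabla)=B(\nu\times\nabla)=0$, not the range inclusion $\mathrm{image}(P),\mathrm{image}(B)\subset\mathcal{B}_{\partial\Omega}$ (though the latter, combined with $\mathcal{B}_{\partial\Omega}\subset H^{-\frac12}(\Div 0,\partial\Omega)$ and the factorisation of $\tilde{\mathcal{L}}_\lambda$, produces an equivalent cancellation on the other side; you should say which one you are using). Second, your pointwise bound $|\curl_x\curl_x G_{\lambda,\free}(x,z)|\lesssim e^{-|x-z|\Im\lambda}/|x-z|^3$ would by itself give $d^{-6}$ on the diagonal, which since $d\ge\delta$ on $\Omega_0$ implies the stated $d^{-4}$ up to an $\Omega_0$-dependent constant, so the exponent discrepancy is harmless; the real issue is, as above, only the $|\lambda|^{-2}$ factor.
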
 
\begin{proof}
Given $\delta' \in (0,\delta)$ we choose a compactly supported smooth cut-off function $\chi$ which vanishes in $\Omega_0$ such that the support of $\varphi=1-\chi$ has distance at least $\delta'$ from $\Omega$. Then, since $\varphi p = p$ it is sufficient to show the estimates with $p$ replaced by $\varphi$.
From \eqref{eqn:ResolvDiffQ}, we have
\begin{align} \nonumber
&\varphi(-\Delta_\rel - \lambda^2)^{-1}(\curl\curl) \varphi -\varphi(-\Delta_\free - \lambda^2)^{-1}(\curl\curl) \varphi =\\& -\varphi\tilde{\mathcal{L}}_\lambda \mathcal{L}_\lambda^{-1}(\nu \times) \tilde{\mathcal{L}}^\trans_{\lambda}
 \varphi = - (\varphi \tilde{\mathcal{L}}_\lambda) \mathcal{L}_\lambda^{-1}  (\nu \times) (\varphi\tilde{\mathcal{L}}_{\lambda})^\trans. \label{rhsres}
\end{align}
The operator $\varphi \mathcal{\tilde{L}}_{\lambda}$ is Hilbert-Schmidt by Proposition \ref{Prop:EstimateS}. Since $\mathcal{L}_\lambda^{-1}$ is bounded by Corollary \eqref{supercor} on the correct domains, this factorises the right hand side of \eqref{rhsres}  into a product of the two Hilbert-Schmidt operators
$(\varphi \tilde{\mathcal{L}}_\lambda), (\varphi\tilde{\mathcal{L}}_{\lambda})^\trans$ and a bounded operator $\mathcal{L}_\lambda^{-1}  (\nu \times)$.
This shows it is trace-class (see for example \cite{shubin}, (A.3.4) and (A.3.2)). We need to show the bound for the trace-norm. We now employ the more explicit
description of $\varphi\tilde{\mathcal{L}}_{\lambda} =\varphi (\nabla  \tilde{\mathcal{S}}_\lambda \Div  + \lambda^2 \tilde{\mathcal{S}}_\lambda)$.
This gives
\begin{align*}
  &(\varphi \tilde{\mathcal{L}}_\lambda) \mathcal{L}_\lambda^{-1}  (\nu \times) (\varphi\tilde{\mathcal{L}}_{\lambda})^\trans = 
  \left( \varphi \nabla  \tilde{\mathcal{S}}_\lambda \Div  + \lambda^2 \varphi\tilde{\mathcal{S}}_\lambda \right) \mathcal{L}_\lambda^{-1}  \left((\nu \times) \nabla  \tilde{\mathcal{S}}^\trans_\lambda \div \varphi  + \lambda^2 (\nu \times)(\varphi \tilde{\mathcal{S}}_\lambda)^\trans \right) \\&=
  \left( \varphi \nabla  \tilde{\mathcal{S}}_\lambda \Div  + \lambda^2 \varphi\tilde{\mathcal{S}}_\lambda \right) \mathcal{L}_\lambda^{-1}  \left((\nu \times) \nabla  \tilde{\mathcal{S}}^\trans_\lambda \div \varphi  + \lambda^2 (\nu \times)(\varphi \tilde{\mathcal{S}}_\lambda)^\trans \right) 
  \\&= \varphi \nabla  \tilde{\mathcal{S}}_\lambda \Div  \mathcal{L}_\lambda^{-1} (\nu \times) \nabla  \tilde{\mathcal{S}}^\trans_\lambda \div \varphi + \lambda^4  \varphi\tilde{\mathcal{S}}_\lambda \mathcal{L}_\lambda^{-1}  (\nu \times)(\varphi \tilde{\mathcal{S}}_\lambda)^\trans +  \lambda^2 \varphi \nabla  \tilde{\mathcal{S}}_\lambda \Div \mathcal{L}_\lambda^{-1}  (\nu \times)(\varphi \tilde{\mathcal{S}}_\lambda)^\trans \\&+ \lambda^2 \varphi\tilde{\mathcal{S}}_\lambda \mathcal{L}_\lambda^{-1} (\nu \times )\nabla  \tilde{\mathcal{S}}^\trans_\lambda \div\varphi = \mathrm{(I)} + \mathrm{(II)} + \mathrm{(III)} + \mathrm{(IV)}.
\end{align*}
We will show that the estimate holds for the individual terms.
The trace-norm of $\mathrm{(I)}$ is bounded by $\| \varphi \nabla  \tilde{\mathcal{S}}_\lambda  \|^2_{\mathrm{HS}} \cdot \|  \Div  \mathcal{L}_\lambda^{-1} (\nu \times) \nabla \|$, using the fact that the Hilbert-Schmidt norm is invariant under transposition. This is bounded by $C e^{-\Im(\lambda) \delta'}$ in the sector by Prop. \ref{supercor} and the estimate \eqref{eqn:nablaHSEstimateS} of Prop. \ref{Prop:EstimateS}.

The trace-norm of term $\mathrm{(II)}$ is bounded by
$|\lambda|^4  \| \varphi\tilde{\mathcal{S}}_\lambda\|^2_{\mathrm{HS}}  \| \mathcal{L}_\lambda^{-1} \|$. This is again bounded by $C e^{-\Im(\lambda) \delta'}$ by  Prop. \ref{supercor}  and \eqref{eqn:HSEstimateS} of Prop. \ref{Prop:EstimateS}.
Expression $\mathrm{(III)}$ is the transpose of $\mathrm{(IV)}$ as one computes easily from Lemma \ref{transposelemma}. It is therefore
sufficient to bound the trace-norm of $\mathrm{(IV)}$. We have that 
\begin{align}
 \mathrm{(IV)} &= \lambda^2 \varphi  \tilde{\mathcal{S}}_\lambda \left(- \frac{1}{\lambda^2} \left( T Q_\lambda + U_\lambda P \right) - \frac{1}{\lambda} U_\lambda B + U_\lambda Q_\lambda\right) (\nu \times) \nabla (\varphi \tilde{\mathcal{S}}_\lambda)^\trans \\&= \lambda^2 (\varphi \tilde{\mathcal{S}}_\lambda) 
  \left( \frac{1}{\lambda^2}T Q_\lambda  + U_\lambda Q_\lambda\right)(\nu \times \nabla) 
 (\varphi \tilde{\mathcal{S}}_\lambda)^\trans,
\end{align}
where we have used Lemma \ref{Mexpand}, the expansion \eqref{Lexpand} and the fact that $P(\nu \times \nabla)=0$ and $B(\nu \times \nabla)=0$. 
The range of $T$ consists of distributions in $H^{-\frac{1}{2}}_0(\partial\Omega,\C^3) \cap H^{-\frac{1}{2}}(\Div, \partial \Omega) $. To see this, note that the range of $T$ consists, by Theorem \ref{eigenrep}, of limits in $H^{-\frac{1}{2}}(\Div, \partial \Omega)$ of boundary values of $\curl$-free vector fields.
Applying the integration by parts formula \eqref{stokes}
with $\phi \in \mathrm{rg}(T)$ and $E$ a constant unit vector field, noting that $\curl \phi = \curl E =0$, one obtains that
$\langle \gamma_t \phi, \gamma E \rangle_{L^2(\partial \Omega,\C^3)}=\langle \gamma_t \phi, \gamma_T E \rangle_{L^2(\partial \Omega,\C^3)}=0$ as claimed.
It follows that the trace-norm of $\mathrm{(III)}$ and $\mathrm{(IV)}$ are bounded by
$C e^{-\Im(\lambda) \delta'}$  by Prop. \ref{supercor}, and by the estimates \eqref{eqn:nablaHSEstimateS}, \eqref{eqn:HSEstimateS}.

Next we use  \eqref{eqn:ResolvDiffQabs} to obtain
\begin{align} \nonumber
&\varphi(-\Delta_\abs - \lambda^2)^{-1}(\curl\curl) \varphi -\varphi(-\Delta_\free - \lambda^2)^{-1}(\curl\curl) \varphi =\\& - \lambda^2\varphi\tilde{\mathcal{M}}_\lambda \mathcal{L}_\lambda^{-1}(\nu \times) \tilde{\mathcal{M}}^\trans_{\lambda}
 \varphi = - \lambda^2(\varphi \tilde{\mathcal{M}}_\lambda) \mathcal{L}_\lambda^{-1}  (\nu \times) (\varphi\tilde{\mathcal{M}}_{\lambda})^\trans. \label{rhsresabs}
\end{align}
The operators $\varphi\tilde{\mathcal{M}}_\lambda, (\varphi\tilde{\mathcal{M}}_{\lambda})^\trans$ are Hilbert-Schmidt and their Hilbert-Schmidt norms are bounded by $e^{-\delta' \Im(\lambda)}$ by Prop. \ref{Prop:EstimateS}, Equ. \eqref{eqn:HSMEstimateS}. This gives the claimed estimate for the trace-norm since the operator $\lambda^2 \mathcal{L}_\lambda^{-1}$ is polynomially bounded in any sector by Theorem \ref{supercor}.

It remains to show the estimate on the diagonal of the integral kernel. This is done the same way using pointwise estimate 
$$
 \| \partial^\alpha_x \tilde \calS_\lambda(x,\cdot) \|_{H^{-1}(\partial \Omega)} \leq C \frac{1}{(\mathrm{dist}(x,\partial \Omega))^{1+|\alpha|}} e^{-\frac{1}{2}\Im(\lambda)\mathrm{dist}(x,\partial \Omega)}
$$
which is easily obtained directly from the integral kernel, noting that differentiation in the $x$ or $y$-variable gives a linear combination of terms that are bounded
by 
$$
 \frac{\lambda^k (\mathrm{dist}(x,\partial \Omega))^k}{(\mathrm{dist}(x,\partial \Omega))^{1+|\alpha|}} e^{-\Im(\lambda)\mathrm{dist}(x,\partial \Omega)} \leq C_{k,\epsilon} \frac{1}{(\mathrm{dist}(x,\partial \Omega))^{1+|\alpha|}} e^{-\frac{1}{2}\Im(\lambda)\mathrm{dist}(x,\partial \Omega)}
$$
with $0 \leq k \leq \alpha$. One now applies this estimate to each of the four terms $\mathrm{(I)}, \mathrm{(II)}, \mathrm{(III)}, \mathrm{(IV)}$ and observes that every factor of $\lambda$ can be absorbed using the bound
$$
  |\lambda|^k e^{-\Im(\lambda)\mathrm{dist}(x,\partial \Omega)} =C_{k,\epsilon} \frac{1}{\mathrm{dist}(x,\partial \Omega)^k}e^{-\frac{1}{2}\Im(\lambda)\mathrm{dist}(x,\partial \Omega)}. 
$$
This gives the first claimed estimate. The second estimate follows the same way, since the above implies
$$
 \| \tilde \calM_\lambda(x,\cdot) \|_{H^{-1}(\partial \Omega)} \leq C \frac{1}{(\mathrm{dist}(x,\partial \Omega))^{2}} e^{-\frac{1}{2}\Im(\lambda)\mathrm{dist}(x,\partial \Omega)}.
$$
\end{proof}

\section{The function $\Xi$}\label{multiL}
Recall that the boundary $\partial\Omega$ consists of $N$ connected components $\partial\Omega_j$. To keep the discussion meaningful we will assume throughout this section that $N \geq 2$. This gives a natural decomposition
$$
 H^{-\frac{1}{2}}(\Div,\partial \Omega) = \bigoplus_{j=1}^N H^{-\frac{1}{2}}(\Div,\partial \Omega_j).
$$
Let $q_j$ be the orthogonal projection $H^{-\frac{1}{2}}(\Div,\partial \Omega) \to H^{-\frac{1}{2}}(\Div,\partial \Omega_j)$, 
and $\mathcal{L}_{j,\lambda}=q_j\mathcal{L}_{\lambda}q_j$. We then can write 
\begin{align}
\mathcal{L}_{\lambda}=\sum\limits_{j=1}^N\mathcal{L}_{j,\lambda}+\sum\limits_{j\neq k}q_j\mathcal{L}_{\lambda}q_k=\mathcal{L}_{D,\lambda}+\mathcal{T}_{\lambda}.
\end{align}
We remark that $\mathcal{L}_{j,\lambda}$ which is regarded as a map from $H^{-\frac{1}{2}}(\Div,\partial\Omega)\to H^{-\frac{1}{2}}(\Div, \partial\Omega)$ is independent of the other components. The sum $\mathcal{L}_{D,\lambda}$ describes the diagonal part of the operator $\mathcal{L}$ with respect to the decomposition above. 

We have a similar decomposition for the operator $$\calM_\lambda = \calM_{D,\lambda} + \mathcal{J}_\lambda.$$

We set
\begin{align}
\delta= \min_{j\neq k} \mathrm{dist}(\partial\Omega_j,\partial\Omega_k)>0.
\end{align}
Then we have the following proposition:
\begin{proposition}\label{Prop:CalT}
The families $\mathcal{T}_{\lambda},\mathcal{J}_{\lambda}: H^{-\frac{1}{2}}(\Div,\partial \Omega) \to H^{-\frac{1}{2}}(\Div,\partial \Omega)$ are holomorphic families of trace-class operators in the complex plane.
For any $\epsilon>0$ and any $\delta' \in (0,\delta)$ the following estimates for their trace-norms $\| \cdot \|_1$ hold:
\begin{align}
&\|\mathcal{T}_{\lambda}\|_1 \leq C_{\delta',\epsilon} e^{-\delta'\Im\lambda},\quad \|\mathcal{J}_{\lambda}\|_1 \leq C_{\delta',\epsilon} e^{-\delta'\Im\lambda} \\&
\|\frac{\der}{\der\lambda}\mathcal{T}_{\lambda} \|_1 \leq C_{\delta',\epsilon} e^{-\delta'\Im\lambda}, \quad \|\frac{\der}{\der\lambda}\mathcal{J}_{\lambda} \|_1 \leq C_{\delta',\epsilon} e^{-\delta'\Im\lambda}
\end{align} 
for all $\lambda$ the sector $\mathfrak{D}_{\epsilon}$. We also have
\begin{align}
&\|\mathcal{T}_{\lambda}|_{H^{-\frac{1}{2}}(\Div0,\partial \Omega)} \|_1 \leq  C_{\delta',\epsilon} |\lambda|^2 e^{-\delta'\Im\lambda}\\&
\|\frac{\der}{\der\lambda}\mathcal{T}_{\lambda}|_{H^{-\frac{1}{2}}(\Div0,\partial \Omega)} \|_1 \leq C_{\delta',\epsilon} |\lambda| e^{-\delta'\Im\lambda}.
\end{align} 
\end{proposition}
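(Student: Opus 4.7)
The operators $\mathcal T_\lambda=\sum_{j\neq k}q_j\mathcal L_\lambda q_k$ and $\mathcal J_\lambda=\sum_{j\neq k}q_j\mathcal M_\lambda q_k$ are finite sums of off-diagonal blocks, so it is enough to bound a single block $q_j\mathcal L_\lambda q_k$ (respectively $q_j\mathcal M_\lambda q_k$) with $j\neq k$. The key geometric input is that $\mathrm{dist}(\partial\Omega_j,\partial\Omega_k)\geq\delta$; this will force each such block to be given by a smooth, compactly supported integral kernel with exponential decay in $\Im\lambda$, from which trace class and the claimed norm bound follow.

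Fix $j\neq k$ and $0<\delta'<\delta$. Pick cut-offs $\varphi_j,\varphi_k\in C_c^\infty(\R^3)$ with $\varphi_l=1$ in a neighbourhood of $\partial\Omega_l$, with $\mathrm{supp}\,\varphi_l$ disjoint from $\partial\Omega_m$ for $m\neq l$, and with $\mathrm{dist}(\mathrm{supp}\,\varphi_j,\mathrm{supp}\,\varphi_k)\geq\delta'$. Since $\gamma_T^*q_k$ is a distribution supported in $\partial\Omega_k\subset\{\varphi_k=1\}$ and only the values on $\partial\Omega_j\subset\{\varphi_j=1\}$ enter $q_j\gamma_{t,+}$, we can factor
\begin{equation*}
 q_j\mathcal L_\lambda q_k = (q_j\gamma_{t,+})\circ R_\lambda\circ(\gamma_T^* q_k),\qquad R_\lambda:=\varphi_j\,\curlcurl\,(-\Delta_{\free}-\lambda^2)^{-1}\varphi_k,
\end{equation*}
and analogously with $\curl$ in place of $\curlcurl$ for $\mathcal J_\lambda$. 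The kernel of $R_\lambda$ is $\varphi_j(x)\varphi_k(y)\,\curl_x\curl_x\bigl(e^{\rmi\lambda|x-y|}/(4\pi|x-y|)\bigr)$: smooth in $(x,y)$, compactly supported, and satisfying
\begin{equation*}
 |\partial_x^\alpha\partial_y^\beta K_\lambda(x,y)|\leq C_{\alpha,\beta,\delta',\epsilon}\,e^{-\delta'\Im\lambda}\quad\text{on }\mathfrak D_\epsilon,
\end{equation*}
after a cosmetic shrinkage of $\delta'$ that absorbs the factor $|\lambda|^{|\alpha|+|\beta|}$ produced by differentiating $e^{\rmi\lambda|x-y|}$, using $|\lambda|\leq C_\epsilon\Im\lambda$ and $|x-y|\geq\delta'$ on the support.

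A $C^\infty$ compactly supported kernel $K$ with $\|K\|_{H^N(\R^3\times\R^3)}<\infty$ for $N$ large defines a trace-class operator between the Sobolev spaces relevant here, with trace norm $\leq C_N\|K\|_{H^N}$: one factors $R_\lambda=((1-\Delta_x)^{-N/2}\chi)\cdot(\chi'(1-\Delta_x)^{N/2}K_\lambda(\cdot,\cdot))$ through the compact support, each factor being Hilbert--Schmidt by Lemma \ref{HilSobLemma}, which is the natural trace-class refinement of the Hilbert--Schmidt estimates of Proposition \ref{Prop:EstimateS}. Composing with the bounded maps $\gamma_T^*q_k$ and $q_j\gamma_{t,+}$ then yields $\|q_j\mathcal L_\lambda q_k\|_1\leq C_{\delta',\epsilon}e^{-\delta'\Im\lambda}$; summing over the finitely many off-diagonal pairs and repeating verbatim with $\curl$ in place of $\curlcurl$ gives the stated bounds for $\mathcal T_\lambda$ and $\mathcal J_\lambda$. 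Holomorphy in $\lambda$ is immediate since $K_\lambda$ is entire and the bounds are uniform on compact subsets of $\C$. The derivative estimate follows by the same argument applied to $\tfrac{\der}{\der\lambda}K_\lambda$, which differs from $K_\lambda$ only by an extra bounded factor $\rmi|x-y|$ on the support.

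For the improved bound on the restriction to $H^{-\frac12}(\Div 0,\partial\Omega)$, we invoke \eqref{divexpansion}: on $\ker\Div$ the operator $\tilde{\mathcal L}_\lambda$ reduces to $\lambda^2\tilde{\mathcal S}_\lambda$, so the middle factor in the factorisation becomes $\lambda^2\varphi_j(-\Delta_{\free}-\lambda^2)^{-1}\varphi_k$. Its kernel, with no $\curlcurl$ applied, satisfies the smooth-kernel trace-class estimate without any $|\lambda|$-factors needing absorption, so the explicit prefactor $\lambda^2$ directly yields the $|\lambda|^2e^{-\delta'\Im\lambda}$ bound. A single $\lambda$-differentiation produces either a $2\lambda\tilde{\mathcal S}_\lambda$ term or $\lambda^2\tfrac{\der}{\der\lambda}\tilde{\mathcal S}_\lambda$; the latter has an extra $|\lambda|$ that is absorbed into the exponential at the cost of shrinking $\delta'$, giving the $|\lambda|e^{-\delta'\Im\lambda}$ bound on the derivative. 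The principal technical obstacle is a clean quantitative execution of the passage "smooth compactly supported kernel $\Rightarrow$ trace class" with tracked exponential constant and $|\lambda|$-absorption; everything else is routine bookkeeping on top of Proposition \ref{Prop:EstimateS}.
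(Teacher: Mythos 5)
Your proposal is correct and follows essentially the same route as the paper: reduce to the off-diagonal blocks $q_j\mathcal{L}_\lambda q_k$ (respectively $q_j\mathcal{M}_\lambda q_k$), factor through the free resolvent with disjointly supported cut-offs so that the middle operator has a smooth, compactly supported kernel with exponential decay in $\Im\lambda$ on $\mathfrak D_\epsilon$, apply the smooth-kernel-to-nuclear argument (the paper packages this as Lemma \ref{HilSobTrLemma}, which is precisely the trace-class refinement of Lemma \ref{HilSobLemma} you describe factoring by hand), and use the identity \eqref{divexpansion} to drop the surface-divergence term on $H^{-\frac12}(\Div 0,\partial\Omega)$ so that the explicit $\lambda^2$ prefactor yields the improved bound. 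The only cosmetic slip is citing Lemma \ref{HilSobLemma} where Lemma \ref{HilSobTrLemma} is the appropriate statement, but you flag the needed refinement yourself.
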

\begin{proof}
We will prove this estimate only for $\mathcal{T}_\lambda$ as the estimate for $\mathcal{J}_\lambda$ is proved in the same way.
It is sufficient to show this for the individual terms $q_j \mathcal{L}_\lambda q_k$ with $j \not= k$. We choose an open bounded neighborhood $U$ of $\partial \Omega_j$
and an open bounded neighborhood $V$ of $\partial \Omega_k$ such that $\mathrm{dist}(U,V)> \delta'$.
The first two estimates are implied by Lemma \ref{HilSobTrLemma} by observing that the operator is the composition
$$
 H^{-\frac{1}{2}}(\Div,\partial \Omega) \to H^{-1}(V) \to H^1(U) \to  H^{-\frac{1}{2}}(\Div,\partial \Omega),
$$
and the map $H^{-1}(V) \to H^1(U)$ has smooth integral kernel 
$$
 \chi(x,y) \curlcurl_x \frac{e^{\rmi \lambda |x-y|}}{4 \pi |x-y|}
$$
for a suitable cut-off function that is compactly supported in $U \times V$. The same argument applies to the $\lambda$-derivative.

To show the bounds on the restriction to ${H^{-\frac{1}{2}}(\Div0,\partial \Omega)}$ one uses that
we have $\mathcal{L}_\lambda = \gamma_t \nabla \mathcal{S}_\lambda \Div + \lambda^2 \mathcal{S}_\lambda$. To bound the trace-norm of
$ \lambda^2 q_j \mathcal{S}_\lambda q_k$ one uses exactly the same argument as above applied to the kernel
$$
 \lambda^2 \chi(x,y) \frac{e^{\rmi \lambda |x-y|}}{4 \pi |x-y|}
$$
and its $\lambda$-derivative.
\end{proof}

\begin{proposition}\label{trace1prime}
Fix $\epsilon>0$. Then $(\mathcal{L}_{\lambda}\mathcal{L}_{D,\lambda}^{-1}-\id) : H^{-\frac{1}{2}}(\Div,\partial \Omega) \to H^{-\frac{1}{2}}(\Div,\partial \Omega)$ is a 
meromorphic family of trace-class operators with no poles in the closed upper half-plane. In the sector we have for any 
$\delta' \in (0,\delta)$ the estimate 
\begin{align}
 \|\mathcal{L}_{D,\lambda}^{-1} \mathcal{L}_{\lambda}-\id \|_{1} =\|\mathcal{L}_{\lambda}\mathcal{L}_{D,\lambda}^{-1}-\id \|_{1} \leq C_{\delta',\epsilon} e^{-\delta'\Im\lambda}.
\end{align}
\end{proposition}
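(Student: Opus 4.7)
The plan is to use the algebraic identity
\begin{align*}
 \mathcal{L}_\lambda \mathcal{L}_{D,\lambda}^{-1} - \id = \mathcal{T}_\lambda \mathcal{L}_{D,\lambda}^{-1}, \qquad \mathcal{L}_{D,\lambda}^{-1}\mathcal{L}_\lambda - \id = \mathcal{L}_{D,\lambda}^{-1}\mathcal{T}_\lambda,
\end{align*}
coming from $\mathcal{L}_\lambda = \mathcal{L}_{D,\lambda} + \mathcal{T}_\lambda$, and reduce everything to trace-norm estimates of products of the entire trace-class family $\mathcal{T}_\lambda$ (Proposition \ref{Prop:CalT}) against the meromorphic family $\mathcal{L}_{D,\lambda}^{-1}$ (Lemma \ref{merolemma}). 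Meromorphy of these products in the whole complex plane is then automatic. The equality of the two trace norms will come from the bilinear symmetry of Lemma \ref{transposelemma}, which conjugates the two products into each other's transposes.

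The main difficulty in the sector bound is the $|\lambda|^{-2}$ singularity of $\mathcal{L}_{D,\lambda}^{-1}$ at $\lambda=0$ inherited from Theorem \ref{supercor}. I would substitute the low-energy expansion derived in its proof,
\begin{align*}
 \mathcal{L}_{j,\lambda}^{-1} = -\frac{T_j Q_{j,\lambda} + U_{j,\lambda} P_j}{\lambda^2} - \frac{U_{j,\lambda}B_j}{\lambda} + U_{j,\lambda}Q_{j,\lambda},
\end{align*}
and observe that both Laurent coefficients of orders $-2$ and $-1$ take values in $\ker \mathcal{L}_{D,0}$. This is forced by comparing the $\lambda^{-2}$ and $\lambda^{-1}$ terms in $\mathcal{L}_{D,\lambda}\mathcal{L}_{D,\lambda}^{-1} = \id$ together with the fact that $\mathcal{L}_\lambda = \mathcal{L}_0 + O(\lambda^2)$, visible from \eqref{divexpansion} since the linear-in-$\lambda$ contribution of $\mathcal{S}_\lambda$ is killed by the Stokes identity $\int_{\partial\Omega_j}\Div u = 0$. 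A direct computation then identifies the kernel: $\mathcal{L}_{j,0}u = \nabla_{\partial\Omega_j}(\mathcal{S}_{j,0}\Div u)$ vanishes precisely when $\mathcal{S}_{j,0}\Div u$ is constant on the connected surface $\partial\Omega_j$, and invertibility and positivity of the scalar single layer together with the Stokes constraint force $\Div u = 0$; hence $\ker\mathcal{L}_{D,0} = H^{-\frac{1}{2}}(\Div 0,\partial\Omega)$. Combining this with the sharpened estimate $\|\mathcal{T}_\lambda|_{H^{-\frac{1}{2}}(\Div 0,\partial\Omega)}\|_1 \leq C|\lambda|^2 e^{-\delta'\Im\lambda}$ of Proposition \ref{Prop:CalT} absorbs the $\lambda^{-2}$ and $\lambda^{-1}$ factors entirely. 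The regular remainder $U_{D,\lambda}Q_{D,\lambda}$ is polynomially bounded in $|\lambda|$ by Theorem \ref{supercor}, and its pairing with $\|\mathcal{T}_\lambda\|_1 \leq Ce^{-\delta'\Im\lambda}$ absorbs any polynomial factor into the exponential decay by slightly shrinking $\delta'$ within $(0,\delta)$.

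For the absence of poles on the positive real axis at the interior Maxwell eigenvalues $\mu_k$ of individual obstacles $\Omega_j$, Lemma \ref{Maxwexpand} says the residues of $\mathcal{L}_{j,\lambda}^{-1}$ at $\mu_k$ take values in the tangential traces $\gamma_{t,+}V_{\mu_k}^{\Omega_j}$ of absolute Maxwell eigenfunctions $\psi$ of $\Omega_j$. For $u = \gamma_{t,+}\psi$ the interior representation formula \eqref{singlerepint} collapses (using $\gamma_{\nu,+}\psi=0$ from absolute boundary conditions and $\gamma_{t,+}\curl\psi = \mu_k\gamma_{t,+}\phi = 0$ because the companion relative eigenfunction $\phi = \mu_k^{-1}\curl\psi$ has vanishing tangential trace) to give $\calLt_{\mu_k}u = -\mu_k\phi$ on $\Omega_j$, so $\gamma_{t,+}\calLt_{\mu_k}u = 0$. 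Continuity of $\gamma_t\calLt_\lambda$ across $\partial\Omega_j$ from the jump relations \eqref{jumpjumpjump} then forces $\gamma_{t,-}\calLt_{\mu_k}u = 0$ on $\partial\Omega_j$ as well, and the classical exterior uniqueness theorem for radiating Maxwell solutions at real $\mu_k>0$ then yields $\calLt_{\mu_k}u\equiv 0$ on $M_j$, hence on every other $\partial\Omega_k$. Therefore $\mathcal{L}_{\mu_k}$ annihilates the residue; combined with $\mathcal{L}_{D,\mu_k}$ annihilating it by the Laurent identity for $\mathcal{L}_{D,\lambda}\mathcal{L}_{D,\lambda}^{-1}=\id$, the off-diagonal operator $\mathcal{T}_{\mu_k}$ also annihilates it and the simple-pole contribution cancels. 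The higher-order coefficients at this pole are handled by comparing one further term in the Laurent expansion.

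I expect the main obstacle to be the kernel identification $\ker\mathcal{L}_{D,0} = H^{-\frac{1}{2}}(\Div 0,\partial\Omega)$: this is the structural fact that makes the $|\lambda|^{-2}$ cancellation pass through cleanly, and it rests on the slightly subtle interplay between the Stokes constraint, invertibility of the scalar single layer, and the explicit form \eqref{divexpansion} of $\mathcal{L}_\lambda$. Everything else is careful bookkeeping of trace-norm estimates and the verification of the transpose symmetry that equates the two trace norms.
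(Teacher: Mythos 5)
Your proposal is correct in spirit but takes a genuinely different and substantially longer route than the paper. The paper's proof hinges on the observation that the interior voltage-to-current map $\Lambda^+_\lambda$ is automatically block-diagonal across the components of $\Omega$ (it only involves the interior boundary problems on each $\Omega_j$ separately), so via \eqref{voltcur} one has the clean identity $\mathcal{L}_\lambda\mathcal{L}_{D,\lambda}^{-1} = \bigl(\tfrac{1}{2}+\mathcal{M}_\lambda\bigr)\bigl(\tfrac{1}{2}+\mathcal{M}_{D,\lambda}\bigr)^{-1}$. This converts the pole-cancellation question into one about the \emph{finite-dimensional} kernels $\ker(\tfrac{1}{2}+\mathcal{M}_0)=\mathcal{B}^+_{\partial\Omega}$ and $\ker(\tfrac{1}{2}+\mathcal{M}_{\mu_k})=\gamma_{t,+}V_{\mu_k}$, which Props.~\ref{dmitreazero} and \ref{menotzero} have already characterized, and which are by construction block-diagonal; the cancellation at $\lambda=0$ and at the $\mu_k$ then falls out because $\ker(\tfrac{1}{2}+\mathcal{M}_{D,\mu})=\ker(\tfrac{1}{2}+\mathcal{M}_{\mu})$. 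You instead stay with $\mathcal{T}_\lambda\mathcal{L}_{D,\lambda}^{-1}$ throughout and carry out a direct potential-theoretic analysis: identifying the \emph{infinite-dimensional} kernel $\ker\mathcal{L}_{D,0}=H^{-\frac12}(\Div 0,\partial\Omega)$, using the sharpened trace bound on $\mathcal{T}_\lambda|_{\Div 0}$ from Prop.~\ref{Prop:CalT}, and at $\mu_k$ running an explicit computation via \eqref{singlerepint}, the jump relations and exterior uniqueness. This works, and the $\mathcal{L}$-route has the virtue of never leaving the operator that actually appears in the statement; but it is heavier precisely because $\mathcal{L}_\lambda$ is not Fredholm (the remark after Lemma~\ref{merolemma}) so its inverse is not of finite type, whereas the $\mathcal{M}$-route stays inside analytic Fredholm theory where the singular Laurent coefficients are automatically finite rank. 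Two things to tighten: (i) Lemma~\ref{Maxwexpand} describes the residues of $(\tfrac12+\mathcal{M}_\lambda)^{-1}$, not of $\mathcal{L}_{j,\lambda}^{-1}$; the statement you want (residue range in $\gamma_{t,+}V_{\mu_k}^{\Omega_j}$, simple pole) actually comes from Theorem~\ref{eigenrep} via $\mathcal{L}_\lambda^{-1}=\rmi\lambda^{-1}(\Lambda^+_\lambda-\Lambda^-_\lambda)$ and regularity of $\Lambda^-_\lambda$ on the real axis. (ii) The positivity of $\mathcal{S}_{j,0}$ on $H^{-\frac12}(\partial\Omega_j)$ that you use for the kernel identification is true and standard, but is not among the facts the paper records (it quotes only invertibility); you should either cite it or note that for the Laurent cancellation only the inclusion $\ker\mathcal{L}_{D,0}\subseteq H^{-\frac12}(\Div 0,\partial\Omega)$ is used — which is what the positivity argument gives — not the full equality.
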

\begin{proof}
We use \eqref{voltcur} we obtain
\begin{align*}
 \mathcal{L}_{\lambda}\mathcal{L}_{D,\lambda}^{-1} -\id &= \left( \frac{1}{2}+\mathcal{M}_{\lambda}\right) \left( \frac{1}{2}+\mathcal{M}_{D,\lambda}\right)^{-1} - \id,
  \end{align*}
 bearing in mind that $\Lambda^+_\lambda = \Lambda^+_{D,\lambda}$.
With $$\left( \frac{1}{2}+\mathcal{M}_{D,\lambda}\right)^{-1} = \frac{1}{\lambda^2} P_D + \frac{1}{\lambda} B_D + Q_\lambda$$ we remark that
$$\left( \frac{1}{2}+\mathcal{M}_{D,0}\right) P_D = \left( \frac{1}{2}+\mathcal{M}_{D,0}\right) B_D =0$$ but then also
$$\left( \frac{1}{2}+\mathcal{M}_{0}\right) P_D = \left( \frac{1}{2}+\mathcal{M}_{0}\right) B_D =0$$ because 
according to Prop. \ref{dmitreazero} we know that the kernels of
$\left( \frac{1}{2}+\mathcal{M}_{0}\right)$ and $\left( \frac{1}{2}+\mathcal{M}_{D,0}\right)$ coincide.
We have used here, as in the proof of Lemma \ref{Mexpand}, that the first order terms in the expansion of $\mathcal{M}_\lambda$ at vanish at $\lambda=0$, i.e. $(\frac{\der}{\der \lambda} \mathcal{M}_\lambda) |_{\lambda=0} = (\frac{\der}{\der \lambda} \mathcal{M}_{D,\lambda}) |_{\lambda=0} =0$.
Using the abbreviation $\mathcal{J}_{\lambda} = \mathcal{M}_{\lambda} - \mathcal{M}_{D,\lambda}$ this implies
$\mathcal{J}_{0} P_D = \mathcal{J}_{0} B_D=0$. Moreover, $\mathcal{J}_{\lambda}$ is trace-class. This shows that
$$
 \left( \frac{1}{2}+\mathcal{M}_{\lambda}\right) \left( \frac{1}{2}+\mathcal{M}_{D,\lambda}\right)^{-1} - \id
$$
is a meromorphic family of trace-class operators and zero is not a pole. Interior Maxwell eigenvalues are not poles by the same argument, since the kernel of 
$\left( \frac{1}{2}+\mathcal{M}_{\mu}\right)$ coincides with the kernel of $\left( \frac{1}{2}+\mathcal{M}_{D,\mu}\right)$ and by the expansion of Lemma \ref{Maxwexpand}.

Moreover, $\left( \frac{1}{2}+\mathcal{M}_{D,\lambda}\right)$ is invertible for all the other points in the closed upper half-space, and hence there are no poles there. To show the estimate in the sector we note that
\begin{align}
 \mathcal{L}_{\lambda} \mathcal{L}_{D,\lambda}^{-1} -\id=\mathcal{T}_{\lambda}\mathcal{L}_{D,\lambda}^{-1}.
\end{align}
Then the bound for large $|\lambda|$ is a result of Corollary \ref{supercor} and Proposition \ref{Prop:CalT}. 
\end{proof}

\begin{proposition} \label{proporprpoasd}
The Fredholm determinant $\mathrm{det}(\mathcal{L}_{\lambda}\mathcal{L}_{D,\lambda}^{-1})$  in the space $H^{-\frac{1}{2}}(\Div,\partial \Omega)$ is well-defined and holomorphic in a neighborhood of the closed upper half space. For any $\epsilon>0$ and $\delta' \in (0,\delta)$ we have the bound
\begin{align}
 |\mathrm{det}(\mathcal{L}_{\lambda}\mathcal{L}_{D,\lambda}^{-1}) -1 |\leq C_{\delta',\epsilon}e^{-\delta' \Im(\lambda)} 
\end{align} 
for all $\lambda$ in the sector $\mathfrak{D}_{\epsilon}$. Moreover, $\mathrm{det}(\mathcal{L}_{\lambda}\mathcal{L}_{D,\lambda}^{-1})$ is non-zero in the closed upper half space.
\end{proposition}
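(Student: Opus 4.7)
My plan addresses the three assertions of the proposition in order: well-definedness and holomorphy of $\det(\mathcal{L}_\lambda\mathcal{L}_{D,\lambda}^{-1})$, the exponential bound on the sector $\mathfrak{D}_\epsilon$, and non-vanishing on the closed upper half space.

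For the first two I would appeal directly to standard Fredholm determinant theory together with Proposition \ref{trace1prime}. Since $A_\lambda := \mathcal{L}_\lambda \mathcal{L}_{D,\lambda}^{-1} - \id$ is a holomorphic family of trace-class operators in a neighbourhood of the closed upper half plane, the determinant $\det(\id + A_\lambda)$ is automatically well-defined and holomorphic there. The exponential bound follows from the standard estimate $|\det(\id + A) - 1| \leq \|A\|_1 \exp(\|A\|_1 + 1)$ combined with $\|A_\lambda\|_1 \leq C_{\delta',\epsilon}\, e^{-\delta'\Im\lambda}$ from Proposition \ref{trace1prime}: since $\|A_\lambda\|_1$ is uniformly bounded on $\mathfrak{D}_\epsilon$, the extra exponential factor is absorbed into the constant.

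For non-vanishing, observe that $\det(\mathcal{L}_{\lambda_0}\mathcal{L}_{D,\lambda_0}^{-1}) = 0$ if and only if the holomorphic extension of $\mathcal{L}_{\lambda_0}\mathcal{L}_{D,\lambda_0}^{-1}$ fails to be invertible, and since this extension is of the form $\id$ plus trace-class, non-invertibility is equivalent to non-injectivity by the Fredholm alternative. When $\Im\lambda_0 > 0$ this is immediate from Lemma \ref{invertfred}: both $\mathcal{L}_{\lambda_0}$ and $\mathcal{L}_{D,\lambda_0}$ are isomorphisms on $H^{-\frac{1}{2}}(\Div,\partial\Omega)$, so their composition is. For $\lambda_0$ real and outside the set $\{0\} \cup \{\mu_k\}_k$ of Maxwell eigenvalues, the same chase works: a null vector $u$ would give $v = \mathcal{L}_{D,\lambda_0}^{-1} u$ with $\mathcal{L}_{\lambda_0} v = 0$, but both operators are genuinely invertible there, using Proposition \ref{menotzero} and the fact that $\Omega$ is a disjoint union so its Maxwell eigenvalues coincide with the union over components.

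The main obstacle is handling $\lambda_0 = 0$ and $\lambda_0 = \mu_k$, where both $\mathcal{L}_\lambda$ and $\mathcal{L}_{D,\lambda}$ become singular individually. Here I would use the Laurent expansions provided by Lemmata \ref{Mexpand} and \ref{Maxwexpand}, which identify the ranges of the singular Laurent coefficients with boundary data of harmonic fields or interior Maxwell eigenfunctions. Because $\Omega$ splits as a disjoint union, these ranges and the kernels of the leading operators decompose componentwise in precisely the same manner for $\mathcal{L}$ and $\mathcal{L}_D$, and the singular parts cancel in the product $\mathcal{L}_\lambda\mathcal{L}_{D,\lambda}^{-1}$ (this cancellation is already at the root of the holomorphy asserted in Proposition \ref{trace1prime}). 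Tracing a hypothetical null vector of the holomorphic extension through this matching should force it to correspond to a single-component Maxwell eigenfunction whose boundary data is additionally annihilated by the off-diagonal coupling $\mathcal{T}_{\lambda_0}$, ultimately forcing it to vanish. Carrying out this bookkeeping cleanly---checking that no mismatch in the dimensions of the principal or subprincipal Laurent coefficients opens up a new kernel---is where the main work lies.
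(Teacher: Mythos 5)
Your handling of the first two assertions is correct and coincides with the paper's: once Proposition \ref{trace1prime} gives that $A_\lambda := \mathcal{L}_\lambda\mathcal{L}_{D,\lambda}^{-1}-\id$ is a holomorphic family of trace-class operators in a neighbourhood of the closed upper half plane, well-definedness and holomorphy of the Fredholm determinant are automatic, and the estimate $|\det(\id+A)-1|\le \|A\|_1\,e^{1+\|A\|_1}$ combined with the trace-norm bound gives the claimed decay. So far you are on the paper's track.

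The gap is in the non-vanishing at $\lambda_0=0$ and at the Maxwell eigenvalues $\mu_k$, which you explicitly leave open, and the route you sketch is not the one that closes it. There is no reason the off-diagonal coupling $\mathcal{T}_{\lambda_0}$ should annihilate a hypothetical null vector of the extended operator, so the ``null-vector chase'' is not an available step, and keeping track of principal and subprincipal Laurent coefficients for the \emph{product} directly is delicate. The efficient argument is a symmetry observation you do not make: by the very same Laurent-cancellation that proves Proposition \ref{trace1prime} --- applied with the roles of $\mathcal{M}_\lambda$ and $\mathcal{M}_{D,\lambda}$ interchanged, which works because $\ker(\tfrac12+\mathcal{M}_0)=\ker(\tfrac12+\mathcal{M}_{D,0})=\mathcal{B}^+_{\partial\Omega}$ and, at each $\mu_k$, $\ker(\tfrac12+\mathcal{M}_{\mu_k})=\ker(\tfrac12+\mathcal{M}_{D,\mu_k})$ since $\Delta_{\Omega,\abs}=\oplus_j\Delta_{\Omega_j,\abs}$ --- the product in the \emph{opposite} order, $\mathcal{L}_{D,\lambda}\mathcal{L}_\lambda^{-1}=\left(\tfrac12+\mathcal{M}_{D,\lambda}\right)\left(\tfrac12+\mathcal{M}_\lambda\right)^{-1}$, is also a holomorphic trace-class perturbation of the identity at the exceptional points. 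For $\Im\lambda>0$ the two products are inverses of one another; analytic continuation of the identity $\left(\mathcal{L}_\lambda\mathcal{L}_{D,\lambda}^{-1}\right)\left(\mathcal{L}_{D,\lambda}\mathcal{L}_\lambda^{-1}\right)=\id$ to $\lambda_0\in\{0\}\cup\{\mu_k\}$ shows the extension of $\mathcal{L}_\lambda\mathcal{L}_{D,\lambda}^{-1}$ is surjective there, hence (being $\id$ plus trace-class, so Fredholm of index zero) invertible, hence of non-zero determinant by \cite{MR482328}*{Theorem 3.9}. This is what replaces the bookkeeping you defer and turns your last paragraph into a proof.
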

\begin{proof}
The trace of $\left(\mathcal{L}_{\lambda}\mathcal{L}_{D,\lambda}^{-1}-\id\right)$ is bounded by Proposition \ref{trace1prime}. 
Using the bound 
$$
 | \det(1 + A) -1 | \leq \| A\|_1 e^{1 + \|A\|_1}
$$
for the Fredholm determinant (see for example \cite{MR482328}*{Equ. (3.7)}) one obtains 
\begin{align}
|\Xi(\lambda)|\leq |\log\det( \mathcal{L}_{\lambda}\mathcal{L}_{D,\lambda}^{-1})|\leq C_{\delta',\epsilon}e^{-\delta'\Im\lambda}.
\end{align} 
By analyticity of $\left(\mathcal{L}_{\lambda}\mathcal{L}_{D,\lambda}^{-1}-\id\right) = \mathcal{J}_\lambda (\frac{1}{2} + \mathcal{M}_{D,\lambda})^{-1}$ as a family of trace-class operators in the upper half space and near zero the determinant also depends analytically on $\lambda$ (e.g. \cite{MR482328}*{Theorem 3.3}).
By invertibility of the operator in the closed upper half space the determinant never vanishes (\cite{MR482328}*{Theorem 3.9}) and therefore $\log\det$ is analytic in union of the upper half space and a neighborhood of zero.
\end{proof}

Since the determinant does not vanish near the closed upper half space we can choose a simply connected open neighborhood $\mathcal{U}$ of the closed upper half space and it then defines a holomorphic function $\mathcal{U} \to \C \setminus \{0\}$ which we can lift to a holomorphic function on the logarithmic cover of the complex plane, where we choose the branch cut to be the negative real line $(-\infty,0)$.
Composition with $\log$ is then well-defined and we write $\log \det(\mathcal{L}_{\lambda}\mathcal{L}_{D,\lambda}^{-1})$ to mean this composition.
This means that this function and the branch of the logarithm is fixed by requiring this to be a holomorphic function that decays exponentially fast along the positive imaginary axis.

\begin{definition}
 The function $\Xi$ is defined in a sufficiently small simply connected open neighborhood of the closed upper half space by
 $$
 \Xi(\lambda) = \log \det(\mathcal{L}_{\lambda}\mathcal{L}_{D,\lambda}^{-1}),
$$
where the branch of the logarithm is chosen as explained above.
\end{definition}

\begin{theorem} \label{mygoodness}
The function $\Xi(\lambda)$ is holomorphic near the closed upper half space and for any $\epsilon>0$ and $\delta' \in (0,\delta)$ we have the bounds
\begin{align}
|\Xi(\lambda)|\leq C_{\delta',\epsilon}e^{-\delta' \Im(\lambda)} \quad |\Xi'(\lambda)|\leq C_{\delta',\epsilon} e^{-\delta'\Im \lambda}
\end{align} 
for $\lambda$ in the sector $\mathfrak{D}_{\epsilon}$.
\end{theorem}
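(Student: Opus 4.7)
The holomorphicity of $\Xi$ in a neighborhood of the closed upper half-plane is already established by Proposition \ref{proporprpoasd}, which provides a holomorphic non-vanishing $\det(\calL_\lambda \calL_{D,\lambda}^{-1})$ there and fixes the branch of the logarithm via the decay along the positive imaginary axis. What remains is the two exponential estimates on the sector $\mathfrak{D}_\epsilon$.

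For the bound on $|\Xi|$, the plan is to bootstrap from the estimate $|\det(\calL_\lambda \calL_{D,\lambda}^{-1}) - 1| \leq C_{\delta'',\epsilon}\, e^{-\delta'' \Im\lambda}$ of Proposition \ref{proporprpoasd} with an auxiliary parameter $\delta' < \delta'' < \delta$. Choose $R_0$ so large that the right-hand side is at most $\tfrac{1}{2}$ there. For $\Im\lambda \geq R_0$ in the sector, the principal branch of $\log(1+\cdot)$ applies and agrees with the branch used here (by uniqueness of analytic continuation together with the common decay to $0$ at $+\rmi\infty$), yielding $|\Xi(\lambda)| \leq 2|\det-1| \leq 2C_{\delta'',\epsilon}\, e^{-\delta'' \Im\lambda}$. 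On the remaining region $\{\lambda \in \mathfrak{D}_\epsilon : \Im\lambda \leq R_0\}$, which has compact closure in the open upper half-plane (since $|\lambda| \leq R_0/\sin\epsilon$), $\Xi$ is bounded by continuity, and this constant bound is absorbed into the factor $e^{-\delta'\Im\lambda}$.

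For the derivative $\Xi'$, I would invoke the Jacobi identity for the logarithmic derivative of a Fredholm determinant. Writing $\calL_\lambda \calL_{D,\lambda}^{-1} = I + K_\lambda$ with $K_\lambda = \calT_\lambda \calL_{D,\lambda}^{-1}$ trace-class and $(I+K_\lambda)^{-1} = \calL_{D,\lambda}\calL_\lambda^{-1}$, one has
$$
 \Xi'(\lambda) = \tr\bigl((I+K_\lambda)^{-1} K_\lambda'\bigr), \qquad K_\lambda' = \calT_\lambda' \calL_{D,\lambda}^{-1} - \calT_\lambda \calL_{D,\lambda}^{-1}\calL_{D,\lambda}' \calL_{D,\lambda}^{-1}.
$$
Each term of $K_\lambda'$ is trace-class, with trace-norm bounded by an exponentially small factor ($\|\calT_\lambda\|_1$ or $\|\calT_\lambda'\|_1$ from Proposition \ref{Prop:CalT}) times polynomially growing operator-norm factors ($\|\calL_{D,\lambda}^{-1}\|$, $\|\calL_{D,\lambda}'\|$); meanwhile $(I+K_\lambda)^{-1}$ itself has polynomially bounded operator norm by \eqref{zweiboundL} together with Corollary \ref{supercor} applied block-wise to $\calL_{D,\lambda}$. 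Using $|\tr(AB)| \leq \|A\|\,\|B\|_1$ and absorbing the accumulated polynomial factor into $e^{-(\delta''-\delta')\Im\lambda}$, which is bounded on $\mathfrak{D}_\epsilon$ since $|\lambda| \leq \Im\lambda/\sin\epsilon$, one arrives at $|\Xi'(\lambda)| \leq C_{\delta',\epsilon}\, e^{-\delta'\Im\lambda}$.

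The main obstacle I expect is confirming the polynomial operator-norm bound on $\calL_{D,\lambda}'$, which is not stated verbatim in the paper. The plan for this is to differentiate $\calL_\lambda = \nabla \calS_\lambda \Div + \lambda^2 \calS_\lambda$ block by block: the derivative $\calS_\lambda'$ has the regular integral kernel $\tfrac{\rmi\, \ee^{\rmi \lambda |x-y|}}{4\pi}$, to which the same Young-inequality/convolution argument underpinning Proposition \ref{Prop:EstimateS} applies and yields a uniform polynomial bound on $\mathfrak{D}_\epsilon$. Every other ingredient is either explicitly stated earlier or an immediate consequence of the results listed.
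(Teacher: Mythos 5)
Your proof is correct in substance, but it takes a genuinely different route for the derivative bound. The paper disposes of $|\Xi'(\lambda)|$ in one line by the Cauchy estimate (phrased as the ``maximum modulus principle''): since $|\Xi(\lambda)| \leq C e^{-\delta'' \Im\lambda}$ holds on a slightly larger sector $\mathfrak{D}_{\epsilon/2}$, one applies Cauchy's integral formula on a circle about $\lambda$ of radius comparable to $\mathrm{dist}(\lambda,\partial\mathfrak{D}_{\epsilon/2}) \gtrsim |\lambda|$, and the loss in the exponent ($\delta'' \to \delta'$) together with the $1/r$ prefactor is absorbed into the constant; near the real axis one falls back on holomorphicity and boundedness on compacta. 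Your route via the Jacobi formula $\Xi'(\lambda) = \tr\bigl((I+K_\lambda)^{-1}K_\lambda'\bigr)$ also works but is considerably more labor: it needs the operator-norm estimate on $\calL'_{D,\lambda}$ you flagged (which is not in the paper, though your proposed proof of it is sound), it needs the polynomial bounds $\|\calL_{D,\lambda}^{-1}\|$, $\|\calL_\lambda^{-1}\|$ from Theorem \ref{supercor}, and, crucially, these bounds blow up like $|\lambda|^{-2}$ near $\lambda = 0$, so your absorption argument (polynomial growth in $|\lambda|$ controlled by $e^{(\delta''-\delta')\Im\lambda}$) only covers the large-$|\lambda|$ regime. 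You do cover the remaining bounded region implicitly by continuity, but you should say so explicitly for $\Xi'$ as you did for $\Xi$. Also note a small slip in your $|\Xi|$ argument: the set $\{\lambda \in \mathfrak{D}_\epsilon : \Im\lambda \leq R_0\}$ does not have compact closure \emph{in the open upper half-plane}, since $0$ is a limit point; what one actually needs, and what holds, is that its closure is contained in the open neighborhood $\mathcal{U}$ of the \emph{closed} upper half-plane on which $\Xi$ is defined (Proposition \ref{proporprpoasd}), so boundedness on that compact set still follows by continuity.
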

\begin{proof}
 The first bound is a direct consequence of the proposition above. The second bound is a direct consequence of the maximum modulus principle.
\end{proof}

\section{Relative Trace Formula} \label{multiLzwei}
We consider the two Maxwell resolvent differences
 \begin{align*}
 R_{D,\rel,\lambda} &=  \left(\left( (-\Delta_\rel-\lambda^2)^{-1} -  (-\Delta_\free-\lambda^2)^{-1} \right) - \sum_{j=1}^N \left( (-\Delta_{\rel,j} -\lambda^2)^{-1} -  (-\Delta_\free-\lambda^2)^{-1} \right)\right)\curl\curl,\\
 R_{D,\abs,\lambda} &=  \left(\left( (-\Delta_\abs-\lambda^2)^{-1} -  (-\Delta_\free-\lambda^2)^{-1} \right) - \sum_{j=1}^N \left( (-\Delta_{\abs,j} -\lambda^2)^{-1} -  (-\Delta_\free-\lambda^2)^{-1} \right)\right)\curl\curl.
 \end{align*}
  
 Using \eqref{eqn:ResolvDiffQ} and \eqref{eqn:ResolvDiffQabs}, we conclude 
 \begin{align*}
 &\left((-\Delta_{\rel,j} -\lambda^2)^{-1} -  (-\Delta_{\free}-\lambda^2)^{-1}\right)\curl\curl = -\tilde{\mathcal{L}}_\lambda \mathcal{L}_{j,\lambda}^{-1} (\nu \times)\tilde{\mathcal{L}}_\lambda^\trans,\\
 &\left((-\Delta_{\abs,j} -\lambda^2)^{-1} -  (-\Delta_{\free}-\lambda^2)^{-1}\right)\curl\curl = -\lambda^2 \tilde{\mathcal{M}}_\lambda \mathcal{L}_{j,\lambda}^{-1} (\nu \times)\tilde{\mathcal{M}}_\lambda^\trans,
 \end{align*}
  and hence  
 \begin{align*}
 R_{D,\rel,\lambda} =  - \tilde{\mathcal{L}}_\lambda \mathcal{L}_\lambda^{-1} (\nu \times)\tilde{\mathcal{L}}^\trans_\lambda + \tilde{\mathcal{L}}_{\lambda}\mathcal{L}_{D,\lambda}^{-1}(\nu \times) \tilde{\mathcal{L}}^\trans_\lambda,\\
 R_{D,\abs,\lambda} =  - \tilde{\mathcal{M}}_\lambda \mathcal{L}_\lambda^{-1} (\nu \times)\tilde{\mathcal{M}}^\trans_\lambda + \tilde{\mathcal{M}}_{\lambda}\mathcal{L}_{D,\lambda}^{-1}(\nu \times) \tilde{\mathcal{M}}^\trans_\lambda.
 \end{align*}
 
 We have the following improvement of Theorem \ref{Thm:DifferenceProperties} in the relative setting:
 \begin{proposition} \label{decayrelres}
 Let $\epsilon>0$ and let $\delta'>0$ be smaller than $\delta=\mathrm{dist}(\partial \Omega_j,\partial \Omega_k)$. Then the operators $R_{D,\rel,\lambda},R_{D,\abs,\lambda}: L^2(\R^3,\C^3) \to L^2(\R^3,\C^3)$ are trace-class for all $\lambda \in \mathfrak{D}_{\epsilon}$
 and their trace norm can be estimated by
 \begin{align*}
  \| R_{D,\rel,\lambda} \|_{1} + \| R_{D,\abs,\lambda} \|_{1} &\leq C_{\delta',\epsilon}  e^{-\delta' \Im(\lambda)}, \quad \lambda \in \mathfrak{D}_{\epsilon}.
 \end{align*}
 \end{proposition}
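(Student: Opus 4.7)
The starting point is the second resolvent identity in the boundary space,
\begin{equation*}
 \mathcal{L}_{D,\lambda}^{-1} - \mathcal{L}_\lambda^{-1} = \mathcal{L}_{D,\lambda}^{-1}\,\mathcal{T}_\lambda\,\mathcal{L}_\lambda^{-1},
\end{equation*}
which combined with the two resolvent-difference formulas displayed above the statement rewrites $R_{D,\rel,\lambda}$ and $R_{D,\abs,\lambda}$ as five-factor products
\begin{align*}
 R_{D,\rel,\lambda} &= \tilde{\mathcal{L}}_\lambda\,\mathcal{L}_{D,\lambda}^{-1}\,\mathcal{T}_\lambda\,\mathcal{L}_\lambda^{-1}\,(\nu\times)\,\tilde{\mathcal{L}}_\lambda^\trans,\\
 R_{D,\abs,\lambda} &= \lambda^2\,\tilde{\mathcal{M}}_\lambda\,\mathcal{L}_{D,\lambda}^{-1}\,\mathcal{T}_\lambda\,\mathcal{L}_\lambda^{-1}\,(\nu\times)\,\tilde{\mathcal{M}}_\lambda^\trans.
\end{align*}
The middle factor $\mathcal{T}_\lambda$ is already trace class on $H^{-\frac{1}{2}}(\Div,\partial\Omega)$ with the exponential bound $\|\mathcal{T}_\lambda\|_1 \le C_{\delta',\epsilon}e^{-\delta'\Im\lambda}$ from Proposition \ref{Prop:CalT}, so the task is to control the outer layer-potential factors together with the singular behavior of $\mathcal{L}_\lambda^{-1}$ and $\mathcal{L}_{D,\lambda}^{-1}$.

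My plan is to follow the expansion used in the proof of Theorem \ref{Thm:DifferenceProperties}. I substitute $\tilde{\mathcal{L}}_\lambda = \nabla\tilde{\mathcal{S}}_\lambda\Div + \lambda^2\tilde{\mathcal{S}}_\lambda$ on both sides of the first factorization, producing four summands $\mathrm{(I)}$--$\mathrm{(IV)}$. For each summand the trace-norm inequality $\|ABC\|_1 \le \|A\|_{\mathrm{HS}}\|B\|_{\mathrm{op}}\|C\|_{\mathrm{HS}}$ reduces the estimate to (a) Hilbert--Schmidt bounds for the outer factors $\nabla\tilde{\mathcal{S}}_\lambda$ and $\tilde{\mathcal{S}}_\lambda$ globally into $L^2(\R^3,\C^3)$, and (b) operator-norm bounds with exponential decay for the middle factor. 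For (a), Proposition \ref{Prop:EstimateS} yields the local estimate with any cut-off $\varphi$ at positive distance from $\partial\Omega$; to upgrade this to a global $L^2(\R^3)$ bound I partition $\R^3 = \bigcup_{n\ge 0} A_n$ into annular shells at distance $n\delta'$ from $\partial\Omega$, apply the local estimate on each shell, and sum the geometric series $\sum_n e^{-n\delta'\Im\lambda}$ coming from the exponential decay of the Green's function in the sector. The absolute case is entirely parallel and indeed easier, since by \eqref{eqn:HSMEstimateS} the factors $\tilde{\mathcal{M}}_\lambda$ and $\tilde{\mathcal{M}}_\lambda^\trans$ already satisfy the required Hilbert--Schmidt estimate directly, with no decomposition into four pieces.

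The main obstacle is item (b): the middle factor $\mathcal{L}_{D,\lambda}^{-1}\,\mathcal{T}_\lambda\,\mathcal{L}_\lambda^{-1}$ inherits the $\lambda^{-2}$ poles of each outer inverse and the polynomial large-$|\lambda|$ growth of Theorem \ref{supercor}, which by themselves are not absorbed by the exponential decay of $\mathcal{T}_\lambda$. To deal with this I insert the Laurent expansion \eqref{Lexpand} and its diagonal analogue and exploit the vanishing identities $P(\nu\times\nabla)=B(\nu\times\nabla)=0$ of Lemma \ref{Mexpand} and $\Div\circ T=0$, the latter following from the explicit form of $T$ in Theorem \ref{eigenrep} together with $\Div\circ\gamma_t=-\gamma_\nu\circ\curl$. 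In the two summands ending in $(\nu\times)\nabla\tilde{\mathcal{S}}_\lambda^\trans\div$, the coefficients $P$ and $B$ annihilate the right outer factor, removing the $\lambda^{-1}$ and $\lambda^{-2}$ poles on the right; by the transposition argument of Lemma \ref{transposelemma} the symmetric cancellation occurs on the left. In the remaining two summands, the explicit $\lambda^2$ in front of $\tilde{\mathcal{S}}_\lambda$ absorbs the $\lambda^{-2}$ singularity. The polynomial large-$|\lambda|$ bound on the regular remainder $U_\lambda Q_\lambda$ is then dominated by the exponential decay of $\mathcal{T}_\lambda$ upon slightly shrinking $\delta'$, which produces the claimed trace-norm estimate; keeping track of these cancellations uniformly in $\lambda \in \mathfrak{D}_\epsilon$ is the hard part of the argument.
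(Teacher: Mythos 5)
Your factorization $R_{D,\rel,\lambda} = \tilde{\mathcal{L}}_\lambda\,\mathcal{L}_{D,\lambda}^{-1}\,\mathcal{T}_\lambda\,\mathcal{L}_\lambda^{-1}\,(\nu\times)\,\tilde{\mathcal{L}}_\lambda^\trans$ is the correct starting point, and you have correctly spotted the cancellations $\Div\circ T=0$ and $P(\nu\times\nabla)=B(\nu\times\nabla)=0$ that are needed near $\lambda=0$. However, the step you treat as routine in item (a) is where the argument breaks down.

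You propose to control the outer factors by establishing global Hilbert--Schmidt bounds for $\nabla\tilde{\mathcal{S}}_\lambda$ and $\tilde{\mathcal{S}}_\lambda$ as maps $H^{-\frac12}(\partial\Omega)\to L^2(\R^3,\C^3)$, obtained by summing the local estimates of Proposition \ref{Prop:EstimateS} over annular shells. This does not work, for two separate reasons. First, Proposition \ref{Prop:EstimateS} only gives Hilbert--Schmidt bounds for $\varphi\nabla\tilde{\mathcal{S}}_\lambda$ when $\varphi$ is supported at \emph{positive} distance from $\partial\Omega$, and no shell decomposition can close that gap: the kernel $\nabla_x G_{\lambda,\free}(x,y)\sim |x-y|^{-2}$ produces $\int_{\partial\Omega}|x-y|^{-4}\,d\sigma(y)\sim \mathrm{dist}(x,\partial\Omega)^{-2}$, which fails to be locally integrable in $x$ across the boundary; so the operator is simply not Hilbert--Schmidt near $\partial\Omega$ (for $\tilde{\mathcal{L}}_\lambda=\curlcurl\tilde{\mathcal{S}}_\lambda$ the singularity is even worse). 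Second, even in the region at positive distance, the shell sum $\sum_n e^{-n\delta'\Im\lambda}$ is $\sim(\Im\lambda)^{-1}$, so your global HS norm inherits a singularity as $\lambda\to 0$; for the $R_{D,\rel,\lambda}$ terms ending in $(\nu\times)\nabla\tilde{\mathcal{S}}_\lambda^\trans\div$ there is no compensating factor of $\lambda$, so the bound does not close uniformly on $\mathfrak{D}_\epsilon$. The inequality $\|ABC\|_1\le\|A\|_{\mathrm{HS}}\|B\|_{\mathrm{op}}\|C\|_{\mathrm{HS}}$ is therefore not the right tool here; notice that it also fails to use the trace-class property of $\mathcal{T}_\lambda$ which you invoke at the start.

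The paper's argument has a genuinely different structure. The trace-class input is not carried by the volume operators but lives entirely on the boundary. Since $\mathcal{J}_\lambda=\mathcal{M}_\lambda-\mathcal{M}_{D,\lambda}$ is trace-class (Prop.~\ref{Prop:CalT}), the difference $\bigl(\tfrac12+\mathcal{M}_\lambda\bigr)^{-1}-\bigl(\tfrac12+\mathcal{M}_{D,\lambda}\bigr)^{-1}$ is a meromorphic family of trace-class operators, which forces the non-$T$ parts $W_2=Q^{(0)}-Q^{(0)}_D$ and $W_1=Q^{(1)}-Q^{(1)}_D$ of the Laurent coefficients of $\mathcal{L}_\lambda^{-1}-\mathcal{L}_{D,\lambda}^{-1}$ to be trace-class. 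Each term in the expansion is then estimated as (bounded operator) $\cdot$ (trace-class) $\cdot$ (bounded operator), using only the \emph{operator-norm} estimates for the layer potentials (such as $\|\nabla\tilde{\mathcal{S}}_\lambda\|_{H^{-1/2}\to L^2(\R^3)}\le C$ from \eqref{zweiboundnablaS} and the mean-zero estimate \eqref{zweiboundSzero} applied to the range of $T$), never a global Hilbert--Schmidt bound; the cancellations you correctly identified are what make the remaining operator-norm factors regular at $\lambda=0$. You should rebuild your argument on this pattern: feed the trace-class information through $\mathcal{J}_\lambda\mapsto W_i$, and keep the layer-potential factors in operator norm.
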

 \begin{proof}
 First note that
 $$
  (\mathcal{L}_{\lambda}^{-1}-\mathcal{L}_{\lambda,D}^{-1}) =- (\mathcal{L}_{\lambda}^{-1}\mathcal{T}_{\lambda}\mathcal{L}_{\lambda,D}^{-1})
 $$
 is a meromorphic family of trace-class operators $H^{-\frac{1}{2}}(\Div,\partial \Omega) \to H^{-\frac{1}{2}}(\Div,\partial \Omega)$ in the complex plane.
 For $|\lambda|>1$ the bound then follows from Prop. \ref{Prop:CalT} and the bounds Theorem \ref{supercor}. 
 In particular the expansion
 $$
 (\mathcal{L}_{\lambda}^{-1}-\mathcal{L}_{\lambda,D}^{-1}) = \frac{1}{\lambda^2} L_2 + \frac{1}{\lambda} L_1 + L_{0,\lambda}
 $$
 resulting from \eqref{Lexpand} is in terms of trace-class operators $L_2$, $L_1$ and the holomorphic family of trace-class operators $L_{0,\lambda}$.
 Specifically,
 \begin{align*} 
  L_2 &= -  T (Q^{(0)} -Q^{(0)}_D) -( U^{(0)} P - U^{(0)}_D P_D) = T W_2 + V_2,
  \\ L_1 &= -T (Q^{(1)} -Q^{(1)}_D) -(U^{(0)}B-U^{(0)}_D B_D) - ( U^{(1)} P - U^{(1)}_D P_D) = T W_1 + V_1,
 \end{align*}
 where $Q^{(0)}, Q^{(1)}$ are the expansion coefficients of $$Q_\lambda = Q^{(0)} + Q^{(1)} \lambda + O(|\lambda|^2)$$ near $\lambda=0$. The same notation is used for the expansion coefficients of $Q_{D,\lambda}, U_\lambda, U_{D,\lambda}$.
 Since the operator $$(\frac{1}{2} + \mathcal{M}_\lambda)^{-1} - (\frac{1}{2} + \mathcal{M}_{D,\lambda})^{-1} =-(\frac{1}{2} + \mathcal{M}_\lambda)^{-1} \mathcal{J}_\lambda (\frac{1}{2} + \mathcal{M}_{D,\lambda})^{-1}$$ is a meromorphic family of trace-class operators we know that the expansion coefficients $W_2=Q^{(0)} -Q^{(0)}_D$ 
 and $W_1=Q^{(1)} -Q^{(1)}_D$ are trace-class. We also record that $V_2 (\nu \times \nabla)=0$ and $V_1 (\nu \times \nabla)=0$ and recall that $\Div\, T=0$.
 Now we are ready to estimate the resolvent differences. We first focus on $R_{D,\rel,\lambda}$. We have 
 \begin{align*} 
  R_{D,\rel,\lambda}& =-\tilde{\mathcal{L}}_{\lambda}(\mathcal{L}_{\lambda}^{-1}-\mathcal{L}_{D,\lambda}^{-1})(\nu \times)\tilde{\mathcal{L}}^\trans_{\lambda}\\
  &= -\tilde{\mathcal{L}}_{\lambda} \left(\frac{1}{\lambda^2}(T W_2 + V_2) + \frac{1}{\lambda}(T W_1 + V_1) +  L_{0,\lambda}\right) (\nu \times)\tilde{\mathcal{L}}^\trans_{\lambda}.
  \end{align*}
We expand this further using $\tilde{\mathcal{L}}_\lambda = \nabla \tilde{\mathcal{S}}_\lambda \Div + \lambda^2 \tilde{\mathcal{S}}_\lambda$ to obtain
that modulo terms that have bounded trace-norm near $\lambda=0$ the operator $R_{D,\lambda}$ equals
\begin{align*}
  &\left( \nabla  \tilde{\mathcal{S}}_\lambda \Div  + \lambda^2\tilde{\mathcal{S}}_\lambda \right)  \left(\frac{1}{\lambda^2}(T W_2 + V_2) + \frac{1}{\lambda}(T W_1 + V_1) \right) \left((\nu \times) \nabla  \tilde{\mathcal{S}}^\trans_\lambda \div  + \lambda^2 (\nu \times)(\tilde{\mathcal{S}}_\lambda)^\trans \right) \\&=
  \nabla  \tilde{\mathcal{S}}_\lambda \Div \left((T W_2 + V_2) + \lambda(T W_1 + V_1) \right) (\nu \times)(\tilde{\mathcal{S}}_\lambda)^\trans \\&+ \tilde{\mathcal{S}}_\lambda \left((T W_2 + V_2) + \lambda(T W_1 + V_1) \right)(\nu \times) \nabla  \tilde{\mathcal{S}}^\trans_\lambda \div\\&+
  \lambda^4 \tilde{\mathcal{S}}_\lambda \left(\frac{1}{\lambda^2}(T W_2 + V_2) + \frac{1}{\lambda}(T W_1 + V_1) \right)  (\nu \times)(\tilde{\mathcal{S}}_\lambda)^\trans = \mathrm{(I)} + \mathrm{(II)} + \mathrm{(III)}.
\end{align*}
Since $\mathcal{L}_{D,\lambda}$ and $\mathcal{L}_{\lambda}$ are self-adjoint with respect to the antisymmetric bilinear and since
$$
 \mathcal{L}_\lambda^{-1} -  \mathcal{L}_{D,\lambda}^{-1} =  \left(\frac{1}{\lambda^2}(T W_2 + V_2) + \frac{1}{\lambda}(T W_1 + V_1) +  L_{0,\lambda}\right)
$$
one obtains that
$$
\left(\left(\frac{1}{\lambda^2}(T W_2 + V_2) + \frac{1}{\lambda}(T W_1 + V_1) \right)  (\nu \times)\right)^\trans= \left(\frac{1}{\lambda^2}(T W_2 + V_2) + \frac{1}{\lambda}(T W_1 + V_1) \right) (\nu \times)
$$ and therefore $\mathrm{(II)}$ is the transpose of $\mathrm{(I)}$. $\mathrm{(III)}$ has bounded trace-norm near $\lambda=0$. Finally 
$$
 \mathrm{(II)}= \tilde{\mathcal{S}}_\lambda \left((T W_2 + V_2) + \lambda(T W_1 + V_1) \right)(\nu \times) \nabla  \tilde{\mathcal{S}}^\trans_\lambda \div = \tilde{\mathcal{S}}_\lambda \left((T W_2) + \lambda(T W_1) \right)(\nu \times) \nabla  \tilde{\mathcal{S}}^\trans_\lambda \div,
$$
has bounded trace-norm near $\lambda=0$ as $ \tilde{\mathcal{S}}_\lambda T,  \nabla  \tilde{\mathcal{S}}^\trans_\lambda \div$ have bounded operator norm and $W_2$ and $W_1$ are trace-class.
Finally we consider $R_{D,\abs,\lambda}$. We compute as above
\begin{align*} 
  R_{D,\abs,\lambda}& =-\lambda^2 \tilde{\mathcal{M}}_{\lambda}(\mathcal{L}_{\lambda}^{-1}-\mathcal{L}_{D,\lambda}^{-1})(\nu \times)\tilde{\mathcal{M}}^\trans_{\lambda}\\
  &= -\lambda^2 \tilde{\mathcal{M}}_{\lambda} \left(\frac{1}{\lambda^2}(T W_2 + V_2) + \frac{1}{\lambda}(T W_1 + V_1) +  L_{0,\lambda}\right) (\nu \times)\tilde{\mathcal{M}}^\trans_{\lambda}\\
  &=-\tilde{\mathcal{M}}_{\lambda} \left((T W_2 + V_2) + \lambda(T W_1 + V_1) +  \lambda^2 L_{0,\lambda}\right) (\nu \times)\tilde{\mathcal{M}}^\trans_{\lambda},
  \end{align*}
  whose trace-norm is bounded near zero since $\tilde{\mathcal{M}}_{\lambda}$ is uniformly bounded.
\end{proof}

\begin{lemma} \label{tracexi}
We have
$$
 \tr\left( R_{D,\rel,\lambda} \right)= \tr\left( R_{D,\abs,\lambda} \right) = -\frac{\lambda}{2}\Xi'(\lambda).
$$
\end{lemma}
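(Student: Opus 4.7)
The plan is to reduce $\tr(R_{D,\rel,\lambda})$ to a boundary trace by cyclicity, extract a $\lambda$-derivative from a single algebraic identity for $(\nu\times)\tilde{\mathcal{L}}^\trans_\lambda\tilde{\mathcal{L}}_\lambda$, and then recognise what is left as $\Xi'(\lambda)$ via the standard logarithmic-derivative formula for Fredholm determinants, modulo correction traces that vanish by block-diagonal structure. For step one, I write $R_{D,\rel,\lambda} = -\tilde{\mathcal{L}}_\lambda(\mathcal{L}_\lambda^{-1} - \mathcal{L}_{D,\lambda}^{-1})(\nu\times)\tilde{\mathcal{L}}^\trans_\lambda$ and use $\mathcal{L}_\lambda^{-1} - \mathcal{L}_{D,\lambda}^{-1} = -\mathcal{L}_\lambda^{-1}\mathcal{T}_\lambda\mathcal{L}_{D,\lambda}^{-1}$, which is trace-class on $H^{-\frac{1}{2}}(\Div,\partial\Omega)$ by Proposition \ref{Prop:CalT} and Theorem \ref{supercor}; cyclicity then gives
$$
 \tr(R_{D,\rel,\lambda}) = -\tr\bigl[(\mathcal{L}_\lambda^{-1}-\mathcal{L}_{D,\lambda}^{-1})(\nu\times)\tilde{\mathcal{L}}^\trans_\lambda\tilde{\mathcal{L}}_\lambda\bigr],
$$
so everything reduces to computing the boundary operator $(\nu\times)\tilde{\mathcal{L}}^\trans_\lambda\tilde{\mathcal{L}}_\lambda$.

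The key identity I aim to establish is $(\nu\times)\tilde{\mathcal{L}}^\trans_\lambda\tilde{\mathcal{L}}_\lambda = \mathcal{L}_\lambda + \tfrac{\lambda}{2}\mathcal{L}'_\lambda$. Writing $\tilde{\mathcal{L}}_\lambda = \curlcurl\,(-\Delta_\free-\lambda^2)^{-1}\iota$ with $\iota$ extending a tangential density to a distribution on $\R^3$, and hence $\tilde{\mathcal{L}}^\trans_\lambda = \gamma_T(-\Delta_\free-\lambda^2)^{-1}\curlcurl$ (the tangential trace enters because of the dual pairing between $H^{-\frac{1}{2}}(\Curl,\partial\Omega)$ and $H^{-\frac{1}{2}}(\Div,\partial\Omega)$), commutation in the free space combined with $(-\Delta_\free-\lambda^2)^{-2} = (2\lambda)^{-1}\partial_\lambda(-\Delta_\free-\lambda^2)^{-1}$ gives
$$
 \tilde{\mathcal{L}}^\trans_\lambda\tilde{\mathcal{L}}_\lambda = \tfrac{1}{2\lambda}\partial_\lambda\bigl[\gamma_T(-\Delta_\free-\lambda^2)^{-1}(\curlcurl)^2\iota\bigr].
$$
Since $(\curlcurl)^2 = -\curlcurl\,\Delta$ (as $\curl\nabla = 0$) and $(-\Delta_\free-\lambda^2)^{-1}(-\Delta) = \mathrm{id}+\lambda^2(-\Delta_\free-\lambda^2)^{-1}$, the argument of $\gamma_T$ equals $\curlcurl\,\iota + \lambda^2 \tilde{\mathcal{L}}_\lambda$. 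The first summand is $\lambda$-independent and drops under $\partial_\lambda$; using $\gamma_T\tilde{\mathcal{L}}_\lambda = -(\nu\times)\gamma_t\tilde{\mathcal{L}}_\lambda = -(\nu\times)\mathcal{L}_\lambda$ and $(\nu\times)^2 = -\mathrm{id}$ on tangential fields yields the claimed identity.

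Plugging it in and splitting yields two trace terms. The first bracket $(\mathcal{L}_\lambda^{-1}-\mathcal{L}_{D,\lambda}^{-1})\mathcal{L}_\lambda$ equals $-\mathcal{L}_{D,\lambda}^{-1}\mathcal{T}_\lambda$; because $\mathcal{L}_{D,\lambda}^{-1}$ is block-diagonal under $H^{-\frac{1}{2}}(\Div,\partial\Omega) = \bigoplus_j H^{-\frac{1}{2}}(\Div,\partial\Omega_j)$ while $\mathcal{T}_\lambda$ has vanishing diagonal blocks $q_j\mathcal{T}_\lambda q_j = 0$ by its very definition, the trace $\tr(\mathcal{L}_{D,\lambda}^{-1}\mathcal{T}_\lambda)$ vanishes, and the same block argument shows $\tr(\mathcal{L}_{D,\lambda}^{-1}\mathcal{T}'_\lambda)=0$. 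For the second bracket I invoke the standard derivative-of-log-determinant formula (e.g.\ \cite{MR482328}) for $\mathcal{L}_\lambda\mathcal{L}_{D,\lambda}^{-1} = \mathrm{id}+(\text{trace class})$:
$$
 \Xi'(\lambda) = \tr\bigl[(\mathcal{L}_\lambda^{-1}-\mathcal{L}_{D,\lambda}^{-1})\mathcal{L}'_\lambda + \mathcal{L}_{D,\lambda}^{-1}\mathcal{T}'_\lambda\bigr] = \tr\bigl[(\mathcal{L}_\lambda^{-1}-\mathcal{L}_{D,\lambda}^{-1})\mathcal{L}'_\lambda\bigr],
$$
so the two pieces combine to give $\tr(R_{D,\rel,\lambda}) = -\tfrac{\lambda}{2}\Xi'(\lambda)$. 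For the absolute case, the same program with $\tilde{\mathcal{M}}_\lambda = \curl\,\tilde{\mathcal{S}}_\lambda$ in place of $\tilde{\mathcal{L}}_\lambda$ gives the cleaner identity $(\nu\times)\tilde{\mathcal{M}}^\trans_\lambda\tilde{\mathcal{M}}_\lambda = \tfrac{1}{2\lambda}\mathcal{L}'_\lambda$ (no $\mathcal{L}_\lambda$ term survives, because with only one $\curl$ on each side the analogous $\lambda$-independent boundary remainder is already zero after one application of the free resolvent); the prefactor $\lambda^2$ in $R_{D,\abs,\lambda}$ then combines with the $\tfrac{1}{2\lambda}$ to give exactly $-\tfrac{\lambda}{2}\Xi'(\lambda)$.

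The main obstacle is the careful interpretation of the key identity: the distribution $(\curlcurl)^2 \iota$ is highly singular and the operators $\mathcal{L}'_\lambda$ and $\mathcal{L}_\lambda^{-1}\mathcal{L}'_\lambda$ are not individually trace-class. Both difficulties are resolved by keeping the difference $\mathcal{L}_\lambda^{-1}-\mathcal{L}_{D,\lambda}^{-1}$ together throughout, recognising the singular $\lambda$-independent boundary term as killed by $\partial_\lambda$, and performing every cyclic permutation inside a single trace-class expression.
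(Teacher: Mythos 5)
Your proposal is correct and follows essentially the same strategy as the paper: the same cyclicity reduction, the same key algebraic identity $(\nu\times)\tilde{\mathcal{L}}^\trans_\lambda\tilde{\mathcal{L}}_\lambda = \mathcal{L}_\lambda + \tfrac{\lambda}{2}\mathcal{L}'_\lambda$ (you derive it by first packaging $(-\Delta_\free-\lambda^2)^{-2}$ as a $\lambda$-derivative and then splitting, while the paper first writes $\curl^4 = -\Delta\curl^2$ and then converts $\lambda^2(-\Delta_\free-\lambda^2)^{-2}$ to $\tfrac{\lambda}{2}\partial_\lambda\mathcal{L}_\lambda$, but these are the same computation), the same block-diagonality argument to kill $\tr(\mathcal{L}_{D,\lambda}^{-1}\mathcal{T}_\lambda)$ and $\tr(\mathcal{L}_{D,\lambda}^{-1}\mathcal{T}'_\lambda)$, and the same Fredholm-determinant log-derivative identity. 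The treatment of the absolute case via $(\nu\times)\tilde{\mathcal{M}}^\trans_\lambda\tilde{\mathcal{M}}_\lambda = \tfrac{1}{2\lambda}\mathcal{L}'_\lambda$ also matches.
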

\begin{proof}
One has
\begin{align*}
 (\nu \times) \tilde{\mathcal{L}}^\trans_\lambda \tilde{\mathcal{L}}_\lambda &=  \gamma_t \curl\, \curl\, \curl\, \curl\,(-\Delta_\free -\lambda^2)^{-2} \gamma^\trans_T =
  \gamma_t \curl\, \curl\, (-\Delta_\free) (-\Delta_\free -\lambda^2)^{-2} \gamma^\trans_T 
  \\&=  \gamma_t \curl\, \curl\, (-\Delta_\free -\lambda^2)^{-1} \gamma^\trans_T +\gamma_t \curl\, \curl\, \lambda^2 (-\Delta_\free -\lambda^2)^{-2} \gamma^\trans_T=
  \mathcal{L}_\lambda + \frac{\lambda}{2} \frac{\der}{\der \lambda}\mathcal{L}_\lambda.
\end{align*}
Similarly, we also have
\begin{align*}
 \lambda^2(\nu \times) \tilde{\mathcal{M}}^\trans_\lambda \tilde{\mathcal{M}}_\lambda &=  \lambda^2\gamma_t \curl\, \curl\,(-\Delta_\free -\lambda^2)^{-2} \gamma^\trans_T =
\frac{\lambda}{2} \frac{\der}{\der \lambda}\mathcal{L}_\lambda.
\end{align*}

Using invariance of the trace in $H^{-\frac{1}{2}}(\Div,\partial \Omega)$ under cyclic permutations we get
\begin{align*}
\tr\left( R_{D,\rel,\lambda} \right) &= -  \tr\left( - \tilde{\mathcal{L}}_\lambda ( \mathcal{L}_\lambda^{-1} -  \mathcal{L}_{D,\lambda}^{-1})(\nu \times)\tilde{\mathcal{L}}^\trans_\lambda \right) = - \tr\left(  (\mathcal{L}_\lambda + \frac{\lambda}{2} \frac{\der}{\der \lambda}\mathcal{L}_\lambda)( \mathcal{L}_\lambda^{-1} -  \mathcal{L}_{D,\lambda}^{-1}) \right) \\&= -\tr\left( (\id - \mathcal{L}_\lambda \mathcal{L}_{D,\lambda}^{-1}) \right) - \frac{\lambda}{2} \frac{\der}{\der \lambda} \log \det \left(\mathcal{L}_\lambda \mathcal{L}_{D,\lambda}^{-1}\right) = - \frac{\lambda}{2} \frac{\der}{\der \lambda} \log \det \left(\mathcal{L}_\lambda \mathcal{L}_{D,\lambda}^{-1}\right) 
\end{align*}
Here we have used that $\tr\left( \mathcal{L}_{D,\lambda}^{-1} \frac{\der}{\der \lambda}(\mathcal{T}_{\lambda}) \right)=0$, $\tr\left( \mathcal{L}_{D,\lambda}^{-1} \mathcal{T}_{\lambda} \right)=0$. Indeed this follows as 
 $$
  \Tr \left(  \mathcal{L}_{\lambda,D}^{-1}( \frac{d}{d \lambda} \mathcal{T}_{\lambda} ) \right) = \sum_{j\not=k}\Tr \left(  \mathcal{L}_{\lambda,D}^{-1}(q_j (\frac{d}{d \lambda} \mathcal{L}_{\lambda})  q_k)\right) = \sum_{j\not=k}\Tr\left(  (q_j\mathcal{L}_{\lambda,D}^{-1}  \frac{d}{d \lambda} \mathcal{L}_{\lambda} )q_k \right) =0.
 $$
We also used the fact that  for a holomorphic family of trace-class operators $A(\lambda)$ we have that $\log \det (\id + A(\lambda))$ is holomorphic and we have the identity
$$\frac{\der}{\der \lambda} \log \det (\id + A(\lambda)) = \tr\left(  (\id + A(\lambda))^{-1} \frac{\der}{\der \lambda} A(\lambda)\right),$$ so that
$$
\frac{\der}{\der \lambda} \log \det \left(\mathcal{L}_\lambda \mathcal{L}_{D,\lambda}^{-1}\right)  = \tr\left( \mathcal{L}_\lambda^{-1} \frac{\der}{\der \lambda}(\mathcal{L}_\lambda)- \mathcal{L}_{D,\lambda}^{-1}\frac{\der}{\der \lambda}(\mathcal{L}_{D,\lambda})  \right). 
$$
In the same way,
\begin{align*}
\tr\left( R_{D,\abs,\lambda} \right) &= -  \tr\left( \lambda^2 \tilde{\mathcal{M}}_\lambda ( \mathcal{L}_\lambda^{-1} -  \mathcal{L}_{D,\lambda}^{-1})(\nu \times)\tilde{\mathcal{M}}^\trans_\lambda \right) = - \tr\left( ( \frac{\lambda}{2} \frac{\der}{\der \lambda}\mathcal{L}_\lambda) ( \mathcal{L}_\lambda^{-1} -  \mathcal{L}_{D,\lambda}^{-1}) \right) \\&= - \frac{\lambda}{2} \frac{\der}{\der \lambda} \log \det \left(\mathcal{L}_\lambda \mathcal{L}_{D,\lambda}^{-1}\right) 
\end{align*}
\end{proof}

\section{Proof of the main theorems} \label{mainproofs}

\begin{proof}[Proof of Theorem \ref{nicetheorem}]
This theorem is the combination of Prop. \ref{proporprpoasd} and Theorem \ref{mygoodness} in Section \ref{multiL}.
\end{proof}

\begin{proof}[Proof of Theorem \ref{nicetheorem2}]
We set $f(z) = z^{-2} g(z^2)$ where $g \in \mathcal{P}_\epsilon$. By the decay properties of $\Xi$ it is sufficient to show equality for small $\epsilon$, so we assume 
$\epsilon < \frac{\pi}{4}$. Then the function $e^{- \frac{1}{n} z^2}$ is holomorphic in the sector $\mathfrak{S}_\epsilon$ and decays faster than exponentially. The function $g_n(z) = e^{- \frac{1}{n} z^2} g(z)$ is therefore an admissible function for the Riesz-Dunford functional calculus and we therefore have
\begin{align*}
 f_n((-\Delta_\rel)^\frac{1}{2}) \curl\, \curl =   f_n((\delta \der)^\frac{1}{2}) \delta \der = g_n(\delta \der) =  -\frac{1}{2 \pi \rmi}\int_{\Gamma_\epsilon} (\delta \der - z)^{-1} g_n(z) \der z
 \end{align*}
and similarly for the other terms appearing in $R_{D,\rel,\lambda}$. The integral converges despite the pole of order one at zero since $g \in \mathcal{P}_\epsilon$ implies that $g_n(z) = O(|z|^\alpha)$ for some $\alpha>0$ near $z=0$.
Here $f_n(z) = z^{-2} g_n(z^2)$. 
If $h \in C^\infty_0(X,\C^3)$ then we have convergence of  $g_n(\delta \der)$ to $g(\delta \der)$ in $L^2$. Indeed, by our definition of the function class $\mathcal{P}_\epsilon$, the function $g$ is polynomially bounded on the real line and therefore
$h$ is in the domain of the operator $g(\delta \der)$. Consequently
the function $g$ is square integrable with respect to the measure $\langle \der E_\lambda h,h \rangle$, where $\der E_\lambda$ is the spectral measure of $\delta \der$.
Then we have
$$
 \| \left( g(\delta \der)-g_n(\delta \der) \right) h\|_{L^2} = \int_\R (1-e^{- \frac{1}{n}x^2})^2 |g|^2(x) \langle \der E_\lambda h,h \rangle,
$$
which tends to zero as $n \to \infty$ by the dominated convergence theorem.

We note now that 
$$
  (-\Delta_\rel-z)^{-1}\delta \der = (\delta \der - z)^{-1} \delta \der = \id +  z (\delta \der - z)^{-1},
 $$
 and again this formula applies to the other terms in $R_{D,\rel,\lambda}$. This gives
 $$
  D_{\rel,f_n} = -\frac{1}{2 \pi \rmi}\int_{\Gamma_\epsilon} R_{D,\rel}(\sqrt{z}) \frac{1}{z} g_n(z) \der z = - \frac{1}{\rmi \pi} \int_{\tilde \Gamma_{\epsilon/2}} R_{D,\rel}(\lambda) \lambda f_n(\lambda) \der \lambda.
 $$
 Moreover, $D_{\rel,f_n} h$ converges in $L^2$ to $D_{\rel,f} h$ for any $h \in C^\infty_0(X,\C^3)$.
 By the decays properties of $R_{D,\rel}(\lambda)$, Proposition \ref{decayrelres}, the integral converges in the Banach space of trace-class operators, and the sequence
 $D_{f_n}$ is Cauchy in the Banach space of trace-class operators. We conclude that $D_{\rel,f}$ is trace-class.\\
 To compute the trace we can again use the convergence of the integral in the space of trace-class operators and therefore, using Lemma \ref{tracexi}, we obtain
 $$
  D_{\rel,f} =  - \frac{1}{\rmi \pi} \int_{\tilde \Gamma_\frac{\epsilon}{2}} R_{\rel,D}(\lambda) \lambda f(\lambda) \der \lambda =   \frac{1}{2 \pi \rmi} \int_{\tilde \Gamma_\frac{\epsilon}{2}} (\Xi'(\lambda)) \lambda^2 f(\lambda) \der \lambda.
 $$
 Integration by parts and the decay of $\Xi$, Theorem \ref{mygoodness}, then completes the proof for $D_{\rel,f}$. The proof for $D_{\abs,f}$ is exactly the same.
\end{proof}

\begin{proof}[Proof of Theorem \ref{nicetheorem3}]
 We first establish the smoothness away from the objects. To see this we again use the Riesz-Dunford functional calculus. 
 Let $\kappa_\lambda(x,y)$ be the integral kernel of the difference
 $$
  (-\Delta_\rel - \lambda^2)^{-1} \curlcurl - (-\Delta_\free - \lambda^2)^{-1} \curlcurl.
 $$
 Let $U$ be an open neighborhood of $\partial \Omega$ such that $\mathrm{dist}(U,\Omega_0)> \delta' \in (0,\delta)$.
 Then, on $\Omega_0 \times \Omega_0$ the integral kernel of $\kappa_\lambda(x,y)$ satisfies the estimate
 $$
  \| \kappa_\lambda(x,y) \|_{C^k(K)} \leq C_{k,K} e^{-\delta'\Im{\lambda} }
 $$
 for any compact subset $K \subset \Omega_0 \times \Omega_0$. This can be seen directly from \eqref{eqn:ResolvDiffQ} as Lemma \ref{Prop:EstL2PGfirst} implies that the integral kernel of $\tilde{\calL}_\lambda$ is smooth, and $C^\infty$-seminorms satisfy an exponential decay estimate on $\Omega_0 \times U$, whereas the norm of
 $\calL_\lambda^{-1}$ is polynomially bounded by Theorem \ref{supercor}.
 By the same argument as in the proof of Theorem \ref{nicetheorem2} above the integral
 $$
  2 \int_{\tilde \Gamma_{\epsilon/2}} \kappa_\lambda(x,y) \lambda f(\lambda) \der \lambda
 $$
 then converges in $C^\infty(\Omega_0 \times \Omega_0)$ to the integral integral kernel of $B_f$ restricted to $\Omega_0 \times \Omega_0$. Hence this kernel is smooth
 on $\Omega_0 \times \Omega_0$. It remains to show the decay estimate.
 For large $|x|$ we have by \eqref{eqn:DiagKernelEst} the estimate
 $$
   \| \kappa_\lambda(x,x) \|  \leq \frac{C}{|x|^4} e^{-\delta'  |x| \Im{\lambda} }.
 $$
 Then, using functional calculus as before, we have the representation
 $$
  \kappa(x,x) = \frac{\rmi}{\pi}\int_{\tilde \Gamma_\frac{\epsilon}{2}} \kappa_\lambda(x,x) \lambda f(\lambda) \der \lambda,
 $$
 which gives the estimate
 $$
   \| \kappa(x,x) \|  \leq  \int_1^\infty \frac{C}{|x|^4} \lambda e^{-\delta_1 \lambda |x|} \der \lambda +   \int_0^1 \frac{C}{|x|^4} \lambda e^{-\delta_1 \lambda |x|} \lambda^{a} \der \lambda \leq \frac{C_1}{|x|^{6+a}}.
 $$
 This shows that $\kappa(x,x)$ is integrable and by Mercer's theorem the integral of $\tr(\kappa(x,x))$ is equal to the trace, as claimed.
\end{proof}

\begin{proof}[Proof of Theorem \ref{nicetheorem4}]
Define the relative spectral shift function 
$$\xi_D(\lambda) =  \frac{1}{2\pi \rmi} \log \frac{\det  S_\lambda}{\det ( S_{1,\lambda}) \cdots \det ( S_{N,\lambda})}.$$
By the Birman-Krein formula we have
$$
  \tr D_{\rel,f} = -\int_0^\infty \xi_D(\lambda)   \frac{\der}{\der \lambda}(\lambda^2 f(\lambda)) \der \lambda
  $$
  for any even Schwartz function $f$.
  
   Recall that $\Xi'$ has a meromorphic extension to the complex plane and it is holomorphic on the real line. Now assume that $f$ is a compactly supported even test function
   and let $\tilde f$ be a compactly supported almost analytic extension (see for example \cite{MR1345723} p.169/170).
   Let $\der m(z)= \der x \der y$ be the Lebesgue measure on $\C$.
  By the Helffer-Sj\"ostrand formula (\cite{MR1037319, MR1345723}), combined with the substitution $z \mapsto z^2$, we have
   $$
   \curl\curl f(\Delta_\rel^{1/2})= \frac{2}{\pi} \curlcurl  \int\limits_{\mathrm{Im}(z)>0}z \frac{\partial \tilde f}{\partial \overline z} (\Delta_\rel-z^2)^{-1} \der m(z),
 $$
   Therefore
 $$
   D_{\rel,f}  = \frac{2}{\pi} \int\limits_{\mathrm{Im}(z)>0}z \frac{\partial \tilde f}{\partial \overline z}  R_{D,\rel}(z)   \der m(z),
   $$ 
   and hence, by Lemma \ref{tracexi}, we have
   $$
    \Tr (D_{\rel,f}) = - \frac{1}{\pi} \int\limits_{\mathrm{Im}(z)>0} z^2 \frac{\partial \tilde f}{\partial \overline z} \Xi'(z)  \der m(z).
   $$
   Using Stokes' theorem in the form of \cite{MR1996773}*{p.62/63}, we therefore obtain
   $$
     \Tr (D_f) =  \frac{\rmi}{2\pi} \int_\R  \left( \Xi'(x) +\Xi'(-x) \right)x^2  f(x)   \der x.
   $$
 Comparing this with the Birman-Krein formula in Theorem \ref{BKF} gives $\frac{\rmi}{2\pi}  \left( \Xi'(x) +\Xi'(-x) \right) = \xi_D'(x)$. Since both functions are meromorphic this shows that this identity holds everywhere. We conclude that $\frac{\rmi}{2\pi}  \left( \Xi(\lambda) -\Xi(-\lambda) \right) -\xi_D(\lambda)$
 is constant. Clearly, $\left( \Xi(\lambda) -\Xi(-\lambda) \right)$ vanishes at zero, so the statement follows if we can show that $\xi_D(0)=0$. The estimate \cite{OS}*{Theorem 1.10} shows that
 $S_0= S_{1,0}=\ldots = S_{N,0} = \id$, which then indeed implies $\xi_D(0)=0$. The paper \cite{OS} assumes the boundary of $\Omega$ to be smooth, but the section on the expansions in this paper carry over unmodified to the Lipschitz case (see also the remarks in \cite{SWB} where this is made explicit).
 \end{proof}

\section{Appendix}

\subsection{Norm Estimates}
In the following we assume that $\Omega$ and $M$ are as in the main body of the text. Recall that the integral kernel of the free resolvent is given by \eqref{fskernel}. We will subsequently prove norm and pointwise estimates for $G_{\lambda,0}$ and its derivatives, which are used in the main body of the text. 

\begin{lemma} \label{Prop:EstL2PGfirst} 
 Let $\Omega_0 \subset M$ be an open set with $\mathrm{dist}(\Omega_0, \partial \Omega)= \delta >0$ and choose $\epsilon \in (0,\pi]$. 
 Let $U$ be a bounded open neighborhood of the boundary $\partial \Omega$ such that $\mathrm{dist}(\Omega_0, U)>0$ and fix $\delta'>0$ such that
 $\delta'<\mathrm{dist}(\Omega_0, U) \leq \delta$. Then for any $k \in \N_0$ there exists $C_{k,\delta',\epsilon}>0>0$ such that  we have
 \begin{align}
 \| G_{\lambda,0}\|^2_{H^{k}(\Omega_0 \times U)} &\leq C_{k,\delta',\epsilon}\, \frac{(1+\Im{\lambda}) e^{-2\delta' \Im{\lambda}} }{\Im{\lambda}} ,\\
 \| \nabla_x G_{\lambda,0}\|^2_{H^{k}(\Omega_0 \times U)} &\leq C_{k,\delta',\epsilon}\, e^{-2\delta' \Im{\lambda}},
 \end{align}
 for all $\lambda \in \mathfrak{D}_\epsilon$. Here $\nabla_x$ denotes differentiation in the first variable, i.e.   $(\nabla_x G_\lambda)(x,y) = \nabla_x G_\lambda(x,y)$.
\end{lemma}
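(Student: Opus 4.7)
The plan is to reduce everything to pointwise estimates on the function $g(z) := (4\pi|z|)^{-1} e^{\rmi \lambda |z|}$, exploit the strict separation from the singularity at $z=0$, and integrate. Since $G_{\lambda,0}(x,y) = g(x-y)$, any derivative $\partial_x^\alpha \partial_y^\beta$ reduces to $\pm \partial^{\alpha+\beta} g$ evaluated at $x-y$, so the $H^k$ norm on $\Omega_0 \times U$ is controlled by the $H^k$ norm of $g$ on the bounded annular region $\{x-y : (x,y) \in \Omega_0 \times U\}$. Fix $\delta''$ with $\delta' < \delta'' < \mathrm{dist}(\Omega_0, U)$; on this region one has $\delta'' \leq |z| \leq R$ for some $R>0$ depending on the diameters.

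The key pointwise estimate, proved by an elementary induction on $|\gamma|$, is that for $|z| \geq \delta''$,
\begin{equation*}
|\partial^\gamma g(z)| \leq C_{\gamma,\delta''} (1 + |\lambda|)^{|\gamma|}\, \frac{e^{-\Im(\lambda)|z|}}{|z|}.
\end{equation*}
Indeed, each $\partial_{z_i}$ either differentiates $|z|^{-1}$ (producing $O(|z|^{-1})$ factors, bounded when $|z| \geq \delta''$), hits the exponential to bring down $\rmi \lambda \cdot z_i / |z| = O(|\lambda|)$, or differentiates $|z|$ itself and produces bounded quantities $z_i/|z|$. The sector condition furnishes $|\lambda| \leq (\sin \epsilon)^{-1} \Im(\lambda)$, so for any $c>0$ and $k \in \N_0$,
\begin{equation*}
(1+|\lambda|)^k e^{-c\Im(\lambda)} \leq C_{k,c,\epsilon}.
\end{equation*}
Applying this with $c = \delta'' - \delta' > 0$ and factoring $e^{-\Im(\lambda)|z|}$ using $|z| \geq \delta''$ yields, in the region of interest,
\begin{equation*}
|\partial^\gamma g(z)| \leq C_{\gamma,\delta',\delta'',\epsilon}\, \frac{e^{-\delta'\Im(\lambda)}}{|z|}.
\end{equation*}

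To integrate, note that for $|\gamma| \geq 1$ the factor $|z|^{-1}$ is bounded on our region, and hence $\|\partial^\gamma g\|_{L^2}^2 \leq C\, e^{-2\delta'\Im(\lambda)}$. For $|\gamma| = 0$, the $|z|^{-2}$ factor is exactly compensated by the spherical Jacobian:
\begin{equation*}
\int_U |g(x-y)|^2\, dy \leq C \int_{\delta''}^R \frac{e^{-2\Im(\lambda) r}}{r^2}\, r^2\, dr \leq \frac{C\, e^{-2\delta'\Im(\lambda)}}{\Im(\lambda)},
\end{equation*}
uniformly in $x \in \Omega_0$. Summing over $|\gamma| \leq k$ combines the $|\gamma| = 0$ contribution (giving $1/\Im\lambda$) with the $|\gamma| \geq 1$ contribution (giving $+1$) to produce the first claim with prefactor $(1 + \Im\lambda)/\Im\lambda$. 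The second claim follows identically, but since all multi-indices entering $\|\nabla_x G_{\lambda,0}\|_{H^k}$ have order at least $1$, the $1/\Im(\lambda)$ contribution is absent. The only real bookkeeping is the induction for the pointwise bound; this presents no conceptual obstacle because we stay uniformly far from the singularity of $g$ at the origin.
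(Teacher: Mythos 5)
Your direct attack on $\partial^\gamma g$ is a legitimate alternative to the paper's route. The paper instead uses that $G_{\lambda,0}$ satisfies the Helmholtz equation off the diagonal, so that $(-\Delta_x)^k G_{\lambda,0} = \lambda^{2k} G_{\lambda,0}$, reducing everything to $L^2$ estimates of $G_{\lambda,0}$ itself, and then imports the Sobolev bound through an interior-regularity lemma (Lemma~\ref{sobolestimate}) plus interpolation. Your induction on $|\gamma|$ replaces that machinery with elementary bookkeeping, which in principle is a cleaner and more self-contained argument. However, as written your proof contains a real gap.

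The lemma does \emph{not} assume that $\Omega_0$ is bounded, and indeed cannot: in the applications (Proposition~\ref{Prop:EstimateS}, Theorem~\ref{Thm:DifferenceProperties}, Theorem~\ref{nicetheorem3}) $\Omega_0 \subset M$ is typically the whole exterior region away from a collar of $\partial\Omega$, hence unbounded. Your first displayed step, and the subsequent claim ``on this region one has $\delta'' \leq |z| \leq R$,'' is therefore false — the difference set $\{x-y : (x,y) \in \Omega_0 \times U\}$ is unbounded. The failure propagates: after you replace $e^{-\Im(\lambda)|z|}$ by the $z$-independent quantity $e^{-\delta'\Im(\lambda)}$ you have thrown away \emph{all} the decay in $|z|$, and with it the only mechanism that makes the $x$-integration over the unbounded $\Omega_0$ converge. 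The integral $\int_{\Omega_0}\int_U |z|^{-2}\,dy\,dx$ and, for $|\gamma|\geq 1$, $\int_{\Omega_0}\int_U 1\,dy\,dx$ are both infinite. So ``$\|\partial^\gamma g\|_{L^2}^2 \leq C\,e^{-2\delta'\Im(\lambda)}$'' does not follow, and the final step (multiplying the $x$-uniform $y$-integral by $|\Omega_0|$) is similarly ill-defined.

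The repair is routine but necessary: instead of swallowing the whole exponential into a constant, split it. For $|z|\geq\delta''$ and any $a\in(0,1)$ one has
\begin{equation*}
(1+|\lambda|)^{|\gamma|}\,e^{-\Im(\lambda)|z|}
= (1+|\lambda|)^{|\gamma|}\,e^{-a\,\Im(\lambda)|z|}\cdot e^{-(1-a)\Im(\lambda)|z|}
\leq C_{|\gamma|,a,\delta'',\epsilon}\; e^{-(1-a)\Im(\lambda)|z|},
\end{equation*}
using the sector inequality and $|z|\geq\delta''$ only on the first factor. Choosing $a$ so that $(1-a)\delta''=\delta'$ (possible since $\delta'<\delta''$) retains exponential decay in $|z|$. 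One then integrates as in the paper: fix $y\in U$, change variables $z=x-y$, pass to spherical coordinates, and integrate out to $r=\infty$; the $r^2$ Jacobian cancels the $|z|^{-2}$ and the radial integral $\int_{\delta''}^\infty e^{-2(\delta'/\delta'')\Im(\lambda)r}\,dr$ produces the required $e^{-2\delta'\Im(\lambda)}/\Im(\lambda)$; the remaining integral over $y\in U$ contributes a finite constant since $U$ is bounded. With that replacement your argument goes through, for both claims, and does so more elementarily than the paper's Helmholtz-plus-interior-regularity route.
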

\begin{proof}

Let us set $\lambda = \theta |\lambda|$ and note that $\Im(\theta) \geq \sin(\epsilon) > 0$. Since the kernel $G_{\lambda,0}$ satisfies the Helmholtz equation in both variables away from the diagonal we have $((-\Delta_x)^{k} + (-\Delta_y)^k)G_{\lambda,0}(x,y)=2\lambda^{2k}G_{\lambda,0}(x,y)$. We then change variables so that $r := |x-y| \geq \delta_0$. By homogeneity, all of the integration will be carried out in this variable, with the angular variables only contributing a constant. Substituting $s := \Im\lambda \ r$, into the formula for the Green's function implies for all $k\in\mathbb{N}$ that
\begin{align} \label{eqn:MainL2Integral}
  \|\Delta^k G_{\lambda,0}\|^2_{L^2(\Omega_0' \times U')} 
  &\leq C_k (\Im\lambda)^{4k}\int\limits_{\delta_0}^\infty |G_{\lambda,0}(r)|^2 r^{2} dr \leq C_k (\Im\lambda)^{4k} \int\limits_{\delta_0}^{\infty}e^{-2\Im\lambda r}\,dr
\end{align}
Here we have enlarged the domains slightly  so that $\Omega_0' \times U'$ has positive distance from $\Omega_0 \times U$ and $\mathrm{dist}(\Omega_0',U')> \delta'$.
This allows us to estimate the Sobolev norms using Lemma \ref{sobolestimate}.
We then have 
\begin{align}
\int\limits_{\delta_0}^{\infty}e^{-2\Im\lambda r}\,dr=\frac{e^{-2\delta_0\Im\lambda}}{-2\Im\lambda}.
\end{align}
Let $C_{\delta',\epsilon,k}$ denote a generic constant depending on $\delta',\epsilon,k$. Using \eqref{eqn:MainL2Integral} and interpolation, as a result we can conclude for all $k \geq 0$ we have
\begin{align}\label{hi} 
 \|G_{\lambda,0}\|^2_{H^k(\Omega_0 \times U)} \leq C_{\delta',\epsilon,k}\,\frac{(1+\Im{\lambda}) e^{-2\delta' \Im{\lambda}} }{\Im{\lambda}} .
\end{align}
The second inequality follows by replacing $G_{\lambda,0}$ by $\nabla_xG_{\lambda,0}$ in \eqref{eqn:MainL2Integral}. We then have 
\begin{align} \label{eqn:MainL2Integral2}
& \|\Delta_x^k \nabla_xG_{\lambda,0}\|^2_{L^2(\Omega_0' \times U')} 
  \leq \\& C_k (\Im\lambda)^{4k}\int\limits_{\delta_0}^\infty |\nabla_xG_{\lambda,0}(r)|^2 r^{2} dr \leq C_k (\Im\lambda)^{4k} \int\limits_{\delta_0}^{\infty}\left(|\Im\lambda|^2+\frac{1}{r^2}\right)e^{-2\Im\lambda r}\,dr \nonumber 
  \leq C_k e^{-2\delta' \Im{\lambda}}.
\end{align}
\end{proof}

We now combine these estimates to get an estimate on the Maxwell layer potential operator.

\begin{lemma} \label{Prop:EstL2PG}
Let $\Omega_0\subset M$ be an open set with $\mathrm{dist}(\Omega_0, \Omega)= \delta >0$ and $\lambda \in \mathfrak{D}_\epsilon$. Then, for any $0< \delta' < \delta$  there exists $C_{\delta',\epsilon}>0$ such that  we have
 \begin{align}\label{Pbound2}
  \| \tilde{\mathcal{L}}_{\lambda} \|^2_{H^{-\frac{1}{2}}(\Div,\partial\Omega)\rightarrow H(\curl,\Omega_0)} \leq C_{\delta',\epsilon}\,e^{-2\delta' \Im{\lambda}}
 \end{align}
 and
 \begin{align}\label{Pbound}
  \| \tilde{\mathcal{L}}_{\lambda} \|^2_{H^{-\frac{1}{2}}(\Div 0,\partial\Omega)\rightarrow H(\curl,\Omega_0)} \leq C_{\delta',\epsilon}\, |\Im\lambda|^3 e^{-2\delta' \Im{\lambda}}. 
 \end{align}
\end{lemma}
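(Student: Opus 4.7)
The plan is to reduce matters to the kernel estimates of Lemma~\ref{Prop:EstL2PGfirst} by using the decomposition
$$\tilde{\mathcal{L}}_\lambda a = \lambda^2 \tilde{\mathcal{S}}_\lambda a + \nabla \tilde{\mathcal{S}}_\lambda \Div a$$
from \eqref{divexpansion}, combined with the identity $\curl\,\tilde{\mathcal{L}}_\lambda a = \lambda^2\,\tilde{\mathcal{M}}_\lambda a = \lambda^2\,\curl\,\tilde{\mathcal{S}}_\lambda a$ valid away from $\partial\Omega$, since $\curl$ annihilates the gradient term. Thus $\|\tilde{\mathcal{L}}_\lambda a\|_{H(\curl,\Omega_0)}^2$ is controlled by the $L^2(\Omega_0)$-norms of $\lambda^2\tilde{\mathcal{S}}_\lambda a$, $\nabla\tilde{\mathcal{S}}_\lambda \Div a$ and $\lambda^2\,\curl\tilde{\mathcal{S}}_\lambda a$.

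Fix $\delta'' \in (\delta',\delta)$ and a bounded open neighborhood $U$ of $\partial\Omega$ with $\mathrm{dist}(\Omega_0,U) > \delta''$. Each of the operators above factors as
$$H^{-\frac{1}{2}}(\Div,\partial\Omega) \xrightarrow{\gamma^*} H^{-1}(U) \longrightarrow L^2(\Omega_0),$$
where the second arrow is integration against the smooth kernel $G_{\lambda,0}$, $\nabla_x G_{\lambda,0}$, or $\curl_x G_{\lambda,0}$ on $\Omega_0 \times U$. I would bound its operator norm by its Hilbert--Schmidt norm, and in turn by the $H^1(\Omega_0\times U)$-norm of the kernel via the standard Sobolev-kernel lemma already used elsewhere in the paper. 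Feeding in Lemma~\ref{Prop:EstL2PGfirst} yields
$$\|\tilde{\mathcal{S}}_\lambda\|_{H^{-\frac{1}{2}}(\partial\Omega)\to L^2(\Omega_0)}^2 \lesssim \tfrac{1+\Im\lambda}{\Im\lambda}\, e^{-2\delta''\Im\lambda},$$
together with $O(e^{-2\delta''\Im\lambda})$ bounds for $\nabla\tilde{\mathcal{S}}_\lambda$ and $\curl\tilde{\mathcal{S}}_\lambda$.

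Combining these estimates gives
$$\|\tilde{\mathcal{L}}_\lambda a\|_{H(\curl,\Omega_0)}^2 \lesssim \bigl(1 + |\lambda|^4 + |\lambda|^4\tfrac{1+\Im\lambda}{\Im\lambda}\bigr)\,e^{-2\delta''\Im\lambda}\,\|a\|_{H^{-\frac{1}{2}}(\Div,\partial\Omega)}^2.$$
Since $|\lambda| \sim \Im\lambda$ on $\mathfrak{D}_\epsilon$, the polynomial prefactor is absorbed into the exponential slack $e^{-2(\delta''-\delta')\Im\lambda}$, which yields \eqref{Pbound2}. For $a \in H^{-\frac{1}{2}}(\Div 0,\partial\Omega)$ the $\nabla\tilde{\mathcal{S}}_\lambda \Div a$ contribution drops out, so only the $\lambda^2\tilde{\mathcal{S}}_\lambda$ and $\lambda^2\tilde{\mathcal{M}}_\lambda$ pieces remain; the surviving prefactor $|\lambda|^4(1+\Im\lambda)/\Im\lambda \sim |\Im\lambda|^3(1+|\Im\lambda|)$ then reduces, after trading the higher-order $|\Im\lambda|^4$ piece against the exponent, to exactly $|\Im\lambda|^3 e^{-2\delta'\Im\lambda}$, yielding \eqref{Pbound}. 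The only real care is bookkeeping: one must verify that the single $1/\Im\lambda$ factor supplied by the $L^2$-estimate for $\tilde{\mathcal{S}}_\lambda$ combines with the explicit $\lambda^2$ from $\tilde{\mathcal{L}}_\lambda = \lambda^2\tilde{\mathcal{S}}_\lambda$ on divergence-free data to leave precisely the cube $|\Im\lambda|^3$ appearing in \eqref{Pbound}, rather than a higher power that could not be absorbed without eating into the exponential rate.
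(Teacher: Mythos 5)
Your proof is correct and follows essentially the same route as the paper: reduce to the kernel estimates of Lemma~\ref{Prop:EstL2PGfirst} via the decomposition \eqref{divexpansion}, factor through $H^{-1}_\comp(U)$, and invoke the Sobolev-kernel Hilbert--Schmidt bound. You are in fact more explicit than the paper's terse proof in handling the $\curl$-component of the $H(\curl,\Omega_0)$-norm via the identity $\curl\,\tilde{\mathcal{L}}_\lambda = \lambda^2\,\curl\,\tilde{\mathcal{S}}_\lambda$ and in tracking how the polynomial prefactors in $|\lambda|$ are traded against the exponential slack $\delta''-\delta'$ (including the small-$\Im\lambda$ regime, which the naive $\|\curlcurl_x G_{\lambda,0}\|_{H^k}\le\|G_{\lambda,0}\|_{H^{k+2}}$ bound would not handle cleanly).
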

\begin{proof}
We choose as in Lemma  \ref{Prop:EstL2PGfirst} a bounded open neighborhood of $\partial \Omega$. For $a \in H^{-\frac{1}{2}}(\partial \Omega)$ the distribution $\gamma_t^*(a)$ is, by duality, in $H^{-1}_\comp(U)$. The first inequality then follows by using Lemma \ref{Prop:EstL2PGfirst} bearing in mind that integration defines a continuous map
$$
 H^k(\Omega_0 \times U) \times H^{-s}_\comp(U) \to H^{k-s}(\Omega_0)
$$
for $k$ large enough.
The second inequality follows from the  identity \eqref{divexpansion}, namely that we can write 
\begin{align}
\tilde{\mathcal{L}}_{\lambda}a=\nabla \tilde{\mathcal{S}}_{\lambda} \mathrm{Div} a+\lambda^2\tilde{\mathcal{S}}_{\lambda}a\quad a\in H^{-\frac{1}{2}}(\Div,\partial\Omega)
\end{align}
and again using Lemma  \ref{Prop:EstL2PGfirst} in the same way as above.
\end{proof}

\begin{lemma} \label{HilSobLemma}
 Let $k \in H^2(\R^d \times \R^d)$. Then $k$ is the integral kernel of a Hilbert-Schmidt operator
 $$
  K : H^{-1}(\R^d) \to H^1(\R^d),
 $$
 with Hilbert-Schmidt norm bounded by $\| k \|_{H^2(\R^d \times \R^d)}$.
\end{lemma}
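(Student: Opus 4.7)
The plan is to reduce the claim to the classical $L^2 \to L^2$ Hilbert--Schmidt characterisation by conjugating with the Bessel potential. Set $J = (1-\Delta)^{1/2}$, which acts as an isometric isomorphism $H^s(\R^d) \to H^{s-1}(\R^d)$ for every $s \in \R$. Then an operator $K : H^{-1}(\R^d) \to H^1(\R^d)$ is Hilbert--Schmidt if and only if the operator $\widetilde K := J K J : L^2(\R^d) \to L^2(\R^d)$ is Hilbert--Schmidt, and the Hilbert--Schmidt norms coincide. If $k(x,y)$ is the integral kernel of $K$, then the kernel of $\widetilde K$ is $\widetilde k(x,y) = J_x J_y k(x,y)$, where $J_x, J_y$ denote $J$ applied in the first and second variable, respectively.

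By the usual characterisation of Hilbert--Schmidt operators on $L^2$ by the $L^2$-norm of their kernel,
\begin{equation*}
\|K\|_{\mathrm{HS}(H^{-1},H^1)}^2 = \|\widetilde K\|_{\mathrm{HS}(L^2,L^2)}^2 = \|J_x J_y k\|_{L^2(\R^d\times\R^d)}^2.
\end{equation*}
Applying Plancherel's theorem in both variables, this equals
\begin{equation*}
\int_{\R^d}\int_{\R^d} (1+|\xi|^2)(1+|\eta|^2) \,|\widehat k(\xi,\eta)|^2 \, \der\xi \,\der\eta,
\end{equation*}
where $\widehat k$ is the Fourier transform of $k$ on $\R^d \times \R^d = \R^{2d}$.

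The final step is the elementary pointwise inequality $(1+|\xi|^2)(1+|\eta|^2) \leq (1+|\xi|^2+|\eta|^2)^2$, which holds because the right-hand side expanded contains all terms of the left-hand side plus additional non-negative terms. Substituting this bound gives
\begin{equation*}
\|K\|_{\mathrm{HS}(H^{-1},H^1)}^2 \leq \int_{\R^{2d}} (1+|(\xi,\eta)|^2)^2 \,|\widehat k(\xi,\eta)|^2 \,\der\xi\,\der\eta = \|k\|_{H^2(\R^d\times\R^d)}^2,
\end{equation*}
which is the desired estimate. There is no genuine obstacle here; the only points that deserve care are the conjugation by $J$ on the operator side (which is routine by the density of $\mathcal{S}(\R^d)$ and the fact that $J$ is a bijective isometry between the relevant Sobolev spaces) and the verification of the elementary Fourier-side inequality bounding the anisotropic weight $(1+|\xi|^2)(1+|\eta|^2)$ by the isotropic weight of $H^2(\R^{2d})$.
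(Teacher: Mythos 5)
Your proof is correct and follows essentially the same route as the paper: conjugation by the Bessel potential $(1-\Delta)^{1/2}$ to reduce to the standard $L^2$ Hilbert--Schmidt criterion, Plancherel, and the elementary pointwise inequality $(1+|\xi|^2)(1+|\eta|^2)\leq(1+|\xi|^2+|\eta|^2)^2$, which is the square of the inequality the paper states. No substantive difference.
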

\begin{proof}
 Let $K$ be the integral operator with kernel $k$.
 Since $(-\Delta+1)^\frac{1}{2}$ is an isometry from $L^2(\R^d)$ to $H^{-1}(\R^d)$ and from $H^1(\R^d)$ to $L^2(\R^d)$ it suffices to show that
 $(-\Delta+1)^\frac{1}{2} K (-\Delta+1)^\frac{1}{2}$ is Hilbert-Schmidt from $L^2(\R^d)$ to $L^2(\R^d)$ and bound its Hilbert-Schmidt norm.
 This is equivalent to the distributional integral kernel of $(-\Delta+1)^\frac{1}{2} K (-\Delta+1)^\frac{1}{2}$ to be in $L^2(\R^d \times \R^d)$ (see for example  \cite{shubin}). The Hilbert-Schmidt
 norm is equal to the $L^2$-norm of the kernel. The Fourier transform is $(\xi^2+1)^\frac{1}{2} (\eta^2+1)^\frac{1}{2} \hat k(\xi,\eta)$ and this is in $L^2$ with the $L^2$-norm bounded by $\| k \|_{H^2(\R^d \times \R^d)}$ thanks to the inequality
 $$
  \frac{(\xi^2+1)^\frac{1}{2} (\eta^2+1)^\frac{1}{2}}{\xi^2+\eta^2+1} \leq 1.
 $$
\end{proof}

\begin{lemma} \label{HilSobTrLemma}
 Let $k \in H^4_\comp(\R^3 \times \R^3)$ be supported in a compact set $Q \times Q \subset \R^{3}\times \R^3$. Then $k$ is the integral kernel of a nuclear operator
 $$
  K : H^{-1}(\R^3) \to H^1(\R^3),
 $$
 with trace norm bounded by $C_Q \| k \|_{H^4(\R^d \times \R^d)}$.
\end{lemma}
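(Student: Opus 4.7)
The strategy is to factor $K$ as a composition of two Hilbert--Schmidt operators with a bounded operator in between, so that the estimate $\|ABC\|_1 \leq \|A\|_{\mathrm{HS}} \|B\|_{L^2\to L^2} \|C\|_{\mathrm{HS}}$ applies, with the Hilbert--Schmidt bounds supplied by Lemma~\ref{HilSobLemma} (or direct Fourier computations modelled on its proof).

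First I would use the compact support of $k$ in $Q\times Q$ to write $K = M_\chi K M_\chi$, where $M_\chi$ denotes multiplication by a cutoff $\chi \in C^\infty_c(\R^3)$ equal to $1$ on the projection of $Q$ onto $\R^3$. For a parameter $s>0$ to be fixed, I then insert $(-\Delta+1)^{-s}(-\Delta+1)^{s} = \id$ on both sides:
\[
K \;=\; \bigl[M_\chi (-\Delta+1)^{-s}\bigr]\circ\bigl[(-\Delta+1)^{s} K (-\Delta+1)^{s}\bigr]\circ\bigl[(-\Delta+1)^{-s} M_\chi\bigr] \;=:\; A\circ B \circ \tilde A.
\]
The outer factors have explicit integral kernels $\chi(x)G_{2s}(x-y)$ and $G_{2s}(x-y)\chi(y)$, where $G_{2s}$ is the Bessel kernel of $(-\Delta+1)^{-s}$ on $\R^3$.

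Next I would verify the relevant Schatten-class properties. For $2s$ chosen large enough that $G_{2s}\in H^1(\R^3)$ (which in dimension three requires $2s$ above a critical Bessel threshold), the kernel $\chi(x)G_{2s}(x-y)$ lies in $H^1_x L^2_y$ with norm bounded by $\|\chi\|_{L^2}\|G_{2s}\|_{H^1}$, and a direct computation of the HS norm gives $\|A\|_{\mathrm{HS};\,L^2\to H^1}\leq C_Q$; symmetrically $\|\tilde A\|_{\mathrm{HS};\,H^{-1}\to L^2}\leq C_Q$. The middle factor $B$ has kernel $(-\Delta_x+1)^{s}(-\Delta_y+1)^{s}k(x,y)$, and by Plancherel combined with the Peetre-type inequality $(1+|\xi|^2)^{s}(1+|\eta|^2)^{s}\leq (1+|\xi|^2+|\eta|^2)^{2s}$, its Hilbert--Schmidt (hence operator) norm on $L^2$ is controlled by $\|k\|_{H^{4s}}$.

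Combining these estimates and using that HS $\circ$ bounded $\circ$ HS is trace class with the indicated norm bound would give
\[
\|K\|_1 \;\leq\; \|A\|_{\mathrm{HS}}\cdot \|B\|_{L^2\to L^2}\cdot \|\tilde A\|_{\mathrm{HS}} \;\leq\; C_Q\,\|k\|_{H^4}.
\]
The hardest point is the balancing of the fractional exponent $s$: the outer factors need $2s$ large enough that $G_{2s}$ is sufficiently regular (giving Hilbert--Schmidt operators into/out of $H^{\pm 1}$), while the middle factor requires $4s \leq 4$ so that its norm is dominated by $\|k\|_{H^4}$. In three dimensions these constraints are marginally tight, so the argument must either use an asymmetric splitting of the exponents between the two outer factors, or exploit that $B$ need only be bounded (not Hilbert--Schmidt) via a Schur test on its kernel, which gives a sharper estimate than the one provided by Plancherel alone.
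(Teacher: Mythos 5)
Your route differs from the paper's: you work on $\R^3$ with a symmetric three-factor sandwich $A\circ B\circ\tilde A$ involving cutoff Bessel potentials, whereas the paper uses the compact support of $k$ to extend it periodically to a torus $Y$ and then applies the asymmetric two-factor splitting $K=(-\Delta_Y+1)^{-1}\circ\bigl[(-\Delta_Y+1)K\bigr]$, with the right factor Hilbert--Schmidt from $H^{-1}(Y)$ to $H^1(Y)$ by Lemma~\ref{HilSobLemma} applied to its kernel $(-\Delta_{Y,x}+1)k\in H^2$, and the left factor $(-\Delta_Y+1)^{-1}\colon H^1(Y)\to H^1(Y)$ Hilbert--Schmidt because Weyl's law on the $3$-torus gives eigenvalues $\mu_n\sim n^{2/3}$, so $\sum_n(1+\mu_n)^{-2}<\infty$.

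The tension you flag at the end is not ``marginal'' but a genuine incompatibility, and the remedies you suggest do not resolve it. For $A=M_\chi(-\Delta+1)^{-s_1}\colon L^2\to H^1$ to be Hilbert--Schmidt you correctly need $G_{2s_1}\in H^1(\R^3)$, i.e.\ $s_1>5/4$, and by the dual computation the right-hand factor requires $s_2>5/4$ independently, so asymmetric splitting cannot reduce the total cost below $s_1+s_2>5/2$. A Schur test on $B=(-\Delta+1)^{s_1}K(-\Delta+1)^{s_2}$ is also unavailable: for $s_1+s_2>5/2$ its kernel lies only in $H^{4-2(s_1+s_2)}(\R^6)\subset H^{-1}(\R^6)$, so it is not even a function, let alone in $L^1_xL^\infty_y$. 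The fix, which you can carry out on $\R^3$, is to abandon the symmetric sandwich and put all the Bessel smoothing on one side: write $K=\bigl[M_\chi(-\Delta+1)^{-1}\bigr]\circ\bigl[(-\Delta+1)K\bigr]$. The right factor has kernel $(-\Delta_x+1)k\in H^2(\R^6)$ and is Hilbert--Schmidt from $H^{-1}$ to $H^1$ with norm at most $C\|k\|_{H^4}$ by Lemma~\ref{HilSobLemma}, while $M_\chi(-\Delta+1)^{-1}\colon H^1\to H^1$ conjugates to a rapidly decaying pseudodifferential operator of order $-2$ and is Hilbert--Schmidt since $2>3/2$. In this form your argument becomes the $\R^3$ analogue of the paper's torus argument; in both cases the cutoff or the compactification is what makes the Bessel-potential factor Hilbert--Schmidt, and the smoothing must all be placed on one outer factor rather than split symmetrically.
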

\begin{proof}
 Since $k$ is compactly supported in $Q$ we can assume without loss of generality that $Q$ is a subset of a torus $\mathbb{T}^n$ by imposing periodic boundary conditions on a sufficiently large rectangle and remarking that the Sobolev norms on the torus restricted to a neighborhood of $Q$ are then equivalent to those of $\R^d$ restricted to that neighborhood. We can therefore assume without loss of generality that we are on a compact manifold $Y$.
 We can then write $K$ as $K = (-\Delta_Y + 1)^{-1}  (-\Delta_Y + 1) K$. The operator  $(-\Delta_Y + 1)^{-1}$ is Hilbert-Schmidt from $H^1(Y)$ to $H^1(Y)$, as for example can be seen from Weyl's law.
 The operator $(-\Delta_Y + 1) K$ is Hilbert-Schmidt by Lemma \ref{HilSobLemma}. Since we have written the operator as a product of two Hilbert-Schmidt operators it is nuclear and the corresponding estimate for the nuclear norm follows by estimating in terms of the Hilbert-Schmidt norms.
\end{proof}

\begin{lemma} \label{sobolestimate}
 Suppose that $\Omega \subset \R^d$ is an open subset and assume that $\Omega' \subset \R^d$ is a larger subset such that
 $\overline{\Omega} \subset \Omega'$ and $\mathrm{dist}(\partial \Omega,\partial \Omega')>0$. Let $N \in \N$.
 Then for any $f \in L^2(\Omega')$ with $(-\Delta)^k f \in L^2(\Omega')$ for all $k =0,1,\ldots,N$ we have $f|_\Omega \in H^{2N}(\Omega)$ and there exists a constant
 $C_{N,\Omega',\Omega}>0$, independent of $f$, such that $\| f|_\Omega \|_{H^{2N}(\Omega)} \leq C_{N,\Omega',\Omega} \sum_{k \leq N}\| (-\Delta)^k f\|_{L^2(\Omega')}$.
\end{lemma}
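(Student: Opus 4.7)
The lemma is a standard interior elliptic regularity statement for the iterated Laplacian, and I would prove it by iterating the familiar base case $N=1$.

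Step one is the base case $N=1$: for any pair $\Omega_1 \subset\subset \Omega_2$ with strictly positive boundary distance, and any $u \in L^2(\Omega_2)$ with $\Delta u \in L^2(\Omega_2)$, one has $u|_{\Omega_1} \in H^2(\Omega_1)$ with
$$
 \|u\|_{H^2(\Omega_1)} \leq C_{\Omega_1,\Omega_2}\bigl(\|u\|_{L^2(\Omega_2)} + \|\Delta u\|_{L^2(\Omega_2)}\bigr).
$$
This is classical (cf.\ Evans, \emph{Partial Differential Equations}, §6.3.1): one picks a cutoff $\chi \in C^\infty_c(\Omega_2)$ with $\chi \equiv 1$ near $\overline{\Omega}_1$, uses Nirenberg's difference-quotient method applied to $\chi u$ to produce $H^1_\loc$ control with the correct estimate, and then bootstraps once more to $H^2$.

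Step two upgrades this to the following $H^s$-version: if $u \in L^2(\Omega_2)$ and $\Delta u \in H^s(\Omega_2)$ for some non-negative integer $s$, then $u|_{\Omega_1} \in H^{s+2}(\Omega_1)$ with an estimate $\|u\|_{H^{s+2}(\Omega_1)} \leq C(\|u\|_{L^2(\Omega_2)} + \|\Delta u\|_{H^s(\Omega_2)})$. This follows by differentiating: for any multi-index $\alpha$ with $|\alpha| \leq s$, the distributional derivative $\partial^\alpha u$ satisfies $\Delta(\partial^\alpha u) = \partial^\alpha \Delta u \in L^2$ on the support, and we apply the base case on a slightly smaller nested domain, paying for each differentiation by shrinking the domain by a fixed margin.

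Step three is the iteration that handles general $N$. Choose a nested sequence of open sets
$$
 \Omega = \Omega^{(0)} \subset\subset \Omega^{(1)} \subset\subset \cdots \subset\subset \Omega^{(N)} = \Omega'
$$
with positive distance between consecutive boundaries. I claim, by induction on $j = 0,1,\ldots,N$, that
$$
 (-\Delta)^{N-j} f \in H^{2j}(\Omega^{(N-j)}),\qquad \|(-\Delta)^{N-j} f\|_{H^{2j}(\Omega^{(N-j)})} \leq C_j \sum_{k=0}^{N} \|(-\Delta)^k f\|_{L^2(\Omega')}.
$$
The case $j = 0$ is exactly the hypothesis. For the inductive step from $j$ to $j+1$, apply the $H^s$-version of step two with $s = 2j$ to the function $u := (-\Delta)^{N-j-1} f$: the $L^2$-bound on $u$ is given directly by the assumption, while $\Delta u = -(-\Delta)^{N-j} f$ lies in $H^{2j}(\Omega^{(N-j)})$ by the inductive hypothesis. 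This yields $u \in H^{2j+2}(\Omega^{(N-j-1)})$ with the required bound. Setting $j = N$ gives $f \in H^{2N}(\Omega)$ with the claimed estimate.

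The only real obstacle is the bookkeeping of constants: at each of the $N$ iterations a small amount of the margin between $\Omega$ and $\Omega'$ is consumed, and one must verify that the final constant depends only on $N$ and on the pair $(\Omega,\Omega')$. Since $N$ is fixed and the nested sequence can be chosen once and for all (e.g.\ tubular $\varepsilon$-neighborhoods with $\varepsilon$ a fixed fraction of $\mathrm{dist}(\partial\Omega,\partial\Omega')$), this is routine. The underlying analytic content is entirely classical interior elliptic regularity for $-\Delta$.
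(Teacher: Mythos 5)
Your overall strategy — iterated interior elliptic regularity, shrinking the domain a fixed fraction at each of $N$ steps — matches the paper's argument, and your bookkeeping for the induction is clean (tracking only $(-\Delta)^{N-j}f$ at step $j$ rather than the whole family, as the paper does, is a mild simplification). However, there is a genuine gap: the lemma places no boundedness assumption on $\Omega$ or $\Omega'$, and in the paper's application ($\Omega' = \R^3$ or an exterior domain, with $\Omega$ the complement of a tube around $\partial\Omega$) the domains are unbounded. Your phrasing $\Omega_1 \subset\subset \Omega_2$ and your choice of cutoff $\chi \in C^\infty_c(\Omega_2)$ with $\chi \equiv 1$ near $\overline{\Omega}_1$ implicitly assume $\Omega_1$ has compact closure; no such $\chi$ exists when $\Omega_1$ is unbounded, and the Evans §6.3.1 argument you cite is stated for bounded domains. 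The paper explicitly flags this (``the possibly non-compact domain $\Omega'$'') and works with cutoffs in $C^\infty_b(\R^d)$ that are supported in $\Omega'$ and equal one on a neighborhood of the inner set. Because the separation $\mathrm{dist}(\partial\Omega,\partial\Omega')>0$ is uniform, such a cutoff can be built (via a regularised distance function) with $C^k$-norms controlled purely by that separation, which is what makes the final constant depend only on $(\Omega,\Omega',N)$.

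Once that is repaired, your base case can also be made to match the paper's more directly: instead of the difference-quotient route, write
\[
(1-\Delta)(\chi f) = (\chi - \Delta\chi)\,f - \chi\,\Delta f - 2\,(\nabla\chi)\cdot\nabla f,
\]
observe that the right-hand side lies in $H^{s-1}(\R^d)$, and apply the Fourier multiplier $(1-\Delta)^{-1}$ on $\R^d$ to gain two orders (done in two sub-steps with a second cutoff $\eta$). This is global on $\R^d$, needs no boundary or compactness hypothesis, and gives the uniform constants immediately. Your step-two ``$H^s$-version'' is also stated a little too fast: applying the base case to $\partial^\alpha u$ requires first knowing $\partial^\alpha u \in L^2$ on an intermediate domain, so the differentiation must itself be bootstrapped one order at a time; this is exactly what the two-cutoff structure in the paper accomplishes, and once written carefully it is the same argument.
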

\begin{proof}
 This is the usual proof of interior regularity applied to the possibly non-compact domain $\Omega'$. We will show that $f \in H^s(\Omega'), \Delta f \in H^s(\Omega')$ implies $f \in H^{s+2}(\Omega)$ with the corresponding norm-estimates. The result then follows from this statement by iterating using a sequence of intermediate domains
 $\Omega \subset \Omega_1 \subset \ldots \subset \Omega_{N-1} \subset \Omega'$.
 We will choose $U$ so that $\Omega \subset U \subset \Omega'$ while is still such that $\mathrm{dist}(\partial U,\partial \Omega')>0,\mathrm{dist}(\partial U,\partial \Omega)>0$.
 We can choose a regularised distance function and construct a function $\chi \in C^\infty_b(\R^d)$ which is compactly supported in $\Omega'$ which equals one in a neighborhood of $U$.
 Then, if  $f \in H^s(\Omega'), \Delta f \in H^s(\Omega')$ we have
 $$
  (1-\Delta)( \chi f) =  (\chi-\Delta( \chi)) f -  \chi \Delta f - 2 (\nabla \chi) \nabla f.
 $$
 From this we see that $(- \Delta +1) (\chi f) \in H^{s-1}(\R^d)$ and therefore $\chi f \in H^{s+1}(\R^d)$. Hence, the restriction of $f$ to $U$ is in $H^{s+1}(\Omega_1)$.
 Now we choose another cut-off function $\eta$ in $C^\infty_b(\R^d)$ supported in $U$ that equals one near $\Omega$. Then $(\nabla \eta) \nabla f$ is in $H^s(U)$ and we now conclude in the same way that $f|_{\Omega} \in H^{s+2}(\Omega)$.
 \end{proof}

\begin{bibdiv}
\begin{biblist}

\bib{balian78}{article}{
author={Balian, R.},
author={Duplantier, B.},
title={Electromagnetic waves near perfect conductors II, Casimir effect},
journal={Annals of Physics},
volume= {112},
date = {1978},
pages = {165--208},
}

\bib{MR3596796}{article}{
   author={Ball, J.~M.},
   author={Zarnescu, A.},
   title={Partial regularity and smooth topology-preserving approximations
   of rough domains},
   journal={Calc. Var. Partial Differential Equations},
   volume={56},
   date={2017},
   number={1},
   pages={Paper No. 13, 32},
   issn={0944-2669},
}

\bib{MR2016997}{article}{
   author={Bernasconi, F.},
   author={Graf, G. M.},
   author={Hasler, D.},
   title={The heat kernel expansion for the electromagnetic field in a
   cavity},
   journal={Ann. Henri Poincar\'{e}},
   volume={4},
   date={2003},
   number={5},
   pages={1001--1013},
}

\bib{MR928156}{article}{
   author={Birman, M. Sh.},
   author={Solomyak, M. Z.},
   title={Weyl asymptotics of the spectrum of the Maxwell operator for
   domains with a Lipschitz boundary},
   language={Russian, with English summary},
   journal={Vestnik Leningrad. Univ. Mat. Mekh. Astronom.},
   date={1987},
   number={vyp. 3},
   pages={23--28, 127},
}

\bib{universe7070225}{article}{
AUTHOR = {Bimonte, G.},
AUTHOR= {Emig, T.},
TITLE = {Unifying Theory for Casimir Forces: Bulk and Surface Formulations},
JOURNAL = {Universe},
VOLUME = {7},
YEAR = {2021},
NUMBER = {7},
ARTICLE-NUMBER = {225},
}

\bib{bordag2009advances}{book}{
author = {Bordag, M.},
author = {Klimchitskaya, G.~L.},
author = {Mohideen, U.},
author = {Mostepanenko, V.~M.},
title = {Advances in the {C}asimir effect},
publisher = {Oxford University Press},
date = {2009},
}

\bib{MR1174159}{article}{
   author={Br\"{u}ning, J.},
   author={Lesch, M.},
   title={Hilbert complexes},
   journal={J. Funct. Anal.},
   volume={108},
   date={1992},
   number={1},
   pages={88--132},
   issn={0022-1236},
}

\bib{MR1944792}{article}{
   author={Buffa, A.},
   author={Costabel, M.},
   author={Sheen, D.},
   title={On traces for ${\bf H}({\bf curl},\Omega)$ in Lipschitz domains},
   journal={J. Math. Anal. Appl.},
   volume={276},
   date={2002},
   number={2},
   pages={845--867},
}

\bib{MR2032868}{article}{
   author={Buffa, A.},
   author={Hiptmair, R.},
   title={Galerkin boundary element methods for electromagnetic scattering},
   conference={
      title={Topics in computational wave propagation},
   },
   book={
      series={Lect. Notes Comput. Sci. Eng.},
      volume={31},
      publisher={Springer, Berlin},
   },
   date={2003},
   pages={83--124},
}

\bib{MR0261896}{article}{
   author={Buslaev, V. S.},
   author={Merkur\cprime ev, S. P.},
   title={Trace equation for a three-particle system},
   journal={Dokl. Akad. Nauk SSSR},
   date={1969},
   pages={1055--1057},
}

\bib{carron2003l2}{article}{
   author={Carron, G.},
   title={$L^2$-cohomology of manifolds with flat ends},
   language={English, with English and French summaries},
   journal={Geom. Funct. Anal.},
   volume={13},
   date={2003},
   number={2},
   pages={366--395},
}

\bib{Candelas1982}{article}{
author = {Candelas,~P.},
title = { Vacuum energy in the presence of dielectric and conducting surfaces},
journal = {Annals of Physics}, 
volume = {143},
number = {2},
pages = {241--295}, 
date = {1982},
}

\bib{carron1999determinant}{article}{
author = {Carron, G.},
title ={ D{\'e}terminant relatif et la fonction xi},
journal = {American Journal of Mathematics}, 
volume = {124},
number = {2},
pages = {307--352}, 
date = {2002},
}

\bib{Casimir}{article}{
author = {Casimir, H.~B.~G.},
title = {On the Attraction Between Two Perfectly Conducting Plates},
journal = {Indag. Math.},
volume= {10},
pages = {261--263}, 
date = {1948},
}

\bib{MR3904426}{article}{
   author={Claeys, X.},
   author={Hiptmair, R.},
   title={First-kind boundary integral equations for the Hodge-Helmholtz
   operator},
   journal={SIAM J. Math. Anal.},
   volume={51},
   date={2019},
   number={1},
   pages={197--227},
}

\bib{costabel1988}{article}{
   author={Costabel, M.},
   title={Boundary integral operators on Lipschitz domains: elementary
   results},
   journal={SIAM J. Math. Anal.},
   volume={19},
   date={1988},
   number={3},
   pages={613--626},
}

\bib{costabelremark}{article}{
   author={Costabel, M.},
   title={A remark on the regularity of solutions of Maxwell's equations on
   Lipschitz domains},
   journal={Math. Methods Appl. Sci.},
   volume={12},
   date={1990},
   number={4},
   pages={365--368},
}

\bib{MR1345723}{article}{
   author={Davies, E. B.},
   title={The functional calculus},
   journal={J. London Math. Soc. (2)},
   volume={52},
   date={1995},
   number={1},
   pages={166--176},
}

\bib{EGJK2007}{article}{
author = {T.~Emig},
author = {N.~Graham},
author = {R.~L.~Jaffe},
author = {M.~Kardar},
title = {Casimir Forces between Arbitrary Compact Objects},
journal = {Phys. Rev. Lett.},
volume = {99},
number = {17},
pages = {170403},
date ={ 2007},
}

\bib{emig2008casimir}{article}{
author = {T.~Emig},
author = {R.L.~Jaffe},
title = {Casimir forces between arbitrary compact objects},
journal = {J. Phys. A: Math. Theor.},
volume = {41},
number = {16},
pages ={164001}, 
date = {2008},
}

\bib{Fang2021AMA}{article}{
  title={A Mathematical Analysis of Casimir Interactions I: The Scalar Field},
  author={Fang,Y.L.}
  author= {Strohmaier, A.},
  journal={Annales Henri Poincar{\'e}},
  year={2021},
  doi={https://doi.org/10.1007/s00023-021-01119-z}
}

\bib{MR3113431}{article}{
   author={Filonov, N.},
   title={Weyl asymptotics of the spectrum of the Maxwell operator in
   Lipschitz domains of arbitrary dimension},
   language={Russian},
   journal={Algebra i Analiz},
   volume={25},
   date={2013},
   number={1},
   pages={170--215},
   issn={0234-0852},
   translation={
      journal={St. Petersburg Math. J.},
      volume={25},
      date={2014},
      number={1},
      pages={117--149},
      issn={1061-0022},
   },
}

\bib{MR0461588}{book}{
   author={Folland, G.~B.},
   author={Kohn, J.~J.},
   title={The Neumann problem for the Cauchy-Riemann complex},
   note={Annals of Mathematics Studies, No. 75},
   publisher={Princeton University Press, Princeton, N.J.; University of
   Tokyo Press, Tokyo},
   date={1972},
   pages={viii+146},
}

\bib{MR68888}{article}{
   author={Gaffney,~M.~P.},
   title={Hilbert space methods in the theory of harmonic integrals},
   journal={Trans. Amer. Math. Soc.},
   volume={78},
   date={1955},
   pages={426--444},
}

\bib{MR1395669}{article}{
   author={Gesztesy, F.},
   author={Simon, B.},
   title={The xi function},
   journal={Acta Math.},
   volume={176},
   date={1996},
   number={1},
   pages={49--71},
}

\bib{MR2839867}{article}{
   author={Gol'dshtein, V.},
   author={Mitrea, I.},
   author={Mitrea, M.},
   title={Hodge decompositions with mixed boundary conditions and
   applications to partial differential equations on Lipschitz manifolds},
   note={Problems in mathematical analysis. No. 52},
   journal={J. Math. Sci. (N.Y.)},
   volume={172},
   date={2011},
   number={3},
   pages={347--400},
}

\bib{HSW}{article}{
  author ={Hanisch, F.}
   author={Strohmaier, A.},
   author={Waters, A.},
   title={A relative trace formula for obstacle scattering},
   journal={Duke mathematical journal},
}

\bib{MR1037319}{article}{
   author={Helffer, B.},
   author={Sj\"{o}strand, J.},
   title={\'{E}quation de Schr\"{o}dinger avec champ magn\'{e}tique et \'{e}quation de
   Harper},
   language={French},
   conference={
      title={Schr\"{o}dinger operators},
      address={S\o nderborg},
      date={1988},
   },
   book={
      series={Lecture Notes in Phys.},
      volume={345},
      publisher={Springer, Berlin},
   },
   date={1989},
   pages={118--197},
}

\bib{MR1996773}{book}{
   author={H\"{o}rmander, L.},
   title={The analysis of linear partial differential operators. I},
   series={Classics in Mathematics},
   note={Distribution theory and Fourier analysis;
   Reprint of the second (1990) edition [Springer, Berlin;  MR1065993
   (91m:35001a)]},
   publisher={Springer-Verlag, Berlin},
   date={2003},
   pages={x+440},
}

\bib{johnson2011numerical}{article}{
author = {Johnson,~S.~G.},
title = { Numerical methods for computing {C}asimir interactions},
journal = {In {\em Casimir physics}}, 
pages = {175--218}, 
publiosher = {Springer}, 
date = {2011},
}

\bib{kay1979casimir}{article}{
author = {Kay, B.~S.},
title = {{C}asimir effect in quantum field theory},
journal = {Phys. Rev. D}, 
volume = {20},
number = {12},
pages = {3052}, 
date = {1979},
}

\bib{kenneth06}{article}{
author = {Kenneth, O.},
author = {Klich, I.},
title = {Opposites Attract: A Theorem about the {C}asimir Force},
journal = {Phys. Rev. Lett.},
volume = {97},
pages = {060401}, 
date = {2006},
}

\bib{kenneth08}{article}{
author = {Kenneth, O.},
author = {Klich, I.},
title = {Casimir forces in a T operator approach},
journal = {Phys. Rev. B},
volume = {78},
pages = {014103},
date = {2008},
}

\bib{kirsch}{book}{
   author={Kirsch, A.},
   author={Hettlich, F.},
   title={The mathematical theory of time-harmonic Maxwell's equations},
   series={Applied Mathematical Sciences},
   volume={190},
   note={Expansion-, integral-, and variational methods},
   publisher={Springer, Cham},
   date={2015},
   pages={xiv+337},
}

\bib{kirsten2002}{book}{
author = {K.~Kirsten},
title = {Spectral functions in mathematics and physics},
publisher = {Chapman \& Hall/CRC, Boca Raton, FL},
date = {2002},
}

\bib{mitrea1995}{article}{
   author={Mitrea, M.},
   title={The method of layer potentials in electromagnetic scattering
   theory on nonsmooth domains},
   journal={Duke Math. J.},
   volume={77},
   date={1995},
   number={1},
   pages={111--133},
}

\bib{MR1899489}{article}{
   author={Mitrea, D.},
   author={Mitrea, M.},
   title={Finite energy solutions of Maxwell's equations and constructive
   Hodge decompositions on nonsmooth Riemannian manifolds},
   journal={J. Funct. Anal.},
   volume={190},
   date={2002},
   number={2},
   pages={339--417},
}

\bib{mitrea1997}{article}{
   author={Mitrea, D.},
   author={Mitrea, M.},
   author={Pipher, J.},
   title={Vector potential theory on nonsmooth domains in ${\bf R}^3$ and
   applications to electromagnetic scattering},
   journal={J. Fourier Anal. Appl.},
   volume={3},
   date={1997},
   number={2},
   pages={131--192},
}

\bib{dmitrea}{article}{
   author={Mitrea, D.},
   title={Boundary value problems for harmonic vector fields on nonsmooth
   domains},
   conference={
      title={Integral methods in science and engineering},
      address={Houghton, MI},
      date={1998},
   },
   book={
      series={Chapman \& Hall/CRC Res. Notes Math.},
      volume={418},
      publisher={Chapman \& Hall/CRC, Boca Raton, FL},
   },
   date={2000},
   pages={234--239},
}

\bib{mitreas}{article}{
   author={Mitrea, D.},
   author={Mitrea, M.},
   author={Shaw, M.},
   title={Traces of differential forms on Lipschitz domains, the boundary de
   Rham complex, and Hodge decompositions},
   journal={Indiana Univ. Math. J.},
   volume={57},
   date={2008},
   number={5},
   pages={2061--2095},
}

\bib{MR1809655}{article}{
   author={Mitrea, D.},
   author={Mitrea, M.},
   author={Taylor, M.},
   title={Layer potentials, the Hodge Laplacian, and global boundary
   problems in nonsmooth Riemannian manifolds},
   journal={Mem. Amer. Math. Soc.},
   volume={150},
   date={2001},
   number={713},
   pages={x+120},
   issn={0065-9266},
}

\bib{renne71}{article}{
author = {M.~J.~Renne},
title = {Microscopic theory of retarded van der {W}aals forces between macroscopic dielectric bodies},
journal = {Physica}, 
volume = {56},
pages = {125--137}, 
dates = {1971},
} 

\bib{MR482328}{article}{
   author={Simon, B.},
   title={Notes on infinite determinants of Hilbert space operators},
   journal={Advances in Math.},
   volume={24},
   date={1977},
   number={3},
   pages={244--273},
}

\bib{shubin}{book}{
   author={Shubin, M.~A.},
   title={Pseudodifferential operators and spectral theory},
   edition={2},
   note={Translated from the 1978 Russian original by Stig I. Andersson},
   publisher={Springer-Verlag, Berlin},
   date={2001},
   pages={xii+288},
}

\bib{MR4324382}{article}{
   author={Strohmaier, A.},
   title={The Classical and Quantum Photon Field for Non-compact Manifolds
   with Boundary and in Possibly Inhomogeneous Media},
   journal={Comm. Math. Phys.},
   volume={387},
   date={2021},
   number={3},
   pages={1441--1489},
}

\bib{SWB}{article}{
   author={Strohmaier, A.},
   author={Waters, A.},
   title={The Birman-Krein formula for differential forms and electromagnetic scattering},
   journal={arXiv:2104.13589v2},
}

\bib{OS}{article}{
   author={Strohmaier, A.},
   author={Waters, A.},
   title={Geometric and obstacle scattering at low energy},
   journal={Comm. Partial Differential Equations},
   volume={45},
   date={2020},
   number={11},
   pages={1451--1511},
}

\bib{MR2182300}{article}{
   author={Taylor, M.},
   author={Mitrea, M.},
   author={Vasy, A.},
   title={Lipschitz domains, domains with corners, and the Hodge Laplacian},
   journal={Comm. Partial Differential Equations},
   volume={30},
   date={2005},
   number={10-12},
   pages={1445--1462},
   issn={0360-5302},
}

\bib{MR1944028}{article}{
   author={Vasy, A.},
   author={Wang, X. P.},
   title={Smoothness and high energy asymptotics of the spectral shift
   function in many-body scattering},
   journal={Comm. Partial Differential Equations},
   volume={27},
   date={2002},
   number={11-12},
   pages={2139--2186}
   }
		
\bib{MR769382}{article}{
   author={Verchota, G.},
   title={Layer potentials and regularity for the Dirichlet problem for
   Laplace's equation in Lipschitz domains},
   journal={J. Funct. Anal.},
   volume={59},
   date={1984},
   number={3},
   pages={572--611},
}

\end{biblist}
\end{bibdiv}

\end{document}